\newcommand{\BibTeX}{{\scshape Bib}\kern-.08em\TeX}
\newcommand{\T}{\S\kern .15em\relax }
\newcommand{\AMS}{$\mathcal{A}$\kern-.1667em\lower.5ex\hbox
        {$\mathcal{M}$}\kern-.125em$\mathcal{S}$}
\theoremstyle{plain}
\newtheorem*{mapropo}{\textsc{Proposition}}
\newtheorem*{mapropo1}{\textsc{Proposition 1}}
\newtheorem*{mapropo2}{\textsc{Proposition 2}}
\newtheorem*{mapropo3}{\textsc{Proposition 3}}
\newtheorem*{monlem}{\textsc{Lemme}}
\newtheorem*{monlem1}{\textsc{Lemme 1}}
\newtheorem*{monlem2}{\textsc{Lemme 2}}
\newtheorem*{monlem3}{\textsc{Lemme 3}}
\newtheorem*{moncoro}{\textsc{Corollaire}}
\newtheorem*{moncoro1}{\textsc{Corollaire 1}}
\newtheorem*{moncoro2}{\textsc{Corollaire 2}}
\newtheorem*{marema}{\textsc{Remarque}}
\newtheorem*{marema1}{\textsc{Remarque 1}}
\newtheorem*{marema2}{\textsc{Remarque 2}}
\newtheorem*{marema3}{\textsc{Remarque 3}}
\newtheorem*{madefi}{\textsc{Définition}}
\newtheorem*{exemple}{\textsc{Exemple}}
\def\ES#1{\EuScript{#1}}
\def\wt#1{\widetilde{#1}}
\def\bs#1{\boldsymbol{#1}}
\def\cad{c'est--\`a--dire\ }
\title[INT\'EGRALES ORBITALES SUR $GL(N,{\Bbb F}_q((t)))$]
{INT\'EGRALES ORBITALES SUR $GL(N,{\Bbb F}_q((t)))$}
\author{Bertrand Lemaire}
\address{Aix Marseille Universit, CNRS, Centrale Marseille, I2M, UMR 7373\\ 163 Avenue de Luminy, 
Case 901, 13288 Marseille, France}
\email{Bertrand.Lemaire@univ-amu.fr}
\thanks{L'auteur a bnfici d'une subvention de l'Agence Nationale de la Recherche, projet ANR--13--BS01--00120--02 FERPLAY}
\date{\today}
\begin{document}
\def\smfbyname{}
\small

\begin{abstract}Soit $F$ un corps local non archimédien de caractéristique $\geq 0$, et soit $G=GL(N,F)$, $N\geq 1$. Un élément $\gamma\in G$ est dit quasi--régulier si le centralisateur de $\gamma$ dans $M(N,F)$ est un produit 
d'extensions de $F$. Soit $G_{\rm qr}$ l'ensemble des éléments quasi--réguliers de $G$. Pour $\gamma\in G_{\rm qr}$, on note $\ES{O}_\gamma$ l'intégrale orbitale ordinaire sur $G$ associée à $\gamma$. On remplace ici le discriminant de Weyl $\vert D_G\vert$ 
par un facteur de normalisation $\eta_G: G_{\rm qr}\rightarrow {\Bbb R}_{>0}$ permettant d'obtenir les mêmes résultats que ceux prouvés par Harish--Chandra en caractéristique nulle: pour $f\in C^\infty_{\rm c}(G)$, l'intégrale orbitale normalisée $I^G(\gamma,f)=\eta_G^{1\over 2}(\gamma)\ES{O}_\gamma(f)$ est bornée sur $G$, et pour $\epsilon>0$ tel que $N(N-1)\epsilon <1$, la fonction $\eta_G^{-{1\over 2}-\epsilon}$ est localement intégrable sur $G$.
\end{abstract}

\begin{altabstract}Let $F$ be a non--Archimedean local field of characteristic $\geq 0$, and let $G=GL(N,F)$, $N\geq 1$. An element $\gamma\in G$ is said to be quasi--regular if the centralizer of $\gamma$ in $M(N,F)$ is a product of field extensions of $F$. Let $G_{\rm qr}$ be the set of quasi--regular elements of $G$. For $\gamma\in G_{\rm qr}$, we denote by $\ES{O}_\gamma$ the ordinary orbital integral on $G$ associated with $\gamma$. In this paper, we replace the Weyl discriminant $\vert D_G\vert$ by a normalization factor $\eta_G: G_{\rm qr}\rightarrow {\Bbb R}_{>0}$ which allows us to obtain the same results as proven by Harish--Chandra in characteristic zero: for $f\in C^\infty_{\rm c}(G)$, the normalized orbital integral $I^G(\gamma,f)=\eta_G^{1\over 2}(\gamma)\ES{O}_\gamma(f)$ is bounded on $G$, and for $\epsilon>0$ such that $N(N-1)\epsilon <1$, the function $\eta_G^{-{1\over 2}-\epsilon}$ is locally integrable on $G$.
\end{altabstract}

\subjclass{22E50}

\keywords{intégrale orbitale, discriminant de Weyl, strate simple, élément minimal}

\altkeywords{orbital integral, Weyl discriminant, simple stratum, minimal element}
\maketitle
\setcounter{tocdepth}{3}
\eject

\tableofcontents

\section{Introduction}

\vskip3mm
\noindent {\bf 1.1.} --- Il semble maintenant nécessaire d'établir la formule des traces (resp. tordue) pour les groupes réductifs connexes sur un 
corps de fonctions, puis d'essayer ensuite de la stabiliser, comme il a été fait pour les corps de nombres \cite{LW,MW}. La tâche s'annonce longue et laborieuse, et il n'est pas clair qu'il soit aujourd'hui possible de la mener à bien en toute généralité, \cad sans restriction sur la caractéristique du corps de base. Rappelons par exemple que les travaux de Ng\^o Bao Ch\^au sur le lemme fondamental supposent que la caractéristique du corps de base est grande par rapport au rang du groupe. C'est bien sûr du côté géométrique de la formule des traces que des phénomènes nouveaux apparaissent. Globalement, il est raisonnable d'espèrer que les arguments, une fois compris, soient plus simples pour les corps de fonctions que pour les corps de nombres. Localement en revanche, la théorie des intégrales orbitales en caractéristique $p>0$ pour un groupe réductif connexe quelconque est encore à écrire, et la formule des traces locale semble pour l'instant hors de portée (sauf si $p\gg 1$). 
Cet article est en quelque sorte une illustration de cette affirmation: d'un côté il ouvre la voie vers une formule des traces locale pour $GL(N,{\Bbb F}_q((t)))$, de l'autre il laisse imaginer la nature des difficultés à surmonter si on veut établir une telle formule en caractéristique $p>0$ pour un groupe plus général. 

\vskip3mm
\noindent {\bf 1.2.} --- 
Soit $F$ un corps commutatif localement compact non archimédien, de caractéristique quelconque, et soit $G$ un groupe réductif connexe défini sur $F$. On s'intéresse ici à la théorie des intégrales orbitales sur $G(F)$, dans le cas où ${\rm car}(F)=p>0$. Les résultats démontrés dans cet article concernent exclusivement le groupe $G=GL(N)$, et ne sont vraiment nouveaux que si $p$ divise $N$. Mais revenons, pour cette introduction seulement, au cadre général: $G$ quelconque et ${\rm car}(F)\geq 0$. On note $\mathfrak{o}$ l'anneau des entiers de $F$, $\mathfrak{p}$ son idal maximal, $\nu$ la valuation sur $F$ normalise par $\nu(F^\times)={\Bbb Z}$, et $\vert \;\vert$ la valeur absolue normalise sur $F$. On munit $G(F)$ de la topologie $\mathfrak{p}$--adique (\cad celle dfinie par $F$), et on fixe une mesure de Haar $dg$ sur $G(F)$. On note $l$ le rang de $G$, \cad la dimension des tores maximaux de $G$. On suppose $l\geq 1$. 
Un élément $\gamma\in G$ est dit (absolument) semisimple régulier si son centralisateur connexe $G_\gamma$ est un tore. On note $G_{\rm reg}\subset G$ le sous--ensemble des éléments semisimples réguliers. C'est un ouvert non vide de $G$, défini sur $F$. En effet, soit $\mathfrak{g}$ l'algèbre de Lie de $G$. Fixons une clôture algébrique $\overline{F}$ de $F$, et identifions $G$ à $G(\overline{F})$, $\mathfrak{g}$ à $\mathfrak{g}(\overline{F})$, etc. Pour $\gamma\in G$, on note $D_G(\gamma)\in \overline{F}$ le coefficient de $t^l$ dans le polynôme $\det_{\overline{F}}(t+1-{\rm Ad}_\gamma; \mathfrak{g})$. On a
$$
G_{\rm reg}= \{\gamma\in G: D_G(\gamma)\neq 0\}.
$$

Soit $\gamma\in G_{\rm reg}(F)$. Le tore $T=G_\gamma$ est défini sur $F$, et on peut munir $T(F)$ d'une mesure de Haar $dt=dg_\gamma$. L'orbite $\{g^{-1}\gamma g: g\in G(F)\}$ 
est fermée dans $G(F)$, et on note $\ES{O}_\gamma=\ES{O}_\gamma^G$ la distribution sur $G(F)$ définie par
$$
\ES{O}_\gamma(f)= \int_{T(F)\backslash G(F)}f(g^{-1}\gamma g) \textstyle{dg\over dt}, \quad f\in C^\infty_{\rm c}(G(F)).
$$
Soit $A_T=A_\gamma$ le sous--tore $F$--déployé maximal de $T$. On munit le groupe $A_T(F)$ de la mesure de Haar $da$ qui donne le volume $1$ au sous--groupe compact maximal $A_T(\mathfrak{o})$ de $A_T(F)$. Le groupe quotient $A_T(F)\backslash T(F)$ est compact, et on peut normaliser la mesure $dt$ en imposant la condition ${\rm vol}(A_T(F)\backslash T(F),\textstyle{dt\over da})=1$. On a donc
$$
\ES{O}_\gamma(f)= \int_{A_T(F)\backslash G(F)}f(g^{-1}\gamma g) \textstyle{dg\over da}, \quad f\in C^\infty_{\rm c}(G(F)).
$$
L'élement $D_G(\gamma)$ appartient à $F$, et en notant $\mathfrak{g}_\gamma\subset \mathfrak{g}$ le sous--espace formé des points fixes sous ${\rm Ad}_\gamma$ (qui co\"{\i}ncide avec l'algèbre de Lie de $T$), on a
$$
D_G(\gamma) = {\rm det}_F(1- {\rm Ad}_\gamma; \mathfrak{g}(F)/\mathfrak{g}_\gamma(F)).
$$
On note $I^G(\gamma,\cdot)$ la distribution sur $G(F)$ définie par
$$
I^G(\gamma,f)= \vert D_G(\gamma)\vert^{1\over 2} \ES{O}_\gamma(f),\quad f\in C^\infty_{\rm c}(G(F)).\leqno{(1)}
$$
On sait d'après Harish--Chandra que pour $f\in C^\infty_{\rm c}(G(F))$, l'application $\gamma \mapsto I^G(\gamma,f)$ est localement constante sur $T(F)\cap G_{\rm reg}$.

Le facteur de normalisation $\vert D_G\vert^{1\over 2}$ introduit en (1) est bien sûr motivé par la formule d'intégration de Weyl: pour toute fonction localement intégrable $\theta$ sur $G(F)$, définie sur $G_{\rm reg}(F)$ et telle que $\theta(g^{-1}\gamma g)= \theta(\gamma)$ pour $\gamma\in G_{\rm reg}(F)$ et $g\in G(F)$, et pour toute fonction $f\in C^\infty_{\rm c}(G(F))$, on a
$$
\int_{G(F)} \theta(g)f(g)dg = \sum_T\vert W^G(T)\vert^{-1}\int_{T(F)} \vert D_G(\gamma)\vert^{1\over 2} \theta(\gamma)I^G(\gamma,f)d\gamma,
\leqno{(2)}
$$
où $T$ parcourt les tores maximaux de $G$ définis sur $F$, pris modulo conjugaison par $G(F)$, $\vert W^G(T)\vert$ est le cardinal du groupe de Weyl $N_G(T)/T$ de $G$, et $d\gamma$ est la mesure de Haar normalisée sur $T(F)$.

\vskip3mm
\noindent {\bf 1.3.} --- On a aussi la variante sur $\mathfrak{g}(F)$ de l'intégrale orbitale normalisée définie en 1.2.(1). Pour $X\in \mathfrak{g}$, on note $D_\mathfrak{g}(X)$ le coefficient de $t^l$ dans le polynôme $\det_{\overline{F}}(t-{\rm ad}_X;\mathfrak{g})$, et on pose
$$
\mathfrak{g}_{\rm reg}= \{X\in \mathfrak{g}: D_\mathfrak{g}(X)\neq 0\}.
$$
C'est un ouvert non vide de $\mathfrak{g}$, défini sur $F$. Pour $X\in \mathfrak{g}_{\rm reg}(F)$, l'élément $D_\mathfrak{g}(X)$ appartient à $F$, et en posant $\mathfrak{g}_X=\ker({\rm ad}_X: \mathfrak{g}\rightarrow \mathfrak{g})$, on a
$$
D_\mathfrak{g}(X)={\rm det}_F(-{\rm ad}_X; \mathfrak{g}(F)/\mathfrak{g}_X(F)).
$$
Pour $X\in \mathfrak{g}_{\rm reg}(F)$, on note $I^\mathfrak{g}(X,\cdot)$ la distribution sur $\mathfrak{g}(F)$ définie par
$$
I^\mathfrak{g}(X,\mathfrak{f})= \vert D_\mathfrak{g}(X)\vert^{1\over 2}\int_{A_X(F)\backslash G(F)}\mathfrak{f}({\rm Ad}_{\smash{g^{-1}}}(X)) \textstyle{dg\over da},
\quad \mathfrak{f}\in C^\infty_{\rm c}(\mathfrak{g}(F)).\leqno{(1)}
$$
Ici $A_X$ est le sous--tore $F$--déployé maximal $A_T$ du centralisateur connexe $T=G_X$ de $X$ dans $G$, et $da$ est la mesure de Haar sur $A_X(F)$ qui donne le volume $1$ à $A_X(\mathfrak{o})$. On a bien sûr aussi l'analogue sur $\mathfrak{g}(F)$ de la 
formule d'intégration de Weyl 1.2.(2). 

\vskip3mm
\noindent {\bf 1.4.} --- On suppose dans ce numéro que $F$ est de caractéristique nulle. Rappelons quelques résultats bien connus, dus à  Harish--Chandra. La somme sur $T$ dans 1.2.(2) est finie --- pour cela, il n'est pas nécessaire de déranger Harish--Chandra! ---, et d'après \cite[theo. 14]{HC1}, pour tout tore maximal $T$ de $G$ défini sur $F$ et toute fonction $f\in C^\infty_{\rm c}(G(F))$, l'application $\gamma \mapsto I^G(\gamma,f)$ est localement bornée sur $T(F)\cap G_{\rm reg}$: 
\begin{enumerate}[leftmargin=17pt]
\item[(1)]pour toute partie compacte $\Omega$ de $T(F)$, on a 
$\sup_{\gamma \in \Omega\cap G_{\rm reg}}\vert I^G(\gamma,f)\vert <+\infty$.
\end{enumerate}
Le résultat suivant \cite[theo.~15]{HC1} est indispensable pour l'étude des intégrales pondérées sur $G(F)$:
\begin{enumerate}[leftmargin=17pt]
\item[(2)]il existe $\epsilon >0$ tel que la fonction $\vert D_G\vert^{-{1\over 2}-\epsilon}$ est localement intégrable sur $G(F)$. 
\end{enumerate}
Compte--tenu de (1) et de 1.2.(2), Harish--Chandra déduit (2) de:
\begin{enumerate}[leftmargin=17pt]
\item[(3)]pour chaque tore maximal $T$ de $G$ défini sur $F$, il existe un $\epsilon >0$ tel que la fonction $\vert D_G\vert^{-\epsilon}$ est localement intégrable sur $T(F)$. 
\end{enumerate}

Revenons à la propriété (1). Il suffit pour l'obtenir de prouver que pour chaque $t\in T(F)$, il existe un voisinage ouvert compact $\Omega$ de $t$ dans $T(F)$ tel que $\sup_{\gamma \in \Omega\cap G_{\rm reg}}\vert I^G(\gamma,f)\vert <+\infty$. Notons $H=G_t$ le centralisateur connexe de $t$ dans $G$, et $\mathfrak{h}$ son algèbre de Lie. Soit $\omega_t$ l'ensemble des $\delta\in H(F)$ tels que 
$\det_F(1-{\rm Ad}_{t\delta}; \mathfrak{g}(F)/\mathfrak{h}(F))\neq 0$. Puisque $t$ est semisimple, on a la décomposition $\mathfrak{g}(F)= (1-{\rm Ad}_t)(\mathfrak{g}(F))\oplus \mathfrak{h}(F)$. De plus, $\omega_t$ est un voisinage ouvert et $H(F)$--invariant de $1$ dans $H(F)$, et l'application
$$
\delta: G(F)\times \omega_t\rightarrow (g,h)\mapsto g^{-1}thg
$$
est partout submersive. On peut donc appliquer le principe de submersion d'Harish--Chandra, et \og descendre\fg{} toute distribution $G(F)$--invariante au voisinage de $t$ dans $G(F)$ --- par exemple une intégrale orbitale --- en une distribution $H(F)$--invariante au voisinage de $1$ dans $H(F)$. On en déduit qu'il existe un voisinage ouvert compact $\ES{V}_t$ de $1$ dans $T(F)$ vérifiant la propriété: pour toute fonction $f\in C^\infty_{\rm c}(G(F))$, il existe une fonction $f^H\in C^\infty_{\rm c}(\ES{V}_t)$ telle que pour tout $\delta\in \ES{V}_t$ tel que $t\delta \in G_{\rm reg}$, on a l'égalité 
$$
\ES{O}_{t\delta}(f)= \ES{O}_\delta^H(f^H).
$$
Or $tH(F)\cap G_{\rm reg}\subset tH_{\rm reg}(F)$ et
$$
{\rm \det}_F(1-{\rm Ad}_{t\delta}; \mathfrak{g}(F)/\mathfrak{h}(F))= D_G(t\delta)D_H(\delta)^{-1},
$$ par conséquent l'égalité $\ES{O}_{t\delta}(f)= \ES{O}_\delta^H(f^H)$ s'écrit aussi
$$
I^G(t\delta,f)= \vert {\rm \det}_F(1-{\rm Ad}_{t\delta}; \mathfrak{g}(F)/\mathfrak{h}(F))\vert^{1\over 2} I^H(\delta,f^H).
$$
Comme l'application
$$\delta\mapsto 
 {\rm \det}_F(1-{\rm Ad}_{t\delta}; \mathfrak{g}(F)/\mathfrak{h}(F))
$$ est bornée au voisinage de $1$ dans $T(F)$, par récurrence sur la dimension de $G$ (et translation $\delta \mapsto t^{-1}\delta$ si $t\in Z(G)$), on est ramené pour obtenir (1) à prouver:
\begin{enumerate}[leftmargin=17pt]
\item[(4)]l'application $\gamma \mapsto I^G(\gamma,f)$ est bornée au voisinage de $1$ dans $T(F)$.
\end{enumerate}
Via l'application exponentielle, on peut passer à l'algèbre de Lie. En notant $\mathfrak{t}$ l'algèbre de Lie de $T$, (4) est impliqué par le résultat suivant \cite[theo.~13]{HC1}:
\begin{enumerate}[leftmargin=17pt]
\item[(5)]pour toute fonction $\mathfrak{f}\in C^\infty_{\rm c}(\mathfrak{g}(F))$, on a
$\sup_{\gamma \in \mathfrak{t}(F)\cap \mathfrak{g}_{\rm reg}} \vert I^{\mathfrak{g}}(\gamma,\mathfrak{f})\vert <+\infty$.
\end{enumerate}
Comme pour $(4) \Rightarrow (1)$, on obtient (5) par récurrence sur $\dim_{\overline{F}}(\mathfrak{g}_t)$ pour $t\in \mathfrak{t}(F)$, grâce à la propriété d'homogénéité des germes de Shalika \cite[\S 8]{HC2} (voir aussi \cite[17.14]{K}). Revenons  $G$. Pour toute fonction 
$f\in C^\infty_{\rm c}(G(F))$, l'application $T(F)\rightarrow {\Bbb C},\,\gamma \mapsto I^G(\gamma,f)$ obtenue en posant $I^G(\gamma,f)=0$ pour $\gamma\in T(F)\smallsetminus (T(F)\cap G_{\rm reg})$ est  support compact (c'est une consquence du lemme 39 de \cite{HC1}). On en dduit que la proprit (1) se renforce en:
\begin{enumerate}[leftmargin=17pt]
\item[(6)]pour toute fonction $f\in C^\infty_{\rm c}(G(F))$, on a
$\sup_{\gamma \in T(F)\cap G_{\rm reg}} \vert I^{G}(\gamma,f)\vert <+\infty$.
\end{enumerate}

Le passage de (5) à (1) via la submersion $\delta$ et l'application $f\mapsto f^H$ est appelé \og descente centrale au voisinage d'un élément semisimple\fg{} ou, plus simplement \cite{K}, \og descente semisimple\fg{}\footnote{Pour $G=GL(N)$ et $F$ de caractéristique $>0$, nous aurons à généraliser cette construction au voisinage d'éléments $t$ qui ne sont pas semisimples mais seulement d'orbite fermée --- voir 1.8.(2).}. Par descente semisimple, Harish--Chandra ramène aussi (3) au résultat suivant:
\begin{enumerate}[leftmargin=17pt]
\item[(7)]pour chaque tore maximal $T$ de $G$ défini sur $F$, il existe un $\epsilon >0$ tel que la fonction $\vert D_\mathfrak{g}\vert^{-\epsilon}$ est localement intégrable sur $\mathfrak{t}(F)$. 
\end{enumerate}
%

\vskip3mm
\noindent {\bf 1.5.} --- On suppose maintenant que $F$ est de caractéristique $p>0$. Alors la généralisation des résultats rappelés en 1.4 se heurte à plusieurs obstacles. Parmi ceux--ci:
\begin{itemize}
\item les tores maximaux de $G$ définis sur $F$, modulo conjugaison par $G(F)$, peuvent former un ensemble infini. On ne peut donc pas se contenter d'une borne sur chaque $T$, comme en 1.4.(1), 1.4.(6) ou 1.4.(3), il faut en plus contrôler ces bornes de manière à ce que la somme sur les $T$ dans 1.2.(2) converge; 
\item la présence d'éléments $t\in G(F)$ qui ne sont pas semisimples (sur $\overline{F}$) mais néanmoins tels que l'orbite $\ES{O}_{G(F)}(\gamma)=\{g^{-1}tg:g\in G(F)\}\subset G(F)$ est fermée pour la topologie $\mathfrak{p}$--adique. Au voisinage de tels éléments, la descente centrale telle qu'on l'a rappelée en 1.4 ne fonctionne plus;
\item les classes de conjugaison unipotentes dans $G(F)$ peuvent former un ensemble infini, et la théorie des germes de Shalika ne s'applique pas dans ce cas (il faudrait l'crire autrement). De plus le passage à l'algèbre de Lie pose problème, car on ne dispose pas d'une application exponentielle comme en caractéristique nulle. 
\end{itemize} 
Pour s'en convaincre, il suffit de regarder les premiers exemples: les groupes $\ES{G}=GL(2,F)$ et $\ES{G}'= SL(2,F)$, avec $F={\Bbb F}_2((t))$. Les classes de conjugaison de tores maximaux de $\ES{G}$ sont classifiées par les classes d'isomorphisme d'extensions quadratiques séparables de $F$, qui sont en nombre infini. De plus, il y a aussi les extensions inséparables. Si $E\subset M(2,F)$ est une extension quadratique inséparable de $F$, et si $\gamma=\omega_E$ est une uniformisante de $E$, alors l'intégrale orbitale $\ES{O}_\gamma$ a les mêmes propriétés qu'une \og vraie\fg{} intégrale orbitale semisimple régulière (elliptique). Pourtant sur $\overline{F}$, l'élément $\gamma$ dégénère puisqu'il se décompose en $\gamma = zu$ avec $z\in Z(G;\overline{F})$ et $u\in G(\overline{F})$ unipotent. Dans $\ES{G}'$, on peut vérifier que tous les éléments sont séparables, mais les classes de conjugaison unipotentes non triviales sont classifiées par l'ensemble $F^\times /(F^\times)^2$, qui est infini. Quant à la descente centrale, les difficultés nouvelles apparaissent au voisinage des éléments inséparables qui sont contenus dans un sous--groupe de Levi propre (sur $F$). Pour que de tels éléments existent, il faut que le groupe ambiant soit un peu plus gros que $\ES{G}$ ou $\ES{G}'$: par exemple le groupe $GL(4,F)$, et l'élément $\gamma$ plongé diagonalement dans le sous--groupe de Levi $\ES{G}\times\ES{G}$ de $GL(4,F)$.

\vskip3mm
\noindent {\bf 1.6.} --- Revenons à ${\rm car}(F)\geq 0$ et dcrivons les rsultats contenus dans ce papier. Changeons de notations: dornavant, on fixe un entier $N\geq 1$, un $F$--espace vectoriel $V$ de dimension $N$, et on pose $\mathfrak{g}={\rm End}_F(V)$ et $G={\rm Aut}_F(V)$. Un lment $\gamma \in \mathfrak{g}$ est dit {\it ferm} si la $F$--algbre $F[\gamma]$ est un produit d'extensions de $F$, {\it pur} si la $F$--algbre $F[\gamma]$ est un corps, {\it quasi--rgulier} s'il est ferm et si $\dim_F(F[\gamma])=N$, {\it quasi--rgulier elliptique} s'il est quasi--rgulier et pur. Si de plus $\gamma$ est {\it sparable}, \cad si le polynme caractristique $\zeta_\gamma\in F[t]$ de $\gamma$ est produit de polynômes irréductibles et sparables sur $F$, alors il est ferm si et seulement s'il est (absolument) semisimple, et il est quasi--rgulier, resp. quasi--rgulier elliptique, si et seulement s'il est semisimple rgulier, resp. semisimple rgulier elliptique, au sens habituel (cf. 1.4). On note $\mathfrak{g}_{\rm qr}$, resp. $\mathfrak{g}_{\rm qre}$, l'ensemble des lments quasi--rguliers, resp. quasi--rguliers elliptiques, de $\mathfrak{g}$. Pour $\star= {\rm qr},\, {\rm qre}$, on pose $G_\star = G \cap \mathfrak{g}_\star$. Un lment $\gamma\in \mathfrak{g}$ est ferm si et seulement si son orbite $\ES{O}_G(\gamma)=\{g^{-1}\gamma g: g\in G\}$ est ferme dans $\mathfrak{g}$ (pour la topologie $\mathfrak{p}$--adique). 

Soit $\gamma\in G_{\rm qr}$. La $F$--algbre $F[\gamma]$ co\"{\i}ncide avec le centralisateur $\mathfrak{g}_\gamma= \{x\in \mathfrak{g}: \gamma x -x \gamma = 0\}$ de $\gamma$ dans $\mathfrak{g}$. \'Ecrivons $\mathfrak{g}_\gamma= E_1\times \cdots \times E_r$ pour des extensions $E_i$ de $F$, notons $A_\gamma$ le sous--tore dploy maximal $F^\times\times \cdots \times F^\times$ de $G_\gamma = G\cap \mathfrak{g}_\gamma$, et $M=M(\gamma)$ le centralisateur de $A_\gamma$ dans $G$. Alors $M$ est un produit de groupes linaires sur $F$, et $\gamma$ est quasi--rgulier elliptique dans $M$. Pour toute fonction $f\in C^\infty_{\rm c}(G)$, on dfinit comme suit l'intgrale normalise
$$
I^G(\gamma ,f)= \eta_G(\gamma)^{1\over 2}\ES{O}_\gamma(f).\leqno{(1)}
$$
On note $dg$, resp. $da$, la mesure de Haar sur $G$, resp. $A_\gamma$, qui donne le volume $1$  $GL(N,\mathfrak{o})$, resp. au 
sous--groupe compact maximal $\mathfrak{o}^\times\times \cdots \times \mathfrak{o}^\times$ de $A_\gamma$, et on pose
$$
\ES{O}_\gamma(f) = \int_{A_\gamma \backslash G}f(g^{-1} \gamma g) \textstyle{dg\over da}. 
$$
On pose
$$
\eta_G(\gamma) = \eta_{M}(\gamma) \vert {\rm det}_F( 1- {\rm Ad}_\gamma; \mathfrak{g}/\mathfrak{m})\vert,
$$
o $\mathfrak{m}=\mathfrak{m}(\gamma)$ est l'algbre de Lie de $M$, et on dfinit $\eta_M(\gamma)$ par produit  partir du cas elliptique suivant. Si $\gamma\in G_{\rm qre}$, alors $E=F[\gamma]$ est une extension de $F$ de degr $N$, et on pose
$$
\eta_G(\gamma)= q^{-f(\tilde{c}_F(\gamma) + e-1)}
$$
o $e$, resp. $f$, est l'indice de ramification, resp. le degr rsiduel, de l'extension $E/F$, et o $\tilde{c}_F(\gamma)$ est un invariant dfini comme suit. On commence par supposer que $\gamma$ appartient  l'anneau des entiers $\mathfrak{o}_E$ de $E$, et on pose
$$
\{x\in \mathfrak{o}_E: x \mathfrak{o}_E\subset \mathfrak{o}[\gamma]\} = \mathfrak{p}_E^{c_F(\gamma)}.
$$
Pour $z\in \mathfrak{o}\smallsetminus \{0\}$, on a $c_F(z\gamma)= e(N-1)\nu(z) + c_F(\gamma)$, ce qui permet de dfinir $\tilde{c}_F(\gamma)$ en gnral: on choisit $z\in F^\times$ tel que $z\gamma \in \mathfrak{o}_E$, et on pose $c_F(\gamma)= c_F(z\gamma)- e(N-1)\nu(z)$. On pose aussi $\tilde{c}_F(\gamma)= c_F(\gamma) - (N-1)\nu_E(\gamma)$. Par construction, on a $\tilde{c}_F(z\gamma)= \tilde{c}_F(\gamma)$ pour tout $z\in F^\times$. On vrifie que si $\gamma$ est sparable, \cad si l'extension $E/F$ est sparable, alors on a
$$
\tilde{c}_F(\gamma)= \textstyle{1\over f}(\nu(D_G(\gamma))-\delta),
$$
o $\delta$ est le discriminant de $E/F$, et donc
$$
\eta_G(\gamma)= \vert D_G(\gamma)\vert q^{\delta -f(e-1)}.
$$
On reconnait l'exposant de Swan $\delta - f(e-1)\geq 0$ de $E/F$.

On dfinit aussi la variante sur $\mathfrak{g}$ de l'intgrale orbitale normalise (1): pour $\gamma \in \mathfrak{g}_{\rm qr}$ et $\mathfrak{f}\in C^\infty_{\rm c}(\mathfrak{g})$, on pose
$$
I^\mathfrak{g}(\gamma,\mathfrak{f})= \eta_\mathfrak{g}(\gamma)^{1\over 2}\ES{O}_\gamma(\mathfrak{f}),\leqno{(2)}
$$
o la distribution $\ES{O}_\gamma$ sur $\mathfrak{g}$ est dfinie de la mme manire que celle sur $G$, et le facteur de normalisation $\eta_\mathfrak{g}(\gamma)$ est donn 
par $\eta_\mathfrak{g}(\gamma)= \vert {\rm det}_F(-{\rm ad}_\gamma; \mathfrak{g}/\mathfrak{m})\vert \eta_{\mathfrak{m}}(\gamma)$ avec $\mathfrak{m}=\mathfrak{m}(\gamma)$, et, si $\gamma\in \mathfrak{g}_{\rm qre}$, par $\eta_\mathfrak{g}(\gamma)= q^{-f(c_F(\gamma)-(e-1))}$.

\vskip3mm
\noindent {\bf 1.7.} --- Les deux principaux résultats prouvés ici, qui généralisent ceux d'Harish--Chandra en caractéristique nulle (cf. 1.4), sont les suivants:
\begin{enumerate}[leftmargin=17pt]
\item[(1)]pour toute fonction $f\in C^\infty_{\rm c}(G)$, on a $\sup_{\gamma \in G_{\rm qr}}\vert I^G(\gamma,f)\vert < +\infty$,
\end{enumerate}
où l'intégrale orbitale normalisée $I^G(\gamma,f)= \eta_G(\gamma)^{1\over 2}\ES{O}_\gamma(f)$ est celle définie en 1.6.(1). Quant au facteur de normalisation $\eta_G: G_{\rm qr} \rightarrow {\Bbb R}_{>0}$, il vérifie:
\begin{enumerate}[leftmargin=17pt]
\item[(2)]pour tout $\epsilon >0$ tel que $N(N-1)\epsilon <1$, la fonction $G_{\rm qr}\rightarrow {\Bbb R}_{>0},\, \gamma \mapsto \eta_G(\gamma)^{-{1\over 2}-\epsilon}$ est localement intégrable sur $G$.
\end{enumerate}
En fait, on prouve d'abord la variante sur $\mathfrak{g}$ de ces deux résultats:
\begin{enumerate}[leftmargin=17pt]
\item[(3)]pour toute fonction $\mathfrak{f}\in C^\infty_{\rm c}(\mathfrak{g})$, on a $\sup_{\gamma \in \mathfrak{g}_{\rm qr}}\vert I^\mathfrak{g}(\gamma,\mathfrak{f})\vert < +\infty$;
\end{enumerate}
\begin{enumerate}[leftmargin=17pt]
\item[(4)]pour tout $\epsilon >0$ tel que $N(N-1)\epsilon <1$, la fonction $\mathfrak{g}_{\rm qr}\rightarrow {\Bbb R}_{>0},\, \gamma \mapsto \eta_\mathfrak{g}(\gamma)^{-{1\over 2}-\epsilon}$ est localement intégrable sur $\mathfrak{g}$.
\end{enumerate}

\vskip3mm
\noindent {\bf 1.8.} --- 
On l'a dit en 1.5, l'une des principales difficultés est ici la descente centrale au voisinage d'un élément fermé qui n'est pas semisimple. Soit $\beta\in G$ un élément fermé. Par descente parabolique standard, on se ramène facilement au cas où $\beta$ est pur. Posons $E=F[\beta]$, $d= {N\over [E:F]}$ et $\mathfrak{b}= {\rm End}_E(V)$. Ainsi $\mathfrak{b}$ est le commutant de $E$, \cad le centralisateur de $\beta$, dans $\mathfrak{g}$. Si l'extension $E/F$ est inséparable, l'intersection ${\rm ad}_\beta(\mathfrak{g})\cap \mathfrak{b}$ n'est pas nulle, par conséquent l'inclusion ${\rm ad}_\beta(\mathfrak{g})+ \mathfrak{b}\subset \mathfrak{g}$ est stricte, et la méthode d'Harish--Chandra (descente semisimple) ne fonctionne plus. On modifie cette méthode comme on l'a fait en \cite{L1} pour prouver l'intégrabilité locale des caractères. En gros, l'idée consiste à choisir un \og bon \fg{} supplémentaire de ${\rm ad}_\beta(\mathfrak{g})$ dans $\mathfrak{g}$. Pour cela on fixe une corestriction modérée $\bs{s}_0: A(E)\rightarrow E$ sur $A(E)={\rm End}_F(E)$ relativement à $E/F$, et un élément $\bs{x}_0\in \mathfrak{A}(E)= {\rm End}_\mathfrak{o}^0(\{\mathfrak{p}_E^i:i\in {\Bbb Z}\})$ tel que $\bs{s}_0(\bs{x}_0)=1$. On fixe aussi un $\mathfrak{o}$--ordre héréditaire $\mathfrak{A}$ dans $\mathfrak{g}$ normalisé par $E^\times$ (que l'on choisira minimal pour cette propriété), et une $(W,E)$--décomposition $\mathfrak{A}= \mathfrak{A}(E)\otimes_{\mathfrak{o}_E}\mathfrak{B}$ de $\mathfrak{A}$, avec $\mathfrak{B}=\mathfrak{A}\cap \mathfrak{b}$. Cette décomposition induit une $(W,E)$--décomposition $\mathfrak{g}= A(E)\otimes_E\mathfrak{b}$ de $\mathfrak{g}$, et on pose $\bs{x}= \bs{x}_0\otimes 1\in \mathfrak{A}$. Pour ces notions, dues à Bushnell--Kutzko \cite{BK}, on renvoie à \ref{des décompositions}. Le supplémentaire en question est le sous--espace $\bs{x}_0\otimes \mathfrak{b}= \bs{x}\mathfrak{b}$ de $\mathfrak{g}$. On en déduit qu'il existe un voisinage ouvert compact $\ES{V}$ de $0$ dans $\mathfrak{b}$ tel que l'application
$$
\delta: G \times \bs{x}\ES{V} \rightarrow G, \, (g,b)\mapsto g^{-1}(\beta + \bs{x}b)g\leqno{(1)}
$$
est partout submersive. On peut donc appliquer le principe de submersion d'Harish--Chandra, et \og descendre \fg{} toute distribution $G$--invariante $T$ au voisinage de $\beta$ dans $G$ en une distribution $\wt{\vartheta}_T$ sur $\bs{x}\ES{V}$. Mais cette dernière n'est pas invariante sous l'action du groupe $H={\rm Aut}_E(V)$ par conjugaison (d'ailleurs $\bs{x}\ES{V}$ lui--même n'est pas $H$--invariant). On peut cependant en déduire, par un procédé de recollement \cite{L1}, une distribution $H$--invariante $\theta_T$ sur $\mathfrak{b}$. Signalons que dans cette construction, l'élément $\bs{x}$ n'appartient pas à $\mathfrak{b}$, sauf si l'extension $E/F$ est modérément ramifiée (donc en particulier séparable), auquel cas on peut prendre $\bs{x}_0=1$. On est donc ramené à déterminer la distribution $\theta_T$ sur $\mathfrak{b}$ lorsque $T= \ES{O}_\gamma$ pour un élément $\gamma\in G_{\rm qr}$ de la forme $\gamma = \beta + \bs{x}b$ avec $b\in \ES{V}$, et aussi à calculer le facteur de normalisation $\eta_G(\gamma)$ pour un tel élément $\gamma$. C'est la partie la plus difficile de ce travail: elle occupe les sections \ref{descente centrale au voisinage d'un élément pur} et \ref{descente centrale: le cas général}. Dans la section 3, on prouve que si $\ES{V}$ est suffisamment petit, alors pour $b\in \ES{V}\cap \mathfrak{b}_{\rm qre}$, l'élément $\gamma$ appartient à $G_{\rm qre}$, et la distribution $\theta_{\ES{O}_\gamma}$ est égale $\lambda \ES{O}_b^\mathfrak{b}$ pour une constante $\lambda$ ne dépendant que de $\beta$ (et pas de $b$). Ici $\ES{O}_b^\mathfrak{b}$ est l'intégrale orbitale sur $\mathfrak{b}$ définie par $b$, normalisée comme plus haut en rempla\c{c}ant $\mathfrak{g}$ par $\mathfrak{b}$. De plus, le facteur de normalisation $\eta_G^{1\over 2}(\gamma)$ est égal à $\mu \eta_{\mathfrak{b}}^{1\over 2}(b)$ pour une constante $\mu$ ne dépendant elle aussi que de $\beta$ (et pas de $b$). On en déduit en particulier que pour toute fonction $f\in C^\infty_{\rm c}(G)$, il existe une fonction $f^\mathfrak{b}\in C^\infty_{\rm c}(\ES{V})$ telle que
$$
I^G(\beta + \bs{x}b,f) = I^\mathfrak{b}(b,f^\mathfrak{b}),\quad b\in \ES{V}\cap \mathfrak{b}_{\rm qre}.\leqno{(2)}
$$
Dans la section \ref{descente centrale: le cas général}, on prouve que cette construction est compatible aux applications \og terme constant \fg{} (sur $G$ et sur $\mathfrak{b}$), ce qui entra\^{i}ne que l'égalité (1) est vraie pour tout $b\in \ES{V}\cap \mathfrak{b}_{\rm qr}$.

Notons que la construction est relativement explicite. En particulier on ne se contente pas d'affirmer l'existence du voisinage $\ES{V}$, on en produit un qui est en quelque sorte optimal: l'ensemble ${^H\ES{V}}= \{h^{-1}bh: h\in H,\, b\in \ES{V}\}$ est le plus gros possible, et il est {\it fermé}  dans $\mathfrak{g}$. L'étude de la distribution $\theta_{\ES{O}_\gamma}$ consiste d'une part à prouver que pour $b,\,b'\in \ES{V}\cap \mathfrak{b}_{\rm qre}$, on a $\ES{O}_H(b)= \ES{O}_H(b')$ si et seulement si $\ES{O}_G(\beta + \bs{x}b)= \ES{O}_G(\beta + \bs{x}b')$, d'autre part à calculer la différentielle de la submersion $\delta$ en $(1,b)$ pour chaque $b\in \ES{V}\cap \mathfrak{b}_{\rm qre}$. On se ramène, par une récurrence assez compliquée, au cas où l'élément $b$ est le plus simple possible, \cad {\it $E$--minimal} au sens de Bushnell--Kutzko (cf. \ref{l'invariant k}). D'ailleurs, cette construction pourrait avoir des implications intéressantes, puisqu'on prouve au passage que tout élément quasi--régulier elliptique de $G$ admet une décomposition --- en un certain sens \og unique \fg{} --- en termes d'éléments quasi--réguliers elliptiques $F_i$--minimaux de ${\rm End}_{F_{i+1}}(F_i)$ pour une suite d'extensions $(F_0=F[\gamma], \ldots ,F_m)$ de $F$ (cf. \ref{une conséquence}). Notons que si la caractéristique résiduelle $p$ de $F$ ne divise pas $N$ (et aussi si $N=p$), toutes les extensions $F_i/F$ ($i>0$) sont modrment ramifies, et il est possible de les choisir de telle manière que $F_m\subset \cdots \subset F_1\subset F_0$. Mais c'est un cas trs particulier: si $p<N$ divise $N$, il n'est en gnral pas possible de les choisir de cette manire.

\vskip3mm
\noindent {\bf 1.9.} --- Par descente centrale, on est donc ramené à l'étude des intégrales orbitales $I^\mathfrak{g}(\gamma,\mathfrak{f})$ pour $\gamma\in \mathfrak{g}$ proche de $0$ dans $\mathfrak{g}$. On dispose pour cela des germes de Shalika associés aux orbites nilpotentes de $\mathfrak{g}$, lesquels vérifient une propriété d'homogénéité particulièrement utile. Cette étude fait l'objet de la section \ref{germes de shalika et résultats sur l'algèbre de lie}. En caractéristique nulle, Kottwitz a exposé la théorie des germes de Shalika de manière trs claire dans \cite{K}. Notre contribution est ici minime, puisque nous n'avons eu qu'à adapter son travail. La propriété d'homogénéité des germes de Shalika (normalisés) permet de leur associer des fonctions sur $\mathfrak{g}_{\rm qr}$. Par descente centrale et homogénéité, on prouve que ces fonctions sont localement bornées sur $\mathfrak{g}$. On en déduit que les intégrales orbitales normalisées sont elles aussi localement bornées sur $\mathfrak{g}$, puis, grâce à un argument de support relativement simple (cf. \ref{parties compactes modulo conjugaison}), qu'elles sont bornées sur $\mathfrak{g}$, \cad 1.7.(3). On prouve 1.7.(4) de la même manière. Dans la section \ref{résultats sur le groupe}, on en déduit les mêmes résultats sur $G$, \cad 1.7.(1) et 1.7.(2). 

\vskip3mm
\noindent {\bf 1.10.} --- Pour les caractères, on peut prouver un résultat analogue à 1.7.(1). Précisément, soit $\pi$ une représentation complexe lisse irréductible de $G$. À $\pi$ est associée une distribution 
$\Theta_\pi$ sur $G$, donnée par $\Theta_\pi(f)= {\rm trace}(\pi(f))$ pour toute fonction $f\in C^\infty_{\rm c}(G)$, où $\pi(f)$ est l'opérateur sur l'espace de $\pi$ défini par $\pi(f)= \int_G f(g)\pi(g)dg$. On sait que cette distribution $\Theta_\pi$ est localement constante sur $G_{\rm qr}$, et localement intégrable sur $G$ \cite{L1}: il existe une fonction localement constante 
$\theta_\pi: G_{\rm qr}\rightarrow {\Bbb C}$ telle que pour toute fonction $f\in C^\infty_{\rm c}(G)$, on a
$$
\Theta_\pi(f)= \int_G f(g)\theta_\pi(g)dg,
$$
l'intégrale étant absolument convergente. Notons que le caractère--distribution $\Theta_\pi$ dépend de la mesure de Haar $dg$ sur $G$ mais que la fonction caractère $\theta_\pi$ n'en dépend pas. 
Comme pour les intégrales orbitales, on peut prouver\footnote{Nous donnerons ailleurs une démonstration détaillée de ce résultat, le présent article étant déjà suffisamment long.} 
que la fonction caractère normalisée
$$
G_{\rm qr}\rightarrow {\Bbb C},\, \gamma \mapsto I^G(\pi,\gamma) = \eta_G(\gamma)^{1\over 2} \theta_\pi(\gamma)
$$
est localement bornée sur $G$. Compte--tenu des constructions de \cite{L1} --- en partie reprises ici (cf. 1.8) ---, 
on se ramène à prouver, par descente parabolique puis descente centrale au voisinage d'un élément pur de $G$, que les transformées de 
Fourier normalisées des intégrales orbitales nilpotentes sur $\mathfrak{g}$, qui sont des fonctions localement constantes sur $\mathfrak{g}_{\rm qr}$, 
sont bornées sur $\mathfrak{g}$. Or ces transformées de Fourier sont des fonctions bien plus faciles à calculer, et donc 
à majorer, que les germes de Shalika (cf. \cite{Ho}). 

\vskip3mm
\noindent {\bf 1.11.} --- Un joli papier de J.--P. Serre \cite{S1} est à l'origine de ce travail. L'auteur y prouve une \og formule de masse \fg{}, valable en toute caractéristique: $
\sum_{E}q^{-(\delta(E/F)-(N-1))}=N$, 
où $E/F$ parcourt les sous--extensions totalement ramifiées de degré $N$ de $F^{\rm sep}/F$ pour une clôture séparable de $F^{\rm sep}$ de $F$, et $\delta(E/F)$ est le discriminant de $E/F$. Cette formule de masse est rappelée dans la section \ref{des invariants}, et étendue à toutes les sous--extensions de degré $N$ de $F^{\rm sep}\!/F$. Jointe à 1.7.(1) et à la formule d'intégration de Weyl, elle entraîne l'intégrabilité locale de la fonction $\eta_G^{-{1\over 2}}:G_{\rm qr}\rightarrow {\Bbb R}_{>0}$ sur $G$. Si on remplace $G$ par le groupe multiplicatif $D^\times$ d'une algèbre à divison de centre $F$ et de degré $N^2$ sur $F$, alors 1.7.(1) est pratiquement immédiat par compacité. Pour $G=GL(N,F)$, on peut donc en déduire 1.7.(1) pour les fonctions $f\in C^\infty_{\rm c}(G)$ qui s'obtiennent par transfert à partir de celles sur $D^\times$ --- \cad les fonctions cuspidales sur $G$ ---, mais cette approche ne permet pas de traiter les autres fonctions (rappelons qu'en caractéristique nulle, une intégrale orbitale semisimple régulière elliptique s'écrit comme une combinaison linéaire de caractères de représentations elliptiques mais aussi de caractères pondérés). Curieusement, la formule de masse de Serre, qui semblait au départ cruciale pour l'étude de la fonction $\eta_G^{-{1\over 2}}$, s'est révélée au bout du compte inutile, puisque l'intégrabilité locale de la fonction 
$\eta_G^{-{1\over 2}}$ est impliquée par celle de la fonction $\eta_G^{-{1\over 2}-\epsilon}$, obtenue sans utiliser la formule de masse.  

Signalons que le facteur de normalisation $\eta_G^{1\over 2}$ apparaît pour le groupe $G=GL(2,F)$ dans le livre de 
Jacquet--Langlands \cite{JL}, l'un des (trop) rares textes sur la question écrit en caractéristique $\geq 0$.

\vskip1mm
Je remercie vivement le rapporteur pour sa lecture minutieuse du manuscrit.

\section{Des invariants (rappels)}\label{des invariants}

\subsection{Extensions}\label{extensions}Soit $F$ un corps commutatif localement compact non archimédien, de caractéristique résiduelle $p$. On note $\mathfrak{o}$ l'anneau des entiers de $F$, $\mathfrak{p}$ l'idéal maximal de $\mathfrak{o}$, et $\kappa$ le corps résiduel $\mathfrak{o}/\mathfrak{p}$. Ce dernier est un corps fini de cardinal $q= p^r$ pour un entier $r\geq 1$ et un nombre premier $p$. On note $\nu$ la valuation sur $F$ normalisée par $\nu(F^\times)={\Bbb Z}$, et $\vert\; \vert$ la valeur absolue sur $F$ donnée par $\vert x \vert = q^{-\nu(x)}$. 

Soit $E$ une extension finie de $F$. On définit de la même manière, en les affublant d'un indice $E$, les objets $\mathfrak{o}_E$, $\mathfrak{p}_E$, $\kappa_E= \mathfrak{o}_E/\mathfrak{p}_E$, $q_E=\vert \kappa_E\vert$, (etc.). On pose aussi $U_E=U_E^0=\mathfrak{o}_E^\times$ et $U_E^k= 1+ \mathfrak{p}_E^k$ ($k\geq 1$). On note $e(E/F)$, resp. $f(E/F)$, l'indice de ramification, resp. le degré résiduel, de $E/F$. La valuation normalisée $\nu_F=\nu$ sur $F$ se prolonge de manière unique en une valuation sur $E$, que l'on note encore $\nu_F$. Cette dernière est reliée à la valuation normalisée $\nu_E$ sur $E$ par l'égalité $\nu_E= e(E/F)\nu_F$. On a donc
$$
\vert x \vert_E = q_E^{-\nu_E(x)}=q^{-[E:F]\nu_F(x)},\quad x\in E^\times.
$$

Supposons l'extension $E/F$ séparable. On note $N_{E/F}:E^\times \rightarrow F^\times$, resp. 
$T_{E/F}:E \rightarrow F$, l'homomorphisme norme, resp. trace. On a
$$
\nu_E(x)= {1\over f(E/F)}\nu(N_{E/F}(x)),\quad x\in E^\times.
$$
L'homomorphisme $T_{E/F}$ est surjectif, et la forme bilinéaire
$E\times E\rightarrow F,\, (x,y)\mapsto T_{E/F}(x)$ est non dégénérée. Soit $\ES{D}_{E/F}$ la différente de $E/F$, \cad l'inverse de l'idéal fractionnaire $\ES{D}_{E/F}^-$ de $\mathfrak{o}_E$ donné par
$$
\ES{D}_{E/F}^-=\{x\in E: {\rm Tr}_{E/F}(xy)\in \mathfrak{o},\, \forall y\in \mathfrak{o}_E\}.
$$
On note $\delta(E/F)$ l'exposant du discriminant de $E/F$, \cad l'entier $\geq 0$ donné par
$\delta(E/F)= \nu(N_{E/F}(x))$
pour un (i.e. pour tout) élément $x\in E^\times$ tel que $\ES{D}_{E/F}= x\mathfrak{o}_E$. On sait \cite[III, \S7, prop.~13]{S2} que
$$
\delta(E/F)\geq f(E/F)(e(E/F)-1)\leqno{(1)}
$$
avec égalité si et seulement si l'extension $E/F$ est {\it modérément ramifiée}, \cad si son indice de ramification $e(E/F)$ est premier à $p$. On note $\sigma(E/F)$ l'entier défini par
$$
\sigma(E/F)= {\delta(E/F)\over f(E/F)}-(e(E/F)-1).\leqno{(2)}
$$
On a donc $\sigma(E/F)\geq 0$ avec égalité si et seulement si $(e(E/F),p)=1$, o $(a,b)$ dsigne le plus grand commun diviseur de $a$ et $b$.

\subsection{L'invariant $\tilde{k}_F(\gamma)$}\label{l'invariant k}
Soit $\gamma\neq 0$ un élément algbrique sur $F$. Alors $E=F[\gamma]$ est une extension finie de $F$, et l'on pose $e=e(E/F)$, $f=f(E/F)$ et $n=ef$.

L'ensemble $\{\mathfrak{p}_E^i: i\in {\Bbb Z}\}$ des idéaux fractionnaires de $\mathfrak{o}_E$ est une chaîne de $\mathfrak{o}$--réseaux dans $E$ (vu comme un $F$--espave vectoriel de dimension $n$). Pour chaque entier $k$, cette chaîne définit un $\mathfrak{o}$--réseau  $\mathfrak{P}^k(E)={\rm End}_{\mathfrak{o}}^k(\{\mathfrak{p}_E^i\})$ dans $A(E)={\rm End}_F(E)$, donné par
$$
\mathfrak{P}^k(E)=\{u\in A(E): u(\mathfrak{p}_E^i)\subset \mathfrak{p}_E^{i+k}, \, \forall i\in {\Bbb Z}\}.
$$
Alors $\mathfrak{A}(E)= \mathfrak{P}^0(E)$ est un $\mathfrak{o}$--ordre héréditaire dans $A(E)$, et c'est l'unique $\mathfrak{o}$--ordre héréditaire dans $A(E)$ normalisé par $E^\times$ (pour l'identification naturelle $E^\times\subset {\rm Aut}_F(E)$). De plus, 
$\mathfrak{P}(E) = \mathfrak{P}^1(E)$ est le radical de Jacobson de $\mathfrak{A}(E)$ --- c'est donc, en particulier, un idéal fractionnaire de $\mathfrak{A}(E)$ ---, et pour $k\in {\Bbb Z}$, on a $\mathfrak{P}^k(E)= \mathfrak{P}(E)^k$. Soit ${\rm ad}_\gamma: A(E)\rightarrow A(E)$ l'homomorphisme adjoint, donné par
$$
{\rm ad}_\gamma(u)= \gamma u-u \gamma,\quad u\in A(E).
$$
En \cite[1.4.5, 1.4.11]{BK} est défini un invariant $k_0(\gamma,\mathfrak{A}(E))\in {\Bbb Z}\cup\{-\infty\}$, que l'on note ici $k_F(\gamma)$. Rappelons sa définition. Si $E=F$, on pose $k_F(\gamma)=-\infty$; sinon, $k_F(\gamma)$ est le plus petit $k\in {\Bbb Z}$ vérifiant l'inclusion
$$
\mathfrak{P}^k(E)\cap {\rm ad}_\gamma(A(E))\subset {\rm ad}_\gamma(\mathfrak{A}(E)).
$$
On pose $n_F(\gamma)= -\nu_E(\gamma)\in {\Bbb Z}$ et
$$
\tilde{k}_F(\gamma)= k_F(\gamma)+n_F(\gamma)\in {\Bbb Z}\cup \{-\infty\}.\leqno{(1)}
$$
L'élément $\gamma$ est dit {\it $F$--minimal} si les deux conditions suivantes sont vérifiées:
\begin{itemize}
\item l'entier $n_F(\gamma)$ est premier à l'indice de ramification $e(E/F)$;
\item pour une (i.e. pour toute) uniformisante $\varpi$ de $F$, l'élément $\varpi^{-\nu_E(\gamma)}\gamma^{e} + \mathfrak{p}_E$ de $\kappa_E$ engendre l'extension $\kappa_E$ sur $\kappa$.
\end{itemize}
En particulier, tout élément de $F^\times$ est $F$--minimal. D'après \cite[1.4.15]{BK}, si $E\neq F$, on a:
\begin{enumerate}[leftmargin=17pt]
\item[(2)] $\tilde{k}_F(\gamma)\geq 0$ avec égalité si et seulement si $\gamma$ est $F$--minimal.
\end{enumerate}

\subsection{L'invariant $\tilde{c}_F(\gamma)$}\label{l'invariant c}
Continuons avec les hypothèses et les notations de \ref{l'invariant k}. Supposons de plus que $\gamma$ appartient à $\mathfrak{o}_E$. Alors $\mathfrak{o}[\gamma]$ est un sous--anneau de $\mathfrak{o}_E$, et le sous--ensemble $(\mathfrak{o}[\gamma]:\mathfrak{o}_E)\subset \mathfrak{o}_E$ --- appelé conducteur de $\mathfrak{o}[\gamma]$ dans $\mathfrak{o}_E$ --- défini par
$$
(\mathfrak{o}[\gamma]:\mathfrak{o}_E)=\{x\in \mathfrak{o}_E: x\mathfrak{o}_E \subset \mathfrak{o}[\gamma]\}
$$
est un idéal de $\mathfrak{o}_E $. On note $c_F(\gamma)\geq 0$ l'exposant de cet idéal, \cad que l'on pose
$$
(\mathfrak{o}[\gamma]:\mathfrak{o}_E)=\mathfrak{p}_E^{c_F(\gamma)}.
$$
Soit $\phi_\gamma\in F[t]$ le polynôme minimal de $\gamma$ sur $F$, et soit $\phi'_\gamma\in F[t]$ la dérivée de $\phi_\gamma$. D'après 
\cite[III, \S6, cor.~1]{S2}, on a:
\begin{enumerate}[leftmargin=17pt]
\item[(1)]si l'extension $E/F$ est séparable, alors $(\mathfrak{o}[\gamma],\mathfrak{o}_E)= \phi'_\gamma(\gamma) \ES{D}_{E/F}^-$.
\end{enumerate}
On en déduit que
\begin{enumerate}[leftmargin=17pt]
\item[(2)]pour $z\in \mathfrak{o}\smallsetminus \{0\}$, on a $c_F(z\gamma)= e(n-1)\nu(z) + c_F(\gamma)$.
\end{enumerate}
En effet, pour $z\in \mathfrak{o}\smallsetminus \{0\}$, on a $\phi'_{z\gamma}(z\gamma)= z^{n-1}\phi'_\gamma(\gamma)$. Si l'extension $E/F$ séparable --- par exemple si $F$ est de caractéristique nulle ---, d'après (1), on a
$$
c_F(z\gamma)- c_F(\gamma)= \nu_E(z^{n-1})= e(n-1)\nu(z),
$$
d'où (2). Si $F$ est de caractéristique $p$, le résultat se déduit de celui en caractéristique nulle par la théorie des corps locaux proches \cite{D}.  

On ne suppose plus que $\gamma$ appartient à $\mathfrak{o}_E$. Grâce à (2), on peut encore définir l'invariant $c_F(\gamma)$: 
on choisit un élément $z\in  \mathfrak{p}\smallsetminus \{0\}$ tel que $z\gamma\in \mathfrak{o}_E$, et on pose
$$
c_F(\gamma)= c_F(z\gamma)-e(n-1)\nu(z).
$$
D'après (2), l'entier $c_F(\gamma)$ est bien défini (\cad qu'il ne dépend pas du choix de $z$). On pose
$$
\tilde{c}_F(\gamma)= c_F(\gamma)+ (n-1)n_F(\gamma).\leqno{(3)}
$$
Par construction, on a
$$
\tilde{c}_F(z\gamma)= \tilde{c}_F(\gamma),\quad z\in F^\times.\leqno{(4)}
$$

\begin{marema1}
{\rm 
Choisissons un élément $z\in F^\times$ tel que $z\gamma \in \mathfrak{o}_E\smallsetminus \mathfrak{p}_E^e$. Posons $\gamma'=z\gamma$. 
Puisque $c_F(\gamma')\geq 0$ et $n_F(z\gamma)=-\nu_E(\gamma')\geq 1-e$, on a
$$
\tilde{c}_F(\gamma) =\tilde{c}_F(\gamma')\geq -(n-1)(e-1).\eqno{\blacksquare}
$$
}
\end{marema1}

Soit ${\rm Ad}_\gamma:A(E)\rightarrow A(E)$ l'automorphisme $u\mapsto \gamma u \gamma^{-1}$. On pose
$$
D_F(\gamma)=\left\{\begin{array}{ll}
{\rm det}_F(1-{\rm Ad}_\gamma; A(E)/E) & \mbox{si $E\neq F$}\\
1 & \mbox{sinon}
\end{array}\right..
$$
On a
$$
D_F(\gamma^{-1})= (-1)^{n(n-1)}D_F(\gamma),
$$et $D_F(\gamma)\neq 0$ si et seulement si l'extension $E/F$ est séparable. On a aussi:
\begin{enumerate}[leftmargin=17pt]
\item[(5)]si l'extension $E/F$ est séparable, alors $ \tilde{c}_F(\gamma)= {1\over f}(\nu(D_F(\gamma))-\delta(E/F))$.
\end{enumerate}
Montrons (5). Supposons l'extension $E/F$ séparable et posons $\delta=\delta(E/F)$. On a
$$
D_F(\gamma)=N_{E/F}(\gamma^{1-n}\phi'_\gamma(\gamma)),
$$
d'où
$$
\nu(D_F(\gamma))= \nu\circ N_{E/F}(\gamma^{1-n}\phi'_\gamma(\gamma))= f\nu_E(\gamma^{1-n}\phi'_\gamma(\gamma)).
$$
D'autre part, d'après (1), on a
$$
c_F(\gamma)= \nu_E(\phi'_\gamma(\gamma))- {\delta \over f}.
$$
On en déduit que
$$
\nu(D_F(\gamma))=f(1-n)\nu_E(\gamma)+fc_F(\gamma)+\delta,
$$
puis que
$$
{1\over f}(\nu(D_F(\gamma)-\delta)= (1-n)\nu_E(\gamma) + c_F(\gamma)=\tilde{c}_F(\gamma).
$$

\begin{marema2}
{\rm 
Supposons que l'extension $E/F$ est totalement ramifiée (mais pas forcment sparable), et soit $\varpi_E$ une uniformisante de $E$. C'est un élément $F$--minimal, et puisque $\mathfrak{o}[\varpi_E]=\mathfrak{o}_E$, on a $c_F(\varpi_E)=0$ et $\tilde{c}_F(\varpi_E)=1-n$. Si de plus l'extension $E/F$ est séparable, alors d'après (5), on a $\nu(D_F(\varpi_E))=\delta+1-n$.\hfill $\blacksquare$
}
\end{marema2}

\subsection{La \og formule de masse\fg{} de Serre}\label{la formule de masse de Serre}
Pour toute extension finie $E$ de $F$, on note $w(E/F)$ le nombre de $F$--automorphismes de $E$. Si $F'/F$ la sous--extension non ramifiée maximale de $E/F$, on a donc $w(E/F)= f(E/F)w(E/F')$.

Fixons un entier $n\geq 1$ et une cl\^oture sparable $F^{\rm sep}$ de $F$. Soit $\bs{\ES{E}}(n)$ l'ensemble des sous--extensions de degré $n$ de $F^{\rm sep}\!/F$. Pour chaque entier $e\geq 1$ divisant $n$, soit $\bs{\ES{E}}_e(n)$ le sous--ensemble de $\bs{\ES{E}}(n)$ formé des extensions $E/F$ telles que $e(E/F)=e$. Lorsque $n$ est premier à $p$, l'ensemble $\bs{\ES{E}}_n(n)$ est fini de cardinal $n$. Si $F$ est de caractéristique $p$ et $p$ divise $n$, l'ensemble $\bs{\ES{E}}_n(n)$ est infini. En général, d'après Serre \cite{S1}, on a la {\it formule de masse}
$$
\sum_{E\in \bs{\ES{E}}_n(n)}q^{-\sigma(E/F)}=n,\leqno{(1)}
$$
où (rappel) $\sigma(E/F)= \delta(E/F)-(n-1)$. Notons $\ES{E}(n)$ l'ensemble des classes d'isomorphisme de sous--extensions 
de degré $n$ de $F^{\rm sep}/F$, et $\ES{E}_n(n)$ le sous--ensemble de $\ES{E}(n)$ formé des extensions qui sont totalement ramifiées. Pour chaque 
$E\in \ES{E}_n(n)$, il y a exactement ${n\over w(E/F)}$ éléments de $\bs{\ES{E}}_n(n)$ dans la classe $E$. La formule (1) s'écrit donc aussi
$$
\sum_{E\in \ES{E}_n(n)}{1\over w(E/F)}q^{-\sigma(E/F)}=1.\leqno{(2)}
$$

\begin{marema1}
{\rm Dans \cite{S1}, Serre donne deux preuves de la formule (2). La première consiste en gros à compter les polynômes d'Eisenstein de degré $n$ (cela devrait en principe pouvoir se déduire des résultats de Krasner, mais l'approche de Serre est plus directe). La seconde, très courte, est une simple application de la formule d'intégration de Hermann Weyl: on fixe une algèbre à division $D$ de centre $F$ et de dimension $n^2$ sur $F$. L'ensemble des uniformisantes de $D$ est un ouvert compact de $D^\times$ (c'est un espace principal homogène, à gauche et à droite, sous le groupe des unités de $D^\times$). La formule d'intégration de Weyl appliquée à la fonction caractéristique de cet ensemble donne le résultat.\hfill$\blacksquare$
}
\end{marema1}

Fixons un entier $e\geq 1$ divisant $n$, et posons $f={n\over e}$. Notons $F'/F$ la sous--extension non ramifiée de degré $f$ de $F^{\rm sep}$. Soit $\ES{E}'(e)$ l'ensemble des classes d'isomorphisme de sous--extensions de degré $e$ de $F^{\rm sep}\!/F'$, et soit $\ES{E}'_e(e)$ le sous--ensemble de $\ES{E}'(e)$ formé des extensions qui sont totalement ramifiées. D'après (2), on a
$$
\sum_{E\in \ES{E}'_e(e)}{1\over w(E/F')}q_{F'}^{-\sigma(E/F')}=1.
$$
Or $\ES{E}'_e(e)= \ES{E}_e(n)$, et pour $E\in \ES{E}_e(n)$, on a $w(E/F')= {1\over f}w(E/F)$ et $\delta(E/F')={1\over f}\delta(E/F)$, d'où 
$\sigma(E/F')=\sigma(E/F)$. On obtient
$$
\sum_{E\in \ES{E}_e(n)}{1\over w(E/F)}q_{F'}^{-\sigma(E/F)}={1\over f},
$$
avec
$$
q_{F'}^{-\sigma(E/F)}= q^{-f\sigma(E/F)}=q^{-(\delta(E/F)-(n-f))}.
$$
En sommant sur tous les entiers $e\geq 1$ divisant $n$, on obtient
$$
\sum_{E\in \ES{E}(n)}{1\over w(E/F)}q_E^{-\sigma(E/F)}=\sum_{e\vert n}{e\over n}.\leqno{(3)}
$$

\begin{marema2}
{\rm La formule (2), ainsi que la formule (3) qui en est issue, s'appuient sur la propriété vérifiée par toute uniformisante 
$\varpi_E$ dans une extension totalement ramifiée $E/F$ de degré $n$ (remarque 2 de \ref{l'invariant c}): $\vert D_F(\varpi_E) \vert q^{\sigma(E/F)}= 1$. Plus généralement, pour $E\in \ES{E}(n)$, notant $E_*^\times $ l'ensemble des $\gamma\in E^\times$ tels que $F[\gamma]=E$, on définit la fonction
$$
E_*^\times \rightarrow {\Bbb R}_{>0},\, \gamma \mapsto \vert D_F(\gamma)\vert q_E^{\sigma(E/F)}.
$$
On peut la prolonger en une fonction sur $D^\times$, où $D$ est une algèbre à division de centre $F$ et de dimension $n^2$ sur $F$, et aussi --- plus intéressant encore! --- en une fonction sur $GL(n,F)$. C'est essentiellement l'étude de ce prolongement qui nous intéresse ici.\hfill $\blacksquare$
}
\end{marema2}

\section{Descente centrale au voisinage d'un élément pur}\label{descente centrale au voisinage d'un élément pur}

\subsection{\'Eléments quasi--réguliers elliptiques}\label{éléments qre}
On fixe un entier $N\geq 1$ et un $F$--espace vectoriel $V$ de dimension $N$. On pose $G={\rm Aut}_F(V)$ et $\mathfrak{g}={\rm End}_F(V)$, et on munit $G$ et $\mathfrak{g}$ de la topologie definie par $F$ (i.e. $\mathfrak{p}$--adique). On note $Z=F^\times$ le centre de $G$, et $\mathfrak{z}=F$ celui de $\mathfrak{g}$. Pour $\gamma\in \mathfrak{g}$, on note $\mathfrak{g}_\gamma =\{x\in \mathfrak{g}: \gamma x - x\gamma=0\}$ le centralisateur de $\gamma$ dans $\mathfrak{g}$, \cad le commutant dans $\mathfrak{g}$ de la sous--$F$--algèbre $F[\gamma]\subset \mathfrak{g}$, 
et $\det(\gamma)$ le déterminant ${\rm det}_F(v\mapsto \gamma v;V)$. Un élément $\gamma$ de $\mathfrak{g}$ est dit:
\begin{itemize}
\item {\it fermé} (ou {\it $F$--fermé}) si $F[\gamma]$ est un produit $E_1\times \cdots \times E_d$ d'extensions $E_i$ de $F$;
\item {\it pur} (ou {\it $F$--pur}) si $F[\gamma]$ est une extension de $F$;
\item {\it quasi--régulier} si $F[\gamma]$ est un produit $E_1\times \cdots \times E_d$ d'extensions $E_i$ de $F$ tel que $\sum_{i=1}^{d}[E_i:F]=N$;
\item {\it quasi--régulier elliptique} si $F[\gamma]$ est une extension de degré $N$ de $F$;
\end{itemize}

\begin{marema1}
{\rm 
Un élément $\gamma\in \mathfrak{g}$ est fermé si et seulement si l'orbite $\{g^{-1}\gamma g:g\in G\}$ est fermée dans $\mathfrak{g}$ (pour la topologie $\mathfrak{p}$--adique) --- cf. \cite[2.3.2]{L2}. Parmi les éléments fermés $\gamma\in \mathfrak{g}$, il y a ceux qui sont (absolument) semisimples, \cad tels que $F[\gamma]$ est un produit $E_1\times \cdots \times E_d$ d'extensions {\it séparables} $E_i$ de $F$. On aurait pu aussi appeler {\it quasi--semisimples} les éléments fermés de $\mathfrak{g}$, mais comme il existe déjà une notion d'élément (absolument) quasi--semisimple différente de celle d'élément fermé introduite ici, on a préféré ne pas le faire. Pour la même raison, on a choisi d'abandonner la terminologie de Bourbaki (reprise dans \cite{L2}): rappelons que les éléments fermés, resp.  semisimples, du présent article sont dans Bourbaki appelés semisimples, resp. absolument semisimples.\hfill $\blacksquare$ 
}
\end{marema1}

Si $\gamma$ est un élément fermé de $\mathfrak{g}$, alors en notant $e_i$ l'idempotent primitif associé à $E_i$ dans la décomposition $F[\gamma]=E_1\times\cdots \times E_d$ et $V_i$ le sous--$F$--espace vectoriel $e_i(V)$ de $V$, on a $E_i\subset {\rm End}_F(V_i)$ et
$$
\mathfrak{g}_\gamma = \mathfrak{b}_1\times \cdots \times \mathfrak{b}_d,\quad \mathfrak{b}_i={\rm End}_{E_i}(V_i).
$$
En particulier on a $\sum_{i=1}^d [E_i:F]\dim_{E_i}(V_i)=N$, et $\gamma$ est pur, resp. quasi--régulier, si et seulement si $d=1$, resp. $\dim_{E_i}(V_i)=1$ pour $i=1,\ldots ,d$ (cette dernière condition est bien sûr équivalente à $\mathfrak{g}_\gamma=F[\gamma]$). Un élément quasi--régulier de $\mathfrak{g}$ est quasi--régulier elliptique si et seulement s'il est pur. Soit $\gamma$ un élément quasi--régulier de $\mathfrak{g}$. 
Posons $F[\gamma]=E_1\times \cdots \times E_d$ et écrivons $\gamma =(\gamma_1,\ldots ,\gamma_d)$, $\gamma_i\in E_i$. Pour $i=1,\ldots ,d$, $\gamma_i$ est un élément quasi--régulier elliptique de $A(E_i)={\rm End}_F(E_i)$. 
On note $\mathfrak{g}_{\rm qr}$ l'ensemble des éléments quasi--réguliers de $\mathfrak{g}$, et $\mathfrak{g}_{\rm qre}\subset \mathfrak{g}_{\rm qr}$ le sous--ensemble formé des éléments elliptiques. 
On pose $G_{\rm qr}= G \cap \mathfrak{g}_{\rm qr}$ et $G_{\rm qre}= G\cap \mathfrak{g}_{\rm qre}$. Notons que si $N=1$, alors $\mathfrak{g}_{\rm qre}=\mathfrak{g}_{\rm qr}=\mathfrak{g}$ et 
$G_{\rm qre}=G_{\rm re}= G$; en revanche si $N>1$, alors $\mathfrak{g}_{\rm qr} \supsetneq G_{\rm qr}$ et $\mathfrak{g}_{\rm qre}= G_{\rm qre}$. 
On définit comme suit une filtration décroissante $k\mapsto G_{\rm qre}^k$ de $G_{\rm qre}$: pour $k\in {\Bbb R}$, on pose
$$
G_{\rm qre}^k=\{\gamma\in G_{\rm qre}: \nu_F(\gamma)\geq k\}.
$$
Les \og sauts\fg de cette filtration sont les éléments de ${1\over N}{\Bbb Z}$: pour $k\in {\Bbb R}$, l'inclusion
$$\bigcup_{k'>k} G^{k'}_{\rm qre}\subset G_{\rm qre}^k$$ est stricte si et seulement si $k\in {1\over N}{\Bbb Z}$. 

On commence par quelques rappels sur les $\mathfrak{o}$--ordres héréditaires dans $\mathfrak{g}$ (cf. \cite[1.1]{BK}). On fixe un $\mathfrak{o}$--ordre héréditaire minimal (ou d'Iwahori) $\mathfrak{A}_{\rm min}$ dans $\mathfrak{g}$, et un $\mathfrak{o}$--ordre héréditaire maximal $\mathfrak{A}_{\rm max}$ dans $\mathfrak{g}$ contenant $\mathfrak{A}_{\rm min}$. Rappelons que $\mathfrak{A}_{\rm min}$ est le stabilisateur ${\rm End}_{\mathfrak{o}}^0(\ES{L})$ d'une chaîne de $\mathfrak{o}$--réseaux $\ES{L}=\{\ES{L}_i:i\in {\Bbb Z}\}$ dans $V$ telle que $\ES{L}_{i+1}\subsetneq \ES{L}_i$ et $\mathfrak{p}\ES{L}_i= \ES{L}_{i+N}$. Cette chaîne est unique à translation des indices près, et quitte à changer l'indexation, on peut supposer que $\mathfrak{A}_{\rm max}= {\rm End}_{\mathfrak{o}}^0(\{\ES{L}_{Ni}\})
\;(={\rm End}_{\mathfrak{o}}(\ES{L}_0))$. Pour chaque entier $e\geq 1$ divisant $N$, on note $\mathfrak{A}_e$ l'unique $\mathfrak{o}$--ordre héréditaire principal dans $\mathfrak{g}$ de période $e(\mathfrak{A}_e\vert \mathfrak{o})=e$ tel que $\mathfrak{A}_{\rm min}\subset \mathfrak{A}_e\subset \mathfrak{A}_{\rm max}$. On a donc $\mathfrak{A}_e={\rm End}_{\mathfrak{o}}^0(\{\ES{L}_{(N/e)i}\})$. 
Ces $\mathfrak{o}$--ordres héréditaires principaux $\mathfrak{A}_e$ dans $\mathfrak{g}$ sont dits {\it standards}. Ils forment un système de représentants des classes de $G$--conjugaison d'$\mathfrak{o}$--ordres héréditaires principaux dans $\mathfrak{g}$. 

Soit $e\geq 1$ un entier divisant $N$. On note $\mathfrak{P}_e$ le radical de Jacobson de $\mathfrak{A}_e$. Pour $k\in {\Bbb Z}$, on a 
donc $\mathfrak{p}^k\mathfrak{A}_e = \mathfrak{P}_e^{ke}$. On note $U^0_e=U^0(\mathfrak{A}_e)$ le sous--groupe (compact, ouvert) de $G$ défini par $U_e^0=\mathfrak{A}_e^\times$, $K_e=K(\mathfrak{A}_e)$ le normalisateur de $\mathfrak{A}_e$ dans $G$, et, pour chaque entier $k\geq 1$, $U_e^k=U^k(\mathfrak{A}_e)$ le sous--groupe distingué de $K_e$ défini par $U_e^k=1 + \mathfrak{P}_e^k$. Alors $U_e^0$ est l'unique sous--groupe compact maximal de $K_e$. Notons que pour $k\in {\Bbb Z}$, on a
$$
\{\det(\gamma): \gamma \in \mathfrak{P}_e^{k}\}= \mathfrak{p}^{k{N\over e}}.\leqno{(1)}
$$

Pour tous entiers $e,\,e'\geq 1$ tels que $e\vert N$ et $e'\vert e$, on a les inclusions
$$
\mathfrak{A}_{\rm min}\subset \mathfrak{A}_e\subset \mathfrak{A}_{e'}\subset \mathfrak{A}_{\rm max}
$$
et
$$
\mathfrak{P}_{\rm max}\subset \mathfrak{P}_{e'}\subset \mathfrak{P}_e \subset \mathfrak{P}_{\rm min}.
$$
Plus généralement, en posant $a=e/e'$, pour $k\in {\Bbb Z}$, on a les inclusions
$$
\mathfrak{P}_e^{ak}\subset \mathfrak{P}_{e'}^k\subset \mathfrak{P}_e^{a(k-1)+1}.\leqno{(2)}
$$

Soit $\gamma\in G_{\rm qre}$. Posons $E= F[\gamma]$. L'inclusion $E\subset \mathfrak{g}$ identifie $V$ à un $E$--espace vectoriel de dimension $1$, et le choix d'un vecteur non nul $v\in V$ identifie $\mathfrak{g}={\rm End}_F(V)$ à $A(E)$. Soit $\mathfrak{A}_\gamma$ l'$\mathfrak{o}$--ordre héréditaire dans $\mathfrak{g}$ correspondant à $\mathfrak{A}(E)$ via cette identification. Il est principal, de période $e(\mathfrak{A}_\gamma\vert \mathfrak{o})=e(E/F)$, et ne dépend pas du choix du vecteur $v$: c'est l'unique $\mathfrak{o}$--ordre héréditaire dans $\mathfrak{g}$ normalisé par $E^\times$. On note $\mathfrak{P}_\gamma$ le radical de Jacobson de $\mathfrak{A}_\gamma$, et on pose $U_\gamma^0= \mathfrak{A}_\gamma^\times$, $K_\gamma = K(\mathfrak{A}_\gamma)$, et $U_\gamma^k= 1 + \mathfrak{P}_\gamma^k$ ($k\geq 1$). Notons que
$$
\nu_E(\gamma)= k \Leftrightarrow \gamma\in \mathfrak{P}_\gamma^k\smallsetminus \mathfrak{P}_\gamma^{k+1}.\leqno{(3)}
$$
Si $\mathfrak{A}_\gamma = \mathfrak{A}_{e(E/F)}$, on dit que $\gamma$ est {\it en position standard}.

Pour $a\in {\Bbb R}$, on note $\lfloor a\rfloor$ la partie entière de $x$, \cad le plus petit grand entier inférieur ou égal à $a$, et $\lceil a \rceil = -\lfloor -a \rfloor$ le plus petit entier suprieur ou gal  $a$. Pour une partie 
$\mathfrak{X}$ de $\mathfrak{g}$, on pose ${^G{\mathfrak{X}}}=\{g^{-1}\gamma g: g\in G,\, x\in \mathfrak{X}\}$.

\begin{monlem1}
{\rm Pour $k\in {1\over N}{\Bbb Z}$, on a
$$
G_{\rm qre}^k= \bigcup_{e\vert N}G_{\rm qre} \cap {^G({\mathfrak{P}_e^{{\lfloor ek - {e\over N} \rfloor}+1}})}.
$$
}\end{monlem1}

\begin{proof}Commen\c{c}ons par l'inclusion $\subset$. Soit $\gamma\in G_{\rm qre}^k$. Quitte à remplacer $\gamma$ par $g^{-1}\gamma g$ pour un élément $g\in G$, on peut supposer $\gamma$ en position standard. Posons $E=F[\gamma]$, $e=e(E/F)$ et $f=f(E/F)\;(={N\over e})$. On a $\nu_E(\gamma)=e\nu_F(\gamma) \geq ek\in {1\over f}{\Bbb Z}$. Comme $\nu_E(\gamma)\in {\Bbb Z}$, on obtient
$$
\nu_E(\gamma)\geq \lfloor ek + \textstyle{f-1\over f}\rfloor = \lfloor ek-\textstyle{1\over f}\rfloor +1.
$$
D'où l'inclusion $\subset$, d'après (3).

Montrons l'inclusion $\supset$. Soit $\gamma\in {^G({\mathfrak{P}_e^{{\lfloor ek - {e\over N} \rfloor}+1}})}$ pour un entier $k\in {\Bbb Z}$ et un entier $e\geq 1$ divisant $N$. Posons $f={N\over e}$. D'après (1), on a 
$$
\nu_F(\gamma)= \textstyle{1\over N}\nu(\det(\gamma))\geq \textstyle{1\over e}({\lfloor ek - {1\over f} \rfloor}+1).
$$
\'Ecrivons $ek= r +{t\over f}$ avec $r,\,t\in {\Bbb Z}$ et $0\leq t \leq f-1$. On a donc 
${\lfloor ek - {1\over f} \rfloor}= r + {\lfloor {t-1\over f}\rfloor}$. Comme $\nu_F(\gamma)\in {1\over e}{\Bbb Z}$, on obtient 
que $\nu_F(\gamma)\geq {1\over e}(r+1)>k$ si $t>0$ et $\nu_F(\gamma)\geq {r\over e}=k$ si $t=0$. D'où l'inclusion $\supset$.
\end{proof}

\begin{monlem2}
Pour $k\in {1\over N}+ {\Bbb Z}$, on a
$$
G_{\rm qre}^k= G_{\rm qre} \cap {^G(\mathfrak{P}_{\rm min}^{Nk})}.
$$
\end{monlem2}

\begin{proof}
Puisque $\mathfrak{P}_{\rm min}^{Nk}=\mathfrak{P}_N^{{\lfloor Nk - {N\over N}\rfloor}+1}$, d'après le lemme 1, 
on a l'inclusion $\supset$. 
\'Ecrivons $k= {1\over N}+c$ avec $c\in {\Bbb Z}$. Pour chaque entier $e\geq 1$ divisant $N$, on a ${\lfloor ek -{e\over N} \rfloor} +1=ec+1$ et
$$
{\mathfrak{P}_{e}^{{\lfloor ek - {e\over N} \rfloor}+1}}= \mathfrak{P}_e^{ec+1}=\mathfrak{p}^c \mathfrak{P}_e\subset \mathfrak{p}^c\mathfrak{P}_{\rm min}= \mathfrak{P}_{\rm min}^{Nc+1}= \mathfrak{P}_{\rm min}^{Nk}.
$$
D'où, à nouveau d'après le lemme 1, l'inclusion $\subset$.
\end{proof}

On définit, comme on l'a fait pour $G$, une filtration décroissante $k\mapsto \mathfrak{g}_{\rm qre}^k$ de $\mathfrak{g}_{\rm qre}$: pour $k\in {\Bbb R}$, on pose
$$
\mathfrak{g}_{\rm qre}^k=\{\gamma\in \mathfrak{g}_{\rm qre}: \nu_F(\gamma)\geq k\}.
$$
Notons que si $N=1$, on a $\mathfrak{g}_{\rm qre}^k= \mathfrak{p}^{\lceil k \rceil}$.

\begin{marema2}
{\rm Pour un entier $n\geq 1$ et un polynme unitaire $\zeta\in F[t]$ de degr $n$, disons $\zeta(t)= \sum_{i=0}^na_i t^i$, on note $C(\zeta)\in M(n,F)$ la matrice compagnon de $\zeta$ dfinie par 
$$
C(\zeta)= \left(\begin{array}{ccccc}
0 & 1 & 0 &\cdots & 0\\
\vdots & \ddots & \ddots & \ddots & \vdots \\
\vdots &  & \ddots & \ddots & 0\\
0 & \cdots & \cdots & 0& 1\\
-a_0 & -a_1   &\cdots &\cdots &-a_{n-1}
\end{array}
\right).
$$
Via le choix d'un $\mathfrak{o}$--base de $\ES{L}$ (cf. \cite[1.1.7]{BK}), identifions $\mathfrak{g}$  $M(N,F)$ et 
$\mathfrak{A}_{\rm min}$  la sous--$\mathfrak{o}$--algbre de ${\rm M}(N,\mathfrak{o})$ forme des matrices triangulaires suprieures modulo $\mathfrak{p}$. Alors $\mathfrak{P}_{\rm min}$ est la sous--$\mathfrak{o}$--algbre de ${\rm M}(N,\mathfrak{o})$ forme des matrices {\it strictement} triangulaires suprieures modulo $\mathfrak{p}$. Pour $\gamma\in \mathfrak{g}$, notons $\zeta_{\gamma,1},  \ldots  , \zeta_{\gamma,r}$ les invariants de similitude (diviseurs élémentaires) de $\gamma$:
\begin{itemize}
\item pour $i=1,\ldots ,r=r_\gamma$, $\zeta_{\gamma,i}\in F[t]$ est un polynme unitaire de degr $N_i=N_{\gamma,i}$;
\item pour $i= 2,\ldots ,r$, $\zeta_{\gamma,i}$ divise $\zeta_{\gamma,i-1}$;
\item $\zeta_{\gamma,1}$ est le polynme minimal de $\gamma$;
\item $\zeta_\gamma = \prod_{i=1}^r \zeta_{\gamma,i}$ est le polynme caractristique de $\gamma$.
\end{itemize} 
On note $\tilde{\gamma}\in \mathfrak{g}$ la matrice diagonale par blocs de taille $N_1,\ldots ,N_r$, dfinie par
$$
\tilde{\gamma}= {\rm diag}(C(\zeta_{\gamma,1}),\ldots ,C(\zeta_{\gamma,r})).
$$
D'aprs le thorme de dcomposition de Frobenius, un lment $\gamma'\in \mathfrak{g}$ est dans la classe de $G$--conjugaison de 
$\gamma$ si et seulement si $\tilde{\gamma}=\tilde{\gamma}'$, i.e. si et seulement si 
$r_{\gamma'}= r$ et $\zeta_{\gamma'\!,i}= \zeta_{\gamma,i}$ pour $i=1,\ldots ,r$. 
\'Ecrivons $\zeta_\gamma(t) = \sum_{j=0}^N a_{\gamma,j}t^j$. Pour $j=0,\ldots ,N-1$, on a
$$
a_{g^{-1}\gamma g,j} = a_{\gamma,j}, \quad g\in G,\leqno{(4)}
$$
et 
$$
a_{z\gamma,j} = z^{N-j} a_{\gamma,j}, \quad z\in Z=F^\times.\leqno{(5)}
$$
On en dduit que pour $k\in {\Bbb Z}$, on a
$$
\{\gamma\in \mathfrak{g}: \nu (a_{\gamma,j})\geq (N-j)k+ 1,\; j=0,\ldots ,N-1\}= {^G(\mathfrak{P}_{\rm min}^{Nk+1})}.\leqno{(6)}
$$
En effet, puisque pour une uniformisante $\varpi$ de $F$, on a ${^G(\mathfrak{P}_{\rm min}^{Nk +1})}= \varpi^k ({^G(\mathfrak{P}_{\rm min})})$, d'aprs (5), il suffit de vrifier (6) pour $k=0$. Si $\gamma\in \mathfrak{P}_{\rm min}$, alors $\overline{\gamma}= \gamma\; ({\rm mod}\,\mathfrak{p})$ est un lment nilpotent  de $M(N,\kappa)$, et comme $\zeta_\gamma\;({\rm mod}\,\mathfrak{p})\in \kappa[t]$ co\"{\i}ncide avec le polynme caractristique $\zeta_{\overline{\gamma}} =t^N\in \kappa[t]$ de $\overline{\gamma}$, les coefficients $a_{\gamma,j}$ ($j=0,\ldots ,N-1$) appartiennent  $\mathfrak{p}$. D'aprs (4), on a donc l'inclusion $\supset $ dans (6) pour $k=0$. Rciproquement, si $\gamma\in \mathfrak{g}$ est tel que $\nu(a_{\gamma,j})\geq 1$ pour $j=0,\ldots ,N-1$, alors $\zeta_\gamma \;({\rm mod}\,\mathfrak{p})=t^N$. Puisque l'anneau $\mathfrak{o}[t]$ est factoriel et que les polynmes $\zeta_{\gamma,1},\ldots , \zeta_{\gamma,r_\gamma}\in F[t]$ sont unitaires (donc en particulier primitifs), ils appartiennent tous  $\mathfrak{o}[t]$ et on a $\zeta_{\gamma,i}\;({\rm mod}\,\mathfrak{p})=t^{N_{\gamma,i}}$ pour $i=1,\ldots , r_\gamma$, par consquent le représentant standard $\tilde{\gamma}$ appartient  $\mathfrak{P}_{\rm min}$. On a donc aussi l'inclusion $\subset$ dans (6) pour $k=0$. 
\hfill $\blacksquare$
}
\end{marema2}

Notons que l'on peut retrouver le lemme 2  partir de (6). De plus, d'après (6), pour tout $k\in {\Bbb Z}$, on a:
\begin{enumerate}[leftmargin=17pt]
\item[(7)]${^G(\mathfrak{P}_{\rm min}^{Nk +1})}$ est un voisinage ouvert {\it ferm} et $G$--invariant de $0$ dans $\mathfrak{g}$.
\end{enumerate}
%

\subsection{Parties compactes modulo conjugaison}\label{parties compactes modulo conjugaison}Une partie $X$ de $G$, resp. $\mathfrak{g}$, est dite {\it compacte modulo conjugaison} (dans $G$, resp. $\mathfrak{g}$) s'il existe une partie compacte $\Omega$ de $G$, resp. $\mathfrak{g}$, telle que $X$ est contenu dans ${^G\Omega}= \{g^{-1}\gamma g: g\in G,\,\gamma \in \Omega\}$. La caractérisation des parties compactes modulo conjugaison (dans $\mathfrak{g}$) donnée dans \cite[5.2]{K} ne fonctionne plus ici, du fait de la présence possible d'éléments fermés qui ne sont pas semisimples. On procède donc autrement\footnote{Les résultats contenus dans ce numéro sont bien connus, mais comme nous n'avons pas trouvé de référence utilisable, nous avons préféré les redémontrer ici.}.

Pour chaque entier $n\geq 1$, on note $F[t]_n$ la variété $\mathfrak{p}$--adique formée des polynômes unitaires de degré $n$, et 
$q_n:F[t]_n \rightarrow F^{n}$ l'isomorphisme de variétés $\mathfrak{p}$--adiques donné par
$$
q_n(\zeta)= (a_{n-1},\ldots ,a_0),\quad \zeta(t)= \textstyle{\sum_{i=0}^n}a_it^i.
$$
On note $F^\times \times F^n\rightarrow F^n,\, (z,\bs{a})\mapsto z\cdot \bs{a}$ l'action de $F^\times$ sur $F^n$ déduite de l'action 
de $F^\times$ sur $F[t]_n$ donnée par $(z,\zeta)\mapsto z\cdot\zeta$ avec $(z\cdot\zeta)(t)= z^n\zeta(z^{-1}t)$. On a donc
$$
z\cdot \bs{a}= (za_{n-1},z^2a_{n-2},\ldots ,z^na_0),\quad \bs{a}=(a_{n-1},\ldots ,a_0)\in F^n,\; z\in F^\times.
$$ 
On note aussi $F[t]_n^*$ la sous--variété $\mathfrak{p}$--adique ouverte de $F[t]_n$ formée des polynômes qui ne sont pas divisibles par $t$, et $q_n^*: F[t]_n^*\rightarrow  F^{n-1}\times F^\times$ l'isomorphisme de variétés $\mathfrak{p}$--adiques déduit de $q_n$ par restriction.

Reprenons les notations de la remarque 2 de \ref{éléments qre}, ainsi que l'identification $\mathfrak{g}=M(N,F)$ donnée par le choix d'un $\mathfrak{o}$--base de $\ES{L}$. Soit $\pi=\pi_\mathfrak{g}: \mathfrak{g}\rightarrow F[t]_N$ l'application polynôme caractéristique $\gamma \mapsto \zeta_\gamma$. Elle est surjective, et elle se restreint en une application surjective $\pi_G: G\rightarrow F[t]_N^*$. On dispose d'une section $\sigma :F[t]_N \rightarrow \mathfrak{g}$ donnée par l'application matrice compagnon $\zeta \mapsto C(\zeta)$. On a $\sigma(F[t]_N^*)\subset G$. D'après \cite[2.3]{L2}, pour $\zeta\in F[t]_N^*$, la fibre $\pi^{-1}(\zeta)$ au dessus de $\zeta$ --- qui est une partie fermée dans $G$ --- est une union finie de $G$--orbites, égale à la fermeture $\overline{\ES{O}_G(C(\zeta))}$ de $\ES{O}_G(C(\zeta))$ dans $G$ (pour la topologie $\mathfrak{p}$--adique). Parmi ces $G$--orbites, $\ES{O}_G(C(\zeta))$ est l'unique de dimension maximale, et il y en a une seule de dimension minimale, qui est l'unique $G$--orbite fermée (dans $G$) contenue dans $\pi^{-1}(\zeta)$. De même, pour $\zeta \in F[t]_N$, la fibre $\pi^{-1}(\zeta)$ au--dessus de $\zeta$ est une union finie de $G$--orbites, égale à la fermeture $\overline{\ES{O}_G(C(\zeta))}$ de $\ES{O}_G(C(\zeta))$ dans $\mathfrak{g}$.

Rappelons que pour $\gamma \in \mathfrak{g}$, on a défini dans la remarque 2 de \ref{éléments qre} un représentant standard $\tilde{\gamma}\in \ES{O}_G(\gamma)$. Pour $\zeta\in F[t]_N$, on note $R_\zeta$ l'ensemble (fini) des représentants standards $\tilde{\gamma}$ des éléments $\gamma\in \pi^{-1}(\zeta)$. On a donc $\pi^{-1}(\zeta)= \coprod_{\tilde{\gamma}\in R_\zeta} \ES{O}_G(\tilde{\gamma})$.

On note $\Pi= \Pi_\mathfrak{g}: \mathfrak{g}\rightarrow F^N$ l'application composée $q_N\circ \pi$, et $\Pi_G: G\rightarrow F^{N-1}\times F^\times$ sa restriction à $G$.

\begin{monlem1}
Soit $X$ une partie de $\mathfrak{g}$. Les trois conditions suivantes sont équivalentes:
\begin{enumerate}
\item[(i)] $X$ est une partie compacte modulo conjugaison (dans $\mathfrak{g}$);
\item[(ii)] il existe une partie compacte $\Omega$ de $\mathfrak{g}$ telle que $X$ est contenu dans la fermeture 
$\overline{^G\Omega}$ de ${^G\Omega}$ dans $\mathfrak{g}$ (pour la topologie $\mathfrak{p}$--adique);
\item[(iii)] $\Pi_\mathfrak{g}(X)$ est une partie bornée dans $F^N$.
\end{enumerate}
\end{monlem1}

\begin{proof}
On a clairement $(i)\Rightarrow (ii)$, et puisque l'application $\Pi$ est continue, on a aussi $(ii)\Rightarrow (iii)$. Il suffit donc de prouver $(iii)\Rightarrow (i)$. Supposons que $\Pi(X)$ est une partie bornée dans $F^N$. Alors il existe un entier $k$ tel que $\Pi(X)$ est contenu dans la partie ouverte compacte $\varpi^k\cdot \mathfrak{o}^N$ de $F^N$ (pour l'action de $F^\times$ sur $F^N$ introduite plus haut), où $\varpi$ est une uniformisante de $F$. Pour $\gamma\in \Pi^{-1}(\varpi^k\cdot \mathfrak{o}^N)$, l'expression des coefficients $a_{\gamma,i}$ du polynôme caractéristique de $\gamma$ en termes des fonctions symétriques élémentaires des valeurs propres $\lambda_1,\ldots ,\lambda_N\in \overline{F}$ de $\gamma$ (chaque valeur propre étant comptée un nombre de fois égal à sa multiplicité), entraîne la relation
$$
\nu_F(\lambda_i)\geq k, \quad i=1,\ldots ,N.\leqno{(1)}
$$
En effet, supposons qu'il existe une valeur propre $\lambda_i$ telle que $\nu_F(\lambda_i)<k$, et notons $d\geq 1$ le nombre de valeurs propres $\lambda_j$ qui vérifient cette propriété. Alors ou bien $d=N$, ce qui contredit l'inégalité $\nu(a_{\gamma,0})\geq kN$; ou bien $d<N$, et en supposant que les $\lambda_i$ sont ordonnés de telle manière que $\nu(\lambda_j)<k$ pour $j=1,\ldots ,d$, on a les relations
$$
\nu_F(\lambda_1\cdots \lambda_d)< kd,\quad \nu(a_{N-d})\geq kd.
$$
Cela entraîne l'existence d'un \og mot \fg{} de longueur $d$, disons $\lambda_{i_1}\cdots \lambda_{i_d}$ avec $i_j\in \{1,\ldots ,N\}$, $i_j \neq i_{j'}$ si $j\neq j'$, et $\{i_1,\ldots ,i_d\}\neq \{1,\ldots ,d\}$, tel que
$$
\nu_F(\lambda_{i_1}\cdots \lambda_{i_d})= \nu(\lambda_1\cdots \lambda_d).
$$
Cela n'est possible que s'il existe une valeur propre $\lambda_j$, $j\geq d+1$, telle que $\nu(\lambda_j)<k$; contradiction.  
De (1), on déduit que pour tout $\zeta\in q_N^{-1}(\varpi^k\cdot \mathfrak{o}^N)$ et tout polynôme $\xi\in F[t]_n$ divisant $\zeta$, 
on a $q_n(\xi)\in \varpi^k\cdot \mathfrak{o}^{n}$. Posant $k' = \inf \{k,Nk\}$, cela entraîne que le représentant standard $\tilde{\gamma}\in \ES{O}_G(\gamma)$ appartient à $\mathfrak{p}^{k'} M(N,\mathfrak{o})= \mathfrak{P}_{\rm max}^{k'}$. On a donc
$$
X\subset \Pi^{-1}(\varpi^k\cdot \mathfrak{o}^N) \subset {^G(\mathfrak{P}_{\rm max}^{k'})},
$$
et le lemme est démontré.
\end{proof}

\begin{marema1}
{\rm Du lemme 1, on déduit en particulier que l'on peut choisir des voisinages ouverts fermés et $G$--invariants du cône nilpotent dans $\mathfrak{g}$ aussi petits que l'on veut, ce que l'on savait déjà d'après \ref{éléments qre}.(7). Précisément, si $\Lambda$ est un $\mathfrak{o}$--réseau dans $\mathfrak{g}$, alors il existe un voisinage ouvert fermé et $G$--invariant $X$ de $\Pi^{-1}(0^N)$ dans $\mathfrak{g}$ tel que $X\subset {^G\Lambda}$. En effet, d'après le lemme 1, on a  
$\Pi^{-1}(\mathfrak{o}^N)\subset {^G(\mathfrak{A}_{\rm max})}$. Soit un entier $k\geq 0$ tel que $\mathfrak{P}_{\rm max}^{k}\subset \Lambda$. Alors $X= \Pi^{-1}(\varpi^k\cdot \mathfrak{o}^N)$ convient.
\hfill $\blacksquare$
}
\end{marema1} 

Avant de poursuivre, rappelons quelques faits élémentaires sur la structure de $F[t]_n^*$, $n\geq 1$. Pour $\zeta\in F[t]_n^*$, on peut écrire $\zeta= \prod_{i=1}^r f_i^{d_i}$ pour des polynômes irréductibles $f_i \in F[t]_{n_i}^*$ et des entiers $d_i\geq 1$ tels que $\sum_{i=1}^r n_id_i=n$. Pour chaque entier $k$, notons $\ES{V}_k(\zeta)$ l'ensemble des polynômes $\zeta'\in F[t]_n$ tels que, posant $\zeta(t)= \sum_{i=0}^n a_it^i$ et $\zeta'=\sum_{i=1}^n a'_i t^i$, on a $a'_i-a_i\in \mathfrak{p}^k$ pour $i=0,\ldots ,n-1$. C'est un voisinage ouvert compact de $\zeta$ dans $F[t]_n$. Puisque 
$a_{\zeta,0}\neq 0$, la condition $k>\nu(a_{\zeta,0})$ assure que $\ES{V}_k(\zeta)$ est contenu dans $F[t]_n^*$. D'après \cite[2.5]{L2}, si l'entier $k$ est suffisamment grand, alors pour tout $\zeta'\in \ES{V}_k(f)$, on a:
\begin{itemize}
\item $\zeta'= \prod_{i=1}^r \zeta'_i$ pour des polynômes $\zeta'_i\in F[t]_{n_id_i}^*$;
\item pour $i=1,\ldots ,r$ et pour toute composante irréductible $f'_i$ de $\zeta'_i$, le degré de $\zeta'_i$ est divisible par $n_i = {\rm deg}(f_i)$.
\end{itemize}
La première propriété est une 
conséquence du lemme de Hensel, et la seconde est une variante du lemme de Krasner (cf. \cite[2.5.1]{L2}). On note $k_\zeta$ le plus petit entier $k> \nu(a_{\zeta,0})$ vérifiant ces deux propriétés. On en déduit que pour 
tout entier $m$, il existe un plus petit entier 
$k_\zeta(m)\geq k_\zeta$ tel que pour tout $\zeta'\in \ES{V}_{k_\zeta(m)}(\zeta)$, on a:
\begin{itemize}
\item pour tout polynôme unitaire $h'$ divisant $\zeta'$, il existe un unique polynôme unitaire $h$ divisant $\zeta$ tel que $h'\in \ES{V}_m(h)$.
\end{itemize}
Le voisinage ouvert compact $\ES{V}_{k_\zeta(m)}(\zeta)$ de $\zeta$ dans $F[t]_n$ est contenu dans $F[t]_n^*$, par conséquent $h'$ appartient à $F[t]_{{\rm deg}(h)}^*$. Si de plus on suppose, ce qui est toujours possible, que pour tout polynôme unitaire $h$ divisant $\zeta$, on a $m>\nu(a_{h,0})$, alors le voisinage ouvert compact $\ES{V}_m(h)$ de $h$ dans $F[t]_{{\rm deg}(h)}$ est contenu dans $F[t]_{{\rm deg}(h)}^*$.

\vskip1mm
Le lemme 2 est la version sur $G$ du lemme 1. Le lemme 3 est une conséquence de la preuve du lemme 2, qui nous servira plus loin. 

\begin{monlem2}
Soit $X$ une partie de $G$. Les trois conditions suivantes sont équivalentes:
\begin{enumerate}
\item[(i)] $X$ est une partie compacte modulo conjugaison (dans $G$);
\item[(ii)] il existe une partie compacte $\Omega$ de $G$ telle que $X$ est contenu dans la fermeture 
$\overline{^G\Omega}$ de ${^G\Omega}$ dans $G$ (pour la topologie $\mathfrak{p}$--adique);
\item[(iii)] $\Pi_G(X)$ est une partie bornée dans $ F^{N-1}\times F^\times $.
\end{enumerate}
\end{monlem2}

\begin{proof}
Comme pour le lemme 1, les implications $(i)\Rightarrow (ii) \Rightarrow (iii)$ sont claires. Il suffit donc de 
prouver $(iii)\Rightarrow (i)$. Supposons que $\Pi_G(X)$ est une partie bornée dans $F^{N-1}\times F^\times$. Pour $\zeta\in 
F[t]_N^*$, notons $m_\zeta$ le plus petit entier entier $m\geq 1$ tel que pour tout $\gamma\in \pi^{-1}(\zeta)$, on a $\tilde{\gamma} + \mathfrak{P}_{\rm max}^m\subset \tilde{\gamma}U^1_{\rm max}$ avec $U^1_{\rm max}=1+\mathfrak{P}_{\rm max}$. Un tel $m_\zeta$ existe car l'ensemble $R_\zeta= \{\tilde{\gamma}: \gamma\in \pi^{-1}(\zeta)\}$ est fini. On peut recouvrir $\pi(X)$ par des ouverts compacts de $F[t]_N^*$ de la forme $\ES{V}_k(\zeta)$ avec $\zeta\in F[t]_N^*$ et $k\geq k_\zeta(m_\zeta)$:
$$
\pi(X) \subset \bigcup_{j=1}^s \ES{V}_{k_j}(\zeta_j), \quad \zeta_j\in F[t]_N^*,\; k_j \geq k_{\zeta_j}(m_{\zeta_j}).
$$
Il suffit de vérifier que pour un tel ouvert compact $\ES{V}_k(\zeta)\subset F[t]_N^*$, la partie $\pi^{-1}(\ES{V}_k(\zeta))$ est compacte modulo conjugaison dans $G$. Cela résulte des définitions: pour $\gamma'\in \pi^{-1}(\ES{V}_k(\zeta))$, la condition $k\geq k_\zeta(m_\zeta)$ assure l'existence d'un élément $\gamma \in \pi^{-1}(\zeta)$ --- bien déterminé à conjugaison près dans $G$ --- tel que
$$
\tilde{\gamma}' \in \tilde{\gamma} + \mathfrak{P}_{\rm max}^{m_\zeta} \subset \tilde{\gamma}U^1_{\rm max}.
$$
On a donc l'inclusion
$$
\pi^{-1}(\ES{V}_k(\zeta)) \subset{^G(\textstyle{\bigcup_{\tilde{\gamma}\in R_\zeta}}\tilde{\gamma}U^1_{\rm max})},
$$
et le lemme est démontré.
\end{proof}

\begin{monlem3}
Soit $\beta\in G$ un élément fermé, et soit $J$ un sous--groupe ouvert compact de $G$. Il existe un voisinage ouvert fermé et $G$--invariant $X$ de $\beta$ dans $G$ tel que $X\subset {^G(\beta J)}$. 
\end{monlem3}

\begin{proof}D'après la preuve du lemme 2, pour tout polynôme $\zeta \in F[t]_N^*$ et tout système de représentants $\{\gamma_1,\ldots ,\gamma_s\}\subset G$ des $G$--orbites $\ES{O}_G(\gamma)$ contenues dans $\pi^{-1}(\zeta)$, il existe une partie $X$ ouverte fermée et $G$--invariante dans $G$ telle que $X \subset \bigcup_{j=1}^s {^G(\gamma_j J)}$. En effet, pour $j=1,\ldots , s$, on écrit $\gamma_j = g_j^{-1} \tilde{\gamma}_j g_j$ avec $g_j\in G$. Posons $J' = \bigcap_{j=1}^s g_j J g_j^{-1}$, et choisissons un entier $m\geq 1$ tel que pour $j=1,\ldots ,s$, on a $\tilde{\gamma}_j + \mathfrak{P}_{\rm max}^m \subset \tilde{\gamma}_jJ'$. Enfin choisissons un entier $k \geq k_\zeta(m)$ et prenons $X= \pi^{-1}(\ES{V}_k(\zeta))$. Alors (d'après la preuve du lemme 2) on a
$$
X \subset \bigcup_{j=1}^s {^G(\tilde{\gamma}_j + \mathfrak{P}_{\rm max}^m)} \subset \bigcup_{j=1}^s {^G(\tilde{\gamma}_j J')}
\subset \bigcup_{j=1}^s {^G(\gamma_jJ)}.
$$
Prenons $\zeta = \zeta_\beta$ et $\gamma_1=\beta$. Pour toute $G$--orbite $\ES{O}\subset \pi^{-1}(\zeta)$, puisque la fermeture $\overline{\ES{O}}$ de $\ES{O}$ dans $G$ contient $\ES{O}_G(\beta)$, le voisinage ouvert compact $\beta J$ de $\beta$ dans $G$ rencontre $\ES{O}$. On peut donc, pour $j=2, \ldots ,s$, choisir l'élément $\gamma_j$ dans $\beta J$. D'où le lemme.
\end{proof}

\begin{marema2}
{\rm Si $\Omega$ est une partie ouverte compacte de $\mathfrak{g}$, resp. $G$, pour que ${^G\Omega}$ soit fermé dans $\mathfrak{g}$, resp. $G$, il suffit que ${^G\Omega} = \Pi^{-1}(\Pi(\Omega))$. 
Cette égalité est vérifiée si et seulement si pour tout $\gamma\in \Omega$, il existe un élément fermé (dans $\mathfrak{g}$) $\gamma'\in \Omega \cap \overline{\ES{O}_G(\gamma)}$. En effet, si ${^G\Omega}= \Pi^{-1}(\Pi(\Omega))$, alors pour $\gamma\in \Omega$, on a $\overline{\ES{O}_G(\gamma)}\subset \Pi^{-1}(\Pi(\gamma))\subset {^G\Omega}$. D'autre part, si $\gamma'\in \Omega$ est un élément fermé (dans $\mathfrak{g}$), pour tout voisinage ouvert compact $\ES{V}_{\gamma'}$ de $\gamma'$ dans $\mathfrak{g}$ contenu dans $\Omega$, on a $\Pi^{-1}(\Pi(\gamma'))\subset {^G(\ES{V}_{\gamma'})}\subset {^G\Omega}$.\hfill $\blacksquare$
}
\end{marema2}

\subsection{Des $(W,E)$--décompositions}\label{des décompositions}Pour un $\mathfrak{o}$--ordre héréditaire (pas forcément principal) $\mathfrak{A}$ dans $\mathfrak{g}$, on note $\mathfrak{P}={\rm rad}(\mathfrak{A})$ son radical de Jacobson, 
$K(\mathfrak{A})$ son normalisateur dans $G$, et $\{U^k(\mathfrak{A}):k\geq 0\}$ la suite de sous--groupes ouverts compacts distingués de $K(\mathfrak{A})$ définie par
$$
U^0(\mathfrak{A})=U(\mathfrak{A})=\mathfrak{A}^\times,
$$
$$
U^k(\mathfrak{A})= 1+ \mathfrak{P}^k,\quad k\geq 1.
$$

Soit $E/F$ une extension telle que $E\subset \mathfrak{g}$. Posons $\mathfrak{b}={\rm End}_E(V)$. Soit $W$ un sous--$F$--espace vectoriel de $V$ tel que l'application naturelle $E\otimes_F W\rightarrow V$ (induite par l'inclusion $W\subset V$ et par l'action de $E$ sur $V$ donnée par l'inclusion $E\subset \mathfrak{g}$) est un isomorphisme. Cet isomorphisme induit un isomorphisme de $(A(E),\mathfrak{b})$--bimodules
$$
\tau_W= \tau_{W,E}: A(E)\otimes_E \mathfrak{b}\buildrel\simeq\over{\longrightarrow} \mathfrak{g}\leqno{(1)}
$$
appelé {\it $(W,E)$--décomposition de $\mathfrak{g}$} (cf. \cite[1.2.6]{BK}). Prcisons l'isomorphisme (1). L'extension $E$ de $F$ s'iden\-tifie naturellement à un sous--corps maximal de $A(E)$, et le choix de $W$ induit un homomorphisme injectif de $F$--algèbres
$$
\iota_W=\iota_{W,E}:A(E)\rightarrow \mathfrak{g}
$$
qui prolonge l'inclusion $E\subset \mathfrak{g}$. 
En considérant $A(E)$ comme un $(E,E)$--bimodule, on a une identification naturelle $A(E)=A(E)\otimes_E E$. D'autre part, l'isomorphisme de $E$--espaces vectoriels $E\otimes_FW\simeq V$ induit un isomorphisme de $E$--algèbres
$$
E\otimes_F{\rm End}_F(W) ={\rm End}_E(E\otimes_FW)\simeq \mathfrak{b}.
$$
En le combinant avec l'isomorphisme de $F$--algèbres
$$
A(E)\otimes_F {\rm End}_F(W) = {\rm End}_F(E\otimes_FW)\simeq \mathfrak{g},
$$
on obtient l'isomorphisme (1). Concrtement, pour $a\in A(E)$, $e\in E$ et $w\in W$, on a
$$
\iota_W(a)(e\otimes w) =a(e)\otimes w,
$$
et pour $b\in \mathfrak{b}$, on a
$$
\tau_W(a\otimes b) = \iota_W(a)b.
$$

\begin{marema1}
{\rm Soit $E'/F$ une extension telle que $E'\subset A(E)$. On suppose que $E'$ est un sous--corps maximal de $A(E)$. Le sous--$F$--espace vectoriel $W'=F$ de $E$ engendre $E$ sur $E'$ (\cad que l'application naturelle $E'\otimes_F W'\rightarrow E$ est un isomorphisme), et l'homomorphisme injectif de $F$--algèbres $\iota=\iota_{W'\!,E'}:A(E')\rightarrow A(E)$ est un isomorphisme. D'autre part, comme l'application naturelle $E'\otimes_F W \rightarrow V$ est un isomorphisme, on a aussi une $(W,E')$--décomposition de $\mathfrak{g}$
$$
\tau_{W,E'}: A(E')\otimes_{E'} \mathfrak{b}'\buildrel \simeq\over{\longrightarrow} \mathfrak{g},\quad \mathfrak{b}'={\rm End}_{E'}(W).
$$
Posons $\mathfrak{a}={\rm End}_F(W)$. L'isomorphisme $\iota$ se prolonge naturellement en un isomorphisme de $F$--algèbres
$$
\tau_{W,E,E'}: A(E')\otimes_{E'}\mathfrak{b}'= A(E')\otimes_F \mathfrak{a} \buildrel \simeq\over{\longrightarrow}
A(E)\otimes_F\mathfrak{a}= A(E)\otimes_E\mathfrak{b}.
$$
Il vérifie
$$
\tau_{W,E}\circ \tau_{W,E,E'}= \tau_{W,E'},
$$
et c'est le seul isomorphisme de $F$--algbres $A(E')\otimes_{E'}\mathfrak{b}'\buildrel\simeq\over{\longrightarrow} A(E)\otimes_E\mathfrak{b}$ qui vrifie 
l'galit ci--dessus.\hfill $\blacksquare$
}
\end{marema1}

Rappelons qu'une {\it corestriction modérée sur $\mathfrak{g}$ relativement à $E/F$} est un homomorphisme de $(\mathfrak{b},\mathfrak{b})$--bimodules $\bs{s}:\mathfrak{g}\rightarrow \mathfrak{b}$ tel que $\bs{s}(\mathfrak{A})=\mathfrak{A}\cap \mathfrak{b}$ pour tout $\mathfrak{o}$--ordre héréditaire $\mathfrak{A}$ dans $\mathfrak{g}$ normalisé par $E^\times$. D'après \cite[1.3.4]{BK}, une telle corestriction modérée $\bs{s}$ sur $\mathfrak{g}$ existe, et elle est unique à multiplication près par un élément de $\mathfrak{o}_E^\times$. De plus, pour tout $\mathfrak{o}$--ordre héréditaire $\mathfrak{A}$ dans $\mathfrak{g}$ normalisé par $E^\times$, de radical de Jacobson $\mathfrak{P}$, on a
$$
\bs{s}(\mathfrak{P}^k)= \mathfrak{P}^k\cap \mathfrak{b},\quad k\in {\Bbb Z}.
$$
D'après \cite[1.3.9]{BK}, si $\bs{s}_E:A(E)\rightarrow E$ est une corestriction modérée sur $A(E)$ relativement à $E/F$, alors --- pour 
l'identification $\mathfrak{g}=A(E)\otimes_E \mathfrak{b}$ donnée par (1) --- $\bs{s}= \bs{s}_E\otimes {\rm id}_{\mathfrak{b}}$ est une corestriction modérée sur $\mathfrak{g}$ relativement à $E/F$, et d'après la propriété d'unicité, toute corestriction modérée sur $\mathfrak{g}$ relativement à $E/F$ est de cette forme.

Fixons un $\mathfrak{o}$--ordre héréditaire $\mathfrak{A}$ dans $\mathfrak{g}$ normalisé par $E^\times$, et posons $\mathfrak{B}= \mathfrak{A}\cap \mathfrak{b}$. C'est un $\mathfrak{o}_E$--ordre héréditaire dans $\mathfrak{b}$. 
Posons $\mathfrak{P}={\rm rad}(\mathfrak{A})$ et $\mathfrak{Q}={\rm rad}(\mathfrak{B})$. D'après \cite[1.2.4]{BK}, on a
$$
\mathfrak{Q}^k=\mathfrak{P}^k\cap \mathfrak{b},\quad k\in {\Bbb Z},
$$
et les périodes $e(\mathfrak{A}\vert \mathfrak{o})$ et $e(\mathfrak{B}\vert \mathfrak{o}_E)$ sont reliées par l'égalité
$$
e(\mathfrak{B}\vert \mathfrak{o}_E)={e(\mathfrak{A}\vert\mathfrak{o})\over e(E/F)}. 
$$
\'Ecrivons $\mathfrak{A}={\rm End}_{\mathfrak{o}}^0(\ES{L})$ pour une chaîne de $\mathfrak{o}$--réseaux $\ES{L}=\{L_i\}$ dans $V$. 
Puisque $\mathfrak{A}$ est normalisé par $E^\times$, chaque $\mathfrak{o}$--réseau $L_i$ est en fait un $\mathfrak{o}_E$--réseau. 
Supposons de plus que $W$ est engendré sur $F$ par une $\mathfrak{o}_E$--base de $\ES{L}$ (cf. \cite[1.1.7]{BK}). 
Alors d'après \cite[1.2.10]{BK}, pour $k\in {\Bbb Z}$, l'isomorphisme (1) se restreint en un isomorphisme de $(\mathfrak{A}(E),\mathfrak{B})$--bimodules
$$
\mathfrak{A}(E)\otimes_{\mathfrak{o}_E} \mathfrak{Q}^k\buildrel\simeq\over{\longrightarrow}\mathfrak{P}^k\leqno{(2)}
$$
En particulier, pour $k=0$, on a un 
isomorphisme de $(\mathfrak{A}(E),\mathfrak{B})$--bimodules
$$
\mathfrak{A}(E)\otimes_{\mathfrak{o}_E}\mathfrak{B}\buildrel\simeq\over{\longrightarrow} \mathfrak{A}\leqno{(3)}
$$
appelé {\it $(W,E)$--décomposition de $\mathfrak{A}$}. 

\begin{marema2}
{\rm L'application $\mathfrak{B}_1\mapsto \mathfrak{A}(E)\otimes_{\mathfrak{o}_E}\mathfrak{B}_1$ est une bijection de l'ensemble des $\mathfrak{o}_E$--ordres héréditaires dans $\mathfrak{b}$ sur l'ensemble des $\mathfrak{o}$--ordres héréditaires $\mathfrak{A}_1$ dans $\mathfrak{g}$ de la forme $\mathfrak{A}_1= {\rm End}_{\mathfrak{o}}^0(\ES{L}_1)$ pour une chaîne de $\mathfrak{o}_E$--réseaux $\ES{L}_1$ possédant une $\mathfrak{o}_E$--base qui engendre $W$ sur $F$, de bijection réciproque $\mathfrak{A}_1\mapsto \mathfrak{A}_1\cap \mathfrak{b}$. 
}
\end{marema2}

\begin{marema3}
{\rm 
Soit $E'/F$ une extension telle que $E'\subset A(E)$. On suppose que $E'$ est un sous--corps maximal de $A(E)$, et que $E'^\times$ normalise $\mathfrak{A}(E)$. Alors $\mathfrak{A}(E)$ est l'unique $\mathfrak{o}$--ordre héréditaire dans $A(E)$ normalisé par $E'^\times$. On a donc $e(E'/F)=e(E/F)$ et $f(E'/F)= f(E/F)$, et pour $k\in {\Bbb Z}$, le $\mathfrak{o}$--réseau $\mathfrak{p}_E^k$ dans $E$ est un $\mathfrak{o}_{E'}$--réseau (c'est donc un $\mathfrak{o}_{E'}$--module libre de rang $1$). De plus, avec les notations de la remarque 2, l'isomorphisme de $F$--algèbres $\iota:A(E')\buildrel\simeq\over{\longrightarrow} A(E)$ se restreint en un isomorphisme $\mathfrak{A}(E')\buildrel\simeq\over{\longrightarrow} \mathfrak{A}(E)$. Identifions $E'$ au sous--corps $\tau_{W,E}(E'\otimes 1)$ de $\tau_{W,E}(A(E)\otimes_E\mathfrak{b})=\mathfrak{g}$. On a donc aussi l'identification
$$
E'= \tau_{W,E'}(E'\otimes 1)\subset \tau_{W'\!,E'}(A(E')\otimes_{E'}W)=\mathfrak{g}.
$$
Chaque $\mathfrak{o}_E$--réseau $L_i$ dans $V$ de la chaîne $\ES{L}=\{L_i\}$ définissant $\mathfrak{A}$ est aussi un $\mathfrak{o}_{E'}$--réseau. Par conséquent l'$\mathfrak{o}$--ordre héréditaire $\mathfrak{A}$ dans $\mathfrak{g}$ est normalisé par $E'^\times$, et posant $\mathfrak{B}'= \mathfrak{A}\cap \mathfrak{b}'$, la $(W,E')$--décomposition $\tau_{W,E'}: A(E')\otimes_{E'}\mathfrak{b}'\buildrel \simeq\over{\longrightarrow}\mathfrak{g}$ de $\mathfrak{g}$ se restreint en une $(W,E')$--décomposition $\mathfrak{A}(E')\otimes_{\mathfrak{o}_{E'}}\mathfrak{B}'\buildrel\simeq\over{\longrightarrow} \mathfrak{A}$ de $\mathfrak{A}$. On en déduit que l'isomorphisme de $F$--algbres $\tau_{W,E,E'}: A(E')\otimes_{E'} \mathfrak{b}'\buildrel\simeq\over{\longrightarrow} A(E)\otimes_E\mathfrak{b}$ se restreint en un isomorphisme
$$
\mathfrak{A}(E')\otimes_{\mathfrak{o}_{E'}}\mathfrak{B}'\buildrel\simeq\over{\longrightarrow} \mathfrak{A}(E)\otimes_{\mathfrak{o}_E}\mathfrak{B}.\eqno{\blacksquare}
$$
}\end{marema3}

\subsection{Une submersion}\label{une submersion}Soit $\beta\in \mathfrak{g}$ un élément pur. Posons $E=F[\beta]$ et $\mathfrak{b}={\rm End}_E(V)$. Fixons un $\mathfrak{o}$--ordre héréditaire $\mathfrak{A}$ dans $\mathfrak{g}$ normalisé par $E^\times$, posons $\mathfrak{B}=\mathfrak{A}\cap \mathfrak{b}$, et identifions $\mathfrak{A}$ à $\mathfrak{A}(E)\otimes_{\mathfrak{o}_E}\mathfrak{B}$ via le choix d'une $(W,E)$--décomposition de $\mathfrak{A}$ --- cf. \ref{des décompositions}.(3). Posons $\mathfrak{P}={\rm rad}(\mathfrak{A})$ et $\mathfrak{Q}={\rm rad}(\mathfrak{B})$. On a donc les identifications
$$
\mathfrak{g}=A(E)\otimes_E\mathfrak{b},
$$
$$
\mathfrak{P}^k= \mathfrak{A}(E)\otimes_{\mathfrak{o}_E}\mathfrak{Q}^k,\quad k\in {\Bbb Z}.
$$

Pour $k\in {\Bbb Z}$, on pose
$$
\mathfrak{N}_k(\beta,\mathfrak{A})=\{x\in \mathfrak{A}: {\rm ad}_\beta(x)\in \mathfrak{P}^k\}\subset \mathfrak{A}.
$$
C'est un $(\mathfrak{B},\mathfrak{B})$--biréseau dans $\mathfrak{g}$, qui vérifie $\mathfrak{N}_k(\beta,\mathfrak{A})\cap \mathfrak{b}= \mathfrak{B}$. D'après \cite[1.4.4]{BK}, on a
$$
\bigcap_{k\in {\Bbb Z}}\mathfrak{N}_k(\beta,\mathfrak{A})=\mathfrak{B},
$$
et pour $k$ suffisamment grand, on a $\mathfrak{N}_k(\beta,\mathfrak{A})\subset \mathfrak{B}+ \mathfrak{P}$. Si $E\neq F$, on note $k_0(\beta,\mathfrak{A})$ le plus grand entier $k$ tel que $\mathfrak{N}_k(\beta,\mathfrak{A})\not\subset \mathfrak{B} + \mathfrak{P}$; sinon, on pose $k_0(\beta,\mathfrak{A})=-\infty$. De manière équivalente \cite[1.4.11.(iii)]{BK}, si $E\neq F$, $k_0(\beta,\mathfrak{A})$ est le plus petit entier $k$ tel que $\mathfrak{P}^k\cap {\rm ad}_\beta(\mathfrak{g})\subset {\rm ad}_\beta(\mathfrak{A})$. Pour tous entiers $k,\,r$ tels que $k\geq k_0(\beta,\mathfrak{A})$ et $r\geq 1$, on a l'égalité \cite[1.4.9]{BK}
$$
\mathfrak{N}_{k+r}(\beta,\mathfrak{A})= \mathfrak{B} + \mathfrak{Q}^r\mathfrak{N}_k(\beta,\mathfrak{A}).\leqno{(1)}
$$
Rappelons qu'en \ref{l'invariant k}, on a posé $k_F(\beta)= k_0(\beta,\mathfrak{A}(E))$. D'après \cite[1.4.13]{BK}, on a
$$
\mathfrak{N}_{e(\mathfrak{B}\vert \mathfrak{o}_E)k}(\beta, \mathfrak{A})=\mathfrak{N}_k(\beta,\mathfrak{A}(E))\otimes_{\mathfrak{o}_E}\mathfrak{B},\quad k\in {\Bbb Z},
$$
et
$$
k_0(\beta,\mathfrak{A})= e(\mathfrak{B}\vert\mathfrak{o}_E)k_F(\beta).
$$
Soit $\nu_{\mathfrak{A}}:\mathfrak{g}\rightarrow {\Bbb Z}$ la \og valuation sur $\mathfrak{g}$\fg définie par la filtration $\{\mathfrak{P}^k:k\in {\Bbb Z}\}$, donnée par
$$
\nu_{\mathfrak{A}}(x)=k \Leftrightarrow x\in \mathfrak{P}^k\smallsetminus \mathfrak{P}^{k+1}.
$$
On a
$$
\nu_{\mathfrak{A}}(\beta)= \nu_E(\beta)e(\mathfrak{B}\vert \mathfrak{o}_E)\;(=-n_F(\beta)e(\mathfrak{B}\vert \mathfrak{o}_E)),
$$
par conséquent si $E\neq F$, on a $k_0(\beta,\mathfrak{A})\geq \nu_{\mathfrak{A}}(\beta)$ si et seulement si $k_F(\beta)\geq \nu_E(\beta)$, avec égalité si et seulement si $\beta$ est $F$--minimal \cite[1.4.15]{BK} (cf. \ref{l'invariant k}). 

Fixons une corestriction modérée $\bs{s}:\mathfrak{g}\rightarrow \mathfrak{b}$ sur $\mathfrak{g}$ relativement à $E/F$, et un élément $\bs{x}\in \mathfrak{A}$ tel que $\bs{s}(\bs{x})=1$. Puisque $\bs{s}(\mathfrak{g})=\mathfrak{b}$ et $\ker (\bs{s})={\rm ad}_\beta(\mathfrak{g})$, on a la décomposition
$$
\mathfrak{g}= {\rm ad}_\beta(\mathfrak{g})\oplus \bs{x}\mathfrak{b}.\leqno{(2)}
$$
Pour tout entier $k\geq k_0(\beta,\mathfrak{A})$, la décomposition (2) se précise en \cite[1.4.7]{BK}
$$
\mathfrak{P}^k = {\rm ad}_\beta(\mathfrak{N}_k(\beta,\mathfrak{A}))\oplus \bs{x}\mathfrak{Q}^k.\leqno{(3)}
$$

\begin{marema}
{\rm 
L'ensemble des $\bs{x}\in \mathfrak{A}$ tels que $\bs{s}(\bs{x})=1$ est un espace principal homogène sous ${\rm ad}_\beta(\mathfrak{g})\cap \mathfrak{A}$. Or ${\rm ad}_\beta(\mathfrak{g})={\rm ad}_\beta(A(E))\otimes_E\mathfrak{b}$, et comme $\mathfrak{B}$ est un $\mathfrak{o}_E$--module libre, on a
$$
{\rm ad}_\beta(\mathfrak{g})\cap \mathfrak{A}= {\rm ad}_\beta(\mathfrak{g})\cap (\mathfrak{A}(E)\otimes_{\mathfrak{o}_E}\mathfrak{B})=
({\rm ad}_\beta(A(E))\cap \mathfrak{A}(E))\otimes_{\mathfrak{o}_E}\mathfrak{B}.
$$
\'Ecrivons $\bs{s}=\bs{s}_0\otimes {\rm id}_{\mathfrak{b}}$, où $\bs{s}_0:A(E)\rightarrow E$ est une corestriction modérée sur $A(E)$ relativement à $E/F$. L'ensemble des $\bs{x}_0\in \mathfrak{A}(E)$ tels que $\bs{s}_0(\bs{x}_0)=1$ est un espace principal homogène sous ${\rm ad}_\beta(A(E))\cap \mathfrak{A}(E)$, et pour un tel $\bs{x}_0$, on a $\bs{s}(\bs{x}_0\otimes 1)=1$. \hfill $\blacksquare$ }
\end{marema}

\begin{mapropo}
On suppose $E\neq F$. L'application
$$
\delta: G\times \mathfrak{Q}^{k_0(\beta,\mathfrak{A})+1}\rightarrow G, \, (g,b)\mapsto g^{-1}(\beta + \bs{x}b)g
$$
est partout submersive.
\end{mapropo}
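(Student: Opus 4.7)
On esquisse la d\'emarche. Par \'equivariance \`a droite sous $G$, la submersivit\'e au point $(g,b)$ \'equivaut \`a celle en $(1,b)$. Un calcul direct donne
$$
d\delta_{(1,b)}(X,Y) = [\gamma_b, X] + \bs{x}Y, \quad (X,Y)\in \mathfrak{g}\times \mathfrak{b},
$$
o\`u $\gamma_b = \beta + \bs{x}b$, et le probl\`eme devient de v\'erifier l'\'egalit\'e ${\rm ad}_{\gamma_b}(\mathfrak{g}) + \bs{x}\mathfrak{b} = \mathfrak{g}$. Cette image \'etant un sous--$F$--espace vectoriel de $\mathfrak{g}$ et $\mathfrak{P}^{k_0}$ (on abr\`ege $k_0 = k_0(\beta,\mathfrak{A})$) engendrant $\mathfrak{g}$ sur $F$, il suffit de prouver que tout $z\in \mathfrak{P}^{k_0}$ s'\'ecrit ${\rm ad}_{\gamma_b}(X) + \bs{x}Y$ avec $X\in \mathfrak{A}$ et $Y\in \mathfrak{Q}^{k_0}$.

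Je proc\'ederais par approximations successives \`a la Hensel, en utilisant \`a chaque \'etape la d\'ecomposition (3) associ\'ee \`a $\beta$ et non \`a $\gamma_b$. Partant de $z_0=z$ et $X^{(0)}=Y^{(0)}=0$, et supposant l'erreur $z_n := z - {\rm ad}_{\gamma_b}(X^{(n)}) - \bs{x}Y^{(n)}$ dans $\mathfrak{P}^{k_0+n}$, on d\'ecompose d'apr\`es (3)
$$
z_n = {\rm ad}_\beta(\delta X_n) + \bs{x}\delta Y_n, \quad \delta X_n\in \mathfrak{N}_{k_0+n}(\beta,\mathfrak{A}),\; \delta Y_n\in \mathfrak{Q}^{k_0+n},
$$
puis on pose $(X^{(n+1)},Y^{(n+1)}) = (X^{(n)}+\delta X_n, Y^{(n)}+\delta Y_n)$. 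On obtient alors la nouvelle erreur $z_{n+1} = -[\bs{x}b, \delta X_n]$ par calcul direct, en utilisant que $z_n$ est absorb\'e par la d\'ecomposition de $\beta$.

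Le point d\'elicat sera de garantir la convergence de l'it\'eration au--del\`a de la premi\`ere \'etape. L'\'etape $n=0$ est imm\'ediate, $\delta X_0\in \mathfrak{A}$ et $\bs{x}b\in \mathfrak{P}^{k_0+1}$ entra\^{\i}nant $z_1 \in \mathfrak{P}^{k_0+1}$. Pour $n\geq 1$ en revanche, $\delta X_n\in \mathfrak{N}_{k_0+n}(\beta,\mathfrak{A})\subset \mathfrak{A}$ n'est a priori pas assez petit; il faudra exploiter la libert\'e du choix de $\delta X_n$ modulo $\ker({\rm ad}_\beta)\cap \mathfrak{N}_{k_0+n}(\beta, \mathfrak{A}) = \mathfrak{B}$. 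Gr\^ace \`a la formule (1) $\mathfrak{N}_{k_0+n}(\beta, \mathfrak{A}) = \mathfrak{B} + \mathfrak{Q}^n \mathfrak{N}_{k_0}(\beta, \mathfrak{A})$, on peut alors choisir $\delta X_n\in \mathfrak{Q}^n\mathfrak{N}_{k_0}(\beta,\mathfrak{A})\subset \mathfrak{P}^n$. Avec ce choix, $z_{n+1}\in \mathfrak{P}^{k_0+n+1}$; les s\'eries $\sum \delta X_n$ et $\sum \delta Y_n$ convergent $\mathfrak{P}$--adiquement dans $\mathfrak{A}$ et $\mathfrak{Q}^{k_0}$ respectivement, et leur limite $(X,Y)$ r\'esout l'\'equation par continuit\'e.
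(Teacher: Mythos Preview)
Your proof is correct and follows essentially the same strategy as the paper: reduce to surjectivity of the differential at $(1,b)$, then use successive approximations based on the decomposition (3) for $\beta$ (not $\gamma_b$), with the perturbation $[\bs{x}b,\cdot]$ providing the gain of one level at each step. The only stylistic difference is that the paper works directly with $\mathfrak{Q}^{i-k_0}\mathfrak{N}_{k_0}(\beta,\mathfrak{A})$ from the outset (so that ${\rm ad}_{\gamma_b}(y)\equiv{\rm ad}_\beta(y)\pmod{\mathfrak{P}^{i+1}}$ is immediate), whereas you first decompose in $\mathfrak{N}_{k_0+n}(\beta,\mathfrak{A})$ and then invoke formula (1) to choose the representative in $\mathfrak{Q}^n\mathfrak{N}_{k_0}(\beta,\mathfrak{A})$; the two are equivalent since ${\rm ad}_\beta$ kills $\mathfrak{B}$.
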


\begin{proof}
Posons $k_0=k_0(\beta,\mathfrak{A})$. Puisque $\delta(g,b)=g^{-1}\delta(1,b)g$, il suffit de prouver que pour tout 
$b_1\in \mathfrak{Q}^{k_0+1}$, la différentielle $d\delta_{(1,b_1)}$ de $\delta$ en le point $(1,b_1)$ est surjective. Fixons un élément $b_1
\in \mathfrak{Q}^{k_0+1}$. En identifiant l'espace tangent à $G\times \mathfrak{Q}^{k_0+1}$ au point $(1,b_1)$ à $\mathfrak{g}\times \mathfrak{b}$, et l'espace tangent à $G$ au point $\gamma_1= \beta + \bs{x}b_1$ à $\mathfrak{g}$, la différentielle $d\delta_{(1,b_1)}: \mathfrak{g}\times \mathfrak{b}\rightarrow \mathfrak{g}$ s'écrit
$$
d\delta_{1,b_1}(y,b)= \gamma_1y -y\gamma_1 + \bs{x}b.
$$
Pour $i\in {\Bbb Z}$ et $y\in \mathfrak{Q}^{i-k_0}\mathfrak{N}_{k_0}(\beta,\mathfrak{A})=\mathfrak{N}_{k_0}(\beta,\mathfrak{A})\mathfrak{Q}^{i-k_0}$, on a
$$
\gamma_1y - y\gamma_1\equiv {\rm ad}_\beta(y) \quad ({\rm mod}\;\mathfrak{P}^{i+1}).
$$
Comme d'après (3), on a la décomposition ${\rm ad}_\beta(\mathfrak{N}_{k_0}(\beta,\mathfrak{A})\oplus \bs{x}\mathfrak{Q}^{k_0}=\mathfrak{P}^{k_0}$, on obtient que
$$
d\delta_{(1,b_1)}(\mathfrak{Q}^{i-k_0}\mathfrak{N}_{k_0}(\beta,\mathfrak{A})\times \mathfrak{Q}^{i})) + \mathfrak{P}^{i+1}= \mathfrak{P}^i,\quad i\in {\Bbb Z}.
$$
Par approximations successives, on en déduit que
$$
d\delta_{(1,b_1)}(\mathfrak{Q}^{i-k_0}\mathfrak{N}_{k_0}(\beta,\mathfrak{A})\times \mathfrak{Q}^{i}))= \mathfrak{P}^i,\quad i\in {\Bbb Z}.
$$
Par conséquent $d\delta_{(1,b_1)}(\mathfrak{g}\times \mathfrak{b})=\mathfrak{g}$ et la proposition est démontrée. 
\end{proof}

\subsection{Raffinement}\label{raffinement}
Une {\it strate dans $\mathfrak{g}$} est par définition un quadruplet $[\mathfrak{A},n,r,\gamma]$, où $\mathfrak{A}$ est un $\mathfrak{o}$--ordre héréditaire dans $\mathfrak{g}$, $n$ et $r$ sont deux entiers tels que $n>r$, et $\gamma$ est un élément de $\mathfrak{g}$ tel que $\nu_\mathfrak{A}(\gamma)\geq -n$. Une telle strate équivaut donc à la donnée d'un élément $\gamma+\mathfrak{P}^{-r}$ dans le groupe quotient $\mathfrak{P}^{-n}/\mathfrak{P}^{-r}$, o on a pos $\mathfrak{P}={\rm rad}(\mathfrak{A})$. D'ailleurs deux strates $[\mathfrak{A},n,r,\gamma]$ et $[\mathfrak{A}',n',r', \gamma']$ dans $\mathfrak{g}$ sont dites {\it quivalentes} si $\mathfrak{A}=\mathfrak{A}'$, $n'=n$, $r'=r$, et si $\gamma'-\gamma \in \mathfrak{P}^{-r}$. On rappelle la définition de strate {\it pure}, resp. {\it simple}, dans $\mathfrak{g}$ \cite[1.5.5]{BK}:

\begin{madefi}
{\rm Une strate $[\mathfrak{A},n,r,\gamma]$ dans $\mathfrak{g}$ est dite:
\begin{itemize}
\item {\it pure} si l'élément $\gamma$ est pur, $F[\gamma]^\times$ normalise $\mathfrak{A}$, et $\nu_{\mathfrak{A}}(\gamma)= -n$;
\item {\it simple} si elle est pure, et si $r<-k_0(\gamma,\mathfrak{A})$.
\end{itemize}
}
\end{madefi}

\begin{marema1}
{\rm Soit $[\mathfrak{A},n,n-1,\gamma]$ une strate simple dans $\mathfrak{g}$. On a $\nu_{\mathfrak{A}}(\gamma)=-n$ 
et $n-1<-k_0(\gamma,\mathfrak{A})$. Si de plus $F[\gamma]\neq F$, comme on a aussi $-n \leq k_0(\gamma,\mathfrak{A})$, cette inégalité est une égalité. Dans tous les cas, l'élément $\gamma$ est $F$--minimal. \hfill $\blacksquare$
}
\end{marema1}

\`A une strate dans $\mathfrak{g}$ de la forme $[\mathfrak{A},n,n-1,\gamma]$ est associ comme suit un polynme caractristique $\phi_\gamma=\phi_{[\mathfrak{A},n,n-1,\gamma]}\in \kappa[t]$. 
Rappelons sa définition \cite[2.3]{BK}. On pose $e=e(\mathfrak{A}\vert \mathfrak{o})$, $\mathfrak{P}={\rm rad}(\mathfrak{A})$, on choisit une uniformisante $\varpi$ de $F$, et on note $\delta= (e, n)\geq 1$ le plus grand diviseur commun de $e$ et $n$. Alors $y_\gamma = \varpi^{n/\delta}\gamma^{e/\delta}+ \mathfrak{P}$ est un élément de $\mathfrak{A}/\mathfrak{P}$, qui ne dépend que de la classe d'équivalence de la strate $[\mathfrak{A},n,n-1, \gamma]$. Si $\mathfrak{A}= {\rm End}_{\mathfrak{o}}^0(\ES{L})$ pour une chane de $\mathfrak{o}$--rseaux $\ES{L}=\{L_i\}$ dans $V$, on a les identifications
$$
\mathfrak{A}/\mathfrak{P}= \coprod_{i=0}^{e-1}{\rm End}_{\kappa}(L_i/L_{i+1})\subset 
{\rm End}_{\kappa}(L_0/ \mathfrak{p}L_0),
$$
et on note $\phi_\gamma\in \kappa[t]$ le polynôme caractéristique de $y_\gamma\in {\rm End}_{\kappa}(L_0/ \mathfrak{p}L_0)$ --- à ne pas confondre avec le polynôme caractéristique $\zeta_\gamma\in F[t]$ du $F$--endomorphisme $\gamma$ de $V$. Tout comme l'élément $y_\gamma$, il ne dépend que de la classe d'équivalence de la strate $[\mathfrak{A},n,n-1,\gamma]$.

Soit $[\mathfrak{A},n,r,\beta]$ une strate simple dans $\mathfrak{g}$. On a donc $n= -\nu_{\mathfrak{A}}(\beta)$ et
$$
r<\inf \{-k_0(\beta,\mathfrak{A}),n\}.
$$
Posons $E=F[\beta]$, $\mathfrak{b}={\rm End}_E(V)$, et notons $\mathfrak{B}$ l'$\mathfrak{o}_E$--ordre héréditaire $\mathfrak{A}\cap \mathfrak{b}$ dans $\mathfrak{b}$. Puisque $k_0(\beta,\mathfrak{A})=k_F(\beta)e(\mathfrak{B}\vert \mathfrak{o}_E)$ et $n = n_F(\beta)e(\mathfrak{B}\vert \mathfrak{o}_E)$, on a
$$
{r\over e(\mathfrak{B}\vert \mathfrak{o}_E)}< \inf(-k_F(\beta),n_F(\beta)\}.
$$
Posons $\mathfrak{P}={\rm rad}(\mathfrak{A})$ et $\mathfrak{Q}={\rm rad}(\mathfrak{B})$. 
Fixons une corestriction modérée $\bs{s}:\mathfrak{g}\rightarrow \mathfrak{b}$ sur $\mathfrak{g}$ relativement à $E/F$. La proposition suivante est due à Bushnell--Kutzko 
\cite[2.2.3]{BK}, et son corollaire est prouvé dans \cite[5.3.2]{L2}.

\begin{mapropo1}
Soit $[\mathfrak{B},r,r-1,b]$ une strate simple dans $\mathfrak{b}$ telle que $E[b]\;(=F[\beta, b])$ est un sous--corps maximal de $\mathfrak{b}$. Soit $\gamma = \beta + y$ pour un $y\in \mathfrak{P}^{-r}$ tel que $\bs{s}(y)=b$. 
La strate $[\mathfrak{A},n,r-1,\gamma]$ dans $\mathfrak{g}$ est simple, et l'extension $F[\gamma]/F$ vérifie $e(F[\gamma]/F)=
e(E[b]/F)$ et $f(F[\gamma]/F)=f(E[b]/F)$. En particulier, $F[\gamma]$ est un sous--corps maximal de $\mathfrak{g}$. De plus, on a
$$
k_F(\gamma)=\left\{\begin{array}{ll}
-r=k_E(b)& \mbox{si $E[b]\neq E$}\\
k_0(\beta, \mathfrak{A})& \mbox{sinon}
\end{array}\right..
$$
\end{mapropo1}

\begin{moncoro1}
Soit $\bs{s}_b: \mathfrak{b}\rightarrow E[b]$ une corestriction modérée sur $\mathfrak{b}$ relativement à $E[b]/E$. Il existe une corestriction modérée $\bs{s}_\gamma: \mathfrak{g}\rightarrow F[\gamma]$ sur $\mathfrak{g}$ relativement à $F[\gamma]/F$ telle que pour tout $k\in {\Bbb Z}$ et tout $y\in \mathfrak{P}^k$, on a
$$
\bs{s}_\gamma(y) \equiv \bs{s}_b \circ \bs{s}(y) \quad ({\rm mod}\; \mathfrak{P}^{k+1}).
$$
\end{moncoro1}

\begin{marema2}
{\rm 
D'après le corollaire 1, pour $k\in {\Bbb Z}$, on a les égalités
$$
\mathfrak{p}_{F[\gamma]}^k+ \mathfrak{P}^{k+1}= \mathfrak{p}_{E[b]}^k+ \mathfrak{P}^{k+1}
$$
et
$$
\ker(s_\gamma\vert_{\mathfrak{P}^k})+ \mathfrak{P}^{k+1} = \ker(\bs{s}_b\circ \bs{s}\vert_{\mathfrak{P}^k})+ \mathfrak{P}^{k+1}.
$$
Via les identifications naturelles
$$
(\mathfrak{p}_{F[\gamma]}^k+ \mathfrak{P}^{k+1})/\mathfrak{P}^{k+1}= \mathfrak{p}_{F[\gamma]}^k/
\mathfrak{p}_{F[\gamma]}^{k+1},
$$
on a donc une identification (naturelle) $\mathfrak{p}_{F[\gamma]}^k/\mathfrak{p}_{F[\gamma]}^{k+1}= 
\mathfrak{p}_{E[b]}^k/\mathfrak{p}_{E[b]}^{k+1}$, 
et cette dernière co\"{\i}ncide avec celle donnée par 
l'isomorphisme $\mathfrak{p}_{E[b]}^k/\mathfrak{p}_{E[b]}^{k+1}\buildrel\simeq\over{\longrightarrow} \mathfrak{p}_{F[\gamma]}^k/\mathfrak{p}_{F[\gamma]}^{k+1}$ déduit, par restriction et passage aux quotients, de l'application $E[b]\rightarrow F[\gamma],\,  y\mapsto \bs{s}_\gamma(\tilde{\bs{x}}y)$; où $\tilde{\bs{x}}$ est un élément de $\mathfrak{A}$ tel que $\bs{s}_b\circ \bs{s}(\tilde{\bs{x}})=1$.
\hfill $\blacksquare$
}
\end{marema2}

La strate $[\mathfrak{A},n,r-1, \beta + y]$ est un {\it raffinement} de la strate simple $[\mathfrak{A},n,r,\beta]$ dans $\mathfrak{g}$, de {\it strate dérivée associée} la strate simple $[\mathfrak{B},r,r-1,b]$ dans $\mathfrak{b}$. Nous allons voir plus loin (\ref{approximation}) que tout élément $\gamma\in G_{\rm qre}$ définit une strate simple $[\mathfrak{A}_\gamma,n,r-1,\gamma]$ dans $\mathfrak{g}$ avec $r=-k_F(\gamma)$, que l'on peut réaliser comme un raffinement de la forme ci--dessus.

Identifions $\mathfrak{A}$ à $\mathfrak{A}(E)\otimes_{\mathfrak{o}_E}\mathfrak{B}$ via le choix d'une $(W,E)$--décomposition de $\mathfrak{A}$ --- cf. \ref{des décompositions}.(3). Soit $\bs{s}_0: A(E)\rightarrow E$ la corestriction modérée sur $A(E)$ relativement à $E/F$ telle que $\bs{s}= \bs{s}_0\otimes {\rm id}_{\mathfrak{b}}$. Fixons 
un élément $\bs{x}_0\in \mathfrak{A}(E)$ tel que $\bs{s}_0(\bs{x}_0)=1$. Posons $\bs{x}=\bs{x}_0\otimes 1\in \mathfrak{A}$. On a donc $\bs{s}(\bs{x})=1$. La proposition suivante est une simple variante de la proposition 1: le choix particulier de $y=\bs{x}_0\otimes b$ permet de supprimer 
l'hypothèse que $E[b]$ est un sous--corps maximal de $\mathfrak{b}$. 

\begin{mapropo2}
Soit $[\mathfrak{B},r,r-1,b]$ une strate simple dans $\mathfrak{b}$, et soit 
$\gamma=\beta +\bs{x}_0\otimes b$. La strate $[\mathfrak{A},n,r-1,\gamma]$ dans $\mathfrak{g}$ est simple, et 
l'extension $F[\gamma]/F$ vérifie $e(F[\gamma]/F)= e(E[b]/F)$ et $f(F[\gamma]/F)=f(E[b]/F)$.
De plus on a
$$
k_0(\gamma,\mathfrak{A})=\left\{\begin{array}{ll}
-r =k_0(b,\mathfrak{B}) & \mbox{si $E[b]\neq E$}\\
k_0(\beta,\mathfrak{A})& \mbox{sinon}\end{array}\right..
$$
\end{mapropo2}

\begin{proof} On a $\gamma =\beta + \bs{x}b$. Posons $E_1=E[b]$, $\mathfrak{a}_1={\rm End}_E(E_1)$ et $\mathfrak{b}_1={\rm End}_{E_1}(V)$. Soit $\mathfrak{A}_1$ l'$\mathfrak{o}_E$--ordre héréditaire ${\rm End}_{\mathfrak{o}_E}^0(\{\mathfrak{p}_{E_1}^i\})$ dans $\mathfrak{a}_1$, et soit $\mathfrak{B}_1$ l'$\mathfrak{o}_{E_1}$--ordre héréditaire $\mathfrak{B}\cap \mathfrak{b}_1$ dans $\mathfrak{b}_1$. Identifions $\mathfrak{B}$ à $\mathfrak{A}_1\otimes_{\mathfrak{o}_{E_1}}\mathfrak{B}_1$ via le choix d'une $(W_1,E_1)$--décomposition de $\mathfrak{B}$. On a donc les identifications
$$
\mathfrak{g}=A(E)\otimes_E\mathfrak{a}_1\otimes_{E_1}\mathfrak{b}_1,\quad 
\mathfrak{A}= \mathfrak{A}(E)\otimes_{\mathfrak{o}_E}\mathfrak{A}_1\otimes_{\mathfrak{o}_{E_1}}\mathfrak{B}_1.
$$
D'autre part, en identifiant $\mathfrak{A}(E_1)$ à $\mathfrak{A}(E)\otimes_{\mathfrak{o}_E} \mathfrak{A}_1$ via le choix d'une $(X,E)$--décomposition de $\mathfrak{A}(E_1)$, on a aussi les identifications
$$
\mathfrak{g}= A(E_1)\otimes_{E_1} \mathfrak{b}_1,\quad \mathfrak{A}= \mathfrak{A}(E_1)\otimes_{\mathfrak{o}_{E_1}}\mathfrak{B}_1.
$$

Soit $\bs{s}_1:A(E_1)\rightarrow \mathfrak{a}_1$ la corestriction modérée $\bs{s}_0\otimes {\rm id}_{\mathfrak{a}_1}$ sur $A(E_1)$ relativement à $E/F$. L'élément $\bs{x}_1= \bs{x}_0\otimes 1$ de $\mathfrak{A}(E)\otimes_{\mathfrak{o}_E}\mathfrak{A}_1$ vérifie $\bs{s}_1(\bs{x}_1)=1$, et l'élément $\bs{x}_1\otimes 1$ de $ \mathfrak{A}(E_1)\otimes_{\mathfrak{o}_{E_1}}\mathfrak{B}_1$ co\"{\i}ncide avec $\bs{x}$. \'Ecrivons $b=a_1\otimes 1$ avec $a_1\in \mathfrak{a}_1$, et posons $\gamma_1= \beta + \bs{x}_1 a_1\in A(E_1)$. Posons $e_1=e(\mathfrak{B}_1\vert \mathfrak{o}_{E_1})$, $n_1=n/e_1$ et $r_1=r/e_1$. Puisque $k_0(b, \mathfrak{B})=e_1k_E(b)$, la strate $[\mathfrak{A}_1,r_1,r_1-1,a_1]$ dans $\mathfrak{a}_1$ est simple, et $a_1$ est $E$--minimal. On a
$$
k_0(\beta,\mathfrak{A}(E_1))=k_F(\beta)e(\mathfrak{A}_1\vert \mathfrak{o}_E)=k_F(\beta)e(E_1/E).
$$
Comme $e(\mathfrak{B}\vert\mathfrak{o}_E)= e(E_1/E)e_1$, puisque $r<\inf \{-k_0(\beta,\mathfrak{A}),n\}$, on a
$$
r_1<\inf \{-k_0(\beta,\mathfrak{A}_1),n_1\}.
$$
En particulier, la strate $[\mathfrak{A}(E_1),n_1,r_1,\beta]$ dans $A(E_1)$ est simple. Puisque $E_1$ est un sous--corps maximal de $A(E_1)$, d'après la proposition 1, la strate $[\mathfrak{A}(E_1),n_1,r_1-1, \gamma_1]$ dans $A(E_1)$ est simple, et l'extension $E'_1=F[\gamma_1]$ de $F$ vérifie $e(E'_1/F)=e(E_1/F)$ et $f(E'_1/F)=f(E_1/F)$. 
De plus on a
$$
k_0(\gamma_1,\mathfrak{A}(E_1))= \left\{\begin{array}{ll}
-r_1=k_0(b,\mathfrak{A}_1) & \mbox{si $E_1\neq E$}\\
k_0(\beta,\mathfrak{A}(E_1)) & \mbox{sinon}\end{array}\right..
$$
L'élément $\gamma_1\otimes 1$ de $A(E_1)\otimes_{E_1}\mathfrak{b}_1$ co\"{\i}ncide avec $\gamma$, et l'extension $K=F[\gamma]$ de $F$ est isomorphe à $E'_1$. On a donc donc $e(K/F)=e(E_1/F)$ et $f(K/F)=f(E_1/F)$. 
D'autre part, comme on a  $k_0(\gamma,\mathfrak{A})=e_1k_0(\gamma_1,\mathfrak{A}(E_1))$, $k_0(\beta,\mathfrak{A})=e_1k_0(\beta,\mathfrak{A}(E_1))$ et  $k_0(b,\mathfrak{B})= e_1 k_0(b,\mathfrak{A}_1)$, on a aussi
$$
k_0(\gamma,\mathfrak{A})=\left\{\begin{array}{ll}
-r =k_0(b,\mathfrak{B}) & \mbox{si $E_1\neq E$}\\
k_0(\beta,\mathfrak{A})& \mbox{sinon}\end{array}\right..
$$
La strate $[\mathfrak{A},n,r-1,\gamma]$ dans $\mathfrak{g}$ est pure, donc simple, et la proposition est démon\-trée.
\end{proof}

\begin{marema3}
{\rm 
Sous les hypothèses de la proposition 2, si $E_1=E$, on a $k_0(b,\mathfrak{B})=-\infty$ et $k_0(\gamma, \mathfrak{A})=k_0(\beta,\mathfrak{A})$, avec $k_0(\beta,\mathfrak{A})>-(r-1)$ si $E\neq F$, et $k_0(\beta,\mathfrak{A})=-\infty$ sinon. \hfill $\blacksquare$
}
\end{marema3}

\begin{moncoro2}
Soit $\bs{s}_b: \mathfrak{b}\rightarrow {\rm End}_{E[b]}(V)$ une corestriction modérée sur $\mathfrak{b}$ relativement à $E[b]/E$. Il existe une corestriction modérée $\bs{s}_\gamma: \mathfrak{g}\rightarrow {\rm End}_{F[\gamma]}(V)$ sur $\mathfrak{g}$ relativement à $F[\gamma]/F$ telle que pour tout $k\in {\Bbb Z}$ et tout $y\in \mathfrak{P}^k$, on a
$$
\bs{s}_\gamma(y) \equiv \bs{s}_b \circ \bs{s}(y) \quad ({\rm mod}\; \mathfrak{P}^{k+e});
$$
où on a posé $e=e(\mathfrak{A}\cap {\rm End}_{E[b]}(V)\vert \mathfrak{o}_{E[b]})$. 
\end{moncoro2}

\begin{proof}
Continuons avec les notations de la démonstration de la proposition 2. La corestriction modérée $\bs{s}_b: \mathfrak{b}\rightarrow \mathfrak{b}_1$ sur $\mathfrak{b}$ relativement à $E_1/E$ s'écrit $\bs{s}_b = \bs{s}_{a_1}\otimes {\rm id}_{\mathfrak{b}_1}$, où $\bs{s}_{a_1}: \mathfrak{a}_1\rightarrow E_1$ est une corestriction modérée sur $\mathfrak{a}_1$ relativement à $E_1/E$. Soit $\bs{s}_1: A(E_1)\rightarrow \mathfrak{a}_1$ la corestriction modérée $\bs{s}_0\otimes {\rm id}_{\mathfrak{a}_1}$ sur $A(E_1)= A(E)\otimes_E\mathfrak{a}_1$ relativement à $E/F$. D'après le corollaire 1, il existe une corestriction modérée $\bs{s}_{\gamma_1}: A(E_1)\rightarrow F[\gamma_1]$ sur $A(E_1)$ relativement à $F[\gamma_1]/F$ telle que pour tout $k\in {\Bbb Z}$ et tout $y_1\in \mathfrak{P}^k(E_1)$, on a
$$
\bs{s}_{\gamma_1}(y_1)\equiv \bs{s}_{a_1}\circ \bs{s}_1(y_1)\quad ({\rm mod}\;\mathfrak{P}^{k+1}(E_1))
$$
avec
$$
\bs{s}_{a_1}\circ \bs{s}_1= \bs{s}_0\otimes \bs{s}_{a_1}: A(E)\otimes_{E}\mathfrak{a}_1= A(E_1)\rightarrow E_1.
$$
On a les identifications
$$
\mathfrak{g}= A(E_1)\otimes_{E_1} \mathfrak{b}_1,\quad \mathfrak{A}= \mathfrak{A}(E_1)\otimes_{\mathfrak{o}_{E_1}}\mathfrak{B}_1.
$$
Elles sont données par une $(X_1,E_1)$--décomposition 
$\tau_{X_1,E_1}: A(E_1)\otimes_{E_1}\mathfrak{b}_1 \buildrel\simeq\over{\longrightarrow} \mathfrak{g}$ de $\mathfrak{g}$ qui se restreint 
en une $(X_1,E_1)$--décomposition $\mathfrak{A}(E_1)\otimes_{\mathfrak{o}_{E_1}}\mathfrak{B}_1
\buildrel\simeq\over{\longrightarrow} \mathfrak{A}$ de $\mathfrak{A}$.
Posons $E'_1=F[\gamma_1]$. D'aprs la remarque 3 de \ref{des dcompositions}, le groupe $E'^\times_1$ normalise $\mathfrak{A}(E_1)$, et on a $e(E'_1/F)=e(E_1/F)$ et $f(E'_1/F)=f(E_1/F)$. Posons $\mathfrak{b}'_1={\rm End}_{E'_1}(V)$ et $\mathfrak{B}'_1= \mathfrak{A}\cap \mathfrak{b}'_1$. Puisque $E'_1$ est un sous--corps maximal de $A(E_1)$, le sous--$F$--espace vectoriel $F\subset E_1$ engendre $E_1$ sur $E'_1$, et l'inclusion $E'_1\subset A(E_1)$ se prolonge en un isomorphisme de $F$--algèbres 
$\iota: A(E'_1)\buildrel\simeq\over{\longrightarrow} A(E_1)$ --- cf. la remarque 1 de \ref{des décompositions}. 
Posons $\mathfrak{Q}_1={\rm rad}(\mathfrak{B}_1)$ et 
$\mathfrak{Q}'_1= {\rm rad}(\mathfrak{B}'_1)$. 
D'après la remarque 3 de \ref{des décompositions}, la $(X_1,E'_1)$--décomposition $\tau_{X_1,E'_1}: A(E'_1)\otimes_{E'_1}\mathfrak{b}'_1\buildrel\simeq\over{\longrightarrow} \mathfrak{g}$ de $\mathfrak{g}$ se restreint en une 
$(X_1,E'_1)$--décomposition $\mathfrak{A}(E'_1)\otimes_{\mathfrak{o}_{E'_1}}\mathfrak{B}_1\buildrel\simeq\over{\longrightarrow} \mathfrak{A}$ de $\mathfrak{A}$, et $\iota$ se prolonge naturellement en un isomorphisme de $F$--algèbres
$$
\tau_{X_1,E,E'}:A(E'_1)\otimes_{E'_1}\mathfrak{b}'_1 \buildrel \simeq\over{\longrightarrow} A(E_1)\otimes_{E_1} \mathfrak{b}_1\leqno{(1)}
$$
qui est compatible à $\tau_{X_1,E'_1}$ et $\tau_{X_1,E_1}$. De plus, pour chaque $k\in {\Bbb Z}$, ce dernier se restreint en un isomorphisme
$$
\mathfrak{A}(E'_1)\otimes_{\mathfrak{o}_{E'_1}}\mathfrak{Q}'^k_1 \buildrel \simeq\over{\longrightarrow} \mathfrak{A}(E_1)\otimes_{\mathfrak{o}_{E_1}} \mathfrak{Q}^k_1.\leqno{(2)}
$$
En particulier, pour $k=0$, on a un isomorphisme
$$
\mathfrak{A}(E'_1)\otimes_{\mathfrak{o}_{E'_1}}\mathfrak{B}'_1 \buildrel \simeq\over{\longrightarrow} \mathfrak{A}(E_1)\otimes_{\mathfrak{o}_{E_1}} \mathfrak{B}_1.\leqno{(3)}
$$
Identifions $E'_1$ au sous--corps $\tau_{X_1,E_1}(E'_1\otimes 1)= \tau_{X_1,E'_1}(E'_1\otimes 1)$ de $\mathfrak{g}$. 
Rappelons que $\gamma =\gamma_1\otimes 1$. On a donc $E'_1=F[\gamma]$. 
Notons $\bs{s}_\gamma: \mathfrak{g}\rightarrow \mathfrak{b}'_1$ l'application 
$((\bs{s}_{\gamma_1}\circ \iota)\otimes{\rm id}_{\mathfrak{b}'_1})\circ \tau_{X_1,E'_1}^{-1}$ sur $\mathfrak{g}$. C'est une corestriction modre sur $\mathfrak{g}$ relativement à $E'_1/F$. Pour $k\in {\Bbb Z}$ et $y'_1\otimes b'_1 \in \mathfrak{A}(E'_1)\otimes_{\mathfrak{o}_{E'_1}}\mathfrak{Q}'^k_1$, posant 
$y=\tau_{X_1,E'_1}(y'_1\otimes b'_1)$ et $y_1= \iota (y'_1)$, on a
\begin{eqnarray*}
\bs{s}_\gamma (y)&= &\bs{s}_{\gamma_1}(y_1)b'_1\\
& \equiv & \tau_{X_1,E'_1}(\iota^{-1} (\bs{s}_{a_1}\circ \bs{s}_1(y_1))\otimes b'_1)\quad ({\rm mod}\;\tau_{X_1,E'_1}(\mathfrak{P}(E'_1)\otimes_{\mathfrak{o}_{E'_1}}\mathfrak{Q}'_1)).
\end{eqnarray*}
Comme on a
$$
\tau_{X_1,E'_1}(\mathfrak{A}(E'_1)\otimes_{\mathfrak{o}_{E'_1}}\mathfrak{Q}'^k_1) = \mathfrak{P}^k
$$
et
$$
\tau_{X_1,E'_1}(\mathfrak{P}(E'_1)\otimes_{\mathfrak{o}_{E'_1}}\mathfrak{Q}'^k_1) = 
\tau_{X_1,E'_1}(\mathfrak{A}(E'_1)\otimes_{\mathfrak{o}_{E'_1}}\mathfrak{Q}'^{k+e}_1)=\mathfrak{P}^{k+e},
$$
le corollaire est dmontr.
\end{proof}

Continuons avec les hypothses de la proposition 2: $[\mathfrak{B},r,r-1,b]$ est une strate simple dans $\mathfrak{b}\;(={\rm End}_E(V))$ et $\gamma= \beta + \bs{x}_0 \otimes b$. Posons $E_1=E[b]$, $\mathfrak{b}_1={\rm End}_{E_1}(V)$, $\mathfrak{B}_1= \mathfrak{A}\cap \mathfrak{b}_1$ et $\mathfrak{Q}_1= {\rm rad}(\mathfrak{B}_1)$. Posons $E'_1=F[\gamma]$. On sait que $E'^\times_1$ normalise $\mathfrak{A}$, et que $e(E'_1/F)=e(E_1/F)$ et $f(E'_1/F)= f(E_1/F)$. Posons $\mathfrak{b}'_1={\rm End}_{E'_1}(V)$, $\mathfrak{B}'_1= \mathfrak{A}\cap \mathfrak{b}'_1$ et $\mathfrak{Q}'_1= {\rm rad}(\mathfrak{B}'_1)$. Posons aussi $e=e(\mathfrak{B}_1\vert\mathfrak{o}_{E_1})\;(=e(\mathfrak{B}'_1\vert \mathfrak{o}_{E'_1})$. D'après le corollaire 2, pour $k\in {\Bbb Z}$, on a les égalités
$$
\mathfrak{Q}'^k_1 + \mathfrak{P}^{k+e}= \mathfrak{Q}^k_1 + \mathfrak{P}^{k+e}
$$
et
$$
\ker(\bs{s}_\gamma\vert_{\mathfrak{P}^k})+ \mathfrak{P}^{k+e}= \ker(\bs{s}_b\circ \bs{s}\vert_{\mathfrak{P}^k})+ \mathfrak{P}^{k+e}.
$$
Comme dans la remarque 2, on en déduit une identification naturelle $\mathfrak{Q}'^k_1/\mathfrak{Q}'^{k+e}_1= \mathfrak{Q}^k_1/\mathfrak{Q}^{k+e}_1$, 
qui co\"{\i}ncide avec celle donnée par l'isomorphisme de $\kappa_{E_1}\;(=\kappa_{E'_1})$--espaces vectoriels 
$$
\mathfrak{Q}^k_1/\mathfrak{Q}^{k+e}_1 \buildrel\simeq \over{\longrightarrow} \mathfrak{Q}'^k_1/\mathfrak{Q}'^{k+e}_1\leqno{(4)}
$$ déduit (par restriction et passage aux quotients) de l'application $\mathfrak{b}_1 \rightarrow \mathfrak{b}'_1,\, b_1 \mapsto \bs{s}_\gamma(\tilde{\bs{x}} b_1)$; où $\tilde{\bs{x}}$ est un élément de $\mathfrak{A}$ tel que $\bs{s}_b\circ \bs{s}(\tilde{\bs{x}})=1$. 

\begin{mapropo3}
Soit un entier $s\leq r-1$, et soit $[\mathfrak{B}_1,s,s-1,c]$ une strate simple dans $\mathfrak{b}_1$ telle que $E_1[c]\;(=E[b,c])$ est un sous--corps maximal de $\mathfrak{b}_1$. Soit $\bar{\gamma} = \gamma + z$ pour un lment $z\in \mathfrak{P}^{-s}$ tel que $\bs{s}_b\circ \bs{s}(z)=c$. La strate $[\mathfrak{A},n,s-1,\bar{\gamma}]$ dans $\mathfrak{g}$ est simple, et l'extension $F[\bar{\gamma}]/F$ vrifie $e(F[\bar{\gamma}]/F)= e(E_1[c]/F)$ et $f(F[\bar{\gamma}]/F)= f(E_1[c]/F)$. En particulier, $F[\bar{\gamma}]$ est un sous--corps maximal de $\mathfrak{g}$. De plus on a
$$
k_F(\bar{\gamma})=\left\{\begin{array}{ll}
-s=k_{E_1}(c)& \mbox{si $E_1[c]\neq E_1$}\\
k_0(\gamma, \mathfrak{A})& \mbox{sinon}
\end{array}\right..
$$
\end{mapropo3}

\begin{proof}Puisque $s\leq r-1$, d'aprs la proposition 2, la strate $[\mathfrak{A},n,s,\gamma]$ dans $\mathfrak{g}$ est simple. D'aprs le corollaire 2, il existe une corestriction modre $s_{\gamma}: \mathfrak{g}\rightarrow \mathfrak{b}'_1$ sur $\mathfrak{g}$ relativement  $E'_1/F$ telle que pour tout $k\in {\Bbb Z}$ et tout $y\in \mathfrak{P}^k$, on a
$$
\bs{s}_{\gamma}(y) \equiv \bs{s}_b \circ \bs{s}(y) \quad ({\rm mod}\; \mathfrak{P}^{k+e}).
$$
L'lment $c'= s_{\gamma}(z)$ appartient  $\mathfrak{Q}'^{-s}_1$, et si $E_1[c]\neq E_1$, les entiers $s$ et $e$ sont premiers entre eux. D'après (4), les polynmes caractristiques $\phi_c\in \kappa_{E_1}[t]$ et $\phi_{c'}\in 
\kappa_{E'_1}[t]$ associs aux strates $[\mathfrak{B}_1,s,s-1,c]$ et $[\mathfrak{B}'_1,s,s-1,c']$, co\"{\i}ncident. Par conséquent 
la strate $[\mathfrak{B}'_1,s,s-1,c']$ dans $\mathfrak{b}'_1$ est simple. Comme $E'_1[c']$ est un sous--corps maximal de $\mathfrak{b}'_1$, on peut appliquer la proposition 1. Remarquons que $e(E'_1[c']/F)= e(E_1[c]/F)$ et $f(E'_1[c']/F)=f(E_1[c]/F)$, et que $k_{E'_1}(c')= k_{E_1}(c)$. D'o le résultat.
\end{proof}

\subsection{Approximation}\label{approximation}Soit un élément $\gamma\in G_{\rm qre}$. On suppose que $\gamma$ {\it n'est pas $F$--minimal}. Posons $n= n_F(\gamma)\;(=-\nu_{F[\gamma]}(\gamma))$ et $r=-k_F(\gamma)$. Puisque $\gamma$ n'est pas $F$--minimal, on a $r>n$. Posons $\mathfrak{A}=\mathfrak{A}_\gamma$ et $\mathfrak{P}=\mathfrak{P}_\gamma$. On a donc $n=-\nu_{\mathfrak{A}}(\gamma)$. La strate $\bs{S}_\gamma=[\mathfrak{A},n,r,\gamma]$ dans $\mathfrak{g}$ est pure, et d'après 
\cite[2.4.1]{BK}, elle est équivalente à une strate simple $[\mathfrak{A},n,r,\beta]$. Par définition, $\beta$ est un élément de $\gamma + \mathfrak{P}^{-r}$. D'après loc.~cit., $e(F[\beta]/F)$ divise $e(F[\gamma]/F)$ et $f(F[\beta]/F)$ divise $f(F[\gamma]/F)$, et parmi les strates pures $[\mathfrak{A},n,r,\beta']$ dans $\mathfrak{g}$ qui sont équivalentes à $\bs{S}_\gamma$, les strates simples sont précisément celles qui minimisent le degré de l'extension $F[\beta']/F$, \cad qui vérifient $[F[\beta']:F]=[F[\beta]:F]$. De plus (loc.~cit.), pour toute strate simple $[\mathfrak{A},n,r,\beta']$ dans $\mathfrak{g}$ équivalente à $\bs{S}_\gamma$, on a:
\begin{itemize}
\item $e(F[\beta']/F)=e(F[\beta]/F)$ et $f(F[\beta']/F)=f(F[\beta]/F)$;
\item $k_F(\beta')=k_F(\beta)$.
\end{itemize}
Enfin (loc.~cit.), si $\bs{s}_\beta: \mathfrak{g}\rightarrow \mathfrak{g}_\beta={\rm End}_{F[\beta]}(V)$ est une corestriction modérée sur $\mathfrak{g}$ relativement à $F[\beta]/F$, alors la strate $[\mathfrak{A}\cap \mathfrak{g}_\beta,r,r-1,\bs{s}_\beta(\gamma-\beta)]$ dans $\mathfrak{g}_\beta$ est équivalente à une strate simple. C'est cette dernière assertion que l'on précise dans ce numéro.

Posons $E=F[\beta]$ et $\mathfrak{b}={\rm End}_E(V)$. Soit $\mathfrak{B}$ l'$\mathfrak{o}_E$--ordre héréditaire $\mathfrak{A}\cap \mathfrak{b}$ dans $\mathfrak{b}$, et soit $\mathfrak{Q}={\rm rad}(\mathfrak{B})$. On fixe une corestriction modérée $\bs{s}:\mathfrak{g}\rightarrow \mathfrak{b}$ sur $\mathfrak{g}$ relativement à $E/F$, et un élément $\bs{x}\in \mathfrak{A}$ tel que $\bs{s}(\bs{x})=1$. 

On pose $k_0=k_0(\beta,\mathfrak{A})\;(<-r)$ et $\mathfrak{N}_{k_0}=\mathfrak{N}_{k_0}(\beta,\mathfrak{A})$. 

\begin{monlem1}
On a
$$\beta+ \mathfrak{P}^{-r}= \{g^{-1}(\beta + \bs{x} b)g: g\in 1+\mathfrak{Q}^{-r-k_0}\mathfrak{N}_{k_0},\,b\in \mathfrak{Q}^{-r}\}.
$$
\end{monlem1}

\begin{proof}L'inclusion $\supset$ est claire, puisque d'après \cite[1.5.8]{BK}, $1+\mathfrak{Q}^{-r-k_0}\mathfrak{N}_{k_0}$ est contenu dans le $G$--entrelacement de la strate simple $[\mathfrak{A}, n,r,\beta]$ dans $\mathfrak{g}$. Pour l'inclusion $\subset$, on procède par approximations successives. D'après les relations (1) et (3) de \ref{une submersion}, pour $i>k_0$, on a
$$
\mathfrak{P}^{i}= {\rm ad}_\beta(\mathfrak{Q}^{i-k_0}\mathfrak{N}_{k_0})\oplus \bs{x}\mathfrak{Q}^{i}.
$$
Soit $X \in \mathfrak{P}^{-r}$, et soit $a= -r-k_0>0$. \'Ecrivons $X={\rm ad}_\beta(y)+\bs{x}b$ avec $y\in \mathfrak{Q}^{a}\mathfrak{N}_{k_0}$ et $b\in \mathfrak{Q}^{-r}$. On a
\begin{eqnarray*}
(1+y)(\beta +X)(1+y)^{-1}&\equiv& \beta + X -({\rm ad}_\beta(y)+ {\rm ad}_X(y))(1+y)^{-1}\\
&\equiv & \beta + \bs{x}b \quad ({\rm mod}\; \mathfrak{P}^{-r+a}).
\end{eqnarray*}
Posons $g=(1+y)$, et écrivons $g(\beta +X)g^{-1}= \beta + \bs{x}b + X'$ pour un élément $X'\in \mathfrak{P}^{-r+a}$. \'Ecrivons $X'= {\rm ad}_\beta(y')+ \bs{x}b'$ avec $y'\in \mathfrak{Q}^{2a}\mathfrak{N}_{k_0}$ et $b'\in \mathfrak{Q}^{-r+a}$. On obtient de même
$$
(1+y')(\beta + X')(1+y')^{-1}\equiv \beta + \bs{x}b'\quad ({\rm mod}\; \mathfrak{P}^{-r+3a}),
$$
et donc
$$
(1+y')(\beta + \bs{x}b + X')(1+y')^{-1}\equiv \beta + \bs{x}(b+b') \quad ({\rm mod}\; \mathfrak{P}^{-r+2a}).
$$
Posant $g'=1+y'$, $g_1=g'g$ et $b_1=b+b'$, on a donc
$$
g_1(\beta+X)g_1^{-1}\equiv \beta + \bs{x}b_1\quad ({\rm mod}\; \mathfrak{P}^{-r+2a}).
$$
L'élément $g_1$ appartient à $1+\mathfrak{Q}^{a}\mathfrak{N}_{k_0}$ et l'élément $b_1$ appartient à $\mathfrak{Q}^{-r}$. Posons
$$\Omega=\{g^{-1}(\beta + \bs{x} b)g: g\in 1+\mathfrak{Q}^{a}\mathfrak{N}_{k_0},\,b\in \mathfrak{Q}^{-r}\}.$$
On a montré que pour tout entier $j\geq 1$, on a l'inclusion
$$
\beta + \mathfrak{P}^{-r}\subset \Omega +\mathfrak{P}^{-r+aj}.
$$
Comme $\Omega$ est ouvert (d'après la proposition de \ref{une submersion}) et compact dans $G$, pour $j$ suffisamment grand, on a l'égalité $\Omega + \mathfrak{P}^{-r+aj}=\Omega$. D'où le lemme.
\end{proof}

Identifions $\mathfrak{g}$ à $A(E)\otimes_E \mathfrak{b}$ via le choix d'une $(W,E)$--décomposition $\mathfrak{A}(E)\otimes_{\mathfrak{o}_E}\mathfrak{B}\buildrel\simeq\over{\longrightarrow}\mathfrak{A}$ de $\mathfrak{A}$ --- cf. \ref{une submersion} ---, et écrivons $\bs{s}=\bs{s}_0\otimes {\rm id}_{\mathfrak{b}}$, où $\bs{s}_0:A(E)\rightarrow E$ est une corestriction modérée sur $A(E)$ relativement à $E/F$. Fixons un élément $\bs{x}_0\in \mathfrak{A}(E)$ tel que $\bs{s}_0(\bs{x}_0)=1$, et prenons pour $\bs{x}$ l'élément $\bs{x}_0\otimes 1$. D'après le lemme 1, il existe un élément $g\in 1+\mathfrak{Q}^{-r-k_0}\mathfrak{N}_{k_0}$ tel que $\gamma \in g^{-1}(\beta+\bs{x}\mathfrak{Q}^{-r})g$. \'Ecrivons $\gamma = g^{-1}(\beta + \bs{x}b)g$ avec $b\in \mathfrak{Q}^{-r}$. Puisque $g\in U^1(\mathfrak{A})$ et $xb\in \mathfrak{P}^{-r}$, on a $g^{-1}\bs{x}bg\in \mathfrak{P}^{-r}$ et $\beta'=g^{-1}\beta g\in \gamma + \mathfrak{P}^{-r}$. La strate $[\mathfrak{A},n,r,\beta']$ dans $\mathfrak{g}$ est simple et équivalente à $[\mathfrak{A},n,r,\gamma]$, et quitte à remplacer $\beta$ par $\beta'$, $E$ par $E'=F[\beta']$, 
$\mathfrak{b}$ par $\mathfrak{b}'={\rm End}_{E'}(V)$, l'identification $\mathfrak{g}=A(E)\otimes_E\mathfrak{b}$ par 
$\mathfrak{g}= A(E')\otimes_{E'}\mathfrak{b}'$ (par transport de structure via ${\rm Int}_{\smash{g^{-1}}}$), $\bs{x}_0$ par $g^{-1}\bs{x}_0 g$ et $b$ par $g^{-1}bg$, on peut supposer que $\gamma = \beta + \bs{x}_0\otimes b$.

On définit comme on l'a fait pour $\mathfrak{g}$, en rempla\c{c}ant le corps de base $F$ par $E$, les notions d'éléments quasi--réguliers et quasi--réguliers elliptiques de $\mathfrak{b}$. On note $\mathfrak{b}_{\rm qr}$, resp. $\mathfrak{b}_{\rm qre}$, l'ensemble des éléments quasi--réguliers, resp. quasi--réguliers elliptiques, de $\mathfrak{b}$.

\begin{monlem2}
L'élément $b$ est quasi--régulier elliptique dans $\mathfrak{b}$ et la strate $[\mathfrak{B},r,r-1,b]$ dans $\mathfrak{b}$ est simple. En particulier, $b$ est $E$--minimal.
\end{monlem2}

\begin{proof}D'après \cite[2.4.1.(iii)]{BK}, la strate $[\mathfrak{B},r,r-1,b]$ dans $\mathfrak{b}$ est équivalente à une strate simple, 
disons $[\mathfrak{B},r,r-1,c]$. Posons $E_1=E[c]$. Si $c$ est quasi--régulier elliptique dans $\mathfrak{b}$, \cad si $E_1$ est un sous--corps maximal de $\mathfrak{b}$, alors d'après \cite[2.2.2]{BK}, la strate $[\mathfrak{B},r,r-1,b]$ dans $\mathfrak{b}$ est simple et $E[c]$ est un sous--corps maximal de $\mathfrak{b}$. Dans ce cas $b$ est un élément quasi--régulier elliptique de $\mathfrak{b}$ et il est $E$--minimal. 

Supposons (par l'absurde) que le sous--corps $E_1\subset \mathfrak{b}$ n'est pas maximal. Puisque la strate $[\mathfrak{A},n,r,\beta]$ dans $\mathfrak{g}$ est simple, on a $r<\inf \{-k_0(\beta,\mathfrak{A}),n\}$. On peut donc appliquer la proposition 2 de \ref{raffinement}:
en posant $\gamma'= \beta +\bs{x}_0\otimes c$, la strate $[\mathfrak{A},n,r-1,\gamma']$ dans $\mathfrak{g}$ est simple, $
e(F[\gamma']/F)=e(E_1/F)$ et $f(F[\gamma']/F)=f(E_1/F)$, et
$$
k_0(\beta + \bs{x}_0\otimes c,\mathfrak{A})=\left\{\begin{array}{ll}
-r =k_0(c,\mathfrak{B}) & \mbox{si $E_1\neq E$}\\
k_0(\beta,\mathfrak{A})& \mbox{sinon}\end{array}\right..
$$
En particulier $[F[\gamma']:F]$ est strictement 
inférieur à $[F[\gamma]:F]$, ce qui est impossible puisque les strates $[\mathfrak{A},n,r-1,\gamma]$ et $[\mathfrak{A},n,r-1,\gamma']$ dans $\mathfrak{g}$ sont simples et équivalentes. Le sous--corps $E_1\subset \mathfrak{b}$ est donc forcément maximal, ce qui achève la démonstration du lemme.
\end{proof}

Le lemme 2 est le point de départ du procédé d'approximation des éléments quasi--réguliers elliptiques de $\mathfrak{g}$ par des éléments minimaux. 

Soit un élément $\gamma\in G_{\rm qre}$. Posons $\mathfrak{A}=\mathfrak{A}_\gamma$ et $\mathfrak{P}=\mathfrak{P}_\gamma$,  $n=n_F(\gamma)$ et $r= \inf(-k_F(\gamma),n-1)$. On note $\bs{S}_\gamma$ la strate pure $[\mathfrak{A},n,r,\gamma]$ dans $\mathfrak{g}$. Si $\gamma$ est $F$--minimal, i.e. si $r=n-1$, alors la strate $\bs{S}_\gamma$ est simple et il n'y a plus rien à faire. Sinon, on écrit $\gamma= \beta + \bs{x}_\beta\otimes b$ comme ci--dessus: $\beta$ est un élément pur de $\mathfrak{g}$ telle que la strate $[\mathfrak{A},n,r,\beta]$ dans $\mathfrak{g}$ est simple et équivalente à $\bs{S}_\gamma$; $b$ est un élément quasi--régulier elliptique de $\mathfrak{b}={\rm End}_{F[\beta]}(V)$ tel que la strate $[\mathfrak{A}\cap \mathfrak{b},r,r-1,b]$ dans $\mathfrak{b}$ est simple; $\bs{x}_\beta$ est un élément de $\mathfrak{A}(F[\beta])$ tel que $\bs{s}_\beta(\bs{x}_\beta)=1$ pour une corestriction modérée $\bs{s}_\beta:A(F[\beta])\rightarrow F[\beta]$ sur $A(F[\beta])$ relativement à $F[\beta]/F$; $\bs{x}_\beta\otimes b$ est un élément de $\mathfrak{P}^{-r}$ pour l'identification $\mathfrak{g}=A(F[\beta])\otimes_{F[\beta]}\mathfrak{b}$ donnée par le choix d'une $(W,F[\beta])$--décomposition de $\mathfrak{A}$. On pose $\gamma_1=\beta$, $F_1=F[\gamma_1]$, $\mathfrak{g}_1= A(F_1)$ et $\bs{x}_1=\bs{x}_{\gamma_1}$. On a donc $\gamma =\gamma_1+ \bs{x}_1\otimes b$. L'élément $\gamma_1$ est quasi--régulier elliptique dans $\mathfrak{g}_1$. Il définit comme ci--dessus une strate pure $\bs{S}_{\gamma_1}=[\mathfrak{A}_1,n_1,r_1,\gamma_1]$ dans $\mathfrak{g}_1$. Ici $\mathfrak{A}_1=\mathfrak{A}(F_1)$,  $n_1=n_F(\gamma_1)$ et $r_1= \inf(-k_F(\gamma_1),n_1-1))$. Si $\gamma_1$ est $F$--minimal, i.e. si $r_1=n_1-1$, on s'arrête là. Sinon, en rempla\c{c}ant $\gamma$ par $\gamma_1$ dans la construction précédente, on écrit $\gamma_1= \gamma_2 + \bs{x}_2\otimes b_1$ comme ci--dessus. Puisque $[F_1:F]< N$, le processus s'arrête au bout d'un nombre fini d'étapes. D'où la

\begin{mapropo}
Soit $\gamma\in G_{\rm qre}$. Il existe un entier $m\geq 0$ et des éléments $\gamma_0,\ldots ,\gamma_m$ tels que:
\begin{itemize}
\item $\gamma_0=\gamma$;
\item (si $i<m$) $\gamma_{i+1}$ est un lment $F$--pur de $A(F[\gamma_i])$;
\item $\gamma_m$ est $F$--minimal;
\end{itemize} 
la suite $\{\gamma_0,\gamma_1,\ldots ,\gamma_m\}$ vérifiant les propriétés suivantes. Pour $i=0,\ldots ,m$, posons:
\begin{itemize}
\item $F_i=F[\gamma_i]$, $n_i=n_F(\gamma_i)$, $r_i=-k_F(\gamma_i)$;
\item $\mathfrak{g}_i=A(F_i)$, $\mathfrak{A}_i=\mathfrak{A}(F_i)$ et $\mathfrak{P}_i=\mathfrak{P}(F_i)\;(= {\rm rad}(\mathfrak{A}_i))$;
\item $\bs{S}_{\gamma_i}=[\mathfrak{A}_i,n_i,r_i,\gamma_i]$ --- une strate pure dans $\mathfrak{g}_i$;
\item (si $i<m$) $\mathfrak{b}_i={\rm End}_{F_{i+1}}(F_i)$, $\mathfrak{B}_i= \mathfrak{A}_i\cap \mathfrak{b}_i$ et $\mathfrak{Q}_i= {\rm rad}(\mathfrak{B}_i)$.
\end{itemize}
On identifie $\mathfrak{g}$ à $\mathfrak{g}_0$ via le choix d'un vecteur $v\in V\smallsetminus \{0\}$ (on a donc $\mathfrak{A}_\gamma = \mathfrak{A}_0$, cf. \ref{éléments qre}), 
et si $i<m$, on identifie $\mathfrak{g}_i$ à $\mathfrak{g}_{i+1}\otimes_{F_{i+1}}\mathfrak{b}_i$ via le choix d'une $(W_i,F_{i+1})$--décomposition $\mathfrak{A}_{i+1}\otimes_{\mathfrak{o}_{F_{i+1}}}\!\!\mathfrak{B}_i\buildrel\simeq\over{\longrightarrow}\mathfrak{A}_i$ de $\mathfrak{A}_i$ (on a un isomorphisme de $F$--espaces vectoriels $F_{i+1}\otimes_FW_i\simeq F_i$). 
La strate $\bs{S}_{\gamma_i}$ dans $\mathfrak{g}_i$ est équivalente à une strate simple $[\mathfrak{A}_i,n_i,r_i,\gamma_{i+1}]$ avec
$$
\gamma_i=\gamma_{i+1}+\bs{x}_{i+1}\otimes b_i
$$ pour un élément $b_i\in (\mathfrak{b}_i)_{\rm qre}$ tel que la strate $[\mathfrak{B}_i,r_i,r_i-1,b_i]$ dans $\mathfrak{b}_i$ est simple, et un élément $\bs{x}_{i+1}\in \mathfrak{A}_{i+1}$ tel que $\bs{s}_{\gamma_{i+1}}(\bs{x}_{i+1})=1$, où $\bs{s}_{\gamma_{i+1}}:\mathfrak{g}_{i+1}\rightarrow F_{i+1}$ est une corestriction modérée sur $\mathfrak{g}_{i+1}$ relativement à $F_{i+1}/F$. On a donc les décompositions
$$
\mathfrak{g}= \mathfrak{g}_m \otimes_{F_m} \mathfrak{b}_{m-1}\otimes_{F_{m-1}}\mathfrak{b}_{m-2}\otimes \cdots \otimes_{F_1}\mathfrak{b}_0\leqno{(1)}
$$
et (en identifiant $\mathfrak{g}_{i+1}$ à la sous--$F$--algèbre $\mathfrak{g}_{i+1}\otimes 1$ de $\mathfrak{g}_i$ )
$$
\gamma =\gamma_m+ \bs{x}_{m} b_{m-1}+ \bs{x}_{m-1} b_{m-2}+\cdots + \bs{x}_1 b_0.\leqno{(2)}
$$
\end{mapropo}

\begin{madefi}
{\rm Soit $\gamma\in G_{\rm qre}$. Toute suite $(\gamma_0=\gamma,\gamma_1,\ldots ,\gamma_m)$ vérifiant les conditions de la proposition est appelée {\it suite d'approximation minimale de $\gamma$}. \`A une telle suite sont associées:
\begin{itemize}
\item une suite $(F_0, \ldots ,F_m)$ d'extensions $F_i/F$, $F_i=F[\gamma_i]$ --- pour $i=0,\ldots ,m$, on pose $\mathfrak{g}_i=A(F_i)$ et 
$\mathfrak{A}_i = \mathfrak{A}(F_i)$, et (si $i<m$) $\mathfrak{b}_i={\rm End}_{F_{i+1}}(F_i)$ et $\mathfrak{B}_i= \mathfrak{A}_i\cap \mathfrak{b}_i$;
\item une suite $(\bs{x}_1,\ldots ,\bs{x}_m)$ d'éléments $\bs{x}_i\in \mathfrak{A}_i$ tels que $\bs{s}_{\gamma_i}(\bs{x}_i)=1$ pour une corestriction modérée $\bs{s}_{\gamma_i}: \mathfrak{g}_i\rightarrow F_i$ sur $\mathfrak{g}_i$ relativement à $F_i/F$;
\item une suite $(b_0,\ldots ,b_{m-1})$ d'éléments $b_i\in \mathfrak{b}_i$ tels que $\bs{x}_{i+1}\otimes 
b_i=\gamma_i-\gamma_{i+1}$, où pour $i=0,\ldots ,m-1$, on a identifié $\mathfrak{g}_i$ à $\mathfrak{g}_{i+1}\otimes_{F_{i+1}}\mathfrak{b}_i$ via le choix d'une $(W_i,F_{i+1})$--décomposition 
$\mathfrak{A}_{i+1}\otimes_{\mathfrak{o}_{F_{i+1}}}\!\! \mathfrak{B}_i \buildrel \simeq \over{\longrightarrow} \mathfrak{A}_i$ de $\mathfrak{A}_i$. 
\end{itemize}
La suite $(\bs{x}_1,\ldots ,\bs{x}_m)$ est appelée {\it suite des correcteurs} de la suite $(\gamma_0,\ldots ,\gamma_m)$, et la suite 
$(b_0,\ldots ,b_{m-1})$ est appelée {\it suite dérivée} de la suite $(\gamma_0,\ldots ,\gamma_m)$. La suite des correcteurs $(\bs{x}_1,\ldots ,\bs{x}_m)$ est définie via le choix des corestrictions modérées $\bs{s}_{\gamma_i}: \mathfrak{g}_{i}\rightarrow F_{i}$. Si, pour $i=0,\ldots ,m-1$, on note $\tilde{\bs{s}}_i$ la corestriction modérée $\bs{s}_{\gamma_{i+1}}\otimes{\rm id}_{\mathfrak{b}_i} : \mathfrak{g}_i\rightarrow \mathfrak{b}_i$ sur $\mathfrak{g}_i$ relativement à $F_{i+1}/F$, alors la suite dérivée $(b_1,\ldots ,b_m)$ est donnée par $b_i = \tilde{\bs{s}}_i(\gamma_i-\gamma_{i+1})$. 
}
\end{madefi}

\begin{marema1}
{\rm Fixé $\gamma\in G_{\rm qre}$, la suite d'approximation minimale $(\gamma_0,\ldots ,\gamma_m)$ de $\gamma$ n'est pas unique, la suite d'extensions $(F_0,\ldots ,F_m)$ de $F$ définie par $(\gamma_0,\ldots ,\gamma_m)$ n'est pas non plus unique, mais les invariants suivants le sont:
\begin{itemize}
\item l'entier $m\geq0$, appelé \og longueur \fg{} de la suite d'approximation minimale de $\gamma$, ou simplement \og longueur\fg de $\gamma$;
\item les entiers $n_i= n_F(\gamma_i)$ et $r_i=-k_F(\gamma_i)$;
\item les entiers $e_i=e(F_i/F)$ et $f_i=f(F_i/F)$.
\end{itemize}
L'élément $F$--minimal $\gamma_m$ peut être central, \cad que l'on peut avoir $F_m=F$. La longueur d'un lment de 
$G_{\rm qre}$ est infrieure ou gale au nombre de facteurs premiers de $N$ (compts avec multiplicit). Les éléments 
de longueur $0$ sont les éléments $F$--minimaux. 

Notons qu'il n'est en général pas possible de choisir la suite d'extensions $(F_0,\ldots ,F_m)$ de $F$ telle que $F_m\subset \cdots \subset F_0$. En effet (pour $i<m$), 
on a l'inclusion $F_{i+1} \subset F_i$ si et seulement si l'lment $\bs{x}_{i+1}\otimes b_i= \gamma_i - \gamma_{i+1}$ est dans $F_i$ (et donc en particulier commute  $\gamma_{i+1}$), 
ce qui n'est possible que si l'extension $F_{i+1}/F$ est {\it modrment ramifie} \cite[2.2.6]{BK}. D'autre part l'extension 
$F_{i+1}/F$ est modrment ramifie si et seulement si on peut prendre $\bs{x}_{i+1}=1$ \cite[1.3.8]{BK}. En dfinitive, on peut choisir la suite d'extensions $(F_0,\ldots,F_m)$ telle que 
$F_m\subset \cdots \subset F_0$ si et seulement si on peut prendre la suite des correcteurs $(\bs{x}_1,\ldots,\bs{x}_m)$ égale à $(1,\ldots ,1)$, ce qui n'est possible 
que si toutes les extensions $F_{i+1}/F$ ($i=0,\ldots ,m-1$) sont {\it modérément ramifiées}, 
\cad (puisque $e(F_{i+1}/F)$ divise $e(F_i/F)$) si $e_1=e(F_1/F)$ est premier  la caractristique rsiduelle $p$ de $F$. 
C'est donc toujours possible si $p$ ne divise pas $N$, et aussi si $N=p$ (car dans ce cas ou bien $\gamma$ est $F$--minimal, ou bien $m=1$ et $F_1=F$). 
Si $p<N$ divise $N$, on construit facilement un contre--exemple (voir ci--dessous).

Si $(\gamma_0,\ldots ,\gamma_m)$ est une suite d'approximation minimale de $\gamma\in G_{\rm qre}$ de suite des correcteurs $(\bs{x}_1,\ldots ,\bs{x}_m)$ et de suite dérivée $(b_0,\ldots ,b_{m-1})$, alors 
pour $k=0,\ldots ,m$, $(\gamma_k,\ldots ,\gamma_m)$ est une suite d'approximation minimale de $\gamma_k\in A(F_k)^\times_{\rm qre}$ de suite des correcteurs $(\bs{x}_{k+1},\ldots ,\bs{x}_m)$ et de suite dérivée $(b_k,\ldots ,b_{m-1})$. 
\hfill $\blacksquare$
}
\end{marema1}

\begin{exemple}
{\rm 
Si $N$ est premier, les lments de $G_{\rm qre}$ sont de longueur $0$ ou $1$. Les lements de longueur $1$ sont de la forme 
$\gamma = z +\gamma'$ avec $z\in F^\times$ et $\gamma'\in \mathfrak{g}_{\rm qre}$ tels que $n_F(\gamma')= - k_F(\gamma)$ et 
$\nu_{F[\gamma]}(z)= - n_F(\gamma)<k_F(\gamma)$. Alors $(\gamma,z)$ est une suite d'approximation minimale de $\gamma$. Supposons 
maintenant que $N= p_1p_2$ avec $p_i$ premier ($p_1=p_2$ est permis). Soit $\beta\in G$ un lment pur tel que $\beta\notin G_{\rm qre}$. 
Posons $E=F[\beta]$ et $\mathfrak{b}= {\rm End}_E(V)$. Comme $\beta \in A(E)^\times_{\rm qre}$ et 
$[E:F]\in \{1,p_1,p_2\}$, si $\beta$ n'est pas $F$--minimal (ce qui implique $\beta\notin F^\times$), alors toute suite d'approximation minimale de $\beta$ est de 
la forme $(\beta,z)$ pour un $z\in F^\times$. Soit $b\in \mathfrak{b}_{\rm qre}$ un lment $E$--minimal tel que
$$
- \nu_E(b) \;\left( = {n_E(b) \over e(E[b]/E)}\right) <\inf (-k_F(\beta),n_F(\beta)).
$$
Posons $K= E[b]= F[\beta,b]$. 
Soit $\mathfrak{A}$ l'unique $\mathfrak{o}$--ordre hrditaire 
dans $\mathfrak{g}$ normalis par $K^\times$, et soit $\mathfrak{B}= \mathfrak{A}\cap \mathfrak{b}$. On a donc $e(E[b]/E) = e(\mathfrak{B} \vert \mathfrak{o}_E)$ et
$$
- n_E(b) < \inf (-k_0(\mathfrak{A},\beta), - \nu_{\mathfrak{A}}(\beta)).
$$Choisissons une $(W,E)$--dcomposition 
$\mathfrak{A}(E)\otimes_{\mathfrak{o}_E}\mathfrak{B} \buildrel \simeq \over{\longrightarrow} \mathfrak{A}$ de $\mathfrak{A}$ et un lment $\bs{x}_0\in \mathfrak{A}(E)$ tel que 
$\bs{s}_0(\bs{x}_0)=1$ pour une corestriction modre $\bs{s}_0: A(E) \rightarrow E$ sur $A(E)$ relativement  $E/F$ (on peut prendre $\bs{x}_0=1$ si et seulement si 
l'extension $E/F$ est modrment ramifie). Alors l'lment $\gamma = \beta + \bs{x}_0\otimes b$ appartient  
$G_{\rm qre}$, et il est de longeur $1$ ou $2$: si $\beta$ est $F$--minimal, alors $(\gamma,\beta)$ est une suite d'approximation minimale de $\gamma$; et si $(\beta,z)$ est une suite d'approximation 
minimale de $\beta$, alors $(\gamma,\beta,z)$ est une suite d'approximation minimale de $\gamma$. Tous les lments de $G_{\rm qre}$ qui ne sont pas $F$--minimaux sont 
obtenus de cette manire. 
}
\end{exemple}

\begin{marema2}
{\rm  
Soit $\gamma\in G_{\rm qre}$, et soit $(\gamma_0,\ldots ,\gamma_m)$ une suite d'approximation minimale de $\gamma$ de suite des correcteurs $(\bs{x}_1,\ldots ,\bs{x}_m)$ et de suite dérivée $(b_0,\ldots ,b_{m-1})$. \'Ecrivons
$$
\gamma =\gamma_m+ \bs{x}_{m} b_{m-1}+ \bs{x}_{m-1} b_{m-2}+\cdots + \bs{x}_1 b_0
$$
comme en (2). Pour $i=0,\ldots ,m$, rappelons que l'on a posé $\mathfrak{g}_i=A(F_i)$, $\mathfrak{A}_i = \mathfrak{A}(F_i)$, 
et (si $i<m$) $\mathfrak{b}_i={\rm End}_{F_{i+1}}(F_i)$ et $\mathfrak{B}_i = \mathfrak{A}_i\cap \mathfrak{b}_i$, et que l'on a identifié $\mathfrak{g}_i$ à $\mathfrak{g}_{i+1}\otimes_{F_{i+1}}\mathfrak{b}_i$ via le choix d'une $(W_i,F_{i+1})$--décomposition 
$\mathfrak{A}_{i+1}\otimes_{\mathfrak{o}_{F_{i+1}}}\!\! \mathfrak{B}_i \buildrel \simeq \over{\longrightarrow} \mathfrak{A}_i$ de $\mathfrak{A}_i$. Si 
$\tilde{\bs{t}}_i: \mathfrak{g}_i\rightarrow \mathfrak{b}_i$ est une corestriction modérée sur $\mathfrak{g}_i$ relativement à $F_{i+1}/F$, alors on a $\tilde{\bs{t}}_i(\gamma_i-\gamma_{i+1})= u_i b_i$ pour un élément $u_i\in \mathfrak{o}_{F_{i+1}}^\times$. La corestriction modérée $\tilde{\bs{s}}_i = \bs{s}_{\gamma_{i+1}}\otimes {\rm id}_{\mathfrak{b}_i}$ sous--jacente à la définition de $(\gamma_0,\ldots ,\gamma_m)$ est donc celle qui est normalisée par $u_i=1$. Soit $s_i^{i+1}: \mathfrak{b}_i\rightarrow F_i$ la corestriction modérée sur $\mathfrak{b}_i$ relativement à $F_i/F_{i+1}$ telle que
$$
\bs{s}_i^{i+1}\circ \tilde{\bs{s}}_{i}=\bs{s}_{\gamma_i}.
$$
Soit un élément $\bs{x}_{i}^{i+1}\in \mathfrak{B}_i$ tel que $\bs{s}_i^{i+1}(\bs{x}_i^{i+1})=1$. Alors l'élément $\bs{y}_i=\bs{x}_{i+1}\otimes \bs{x}_i^{i+1}$ de $\mathfrak{A}_{i+1}\otimes_{\mathfrak{o}_{F_{i+1}}}\!\mathfrak{B}_i=\mathfrak{A}_i$ vérifie $\bs{s}_{\gamma_i}(y_i)=1$. 
Par conséquent l'élément $\bs{x}_i - \bs{y}_i$ appartient à $\ker (\bs{s}_{\gamma_i})\cap \mathfrak{A}_i={\rm ad}_{\gamma_i}(\mathfrak{g}_i)\cap \mathfrak{A}_i$. On pourrait essayer de s'arranger --- mais nous ne le ferons pas ici --- pour que la suite des correcteurs vérifie la condition supplémentaire: $\bs{x}_i = \bs{x}_{i+1} \otimes \bs{x}_i^{i+1}$ pour $i=1,\ldots ,m-1$.\hfill$\blacksquare$.
}
\end{marema2}

\subsection{Le résultat principal}\label{le résultat principal}On reprend la situation de \ref{une submersion}. 
Soit $\beta\in \mathfrak{g}$ un élément pur. Posons $E=F[\beta]$ et $\mathfrak{b}={\rm End}_E(V)$. Fixons une corestriction modérée $\bs{s}_0:A(E)\rightarrow E$ sur $A(E)$ relativement à $E/F$, et un élément $\bs{x}_0\in \mathfrak{A}(E)$ tel que $\bs{s}_0(\bs{x}_0)=1$. Notons $H$ le groupe $\mathfrak{b}^\times ={\rm Aut}_E(V)$. 

Fixons aussi un $\mathfrak{o}_E$--ordre hrditaire minimal $\underline{\mathfrak{B}}$ dans $\mathfrak{b}$, et notons $\underline{\mathfrak{A}}$ l'$\mathfrak{o}$--ordre héréditaire dans $\mathfrak{g}$ normalis par $E^\times$ tel que $\underline{\mathfrak{A}}\cap \mathfrak{b}= \underline{\mathfrak{B}}$. Posons $\underline{\mathfrak{P}}={\rm rad}(\underline{\mathfrak{A}})$, $\underline{\mathfrak{Q}}={\rm rad}(\underline{\mathfrak{B}})\;(=\underline{\mathfrak{P}}\cap \mathfrak{b})$ et $d = {N\over [E:F]} \;(=e(\underline{\mathfrak{B}}\vert \mathfrak{o}_E)$. Identifions $\mathfrak{g}$ à $A(E)\otimes_E\mathfrak{b}$ via le choix d'une $(\underline{W},E)$--décomposition $\mathfrak{A}(E)\otimes_{\mathfrak{o}_E}\underline{\mathfrak{B}}\buildrel \simeq \over{\longrightarrow}\underline{\mathfrak{A}}$ de $\underline{\mathfrak{A}}$. 
Enfin posons $\bs{x}=\bs{x}_0\otimes 1$ et $\bs{s}=\bs{s}_0\otimes {\rm id}_{\mathfrak{b}}:\mathfrak{g}\rightarrow \mathfrak{b}$. 

\begin{marema1}
{\rm 
Pour tout $\mathfrak{o}$--ordre hrditaire $\mathfrak{A}$ dans $\mathfrak{g}$ normalis par $E^\times$ et tel que $\mathfrak{A}\cap \mathfrak{b}$ contient $\underline{\mathfrak{B}}$, l'identification $\mathfrak{g}= A(E)\otimes_E \mathfrak{b}$ induit par restriction une identification $\mathfrak{A}= \mathfrak{A}(E) \otimes_{\mathfrak{o}_E} (\mathfrak{A}\cap \mathfrak{b})$. Mais ce ne sont pas les seuls $\mathfrak{A}$ qui vrifient cette proprit. \'Ecrivons $\underline{\mathfrak{B}} = {\rm End}_{\mathfrak{o}_E}^0(\ES{R})$ pour une chane de $\mathfrak{o}_E$--rseaux $\ES{R}= \{\ES{R}_i:i\in {\Bbb Z}\}$ dans $V$ de priode $d$. Alors $\underline{W}$ est le sous--$F$--espace vectoriel de $V$ engendr par une $\mathfrak{o}_E$--base $\{\underline{w}_1,\ldots ,\underline{w}_d\}$ de $\ES{R}$. Choisissons une uniformisante $\varpi_E$ 
de $E$. Soit $\bs{W}_{\!H}$ le sous--groupe de $H$ form des lments qui permutent la base $\{\underline{w}_i\}$, et soit 
$\bs{D}_{H}= \bs{D}_{H,\varpi_E}$ le sous--groupe de $H$ form des matrices diagonales (par rapport  la base $\{\underline{w}_i\}$) de la forme 
${\rm diag}(\varpi_E^{a_1},\ldots ,\varpi_E^{a_d})$ avec $a_i\in {\Bbb Z}$. 
Enfin soit $\wt{\bs{W}}_{\!H} = \wt{\bs{W}}_{\!H,\varpi_E}$ le sous--groupe de $H$ engendr par 
$\bs{W}_{\!H}$ et $\bs{D}_{H}$. Alors $\wt{\bs{W}}_{\!H} = \bs{W}_{\!H} \rtimes \bs{D}_H$, et on a la  dcomposition d'Iwahori
$$
H = U(\underline{\mathfrak{B}}) \wt{\bs{W}}_{\! H} U(\underline{\mathfrak{B}}).
$$
Par construction, pour $w\in \bs{W}_{\!H}$, on a $w \bs{x} w^{-1} = \bs{x}$. D'autre part, d'aprs la remarque 2 de \ref{des dcompositions}, 
pour tout $\mathfrak{o}$--ordre hrditaire $\mathfrak{A}$ dans $\mathfrak{g}$ normalis par $E^\times$ et tel que $w^{-1} (\mathfrak{A}\cap \mathfrak{b})w  \supset \underline{\mathfrak{B}}$ 
pour un $w\in \wt{\bs{W}}_{\!H}$, l'identification $A(E)\otimes_E\mathfrak{b} = \mathfrak{g}$ induit par restriction une identification 
$$
\mathfrak{A}(E) \otimes_{\mathfrak{o}_E} (\mathfrak{A \cap \mathfrak{b})} = \mathfrak{A}.\eqno{\blacksquare}
$$
}
\end{marema1}

Comme en \ref{éléments qre}, pour $k\in {\Bbb R}$, on pose
$$
\mathfrak{b}_{\rm qre}^k= \{h\in \mathfrak{b}_{\rm qre}: \nu_E(h)\geq k\}.
$$
Les \og sauts\fg de cette filtration $k\mapsto \mathfrak{b}_{\rm qre}^k$ de $\mathfrak{b}_{\rm qre}$ sont les éléments de ${1\over d}{\Bbb Z}$. 
Notons que si $d=1$, i.e. si $E$ est un sous--corps maximal de $\mathfrak{g}$, alors $\mathfrak{b}=E$, et pour $k\in {\Bbb Z}$, on a $\mathfrak{b}_{\rm qre}= \mathfrak{p}_E^k$.  
Posons $\underline{k}_0=k_0(\beta,\underline{\mathfrak{A}})$ et $\underline{n} = -\nu_{\underline{\mathfrak{A}}}(\beta)$. 
On a donc $\underline{k}_0= k_F(\beta)d$ et $\underline{n}= n_F(\beta)d$, et d'après le lemme 2 de \ref{éléments qre}, si $k_F(\beta)\neq -\infty$ (\cad si $E\neq F$), on a
$$
\mathfrak{b}_{\rm qre}^{k_F(\beta)+{1\over d}}= \mathfrak{b}_{\rm qre}\cap {^H\!(\underline{\mathfrak{Q}}^{\underline{k}_0 +1})}.\leqno{(1)}
$$
Notons que si $k_F(\beta)\neq -\infty$ et si $b\in \mathfrak{b}_{\rm qre}^{k_F(\beta) + {1\over d}}$ est tel que l'unique $\mathfrak{o}_E$--ordre hrditaire $\mathfrak{B}_b$ dans $\mathfrak{b}$ normalis par $E[b]^\times$ vrifie l'inclusion $ \underline{\mathfrak{B}}\subset \mathfrak{B}_b$, 
alors $b\in \underline{\mathfrak{Q}}^{\underline{k}_0+1}$. En effet, posant 
$r= n_E(b) \;(= -\nu_{E[b]}(b))$ et $e= e(E[b]/E)$, puisque $-{r\over e} > k_F(\beta)$, on a $-r \geq ek_F(\beta)+ 1$. Notant $\mathfrak{Q}_b$ le radical de Jacobson de $\mathfrak{B}_b$, on a donc 
$b \in \mathfrak{p}_E^{k_F(\beta)} \mathfrak{Q}_b$. Or $\underline{\mathfrak{Q}}^{\underline{k}_0 +1} = \mathfrak{p}_E^{k_F(\beta)} \underline{\mathfrak{Q}}$ et $\mathfrak{Q}_b \subset \underline{\mathfrak{Q}}$ (car $\underline{\mathfrak{B}} \subset \mathfrak{B}_b$).  

Pour $\gamma\in \mathfrak{g}$, on note $\ES{O}_G(\gamma)$ l'orbite $\{g^{-1}\gamma g: g\in G\}\subset G$. De même, pour $b\in \mathfrak{b}$, on note $\ES{O}_H(b)$ l'orbite $\{h^{-1}b h: h\in H\}\subset \mathfrak{b}$. Le lemme suivant a t prouv en \cite[5.4.2]{L2} grce  la proposition de \ref{une submersion}, 
comme consquence du principe de submersion d'Harish--Chandra (prcisment, de la construction 
de l'application $T \mapsto \vartheta_T$ que nous rappellerons en \ref{le principe de submersion}). Notons que 
dans loc.~cit., on travaille avec un $\mathfrak{o}_E$--ordre maximal dans $\mathfrak{b}$ au lieu de l'$\mathfrak{o}_E$--ordre hrditaire minimal 
$\underline{\mathfrak{B}}$, mais cela n'a aucune incidence sur le rsultat. 

\begin{monlem}
Soient $b,\, b'\in \underline{\mathfrak{Q}}^{\underline{k}_0+1}$. On a
$$
\ES{O}_H(b')=\ES{O}_H(b)\Rightarrow \ES{O}_G(\beta + \bs{x}_0\otimes b')=\ES{O}_G(\beta + \bs{x}_0\otimes b)
$$
\end{monlem}

La proposition suivante généralise un résultat obtenu dans la démonstration de 
\cite[5.4.3]{L2}, où on se limitait aux éléments de $\mathfrak{b}_{\rm qre}$ qui sont $E$--minimaux. 

\begin{mapropo}
On suppose $E\neq F$.
\begin{enumerate}
\item[(i)]Soit $b\in \mathfrak{b}_{\rm qre}\cap \underline{\mathfrak{Q}}^{\underline{k}_0+1}$. 
L'élément $\gamma=\beta + \bs{x}_0\otimes b$ appartient à $G_{\rm qre}$, on a $e(F[\gamma]/F)=e(E[b]/F)$ et $f(F[\gamma]/F)=f(E[b]/F)$, et
$$k_F(\gamma)= \left\{\begin{array}{ll} k_E(b) & \mbox{si $[E:F]<N$} \\
k_F(\beta)& \mbox{sinon}
\end{array}\right..
$$
\item[(ii)]Soient $b,\,b'\in\mathfrak{b}_{\rm qre}\cap \underline{\mathfrak{Q}}^{\underline{k}_0+1}$. On a
$$\ES{O}_G(\beta + \bs{x}_0\otimes b')=\ES{O}_G(\beta + \bs{x}_0\otimes b)\Leftrightarrow \ES{O}_H(b')=\ES{O}_H(b).
$$
\end{enumerate}
\end{mapropo}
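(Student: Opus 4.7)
Le plan propos\'e est de proc\'eder par r\'ecurrence sur le degr\'e $[E[b]:E]$ (ou, de mani\`ere \'equivalente, sur la longueur $m$ d'une suite d'approximation minimale de $b$ dans $\mathfrak{b}$ au sens de \ref{approximation}, appliqu\'e avec corps de base $E$). Avant d'amorcer cette r\'ecurrence, on exploite l'\'egalit\'e (1): tout \'el\'ement $b\in \mathfrak{b}_{\rm qre}^{k_F(\beta)+1/d}$ est $H$--conjugu\'e \`a un \'el\'ement de $\mathfrak{b}_{\rm qre}\cap \underline{\mathfrak{Q}}^{\underline{k}_0+1}$. L'inclusion naturelle $H = 1\otimes H \subset A(E)\otimes_E \mathfrak{b}=\mathfrak{g}$ fait de $H$ un sous--groupe de $G$, et tout $h\in H$ commute \`a $\beta \in E$ tout en agissant par conjugaison sur le second facteur de la d\'ecomposition, d'o\`u
$$
h^{-1}(\beta+\bs{x}_0\otimes b)h = \beta + \bs{x}_0\otimes (h^{-1}bh).
$$
Les invariants $e(F[\gamma]/F)$, $f(F[\gamma]/F)$, $k_F(\gamma)$ et l'orbite $\ES{O}_G(\gamma)$ ne d\'ependant que de la classe de $H$--conjugaison de $b$, on peut supposer $b\in \underline{\mathfrak{Q}}^{\underline{k}_0+1}$ sans perte de g\'en\'eralit\'e. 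Cette m\^eme observation r\`egle imm\'ediatement l'implication $(\Leftarrow)$ de la partie (ii).

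Dans le cas de base o\`u $b$ est $E$--minimal, on choisit un $\mathfrak{o}_E$--ordre h\'er\'editaire $\mathfrak{B}'\subset \mathfrak{b}$ normalis\'e par $E[b]^\times$ (par exemple $\mathfrak{B}'= \mathfrak{A}(E[b])$), et un $\mathfrak{o}$--ordre h\'er\'editaire $\mathfrak{A}'= \mathfrak{A}(E)\otimes_{\mathfrak{o}_E}\mathfrak{B}'$ dans $\mathfrak{g}$ via une $(W',E)$--d\'ecomposition adapt\'ee. Posant $r'=-\nu_{\mathfrak{B}'}(b)$ et $n'=-\nu_{\mathfrak{A}'}(\beta)$, la condition $\nu_{\underline{\mathfrak{B}}}(b)\geq \underline{k}_0+1$, traduite en termes de $\mathfrak{B}'$ via $\nu_{\mathfrak{B}'}(b)= \nu_E(b)e(E[b]/E)$, fournit les in\'egalit\'es $\nu_E(b)> k_F(\beta)$ et $\nu_E(b)>-n_F(\beta)$; cela entra\^ine simultan\'ement que $[\mathfrak{B}',r',r'-1,b]$ est simple dans $\mathfrak{b}$ (car $b$ est $E$--minimal) et que $[\mathfrak{A}',n',r',\beta]$ est simple dans $\mathfrak{g}$. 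La Proposition~2 de \ref{raffinement} s'applique et donne la simplicit\'e de $[\mathfrak{A}',n',r'-1,\gamma]$, d'o\`u l'appartenance $\gamma\in G_{\rm qre}$, les \'egalit\'es $e(F[\gamma]/F)= e(E[b]/F)$, $f(F[\gamma]/F)= f(E[b]/F)$ et la formule annonc\'ee pour $k_F(\gamma)$. Pour le pas de r\'ecurrence, si $b$ n'est pas $E$--minimal, on applique un pas de la proc\'edure d'approximation de \ref{approximation} \`a $b$ et on \'ecrit $b= b_1+ \bs{y}_1\otimes c_0$, o\`u $b_1$ est pur dans $\mathfrak{b}$ avec $[E[b_1]:E]<[E[b]:E]$, et $c_0$ est quasi--r\'egulier elliptique $E[b_1]$--minimal dans $\mathfrak{b}_1={\rm End}_{E[b_1]}(V)$. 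L'hypoth\`ese de r\'ecurrence appliqu\'ee au couple $(\beta,b_1)$ fournit $\gamma_1=\beta+\bs{x}_0\otimes b_1 \in G_{\rm qre}$ avec les invariants voulus, ainsi qu'une strate simple $[\mathfrak{A}',n',r_1,\gamma_1]$. Puisque $\gamma= \gamma_1+(\bs{x}_0\otimes \bs{y}_1)\otimes c_0$, la Proposition~3 de \ref{raffinement} appliqu\'ee \`a cette strate simple et \`a la strate simple $[\mathfrak{B}_1,r_0,r_0-1,c_0]$ dans $\mathfrak{b}_1$ fournit les propri\'et\'es requises pour $\gamma$.

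L'implication $(\Rightarrow)$ de la partie (ii) d\'ecoule de l'unicit\'e des invariants (remarque~1 de \ref{approximation}) de la suite d'approximation minimale de $\gamma$: la construction de la partie (i) fournit une telle suite sous la forme $(\gamma,\beta,\ldots)$, prolong\'ee par une suite d'approximation de $\beta$, et les termes d\'eriv\'es ainsi obtenus d\'eterminent l'orbite $\ES{O}_H(b)$ \`a partir de $\ES{O}_G(\gamma)$; la compatibilit\'e des corestrictions mod\'er\'ees au raffinement (corollaire~2 de \ref{raffinement}) rend cette extraction bien d\'efinie \`a $H$--conjugaison pr\`es. L'obstacle principal de ce plan r\'eside dans la v\'erification pr\'ecise, au pas de r\'ecurrence, des hypoth\`eses de la Proposition~3 de \ref{raffinement} pour le couple $(\gamma_1,c_0)$, ce qui requiert le calcul explicite de $\bs{s}_{b_1}\circ \bs{s}(\bs{x}_0\otimes\bs{y}_1\otimes c_0)$ et le contr\^ole des in\'egalit\'es de valuation impliquant $k_0(\gamma_1,\mathfrak{A}')$. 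C'est aussi cette \'etape qui requiert la comparaison soigneuse entre l'ordre $\underline{\mathfrak{B}}$ (fix\'e minimal) et les ordres $\mathfrak{B}'$ adapt\'es aux extensions successives, comparaison qui transite par la cha\^ine des $(W,E)$--d\'ecompositions introduites en \ref{des d�compositions}.
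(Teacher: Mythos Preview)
Le d\'efaut principal de votre plan est que la formule
$$
h^{-1}(\beta + \bs{x}_0 \otimes b)h = \beta + \bs{x}_0 \otimes (h^{-1}bh)
$$
est \emph{fausse} en g\'en\'eral. La $(W,E)$--d\'ecomposition provient d'un isomorphisme de $F$--alg\`ebres $\mathfrak{g}\simeq A(E)\otimes_F{\rm End}_F(W)$ sous lequel $\mathfrak{b}$ s'identifie \`a $E\otimes_F{\rm End}_F(W)$ et $\bs{x}=\bs{x}_0\otimes 1$ \`a $\bs{x}_0\otimes_F 1$~; l'\'el\'ement $\bs{x}$ ne commute donc avec $h\in H$ que si $\bs{x}_0$ centralise $E$ dans $A(E)$, c'est--\`a--dire si $\bs{x}_0\in E$. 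Or la condition $\bs{x}_0\in \mathfrak{A}(E)$ avec $\bs{s}_0(\bs{x}_0)=1$ interdit $\bs{x}_0\in E$ d\`es que $E/F$ n'est pas mod\'er\'ement ramifi\'ee (cf.~1.8). Ceci ruine simultan\'ement votre r\'eduction au cas $b\in\underline{\mathfrak{Q}}^{\underline{k}_0+1}$ pour (i) et votre preuve \og imm\'ediate\fg{} de $(\Leftarrow)$ dans (ii). L'article \'etablit $(\Leftarrow)$ par approximations successives (c'est le Lemme ins\'er\'e dans la preuve), et cette non--commutation est pr\'ecis\'ement ce qui impose le proc\'ed\'e de recollement en~\ref{le principe de submersion}.

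Il y a d'autres probl\`emes. Votre r\'ecurrence \og sur $[E[b]:E]$\fg{} est vide puisque $[E[b]:E]=d$ pour tout $b\in\mathfrak{b}_{\rm qre}$, et elle n'est pas \'equivalente \`a une r\'ecurrence sur la longueur de la suite d'approximation minimale~; l'\'el\'ement interm\'ediaire $b_1$ n'est pas quasi--r\'egulier elliptique dans $\mathfrak{b}$, et par cons\'equent $\gamma_1=\beta+\bs{x}_0\otimes b_1$ n'appartient \emph{pas} \`a $G_{\rm qre}$, contrairement \`a ce que vous affirmez (c'est seulement un \'el\'ement pur). L'article ne fait d'ailleurs pas de r\'ecurrence pour (i)~: il d\'ecompose $b=b_1+\bs{y}_1\otimes c$ en une seule \'etape avec $c$ d\'ej\`a $E_1$--minimal (lemme~2 de~\ref{approximation}), puis applique les propositions de~\ref{raffinement} deux fois. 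Enfin, pour $(\Rightarrow)$, l'unicit\'e des invariants num\'eriques (remarque~1 de~\ref{approximation}) ne contr\^ole pas les orbites des termes d\'eriv\'es~; l'id\'ee cl\'e qui vous manque est la structure du $G$--entrelacement formel~\cite[1.5.12]{BK}~: si $g^{-1}\gamma g=\gamma'$, on \'ecrit $g=(1+y)h(1+y')$ avec $h\in H$, on applique $\bs{s}$ pour voir que $h$ entrelace $\bs{S}_b$ et $\bs{S}_{b'}$ dans $\mathfrak{b}$, on se ram\`ene au cas $E$--minimal, puis on conclut \`a nouveau par approximations successives.
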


\begin{proof}
Puisque $E\neq F$, on a $k_F(\beta) \geq -n_F(\beta)$ avec égalité si et seulement si $\beta$ est $F$--minimal. 
Commen\c{c}ons par prouver (i). Puisqu'il existe un $h\in H$ tel que $h\mathfrak{B}_b h^{-1}$ contient $\underline{\mathfrak{B}}$, 
quitte  remplacer $b$ par $hbh^{-1}$, on peut grce au lemme supposer que $\underline{\mathfrak{B}}\subset \mathfrak{B}_b$. Notons tout d'abord que si $b=0$, ce qui n'est possible que si $E$ est un sous--corps maximal de $\mathfrak{g}$, alors il n'y a rien à démontrer. On peut donc supposer $b\neq 0$. Posons $E_0=E[b]$ et $\mathfrak{B}=\mathfrak{B}_b$. 
Soit $\mathfrak{A}$ l'$\mathfrak{o}$--ordre héréditaire dans $\mathfrak{g}$ normalisé par $E^\times$ tel que $\mathfrak{A} \cap \mathfrak{b}= \mathfrak{B}$. C'est l'unique 
$\mathfrak{o}$--ordre hrditaire dans $\mathfrak{g}$ normalis par $E_0^\times$, et puisque $\underline{\mathfrak{B}}\subset \mathfrak{B}$, 
on a l'identification $\mathfrak{A}=\mathfrak{A}(E)\otimes_{\mathfrak{o}_E}\mathfrak{B}$. Posons $r=n_E(b)\;(=-\nu_{E_0}(b))$, 
$k_0=k_0(\beta, \mathfrak{A})$ et $n= -\nu_{\mathfrak{A}}(\beta)$. On a $k_0=k_F(\beta)e(E_0/E)$ et $n=n_F(\beta)e(E_0/E)$, et $k_0\geq -n$ avec égalité si et seulement si $\beta$ est $F$--minimal. Puisque
$$
{-r\over e(E_0/E)}= \nu_E(b) \geq k_F(\beta)+ {1\over d},
$$
on a
$$
-r\geq k_F(\beta)e(E_0/E) +{e(E_0/E)\over d}=k_0 + {1\over f(E_0/E)} >k_0.
$$
Considérons la strate pure $[\mathfrak{B},r,r-1,b]$ dans $\mathfrak{b}$. Si elle est simple, \cad si $b$ est $E$--minimal, comme $r< -k_0 \leq n$, on peut appliquer la proposition 1 de \ref{raffinement}: la strate $[\mathfrak{A}, n, r-1, \gamma]$ dans $\mathfrak{g}$ est simple, $e(F[\gamma]/F)=e(E_0/F)$ et $f(F[\gamma]/F)=f(E_0/F)$, et
$$
k_0(\gamma,\mathfrak{A})=\left\{\begin{array}{ll}
-r =k_0(b,\mathfrak{B})=k_E(b) & \mbox{si $E_0\neq E$}\\
k_0=k_F(\beta)& \mbox{sinon}\end{array}\right..
$$
Dans ce cas on a $[F[\gamma]/F]=[E_0:F]=N$, \cad que $\gamma$ est quasi--régulier elliptique dans $\mathfrak{g}$, et $E_0=E$ si et seulement si $[E:F]=N$.

Supposons maintenant que $b$ n'est pas $E$--minimal. On a donc $E_0\neq E$ et $k_E(b)>-r$. Posons $s=-k_E(b)$ et considérons la strate pure $[\mathfrak{B},r,s,b]$ dans $\mathfrak{b}$. Soit $\mathfrak{Q}={\rm rad}(\mathfrak{B})$. D'après les lemmes 1 et 2 de \ref{approximation}, on peut écrire $b$ sous la forme $b=b_1 + \bs{y}_1\otimes c$ avec:
\begin{itemize}
\item $b_1$ est un élément $E$--pur de $\mathfrak{B}$ tel que la strate $[\mathfrak{B},r,s,b_1]$ dans $\mathfrak{b}$ est simple et équi\-valente à $[\mathfrak{B},r,s,b]$;
\item $c$ est un élément quasi--régulier elliptique de $\mathfrak{b}_1={\rm End}_{E_1}(V)$, $E_1=E[b_1]$,  tel que la strate $[\mathfrak{B}_1,s,s-1,c]$ dans $\mathfrak{b}_1$ est simple, où on a posé $\mathfrak{B}_1=\mathfrak{B}\cap \mathfrak{b}_1$;
\item $\bs{y}_1$ est un élément de $\mathfrak{A}_1={\rm End}_{\mathfrak{o}_E}^0(\{\mathfrak{p}_{E_1}^i\})$ tel que $\bs{t}_1(\bs{y}_1)=1$ pour une corestriction modérée $\bs{t}_1:\mathfrak{a}_1\rightarrow E_1$ sur $\mathfrak{a}_1={\rm End}_E(E_1)$ relativement à $E_1/E$;
\item $\bs{y}_1\otimes c$ est un élément de $\mathfrak{Q}^{-s}$ pour l'identification $\mathfrak{b}=\mathfrak{a}_1\otimes_{E_1}\mathfrak{b}_1$ donnée par le choix d'une $(W_1,E_1)$--décomposition $\mathfrak{A}_1\otimes_{\mathfrak{o}_{E_1}}\mathfrak{B}_1
\buildrel \simeq\over{\longrightarrow} \mathfrak{B}$ de $\mathfrak{B}$.
\end{itemize}
De plus $e(E[b]/E)= e(E_1[c]/E)$ et $f(E[b]/E)=f(E_1[c]/E)$, et puisque $E_1[c]\neq E_1$ (car $[E_1:E]<d$ et $c$ est quasi--régulier elliptique dans $\mathfrak{b}_1$), on a
$$
k_E(b)=k_0(b,\mathfrak{B})=-s= k_0(c,\mathfrak{B}_1)= k_{E_1}(c).
$$
On a les identifications
$$
\mathfrak{g}=A(E)\otimes_E\mathfrak{a}_1\otimes_{E_1}\mathfrak{b}_1,\quad 
\mathfrak{A}= \mathfrak{A}(E)\otimes_{\mathfrak{o}_E}\mathfrak{A}_1\otimes_{\mathfrak{o}_{E_1}}\mathfrak{B}_1.
$$
D'autre part, en identifiant $A(E_1)$ à $A(E)\otimes_E \mathfrak{a}_1$ via le choix d'une $(X,E)$--décomposition
$\mathfrak{A}(E)\otimes_{\mathfrak{o}_E} \mathfrak{A}_1\buildrel\simeq \over{\longrightarrow}\mathfrak{A}(E_1)$
de $A(E_1)$, on a aussi les identifications
$$
\mathfrak{g}= A(E_1)\otimes_{E_1}\mathfrak{b}_1,\quad \mathfrak{A}= \mathfrak{A}(E_1)\otimes_{\mathfrak{o}_{E_1}}\mathfrak{B}_1.\leqno{(2)}
$$
L'élément $b_1$ s'écrit $b_1=a_1\otimes 1$ avec $a_1\in \mathfrak{a}_1$. Notons $\gamma_1$ l'élément $\beta + \bs{x}_0\otimes a_1$ de $A(E)\otimes_{E_1}\mathfrak{a}_1$, $G_1$ le groupe $A(E_1)^\times ={\rm Aut}_F(E_1)$, et $H_1$ le groupe $\mathfrak{a}_1^\times = {\rm Aut}_E(E_1)$. Posons $N_1= [E_1:F]$, $e_1=e(\mathfrak{B}_1\vert \mathfrak{o}_{E_1})$, $r_1 = {r\over  e_1}$, $k_1={k_0\over  e_1}$ et $n_1= {n\over  e_1}$. On a $k_1= k_0(\beta, \mathfrak{A}(E_1))$, $n_1 = -\nu_{\mathfrak{A}(E_1)}(\beta)$, et puisque $\nu_{\mathfrak{B}}(b_1)= -r$, on a
$$
\nu_{\mathfrak{A}_1}(a_1)= {\nu_{\mathfrak{B}}(b_1)\over e_1}=-r_1.
$$
La strate $[\mathfrak{A}_1,r_1,r_1-1,a_1]$ dans $\mathfrak{a}_1$ est simple, et comme $r_1<-k_1\leq n_1$, on peut appliquer la proposition 1 de \ref{raffinement}: la strate $[\mathfrak{A}(E_1), n_1, r_1-1, \gamma_1]$ dans $A(E_1)$ est simple, $e(F[\gamma_1]/F)=e(E_1/F)$ et $f(F[\gamma_1]/F)=f(E_1/F)$, et puisque $E_1\neq E$, on a
$$
k_0(\gamma_1,\mathfrak{A}(E_1))=
-r_1 =k_0(a_1,\mathfrak{A}_1)=k_E(a_1).
$$
En particulier on a $[F[\gamma_1]/F]=[E_1:F]=N_1$, l'élément $\gamma_1$ est quasi--régulier elliptique dans $A(E_1)$, et
$$
k_F(\gamma_1)=k_E(a_1).
$$ 

Il s'agit maintenant de remonter à $G$. Notons $\beta_1$ l'élément $\gamma_1\otimes 1$ de $A(E_1)\otimes_{\mathfrak{o}_{E_1}}\mathfrak{b}_1$, 
et $\bs{x}_1$ l'élément $\bs{x}_0\otimes \bs{y}_1$ de $\mathfrak{A}(E)\otimes_{\mathfrak{o}_E}\mathfrak{A}_1= \mathfrak{A}(E_1)$. \'Ecrivons
$$
\gamma = \beta + \bs{x}_0\otimes b = \beta+ \bs{x}_0\otimes (b_1+ \bs{y}_1\otimes c)= \beta_1+ \bs{x}_1\otimes c.
$$
Soit $\bs{s}:\mathfrak{g}\rightarrow \mathfrak{b}$ la corestriction modérée $\bs{s}_0\otimes {\rm id}_{\mathfrak{b}}$ sur 
$\mathfrak{g}=A(E)\otimes_{E}\mathfrak{b}$ relativement à $E/F$, et soit $\bs{s}_{b_1}: \mathfrak{b}\rightarrow \mathfrak{b}_1$ la corestriction modérée 
$\bs{t}_1\otimes {\rm id}_{\mathfrak{b}_1}$ sur $\mathfrak{b}= \mathfrak{a}_1\otimes_{E_1}\mathfrak{b}_1$ relativement à $E_1/E$. 
On a $\bs{s}_{b_1}\circ \bs{s}(\bs{x}_1\otimes c)=c$. 
D'autre part on a $k_0(\beta_1,\mathfrak{A})=k_F(\beta_1)e_1$, et comme $E_1[c]$ est un sous--corps maximal de $\mathfrak{b}_1$ tel que $E_1[c]^\times$ normalise $\mathfrak{B}_1$, on a $e_1=e(E_1[c]/E_1)$. Puisque
$$
s=-k_E(b)<r < -k_0 <n
$$
et
$$
r= r_1e_1= -k_E(a_1)e_1 = -k_F(\beta_1)e_1=-k_0(\beta_1,\mathfrak{A}),
$$
on peut appliquer la proposition 1 de \ref{raffinement}: la strate $[\mathfrak{A}, n, s-1, \gamma]$ dans $\mathfrak{g}$ est simple, $e(F[\gamma]/F)=e(E_1[c]/F)$ et $f(F[\gamma]/F)=f(E_1[c]/F)$, et puisque $E_1[c]\neq E_1$, on a
$$
k_0(\gamma,\mathfrak{A})=
-s =k_0(c,\mathfrak{B}_1)=k_{E_1}(c).
$$
Comme $e(E_1[c]/E)=e(E[b]:E]$ et $f(E_1[c]/E)=f(E[b]/E)$, on a $e(F[\gamma]/F)=e(E[b]]/F)$ et $f(F[\gamma]/F)=f(E[b]/F)$. Comme 
$k_{E_1}(c)=k_E(b)$, on a aussi
$$
k_0(\gamma,\mathfrak{A})=k_E(b).
$$
Cela achève la démonstration du point (i).

\vskip1mm 
Prouvons (ii). L'implication $\Leftarrow$ est une consquence du lemme. 
Prouvons l'implication $\Rightarrow$. Comme pour le point (i), on peut grce au lemme supposer que $b$ et $b'$ vrifient les inclusions 
$\underline{\mathfrak{B}}\subset \mathfrak{B}_b$ et $\underline{\mathfrak{B}}\subset \mathfrak{B}_{b'}$. 
Posons $\gamma = \beta + \bs{x}_0\otimes b$ et $\gamma' = \beta + \bs{x}_0\otimes b'$. On suppose que $\ES{O}_G(\gamma')= \ES{O}_G(\gamma)$. On suppose aussi dans un premier temps que 
$b\neq 0$ et $b'\neq 0$. Posons $E_0=E[b]$ et $\mathfrak{B}=\mathfrak{B}_b$. Soit $\mathfrak{A}=\mathfrak{A}(E)\otimes_{\mathfrak{o}_E}\mathfrak{B}$ l'unique $\mathfrak{o}$--ordre héréditaire dans $\mathfrak{g}$ normalisé par $E_0^\times$. Posons $\mathfrak{Q}={\rm rad}(\mathfrak{B})$ et $\mathfrak{P}={\rm rad}(\mathfrak{A})$. Posons $r=n_E(b)\;(=-\nu_{E_0}(b))$, 
$k_0=k_0(\beta, \mathfrak{A})$ et $n= -\nu_{\mathfrak{A}}(\beta)$. On a vu (cf. le début de la démonstration du point (i)) que $-r> k_0\geq -n$. Considérons la strate pure $\bs{S}_b=[\mathfrak{B},r,r-1,b]$ dans $\mathfrak{b}$. En rempla\c{c}ant $b$ par $b'$, on définit de la même manière 
$E'_0=E[b']$, $\mathfrak{B}'= \mathfrak{B}_{b'}$, $\mathfrak{A}'= \mathfrak{A}(E)\otimes_{\mathfrak{o}_E}\mathfrak{B}'$, $\mathfrak{Q}'$ et $\mathfrak{P}'$, les entiers $r'$, $k'_0$ et $n'$, et la strate pure $\bs{S}_{b'}=[\mathfrak{B}',r',r'-1,b']$ dans $\mathfrak{b}$. La strate pure $\bs{S}_\gamma=[\mathfrak{A},n,r-1,\gamma]$, resp. $\bs{S}_{\gamma'}=[\mathfrak{A}',n',r'-1,\gamma']$, dans $\mathfrak{g}$ est un raffinement de la strate simple $\bs{S}=[\mathfrak{A},n,r,\beta]$, resp. $\bs{S}'=[\mathfrak{A}, n',r',\beta]$, de strate dérivée $\bs{S}_b$, resp. $\bs{S}_{b'}$. Posons $\mathfrak{N}_{k_0}= \mathfrak{N}_{k_0}(\beta,\mathfrak{A})$ et $\mathfrak{N}'_{k'_0}= \mathfrak{N}_{k'_0}(\beta, \mathfrak{A}')$. D'après \cite[1.5.12]{BK}, le $G$--entrelacement formel
$$
\ES{I}_G(\bs{S},\bs{S}')=\{g\in G: g^{-1}(\beta + \mathfrak{P}^{-r})g\cap (\beta + \mathfrak{P}'^{-r'})\neq \emptyset\}
$$
co\"{\i}ncide avec
$$
(1+ \mathfrak{Q}^{-r-k_0}\mathfrak{N}_{k_0})H(1+ \mathfrak{Q}'^{-r'-k'_0}\mathfrak{N}'_{k'_0}).
$$
Soit $g\in G$ tel que $\gamma'= g^{-1}\gamma g$. Puisque $g\in \ES{I}_G(\bs{S},\bs{S}')$, on peut écrire $
g= (1+ y)h (1+y')$ avec $y\in \mathfrak{Q}^{-r-k_0}\mathfrak{N}_{k_0}$, $y'\in \mathfrak{Q}^{-r'-k'_0}\mathfrak{N}'_{k'_0}$ et $h\in H$. Posons $\bs{x}=\bs{x}_0\otimes 1$ et $\bs{s}=\bs{s}_0\otimes {\rm id}_{\mathfrak{b}}:\mathfrak{g}\rightarrow \mathfrak{b}$. L'égalité $\gamma g - g \gamma'=0$ s'écrit
$$
(\beta + \bs{x}b) (1+y)h(1+y') - (1+y)h(1+y')(\beta + \bs{x}b')=0,
$$
\cad
$$
(1+y)^{-1}(\beta + \bs{x}b)(1+y)h -h(1+y')(\beta+ \bs{x}b')(1+y')^{-1}=0.
$$
Posant $a= -r-k_0>0$ et $a'=-r'-k'_0>0$, on en déduit que
$$
({\rm ad}_\beta(y)+ \bs{x}b)h -h ({\rm ad}_\beta(-y')+ \bs{x}b')\equiv 0 \quad ({\rm mod}\; \mathfrak{P}^{-r +a }h +
h\mathfrak{P}'^{-r'+a})
$$
En appliquant $\bs{s}$ (rappelons que $\bs{\mathfrak{s}}:\mathfrak{g}\rightarrow \mathfrak{b}$ est un homomorphisme de $\mathfrak{b}\times \mathfrak{b}$--bimodule et que 
$\bs{s}(\mathfrak{P}^k)= \mathfrak{Q}^k$ et $\bs{s}(\mathfrak{P}')=\mathfrak{Q}'^k$ pour tout $k\in {\Bbb Z}$) , on obtient
$$
bh - hb' \equiv 0\quad ({\rm mod}\; \mathfrak{Q}^{-r+a}h+ h \mathfrak{Q}'^{-r'+a'}),
$$
\cad
$$
h^{-1}(b+ \mathfrak{Q}^{-r+a})h \cap (b' + \mathfrak{Q}'^{-r'+a'})\neq \emptyset.
$$
En particulier, $h$ appartient au $H$--entrelacement formel $\ES{I}_H(\bs{S}_b,\bs{S}_{b'})$. Posons $e=e(\mathfrak{B}\vert \mathfrak{o}_E)\;(=e(E_0/E))$ et $e'= e(\mathfrak{B}'\vert \mathfrak{o}_E)\;(=e(E'_0/E))$.

\begin{marema2}
{\rm 
Soit $\phi_b\in \kappa_E[t]$, resp. $\phi_{b'}\in \kappa_E[t]$, le polynôme caractéristique de la strate $\bs{S}_b$, resp. $\bs{S}_{b'}$. Comme les strates pures $\bs{S}_b$ et $\bs{S}_{b'}$ dans $\mathfrak{b}$ sont équivalentes à des strates simples, d'après \cite[2.6.3]{BK}, on a ${r\over e}= {r'\over e'}$ et $\phi_b = \phi_{b'}$. Si la strate $\bs{S}_b$ est simple, \cad si $b$ est $E$--minimal, alors puisque $E_0=E[b]$ est un sous--corps maximal de $\mathfrak{b}$, le polynôme caractéristique $\phi_b\in \kappa_E[t]$ se factorise en $\phi_b=\phi_0^e$ pour un polynôme $\phi_0\in \kappa_E[t]$ irréductible sur $\kappa_E$. De l'égalité $\phi_{b'}= \phi_0^e$, on déduit que $e'$ divise $e$. Mais puisque $b$ est $E$--minimal, 
l'entier $r=n_E(b)$ est premier  l'indice de ramification $e$, par consquent l'galit 
$r= {e\over e'}r'$ n'est possible que si $e'=e$ et $r'=r$.\hfill $\blacksquare$ }
\end{marema2}

D'après la remarque 2, on a $e=e'$ si $b$ est $E$--minimal. En fait d'après le point (i), 
l'égalité $e=e'$ est toujours vérifiée, que $b$ soit $E$--minimal ou non, \cad que la strate $\bs{S}_b$ dans $\mathfrak{b}$ soit simple ou non: 
on a $e= {e(F[\gamma]/F)\over e(E/F)}$ et $e'= {e(F[\gamma']/F)\over e(E/F)}$, et comme par hypothèse $\gamma$ et $\gamma'$ sont conjugués dans $G$, on a $e(F[\gamma]/F)=e(F[\gamma']/F)$. Les $\mathfrak{o}_E$--ordres héréditaires principaux $\mathfrak{B}$ et $\mathfrak{B}'$ dans $\mathfrak{b}$ sont donc conjugués par un élément de $H$, et quitte  remplacer 
$b'$ par un lment $b''\in \ES{O}_H(b')$ tel que $\mathfrak{B}_{b''}= \mathfrak{B}$ --- ce qui, d'aprs le lemme, laisse inchange l'orbite $\ES{O}_G(\gamma')$ ---,  
on peut supposer que $\mathfrak{B}'=\mathfrak{B}$. Comme les strates pures $\bs{S}_b=[\mathfrak{B}, r,r-1,b]$ et $\bs{S}_{b'}=[\mathfrak{B},r,r-1,b']$ s'entrelacent dans $H$, d'après \cite[2.6.1]{BK}, il existe un élement $u\in U(\mathfrak{B})= \mathfrak{B}^\times$ tel que $b'\in u^{-1}bu + \mathfrak{Q}^{-r+1}$. Quitte à remplacer $b'$ par $ub'u^{-1}$, 
on peut donc supposer que les strates $\bs{S}_b$ et $\bs{S}_{b'}$ sont équivalentes. Si la strate $\bs{S}_b$ est simple, puisque $E_0=E[b]$ est un sous--corps maximal de $\mathfrak{b}$, la strate $\bs{S}_{b'}$ l'est aussi. 

Supposons que la strate $\bs{S}_b$ n'est pas simple, et montrons que l'on peut se ramener au cas où elle l'est. Soit une strate simple $[\mathfrak{B},r,r-1,b_1]$ dans $\mathfrak{b}$ équivalente à $\bs{S}_b$. L'élément $b_1$ est $E$--minimal, et l'on a $b,\,b'\in b_1 + \mathfrak{Q}^{-r+1}$. Posons $E_1=E[b_1]$, $\mathfrak{a}_1={\rm End}_E(E_1)$ et $\mathfrak{b}_1={\rm End}_{E_1}(V)$. Soit $\mathfrak{A}_1$ l'$\mathfrak{o}_E$--ordre héréditaire ${\rm End}_{\mathfrak{o}_E}^0(\{\mathfrak{p}_{E_1}^i\})$ dans $\mathfrak{a}_1$, et 
soit $\mathfrak{B}_1$ l'$\mathfrak{o}_{E_1}$--ordre héréditaire $\mathfrak{B}\cap \mathfrak{b}_1$ dans $\mathfrak{b}_1$. Posons $\mathfrak{Q}_1= {\rm rad}(\mathfrak{B}_1)$. Identifions $\mathfrak{b}$ à $\mathfrak{a}_1\otimes_{E_1}\mathfrak{b}_1$ via le choix d'une $(W_1,E_1)$--décomposition $\mathfrak{A}(E_1)\otimes_{\mathfrak{o}_{E_1}} \mathfrak{B}_1 \buildrel \simeq \over{\longrightarrow} \mathfrak{B}$ de $\mathfrak{B}$. On a les identifications 
$$
\mathfrak{g}= A(E)\otimes_A \mathfrak{a}_1\otimes_{E_1} \mathfrak{b}_1,\quad
\mathfrak{A}= \mathfrak{A}(E)\otimes_{\mathfrak{o}_E}\mathfrak{A}_1 \otimes_{\mathfrak{o}_{E_1}}\mathfrak{B}_1.
$$
Fixons une corestriction modérée $\bs{t}_1: \mathfrak{a}_1 \rightarrow E_1$ sur $\mathfrak{a}_1$ relativement à $E_1/E$, et un élément $\bs{y}_1\in \mathfrak{A}_1$ tel que $\bs{t}_1(\bs{y}_1)=1$. Puisque $k_0(b_1,\mathfrak{B})=-r$, d'après le lemme 1 de \ref{approximation}, il existe des éléments $v,\,v'\in 1 + \mathfrak{Q}_1\mathfrak{N}_{-r}(b_1, \mathfrak{B})$ et $c,\,c'\in \mathfrak{B}_1^{-r+1}$ tels que $v^{-1}bv = b_1 + \bs{y}_1\otimes c$ et $v'^{-1}b'v'= b_1+ \bs{y}_1\otimes c'$. Quitte à remplacer $b$ par $vbv^{-1}$ et $b'$ par $v'b'v'^{-1}$, on peut supposer que $b= b_1 + \bs{y}_1\otimes c$ et $b' = b_1+ \bs{y}_1 \otimes c'$. L'élément $c$, resp. $c'$, est quasi--régulier elliptique dans $\mathfrak{b}_1$. En effet, si ce n'est pas le cas, $c$ est contenu dans une sous--$E_1$--algèbre parabolique propre $\mathfrak{p}_1$ de $\mathfrak{b}_1$, et $b$ appartient à la sous--$E$--algèbre parabolique propre $\mathfrak{a}_1\otimes_{E_1}\mathfrak{p}_1$ de $\mathfrak{b}$, ce qui contredit le fait que $b$ est quasi--régulier elliptique dans $\mathfrak{b}$. \'Ecrivons $b_1=a_1\otimes 1$ avec $a_1\in \mathfrak{a}_1$. Identifions $A(E_1)$ à $A(E)\otimes_E \mathfrak{a}_1$ via le choix d'une $(X,E)$--décomposition
$\mathfrak{A}(E)\otimes_{\mathfrak{o}_E} \mathfrak{A}_1\buildrel\simeq \over{\longrightarrow}\mathfrak{A}(E_1)$
de $\mathfrak{A}(E_1)$. On a les identifications
$$
\mathfrak{g}= A(E_1)\otimes_{E_1}\mathfrak{b}_1,\quad \mathfrak{A}= \mathfrak{A}(E_1)\otimes_{\mathfrak{o}_{E_1}}\mathfrak{b}_1.
$$
Notons $\gamma_1$ l'élément $\beta + \bs{x}_0\otimes a_1$ de $A(E)\otimes_{E_1}\mathfrak{a}_1$. Posons $N_1= [E_1:F]$, $e_1=e(\mathfrak{B}_1\vert \mathfrak{o}_{E_1})$, $r_1 = {r\over e_1}$, $k_1={k_0\over e_1}$ et $n_1= {n\over e_1}$. On a $k_1= k_0(\beta, \mathfrak{A}(E_1))$, $n_1 = -\nu_{\mathfrak{A}(E_1)}(\beta)$, et puisque $\nu_{\mathfrak{B}}(b_1)= -r$, on a $\nu_{\mathfrak{A}_1}(a_1)=-r_1$. 
La strate $[\mathfrak{A}_1,r_1,r_1-1,a_1]$ dans $\mathfrak{a}_1$ est simple, et comme $r_1<-k_1\leq n_1$, on peut appliquer la proposition  1 de \ref{raffinement}: la strate $[\mathfrak{A}(E_1), n_1, r_1-1, \gamma_1]$ dans $A(E_1)$ est simple, $e(F[\gamma_1]/F)=e(E_1/F)$ et $f(F[\gamma_1]/F)=f(E_1/F)$, et
$$
k_0(\gamma_1,\mathfrak{A}(E_1))=\left\{\begin{array}{ll}
-r_1 =k_0(a_1,\mathfrak{A}_1)=k_E(a_1)& \mbox{si $E_1\neq E$}\\
k_1=k_0(\beta,\mathfrak{A}(E_1))& \mbox{sinon}
\end{array}\right..
$$
En particulier, on a $[F[\gamma_1]/F]=N_1$, et $\gamma_1$ est quasi--régulier elliptique dans $A(E_1)$. Notons $\beta_1$ l'élément $\gamma_1\otimes 1$ de $A(E_1)\otimes_{\mathfrak{o}_{E_1}}\mathfrak{b}_1$, 
et $\bs{x}_1$ l'élément $\bs{x}_0\otimes \bs{y}_1$ de $\mathfrak{A}(E)\otimes_{\mathfrak{o}_E}\mathfrak{A}_1= \mathfrak{A}(E_1)$. \'Ecrivons
$$
\gamma = \beta + \bs{x}_0\otimes b = \beta+ \bs{x}_0\otimes (b_1+ \bs{y}_1\otimes c)= \beta_1+ \bs{x}_1\otimes c
$$
et (de la même manière)
$$
\gamma'=\beta_1 + \bs{x}_1\otimes c'.
$$
Soit $\bs{s}_1:A(E_1)\rightarrow E_1$ la corestriction modérée sur $A(E_1)= A(E)\otimes_E\mathfrak{a}_1$ relativement à $E_1/F$ donnée par $\bs{s}_1 = \bs{s}_0\otimes \bs{t}_1$. On a donc $\bs{s}_1(\bs{x}_1)= 1$. On distingue deux cas: 
\begin{itemize}
\item {\it Premier cas}: $E_1=E$. En ce cas $F[\beta_1]=F[\beta]$ et $k_0(\beta_1,\mathfrak{A})=k_0(\beta,\mathfrak{A})$, et le passage de l'écriture $\gamma = \beta + \bs{x}_0\otimes b$, resp. $\gamma=\beta+ \bs{x}_0\otimes b'$, à l'écriture $\gamma = \beta_1 + \bs{x}_1\otimes c$, resp. $\gamma'= \beta_1+ \bs{x}_1\otimes  c'$, a pour effet de faire croître la valuation de $b$, resp. $b'$. On a en effet $n_E(b)=n_E(b')= r$, et par construction on obtient $n_E(c)=n_E(c') <r$.
\item {\it Deuxième cas}: $E_1\neq E$. En ce cas le passage de l'écriture $\gamma = \beta + \bs{x}_0\otimes b$, resp. $\gamma=\beta+ \bs{x}_0\otimes b'$, à l'écriture $\gamma = \beta_1 + \bs{x}_1\otimes c$, resp. $\gamma'= \beta_1+ \bs{x}_1\otimes c'$, a pour effet de faire croître le degré de l'extension $F[\beta]/F$. On a donc $[F[\beta_1,c]:F[\beta_1]]< [F[\beta,b]: F[\beta]]$. 
\end{itemize}
Si l'élément $c$ est $F[\beta_1]$--minimal, alors l'élément $c'$ l'est aussi (même argument que plus haut pour $b$ et $b'$) et on s'arrête là: pour montrer que $b$ et $b'$ sont conjugués dans $H={\rm Aut}_E(V)$, il suffit de montrer que $c$ et $c'$ le sont dans $H_1={\rm Aut}_{E_1}(V)$. Sinon, on refait la même opération avec le couple $(c,c')$. Le processus s'arrête au bout d'un nombre fini de fois. En effet, le second cas ne peut se produire qu'un nombre fini de fois par un argument de dimension. Quant au premier cas, supposons par l'absurde qu'il se produise un nombre infini de fois. Compte--tenu du fait que le second cas ne peut se produire qu'un nombre fini de fois, cela implique qu'il existe un élément pur $\delta\in \mathfrak{g}$ avec  
$[F[\delta]:F]<N$ --- ce $\delta$ est le $\beta_1$ du processus obtenu lors de la dernire occurrence du cas 2 --- tel que pour tout entier $k$, l'intersection $\ES{O}_G(\gamma)\cap 
(\delta U_{F[\delta]} + \mathfrak{P}^k)$ n'est pas vide. Cela signifie que $\delta U_{F[\delta]}$ rencontre la fermeture $\overline{\ES{O}_G(\gamma)}$ de l'orbite $\ES{O}_G(\gamma)$ dans $G$. Or cette dernière est fermée, et comme tout élément de $\ES{O}_G(\gamma)$ est quasi--régulier (elliptique), on a forcément $\ES{O}_G(\gamma)\cap F[\delta]^\times= \emptyset$; contradiction. En définitive, on s'est ramené au cas où $b$ est $E$--minimal, et donc à celui où les strates $\bs{S}_b$ et $\bs{S}_{b'}$ dans $\mathfrak{b}$ sont simples et équivalentes, ce que l'on suppose désormais. 

Prouvons que $b$ et $b'$ sont conjugués dans $H$. Pour cela, montrons que pour chaque entier $j\geq 1$, il existe un élément $u_j\in U(\mathfrak{B})$ tel que $u_j b' u_j^{-1} - b \in \mathfrak{Q}^{-r+j}$. Le cas $j=1$ ayant déjà été traité (on peut prendre $u_1=1$ puisqu'on a déjà conjugué $b'$ dans $U(\mathfrak{B})$ de manière à ce que les strates $\bs{S}_b$ et $\bs{S}_{b'}$ dans $\mathfrak{b}$ soient équivalentes), on procède par récurrence sur $j$. Fixons un entier $j\geq 1$ et supposons qu'un tel $u_j$ existe. On a déjà posé $E_0=E[b]$. Posons $K_0= F[\gamma]$. La strate $[\mathfrak{A}, n,r-1,\gamma]$ dans $\mathfrak{g}$ est simple, $K_0$ est un sous--corps maximal de $\mathfrak{g}$, et $k_F(\gamma)$ est donné par le point (i). Soit $\bs{s}_b:\mathfrak{b}\rightarrow E_0$ une corestriction modérée sur $\mathfrak{b}$ relativement à $E_0/E$. D'après le corollaire de \ref{raffinement}, il existe une corestriction modérée $s_\gamma:\mathfrak{g}\rightarrow K_0$ sur $\mathfrak{g}$ relativement à $K_0/F$ telle que pour tout $k\in {\Bbb Z}$ et tout $y\in \mathfrak{P}^k$, on a
$$
\bs{s}_\gamma(y)\equiv \bs{s}_b\circ  \bs{s}(y)\quad ({\rm mod}\; \mathfrak{P}^{k+1}).
$$
\'Ecrivons $u_j b' u_j^{-1} = b +c$ avec $u_j\in U(\mathfrak{B})$ et $c\in \mathfrak{Q}^{-r+j}$. Puisque $\ES{O}_G(\gamma')=\ES{O}_G(\gamma)$, d'après le lemme, il existe un $g\in G$ tel que $g^{-1}\gamma g= \beta + \bs{x}_0\otimes (b+c)= \gamma + \bs{x}c$. Posons $t= \nu_{\mathfrak{A}}(g)$. Comme
$$
{\rm ad}_\gamma(g)\equiv 0\quad ({\rm mod}\;\mathfrak{P}^{t-r+j}),
$$
d'après \cite[2.1.1]{BK}, l'élément $g$ appartient à $\mathfrak{p}_{K_0}^t\mathfrak{N}_{-r+j}(\gamma,\mathfrak{A})$. D'autre part, on a $k_F(\gamma)\leq -r$ (avec égalité si et seulement si $E_0\neq E$) d'où, d'après \ref{une submersion}.(1),
$$
\mathfrak{N}_{-r-j}(\gamma,\mathfrak{A})= \mathfrak{o}_{K_0}+ \mathfrak{p}_{K_0}^{-r+j-k_F(\gamma)}\mathfrak{N}_{k_F(\gamma)}(\gamma,\mathfrak{A})\subset \mathfrak{o}_{K_0}+ \mathfrak{P}^j.
$$ 
\'Ecrivons $g= \alpha + y$ avec $\alpha\in \mathfrak{p}_{K_0}^t$ et $y\in \mathfrak{P}^{t+j}$. Puisque $\nu_{\mathfrak{A}}(g)=t$, l'élément $\alpha$ appartient à $\mathfrak{p}_{K_0}^t\smallsetminus \mathfrak{p}_{K_0}^{t+1}$, et on a
$$
0= \gamma(\alpha+y)- (\alpha+y)(\gamma + \bs{x}c)\equiv {\rm ad}_\gamma(y) - \alpha\bs{x}c\quad({\rm mod}\; \mathfrak{P}^{t-r+2j}).
$$
En appliquant $\bs{s}_\gamma$, on obtient que $\bs{s}_\gamma(\alpha \bs{x}c)=\alpha \bs{s}_\gamma(\bs{x}c)$ appartient à 
$\mathfrak{p}_{K_0}^{t-r+2j}$, et donc que $\bs{s}_\gamma(\bs{x}c)$ appartient à $\mathfrak{p}_{K_0}^{-r+2j}\subset \mathfrak{p}_{K_0}^{-r+j+1}$. On en déduit que
$$
\bs{s}_b\circ \bs{s}(\bs{x}c)\equiv 0\quad ({\rm mod}\; \mathfrak{P}^{-r+j+1}),
$$
et donc que $\bs{s}_b\circ \bs{s}(\bs{x}c)=\bs{s}_b(c)$ appartient à $\mathfrak{P}^{-r+j+1}\cap \mathfrak{b}= \mathfrak{Q}^{-r+j+1}$. 
Puisque $k_E(b)\leq -r$ (avec égalité si $E_0\neq E$), d'après \cite[1.4.10]{BK}, il existe un $a\in \mathfrak{P}_{E_0}^j\mathfrak{N}_{-r}(b,\mathfrak{B})\;(\subset \mathfrak{Q}^j)$ tel que
$$
c\equiv {\rm ad}_b(a)\quad ({\rm mod}\; \mathfrak{Q}^{-r+j+1}).
$$
Alors on a
$$
(1+a)^{-1}b(1+a)\equiv b+c\quad ({\rm mod}\;\mathfrak{Q}^{-r+j+1}),
$$
et puisque $(1+a)\in U(\mathfrak{B})$, en posant $u_{j+1}=(1+a)u_j$, on obtient que $u_{j+1} b' u_{j+1}^{-1}- b$ appartient à $\mathfrak{Q}^{-r+j+1}$. L'hypothèse de récurrence est donc vraie au cran $j+1$. 
Pour tout entier $j\geq 1$, on a donc
$$
\ES{O}_H(b') \cap (b + \mathfrak{Q}^{-r+j})\neq \emptyset.
$$
Cela implique que $b$ appartient  la fermeture $\overline{\ES{O}_H(b')}$ de l'orbite $\ES{O}_H(b')$ dans $\mathfrak{b}$. 
Puisque cette dernire est fermée dans $\mathfrak{b}$, on a l'égalité $\ES{O}_H(b')=\ES{O}_H(b)$. 

On a prouvé l'implication $\Rightarrow $ du point (ii) dans le cas où $b\neq 0$ et $b'\neq 0$. Si $b=b'=0$, il n'y a rien à démontrer. Reste à prouver que si $b\neq 0$, alors $b'\neq 0$. Supposons par l'absurde que $b\neq 0$ et $b'=0$. Puisque $b'=0\in \mathfrak{b}_{\rm qre}$, $E$ est un sous--corps maximal de $\mathfrak{g}$, et $E[b]=E=\mathfrak{b}$. Posons $r=-\nu_E(b)$. Soit $\mathfrak{A}=\mathfrak{A}_\beta$ l'unique $\mathfrak{o}$--ordre héréditaire dans $\mathfrak{g}$ normalisé par $E^\times$. Puisque $k_F(\gamma)=k_F(\beta)<-r$ et $\gamma'=\beta$, les strates $\bs{S}=[\mathfrak{A}, n,r, \gamma]$ et $\bs{S'}=[\mathfrak{A},n,r,\gamma']$ dans $\mathfrak{g}$ sont simples et équivalentes. Comme elles s'entrelacent dans $G$, on montre comme plus haut qu'il existe un élément $h\in H= E^\times$ tel que
$$
bh - hb' \equiv 0 \quad ({\rm mod}\; \mathfrak{p}_E^{-r + a}h + h\mathfrak{p}_E^{-r+a})
$$
avec $a = -r-k_F(\beta)>0$. Mais cela signifie que $b\in \mathfrak{p}_E^{-r+a}$, ce qui est impossible puisque $\nu_E(b)=-r$. 

Cela achève la démonstration de la proposition.
\end{proof}

\begin{marema3}
{\rm 
Le lemme a t utilis plusieurs fois dans la preuve de la proposition: pour l'implication $\Leftarrow $ du point (ii) bien sr, mais 
aussi pour passer des lments de $\mathfrak{b}_{\rm qre} \cap \underline{\mathfrak{Q}}^{\underline{k}_0+1}$ aux lments 
$b\in \mathfrak{b}_{\rm qre}^{k_F(\beta)+ {1\over d}}$ tels que $\underline{\mathfrak{B}}\subset \mathfrak{B}_b$. Il a aussi une conséquence que nous utiliserons plus loin:
\begin{enumerate}[leftmargin=17pt]
\item[(3)]l'ensemble ${^G(\beta + \bs{x}_0\otimes \underline{\mathfrak{Q}}^{\underline{k}_0+1})}$ est ouvert {\it fermé} et $G$--invariant dans $G$.
\end{enumerate}
En effet, posons $\mathfrak{X}= {^G(\beta + \bs{x}_0\otimes \underline{\mathfrak{Q}}^{\underline{k}_0+1})}$. D'aprs la proposition de \ref{une submersion}, c'est 
un ouvert (clairement $G$--invariant) de $G$. 
Soit $\gamma = \beta + \bs{x}_0\otimes b$ pour un lment $b\in \underline{\mathfrak{Q}}^{\underline{k}_0+1}$. Puisque 
d'après \ref{éléments qre}.(7), l'ensemble ${^H(\underline{\mathfrak{Q}}^{\underline{k}_0+1})}$ est ouvert fermé et $H$--invariant dans $\mathfrak{b}$, on peut choisir un élément fermé (dans $\mathfrak{b}$) $b'$ qui appartient  $\underline{\mathfrak{Q}}^{\underline{k}_0+1}\cap \overline{\ES{O}_H(b)}$. D'après le lemme, l'élément $\gamma'= \beta + \bs{x}_0 \otimes b'$ appartient à $\overline{\ES{O}_G(\gamma)}$. 
Montrons que $\gamma'$ est ferm (dans $\mathfrak{g}$). Si $b'\in \mathfrak{b}_{\rm qre}$, c'est vrai puisque d'aprs la proposition, $\gamma'\in \mathfrak{g}_{\rm qre}$. Sinon, 
quitte  conjuguer $b'$ dans $H$, on peut supposer qu'il existe une dcomposition $V= V_1\times \cdots \times V_s$ o $V_i$ est un sous--$E$--espace vectoriel de $V$ 
de dimension $d_i$, telle que, en posant $\mathfrak{b}_i= {\rm End}_E(V_i)$ et $\mathfrak{m}_*= \mathfrak{b}_1 \times \cdots \times \mathfrak{b}_s$, 
$\mathfrak{g}_i= {\rm End}_F(V_i)$ et $\mathfrak{m}= \mathfrak{g}_1 \times \cdots \times \mathfrak{g}_s$, on a:
\begin{itemize}
\item $\underline{\mathfrak{B}}\cap \mathfrak{m}_* = \underline{\mathfrak{B}}_1\times \cdots \times \underline{\mathfrak{B}}_s$ o $\underline{\mathfrak{B}}_i$ est un $\mathfrak{o}_E$--ordre hrditaire 
minimal dans $\mathfrak{b}_i$;
\item $\underline{\mathfrak{A}}\cap \mathfrak{m}= \underline{\mathfrak{A}}_1\times \cdots \times \underline{\mathfrak{A}}_s$ o $\underline{\mathfrak{A}}_i$ est l'unique $\mathfrak{o}$--ordre hrditaire dans 
$\mathfrak{g}_i$ normalis par $E^\times$ tel que $\underline{\mathfrak{A}}_i\cap \mathfrak{b}_i= \underline{\mathfrak{B}}_i$;
\item la $(\underline{W},E)$--décomposition $\mathfrak{A}(E)\otimes_{\mathfrak{o}_E}\underline{\mathfrak{B}}\buildrel \simeq \over{\longrightarrow}\underline{\mathfrak{A}}$ de 
$\underline{\mathfrak{A}}$ se restreint en un isomorphisme $\mathfrak{A}(E)\otimes_{\mathfrak{o}_E}(\underline{\mathfrak{B}}\cap \mathfrak{m}_*)\buildrel \simeq 
\over{\longrightarrow}\underline{\mathfrak{A}}\cap \mathfrak{m}$;
\item l'lment $b'$ appartient  $\mathfrak{m}_*$, et pour chaque $i$, 
la composante $b_i$ de $b$ sur $\mathfrak{b}_i$ appartient  $(\mathfrak{b}_i)_{\rm qre}\cap \underline{\mathfrak{Q}}_i^{\underline{k}_{i,0}+1}$ o $\underline{\mathfrak{Q}}_i$ est le radical 
de Jacobson de $\underline{\mathfrak{B}}_i$ et $\underline{k}_{i,0}= k_F(\beta)d_i \;(= k_0(\beta, \underline{\mathfrak{A}}_i))$.
\end{itemize}
Pour des dtails sur ces dcompositions, voir plus loin (\ref{descente centrale au voisinage d'un élément pur (suite)}). Ainsi l'lment $\gamma'$ appartient 
 $\mathfrak{m}$, et pour chaque $i$, la composante $\gamma'_i= \beta + \bs{x}_0\otimes b'_i$ de $\gamma'$ sur $\mathfrak{g}_i$ appartient  $(\mathfrak{g}_i)_{\rm qre}$. 
Donc $\gamma'$ est ferm dans $\mathfrak{m}$, et aussi dans $\mathfrak{g}$. On a prouv que pour tout $\gamma\in \beta + \bs{x}_0 \otimes \underline{\mathfrak{Q}}^{\underline{k}_0+1}$, il existe 
un lment ferm (dans $\mathfrak{g}$) $\gamma' $ qui appartient  $(\beta + \bs{x}_0 \otimes \underline{\mathfrak{Q}}^{\underline{k}_0+1}) \cap \overline{\ES{O}_G(\gamma)}$. D'après la remarque 2 de \ref{parties compactes modulo conjugaison}, cela prouve (3). \hfill$\blacksquare$
}
\end{marema3}

\subsection{Une conséquence du résultat principal}\label{une conséquence}Soit un élément $\gamma\in G_{\rm qre}$, et soit $(\gamma_0=\gamma,\gamma_1,\ldots ,\gamma_m)$ une suite d'approximation minimale de $\gamma$, de suite des correcteurs 
$(\bs{x}_1,\ldots ,\bs{x}_m)$ et de suite dérivée $(b_0,\ldots ,b_{m-1})$ --- cf. la définition de \ref{approximation}. On note $(F_0,\ldots ,F_m)$ la suite d'extensions de $F$ définie par $F_i=F[\gamma_i]$, et pour $i=0,\ldots ,m$, on note $n_i$, $r_i$, $e_i$, $f_i$ les entiers définis comme dans la remarque 1 de \ref{approximation}. On pose $\mathfrak{g}_i={\rm End}_F(F_i)$ et $\mathfrak{A}_i= 
\mathfrak{A}(F_i)$, et (si $i<m)$ $\mathfrak{b}_i={\rm End}_{F_{i+1}}(F_i)$ et $\mathfrak{B}_i= \mathfrak{A}_i\cap \mathfrak{b}_i$. On a identifié $\mathfrak{g}$ à $\mathfrak{g}_0$ via le choix d'un vecteur non nul $v\in V$, et pour $i=0,\ldots ,m-1$, on a identifié $\mathfrak{g}_i$ à $\mathfrak{g}_{i+1}\otimes_{F_{i+1}}\!\mathfrak{b}_i$ via le choix d'une $(W_i,F_{i+1})$--décomposition 
$\mathfrak{A}_{i+1}\otimes_{\mathfrak{o}_{F_{i+1}}}\!\!\mathfrak{B}_i \buildrel\simeq\over{\longrightarrow} \mathfrak{A}_i$ de $\mathfrak{A}_i$. Avec ces identifications, on a l'égalité 
$\bs{x}_{i+1}\otimes b_i = \gamma_i -\gamma_{i+1}$. On a aussi l'égalité (comme $F_{i+1}$--espaces vectoriels) $F_{i+1} \otimes_F W_i =F_i$.

Soit aussi un autre élément $\gamma'\in G_{\rm qre}$, et soit $(\gamma'_0=\gamma, \gamma'_1,\ldots ,\gamma'_{m'})$ une suite d'approximation minimale de $\gamma'$, de suite des correcteurs $(\bs{x}'_1,\ldots ,\bs{x}'_m)$ et de suite dérivée $(b'_0,\ldots ,b'_{m-1})$. Elle définit  comme ci--dessus une suite d'extensions $(F'_0,\ldots ,F'_{m'})$, des entiers $n'_i$, $r'_i$, $e'_i$, $f'_i$, des algèbres $\mathfrak{g}'_i$ et $\mathfrak{A}'_i$, et (si $i<m$) $\mathfrak{b}'_i$ et $\mathfrak{B}'_i$. On a identifié $\mathfrak{g}$ à $\mathfrak{g}'_0$ via le choix d'un vecteur non nul $v'\in V$, et pour $i=0,\ldots ,m-1$, on a identifié $\mathfrak{g}'_i$ à $\mathfrak{g}'_{i+1}\otimes_{F'_{i+1}}\!\mathfrak{b}'_i$ via le choix d'une $(W'_i,F'_{i+1})$--décomposition 
$\mathfrak{A}'_{i+1}\otimes_{\mathfrak{o}_{F'_{i+1}}}\!\!\mathfrak{B}'_i \buildrel\simeq\over{\longrightarrow} \mathfrak{A}'_i$ de $\mathfrak{A}'_i$.

\begin{madefi}
{\rm Les suites $(\gamma'_0,\ldots ,\gamma'_{m'})$ et $(\gamma_0,\ldots ,\gamma_m)$ sont dites {\it ($G$--)équivalentes} si les conditions suivantes sont vérifiées:
\begin{itemize}
\item $m'=m$;
\item pour $i=0,\ldots ,m$, on a $n'_i=n_i$ et $r'_i=r_i$;
\item il existe une suite d'isomorphismes de $F$--espaces vectoriels $\iota_i: F_i \buildrel \simeq \over{\longrightarrow} F'_i$ ($i=0,\ldots ,m$) qui sont {\it compatibles} au sens où (pour $i<m$), 
en identifiant $F_{i+1}$ au sous--$F$--espace vectoriel $F_{i+1}\otimes 1$ de $F_{i+1}\otimes_F W_i=F_i$ et $F'_{i+1}$ au sous--$F$--espace vectoriel $F'_{i+1}\otimes 1$ de $F'_{i+1}\otimes_{F} W'_i=F'_i$, on a $\iota_{i+1}=\iota_i\vert_{F_{i+1}}$, et tels que pour $i>0$, $\iota_i$ est un {\it isomorphisme de $F$--extensions}. On note $\bs{\alpha}_i: \mathfrak{g}_i\rightarrow \mathfrak{g}'_i$ l'isomorphisme de $F$--algèbres donné  par $\bs{\alpha}_i(g)= \iota_i \circ g \circ \iota_i^{-1}$. Pour $i=0,\ldots ,m-1$, $\bs{\alpha}_i$ induit par restriction un isomorphisme de $F_{i+1}$--algèbres $\bs{\beta}_i: \mathfrak{b}_i\rightarrow \mathfrak{b}'_i$ (pour la structure de $F_{i+1}$--algèbre sur $\mathfrak{b}'_i$ déduite de l'isomorphisme de $F$--extensions $\iota_{i+1}=F_{i+1}\buildrel\simeq\over{\longrightarrow} F'_{i+1}$). Par construction, les $\bs{\alpha}_i$ sont compatibles aux identifications $\mathfrak{g}_i= \mathfrak{g}_{i+1}\otimes_{F_{i+1}}\mathfrak{b}_i$ et $\mathfrak{g}'_i =\mathfrak{g}'_{i+1}\otimes_{F'_{i+1}}\mathfrak{b}'_i$, au sens où pour $i=0,\ldots ,m-1$, on a $\bs{\alpha}_i = \bs{\alpha}_{i+1}\otimes \bs{\beta}_i$;
\item Pour $i=0,\ldots ,m-1$, on a $\bs{\alpha}_{i+1}(\bs{x}_{i+1})= \bs{x}'_{i+1}$ et $\bs{\beta}_i(b_i)\in \ES{O}_{H'_i}(b'_i)$ avec $H'_i=(\mathfrak{b}'_i)^\times$;
\item $\bs{\alpha}_m(\gamma_m)\in \ES{O}_{G'_m}(\gamma'_m)$ avec $G'_m=(\mathfrak{g}'_m)^\times$. 
\end{itemize}
}
\end{madefi}

\begin{marema1}
{\rm 
Si les suites d'approximation minimale $(\gamma'_0,\ldots ,\gamma'_{m'})$ et $(\gamma_0,\ldots ,\gamma_m)$ sont équivalentes, alors pour $i=0,\ldots ,m'=m$, on a $e'_i=e_i$ et $f'_i=f_i$. En effet pour $i>0$, c'est une conséquence de l'existence de l'ismorphisme de $F$--extensions $\iota_i: F_i\buildrel\simeq\over{\longrightarrow} F'_i$. Pour $i=0$, on a $\gamma = \gamma_1 + \bs{x}_1\otimes b_0$ et $\gamma'= \gamma'_1 + \bs{x}'_1\otimes b'_0$, $e(F_0/F) = e(F_1[b_0]/F)$ et $e(F'_0/F)= e(F'_1[b'_0]/F)$, $f(F_0/F) = f(F_1[b_0]/F)$ et $f(F'_0/F)= f(F'_1[b'_0]/F)$. Or, puisque $\bs{\beta}_0(b_0)\in \ES{O}_{H'_0}(b'_0)$, on a
$$
e(F_1[b_0]/F)= e(F_1[b_0]/F_1)e_1 = e(F'_1[b'_0]/F'_1)e'_1= e(F'_1[b'_0]/F).
$$
De la même manière, on obtient $f(F_1[b_0]/F)= f(F'_1[b'_0]/F)$. \hfill $\blacksquare$
}
\end{marema1}

\begin{marema2}
{\rm 
Pour $i=0,\ldots ,m-1$, soit $\tilde{\bs{s}}_i:\mathfrak{g}_i\rightarrow \mathfrak{b}_i$ la corestriction modérée sur $\mathfrak{g}_i$ relativement à $F_{i+1}/F$ normalisée par $\tilde{\bs{s}}_i(\gamma_i-\gamma_{i+1})=b_i$ --- cf. la remarque 1 de \ref{approximation}. De même, pour $i=0,\ldots ,m'-1$, soit $\tilde{\bs{s}}'_i:\mathfrak{g}'_i\rightarrow \mathfrak{b}'_i$ la corestriction modérée sur $\mathfrak{g}'_i$ relativement à $F'_{i+1}/F$ normalisée par $\tilde{\bs{s}}'_i(\gamma'_i-\gamma'_{i+1})=b'_i$. Si les suites $(\gamma_0,\ldots ,\gamma_m)$ et $(\gamma'_0,\ldots ,\gamma'_{m'})$ sont équivalentes, alors pour $i=0,\ldots ,m-1$, l'application $\tilde{\bs{t}}'_i= \bs{\beta}_i \circ \tilde{\bs{s}}_i \circ \bs{\alpha}_i^{-1}: \mathfrak{g}'_i\rightarrow \mathfrak{b}'_i$ est une corestriction modérée sur $\mathfrak{g}'_i$ relativement à $F'_{i+1}/F$. On a donc $\tilde{\bs{t}}'_i= u'_{i+1} \tilde{\bs{s}}'_i$ pour un élément $u'_{i+1}\in \mathfrak{o}_{\smash{F'_{i+1}}}^\times$. Puisque $\gamma'_i-\gamma'_{i+1} = \bs{x}'_{i+1}\otimes b'_i$ avec $\bs{x}'_{i+1} = \bs{\alpha}_{i+1}(\bs{x}_{i+1}) $ et 
$b'_i = h'^{-1}_i \bs{\beta}_i(b_i) h'_i$ pour un élément $h'_i\in H'_i$, posant $h_i= \bs{\beta}_i^{-1}(h'_i)\in H_i = \mathfrak{b}_i^\times$, on obtient
$$
\tilde{\bs{t}}'_i(\gamma'_i-\gamma'_{i+1})= \bs{\beta}_i \circ \tilde{\bs{s}}_i(\bs{x}_{i+1}\otimes h_i^{-1}b_ih_i))= \bs{\beta}_i(h_i^{-1}b_i h_i)= b'_i = \tilde{\bs{s}}'_i(\gamma'_i-\gamma'_{i+1}).
$$
Par conséquent $u'_{i+1}= 1$ et $\tilde{\bs{t}}'_i = \tilde{\bs{s}}'_i$. En d'autres termes, les corestrictions modérées $\tilde{\bs{s}}_i$ et $\tilde{\bs{s}}'_i$ sont compatibles aux isomorphismes $\bs{\alpha}_i$ et $\bs{\beta}_i$: on a l'égalité $\tilde{\bs{s}}'_i \circ \bs{\alpha}_i= \bs{\beta}_i\circ \tilde{\bs{s}}_i:\mathfrak{g}_i\rightarrow \mathfrak{b}'_i$.\hfill $\blacksquare$
}
\end{marema2}

\begin{mapropo}
On a $\ES{O}_G(\gamma')= \ES{O}_G(\gamma)$ si et seulement s'il existe des suites d'approximation minimale 
$(\gamma'_0,\ldots ,\gamma'_{m'})$ de $\gamma'$ et $(\gamma_0,\ldots ,\gamma_m)$ de $\gamma$ qui sont équivalentes.
\end{mapropo}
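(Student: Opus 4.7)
Le plan est de proc�der par r�currence sur $m$, la longueur de la suite d'approximation minimale, en utilisant comme cl� de vo\^ute l'�nonc� principal de \ref{le r�sultat principal} (proposition 3.6), qui caract�rise la $G$--conjugaison des �l�ments de la forme $\beta + \bs{x}_0\otimes b$ en termes de la $H$--conjugaison des �l�ments $b$.

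Pour l'implication $\Rightarrow$ (partie relativement facile): on fixe une suite d'approximation minimale $(\gamma_0,\ldots ,\gamma_m)$ de $\gamma$, accompagn�e de toutes ses donn�es auxiliaires (extensions $F_i$, correcteurs $\bs{x}_i$, suite d�riv�e $b_i$, $(W_i,F_{i+1})$--d�compositions, corestrictions mod�r�es). Si $g\in G$ v�rifie $\gamma'= g^{-1}\gamma g$, on transporte l'ensemble des donn�es par ${\rm Int}_{\smash{g^{-1}}}$: on pose $\gamma'_i= g^{-1}\gamma_i g$, $F'_i = g^{-1}F_i g$, $\bs{x}'_i = g^{-1}\bs{x}_i g$, $b'_i = g^{-1}b_i g$, avec pour isomorphismes compatibles $\iota_i:F_i \to F'_i$ la restriction de ${\rm Int}_{\smash{g^{-1}}}$. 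On v�rifie sans difficult� que la suite $(\gamma'_0,\ldots ,\gamma'_m)$ ainsi obtenue est bien une suite d'approximation minimale de $\gamma'$, et qu'elle est tautologiquement �quivalente � $(\gamma_0,\ldots ,\gamma_m)$: par construction, $\bs{\alpha}_i$ correspond � ${\rm Int}_{\smash{g^{-1}}}$, donc $\bs{\alpha}_m(\gamma_m)=\gamma'_m$, $\bs{\beta}_i(b_i)=b'_i$, et $\bs{\alpha}_{i+1}(\bs{x}_{i+1})=\bs{x}'_{i+1}$.

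Pour l'implication $\Leftarrow$: on proc�de par r�currence sur $m$. Dans le cas $m=0$, les deux �l�ments $\gamma$ et $\gamma'$ sont $F$--minimaux; l'isomorphisme de $F$--extensions $\iota_0: F_0\buildrel\simeq\over{\longrightarrow}F'_0$ se prolonge naturellement en un �l�ment de $G$ conjuguant $\gamma$ � un �l�ment de $\ES{O}_G(\gamma')$, ce qui donne $\ES{O}_G(\gamma)= \ES{O}_G(\gamma')$. Dans le cas g�n�ral, �crivons $\gamma= \gamma_1 + \bs{x}_1\otimes b_0$ et $\gamma' = \gamma'_1 + \bs{x}'_1\otimes b'_0$. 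Les suites tronqu�es $(\gamma_1,\ldots ,\gamma_m)$ et $(\gamma'_1,\ldots ,\gamma'_m)$ sont des suites d'approximation minimale �quivalentes de $\gamma_1\in \mathfrak{g}_1^\times$ et $\gamma'_1\in \mathfrak{g}'^\times_1$; par hypoth�se de r�currence appliqu�e � ces suites (avec le r�le de $G$ jou� par $\mathfrak{g}_1^\times$), on a $\bs{\alpha}_1(\gamma_1) \in \ES{O}_{\smash{\mathfrak{g}'^\times_1}}(\gamma'_1)$. Quitte � appliquer un �l�ment de $G$ (transportant $\gamma$ dans sa classe) puis un �l�ment de $\mathfrak{g}'^\times_1$ (laissant $\gamma'_1$ fixe mais agissant sur $\bs{x}'_1\otimes b'_0$), on se ram�ne via l'identification $\iota_1$ et en utilisant la remarque 2 de \ref{une cons�quence} (compatibilit� des corestrictions mod�r�es $\tilde{\bs{s}}_0$ et $\tilde{\bs{s}}'_0$) au cas o� $\gamma_1=\gamma'_1$, $F_1=F'_1$, $\mathfrak{b}_0=\mathfrak{b}'_0$, $\bs{x}_1= \bs{x}'_1$, et $\ES{O}_{H_0}(b_0)= \ES{O}_{H_0}(b'_0)$. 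On applique alors la proposition 3.6(ii) dans $\mathfrak{g}_1$ (avec $\beta=\gamma_1$, $\bs{x}_0=\bs{x}_1$, $\mathfrak{b}=\mathfrak{b}_0$): puisque $b_0$ et $b'_0$ sont conjugu�s dans $H_0$, les �l�ments $\gamma= \gamma_1+\bs{x}_1\otimes b_0$ et $\gamma'= \gamma_1+\bs{x}_1\otimes b'_0$ sont conjugu�s dans $\mathfrak{g}_1^\times\subset G$. D'o� $\ES{O}_G(\gamma)=\ES{O}_G(\gamma')$.

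La difficult� principale r�side dans la \emph{r�duction au cadre d'application de la proposition 3.6(ii)}: il faut, � chaque �tape de la r�currence, conjuguer de mani�re coh�rente pour identifier les extensions $F_{i+1}$ et $F'_{i+1}$, les $(W_i,F_{i+1})$--d�compositions, les corestrictions mod�r�es, et les correcteurs. La compatibilit� des corestrictions (remarque 2 de \ref{une cons�quence}), d�coulant de la normalisation $\tilde{\bs{s}}_i(\gamma_i-\gamma_{i+1})=b_i$, est ce qui permet in fine d'�galiser les correcteurs et de se ramener � une configuration o� 3.6(ii) s'applique litt�ralement.
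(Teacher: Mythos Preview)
Your proposal follows the same overall strategy as the paper: transport of structure for the forward implication, and induction on $m$ together with the proposition of \ref{le r�sultat principal}(ii) for the converse. This is the right architecture.

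Two small points. First, in the base case $m=0$ you describe $\iota_0$ as an isomorphism of $F$--extensions and extend it to an element of $G$. But by the definition of equivalence, $\iota_0$ is only an isomorphism of $F$--\emph{vector spaces} (it is $\iota_i$ for $i>0$ that are isomorphisms of $F$--extensions). The base case is in fact even simpler: the last bullet of the definition of equivalence directly asserts $\bs{\alpha}_0(\gamma)\in \ES{O}_{G'_0}(\gamma')$, which is exactly the conjugacy statement after the identifications $\mathfrak{g}_0\simeq\mathfrak{g}\simeq\mathfrak{g}'_0$. Second, your intermediate step of conjugating by an element of $\mathfrak{g}'^\times_1$ centralizing $\gamma'_1$ is not quite what is needed (such an element lies in $F_1'^\times$ and does little); the paper achieves the reduction more directly by first identifying everything via $\iota_0$ --- which automatically gives $F'_1=F_1$, $\mathfrak{b}'_0=\mathfrak{b}_0$ and $\bs{x}'_1=\bs{x}_1$ since $\bs{\alpha}_1(\bs{x}_1)=\bs{x}'_1$ is part of the equivalence --- then applying the induction hypothesis to conjugate $\gamma'_1$ to $\gamma_1$ inside $G_1$, transporting the whole approximation sequence of $\gamma'$ by ${\rm Int}_{g_1\otimes 1}$, and finally invoking \ref{le r�sultat principal}(ii). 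Remark~2 of \ref{une cons�quence} is not actually needed for this argument.
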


\begin{proof}Si $\ES{O}_G(\gamma')=\ES{O}_G(\gamma)$, on écrit $\gamma'= g^{-1}\gamma g$ pour un $g\in G$. Alors toute 
suite d'approximation minimale $(\gamma_0,\ldots ,\gamma_m)$ de $\gamma$ définit, par transport de structure via 
l'automorphisme ${\rm Int}_{g^{-1}}$ de $G$, une suite d'approximation minimale de $\gamma'$ qui lui est équivalente. 

En sens inverse, on raisonne par récurrence sur la longueur $m$ des suites d'approximation minimale. Pour $m=0$, l'assertion est claire: si 
les suites d'approximation minimale $(\gamma'=\gamma'_0)$ et $(\gamma=\gamma_0)$ sont équivalentes, alors (par définition) $\gamma'$ et $\gamma$ sont conjugués dans $G$. 

Supposons $m\geq 1$ et l'assertion que l'on veut prouver vraie pour les suites d'approximation minimale de longueur $\leq m-1$. Supposons aussi que les suites d'approximation minimale $(\gamma'_0,\ldots ,\gamma'_{m'})$ de $\gamma'$ et $(\gamma_0,\ldots ,\gamma_m)$ de $\gamma$ sont équivalentes. On a donc $m'=m$. Posons $F_0= F[\gamma]$, $F'_0= F[\gamma']$, $F_1= F[\gamma_1]$ et $F'_1= F[\gamma'_1]$. Par hypothèse, on a un isomorphisme de $F$--espaces vectoriels $\iota_0: F_0 \buildrel \simeq\over {\longrightarrow} F'_0$ induisant par restriction un isomorphisme de $F$--extensions $\iota_1: F_1\buildrel\simeq \over{\longrightarrow} F'_1$ (pour les identifications $F_1= F_1\otimes 1\subset F_0$ et $F'_1= F'_1\otimes 1 \subset F'_0$). Pour $i=0,\,1$, 
posons $\mathfrak{g}_i = A(F_i)$ et $\mathfrak{g}'_i = A(F'_i)$, et notons $\bs{\alpha}_i: \mathfrak{g}_i\rightarrow \mathfrak{g}'_i$ l'isomorphisme de $F$-algèbres induit par $\iota_i$. Posons aussi $\mathfrak{b}_0= {\rm End}_{F_1}(F_0)$, et notons $\bs{\beta}_0: \mathfrak{b}_0\rightarrow \mathfrak{b}'_0$ l'isomorphisme de $F_1$--algèbres induit par $(\iota_0,\iota_1)$. 
On a des identifications $\mathfrak{g}_0= \mathfrak{g}_1\otimes_{F_1}\mathfrak{b}_0$ et $\mathfrak{g}'_0= \mathfrak{g}'_1\otimes_{F'_1}\mathfrak{b}'_0$. \'Ecrivons $\gamma= \gamma_1 + \bs{x}_1\otimes b_0$ et $\gamma'= \gamma'_1 + \bs{x}'_1 \otimes b'_0$. Par hypothèse, on a $\bs{\alpha}_0= \bs{\alpha}_1\otimes \bs{\beta}_0$, $\bs{\alpha}_1(\bs{x}_1)= \bs{x}'_1$ et $\bs{\beta}_0(b_0)\in \ES{O}_{H'_0}(b'_0)$ avec $H'_0= (\mathfrak{b}'_0)^\times$. Identifions $F'_0$ à $F_0$ via $\iota_0$. Cela revient aussi à identifier $F'_1$ à $F_1$, $\mathfrak{g}'_1$ à $\mathfrak{g}_1$, $\bs{x}'_1$ à $\bs{x}_1$, et 
$\mathfrak{b}'_0$ à $\mathfrak{b}_0$. Posons $G_1= {\rm Aut}_F(F_1)$. Comme les suites d'approximation minimale $(\gamma_1,\ldots ,\gamma_m)$ de $\gamma_1$ et $(\gamma'_1,\ldots ,\gamma'_m)$ de $\gamma'_1$ sont $G_1$--équivalentes et de longueur $m-1$, d'après l'hypothèse de récurrence, il existe un élément $g_1\in G_1$ tel que $\gamma'_1 = g_1^{-1}\gamma_1 g_1$. Quitte à remplacer 
$\gamma'_1$ par $g_1\gamma'_1 g_1^{-1}$, ce qui revient à remplacer $\gamma'$ par $g\gamma'g^{-1} $ avec $g=g_1\otimes 1\in G$, et aussi la suite d'approximation minimale $(\gamma'_0=\gamma', \ldots ,\gamma'_m)$ de $\gamma'$ par celle s'en déduisant par transport de structure via l'automorphisme ${\rm Int}_g$ de $G$, on peut supposer que $\gamma'_1=\gamma_1$. On peut alors appliquer le point (ii) de la proposition de \ref{le résultat principal}: les éléments $\gamma= \gamma_1+ \bs{x}_1\otimes b_0$ et $\gamma'=\gamma_1+ \bs{x}_1\otimes b'_0$ sont conjugués dans $G$. \end{proof}

\subsection{Le principe de submersion}\label{le principe de submersion}Reprenons les hypothèses et les notations de \ref{le résultat principal}. En particulier $E=F[\beta]$ pour un élément pur $\beta\in \mathfrak{g}$, et $H={\rm Aut}_E(V)$. On a les identifications
$$
\mathfrak{g}= A(E)\otimes_E\mathfrak{b}, \quad \underline{\mathfrak{P}}^k= \mathfrak{A}(E)\otimes_{\mathfrak{o}_E}\underline{\mathfrak{Q}}^k\quad (k\in {\Bbb Z}).
$$
On a fixé une corestriction modérée $\bs{s}_0: A(E)\rightarrow E^\times$ relativement à $E/F$, et un élément $\bs{x}_0\in \mathfrak{A}(E)$ tel que $\bs{s}_0(\bs{x}_0)=1$. On pose $\bs{s}=\bs{s}_0\otimes {\rm id}_{\mathfrak{b}}$ et $\bs{x}=\bs{x}_0\otimes 1$. 

Comme dans la proposition de \ref{le résultat principal}, on suppose $E\neq F$. On veut descendre une distribution $G$--invariante au voisinage de $\beta$ dans $G$ en une distribution $H$--invariante au voisinage de $0$ dans $\mathfrak{b}$. On reprend pour cela la construction de \cite{L1} (voir aussi \cite[5.4]{L2}), qui est une variante du principe de submersion d'Harish--Chandra. D'après la proposition de \ref{une submersion}, l'application
$$
\delta:G\times \bs{x}\underline{\mathfrak{Q}}^{\underline{k}_0+1}\rightarrow G, \;(g,\bs{x}b)\mapsto g^{-1}(\beta + \bs{x} b)g
$$
est partout submersive (pour les constructions à suivre, on a préféré remplacer $\underline{\mathfrak{Q}}^{\underline{k}_0+1}$ par $\bs{x}\underline{\mathfrak{Q}}^{\underline{k}_0+1}$). Fixons une mesure de Haar $dg$ sur $G$ et une mesure de Haar $\mathfrak{d}b'$ sur $\mathfrak{b}$. D'après le principe de submersion d'Harish--Chandra, il existe une unique application linéaire surjective
$$
C^\infty_{\rm c}(G\times \bs{x}\underline{\mathfrak{Q}}^{\underline{k}_0+1})\rightarrow C^\infty_{\rm c}({\rm Im}(\delta)),\, 
\phi\mapsto \phi^\delta,
$$
telle que, pour toute fonction $\phi\in C^\infty_{\rm c}(G\times \bs{x}\underline{\mathfrak{Q}}^{\underline{k}_0+1})$ et toute fonction 
$f\in C^\infty({\rm Im}(\delta))$, on a
$$
\int\!\!\!\int_{G\times \mathfrak{b}}\phi(g,\bs{x}b')f(\delta(g,\bs{x}b'))dg\mathfrak{d}b'= \int_G\phi^\delta(g)f(g)dg.
$$
On déduit (cf. \cite[2.3.1]{L1}) que pour toute distribution {\it $G$--invariante}, \cad invariante par conjugaison, $T$ sur $G$, il existe une unique distribution $\wt{\vartheta}_T$ sur $\bs{x}\underline{\mathfrak{Q}}^{\underline{k}_0+1}$ telle que pour toute fonction $\phi\in 
C^\infty_{\rm c}(G\times \bs{x}\underline{\mathfrak{Q}}^{\underline{k}_0+1})$, on a
$$
\langle \phi_\delta , \wt{\vartheta}_T\rangle = \langle \phi^\delta,T\rangle,\leqno{(1)}
$$
où la fonction $\phi_\delta\in C^\infty_{\rm c}(\bs{x}\underline{\mathfrak{Q}}^{\underline{k}_0+1})$ est donnée par
$$
\phi_\delta(\bs{x}b')=\int_G\phi(g,\bs{x}b')dg.
$$
Bien sûr, si $T$ et $T'$ sont deux distributions $G$--invariantes sur $G$ qui co\"{\i}ncident sur l'ouvert ${\rm Im}(\delta)$ de $G$, alors les distributions $\wt{\vartheta}_T$ et $\wt{\vartheta}_{T'}$ sur $\bs{x}\underline{\mathfrak{Q}}^{\underline{k}_0+1}$ sont égales. 

\`A une distribution $G$--invariante $T$ sur $G$, on associe comme en \cite{L1} une distribution $H$--invariante $\vartheta_T$ sur ${^H\!(\underline{\mathfrak{Q}}^{\underline{k}_0+1})}$. Rappelons la construction. Soit une fonction $\varphi\in C^\infty_{\rm c}({^H\!(\underline{\mathfrak{Q}}^{\underline{k}_0+1})})$. Elle se décompose en
$$
\varphi= \sum_{h\in H} \varphi_h\leqno{(2)}
$$
avec $\varphi_h\in C^\infty_{\rm c}(h\,\underline{\mathfrak{Q}}^{\underline{k}_0+1}h^{-1})$ et $\varphi_h=0$ sauf pour un nombre fini de $h$. Pour $h\in H$, on note $\wt{\varphi}_h\in C^\infty_{\rm c}(\bs{x}\underline{\mathfrak{Q}}^{\underline{k}_0+1})$ la fonction $(\varphi_h\circ {\rm Ad}_h)\circ \bs{s}$ sur $\bs{x}\underline{\mathfrak{Q}}^{\underline{k}_0+1}$, et on pose
$$
\langle \varphi, \vartheta_T\rangle = \sum_{h\in H} \langle \wt{\varphi}_h, \wt{\vartheta}_T\rangle.\leqno{(3)}
$$
D'après \cite{L1} (voir aussi \cite[5.4]{L2}), la quantité $\langle \varphi, \vartheta_T\rangle$ ne dépend pas de la décomposition (2) choisie, et la distribution $\vartheta_T$ sur ${^H\!(\underline{\mathfrak{Q}}^{\underline{k}_0+1})}$ ainsi définie est $H$--invariante. Le {\it support}  de cette distribution $\vartheta_T$ est par définition l'ensemble des $b\in {^H\!(\underline{\mathfrak{Q}}^{\underline{k}_0+1})}$ tels que 
pour tout voisinage ouvert $U_b$ de $b$ dans ${^H\!(\underline{\mathfrak{Q}}^{\underline{k}_0+1})}$, la restriction de $\vartheta_T$ à $U_b$ n'est pas nulle. C'est une partie fermée de ${^H\!(\underline{\mathfrak{Q}}^{\underline{k}_0+1})}$, que l'on note ${\rm Supp}(\vartheta_T)$. D'aprs la remarque 2 de \ref{lments qre}, ${^H\!(\underline{\mathfrak{Q}}^{\underline{k}_0+1})}$ est une partie ferme dans $\mathfrak{b}$. Par consquent ${\rm Supp}(\vartheta_T)$ est aussi une partie ferme dans $\mathfrak{b}$, et on peut prolonger $\vartheta_T$ par $0$ sur $\mathfrak{b}\smallsetminus 
{\rm Supp}(\vartheta_T)$. On obtient ainsi une distribution $H$--invariante sur $\mathfrak{b}$, de support 
${\rm Supp}(\vartheta_T)$, que l'on note $\theta_T$.  

On peut aussi, comme en \cite[2.3]{L1}, se restreindre aux distributions sur $\mathfrak{b}$ à support dans un voisinage ouvert fermé et $H$--invariant $\Omega$ de $0$ dans $\mathfrak{b}$ contenu dans ${^H\!(\underline{\mathfrak{Q}}^{\underline{k}_0+1})}$. Fixons un tel voisinage $\Omega$ (on peut choisir $\Omega$ aussi petit que l'on veut --- cf. la remarque de \ref{parties compactes modulo conjugaison}). 
Pour une distribution $G$--invariante $T$ sur $G$, on note $\theta_T^\Omega$ la distribution $H$--invariante sur $\mathfrak{b}$ à support dans $\Omega$ définie par
$$
\langle \mathfrak{f}, \theta_T^\Omega\rangle = \langle \mathfrak{f}\vert_\Omega, \vartheta_T\rangle,\quad \mathfrak{f}\in C^\infty_{\rm c}(\mathfrak{b}).\leqno{(4)}
$$
Bien sûr si ${\rm Supp}(\vartheta_T)\subset \Omega$, on a $\theta_T^\Omega=\theta_T$.

\vskip1mm
Rappelons que d'après \cite[4.3.5]{L2}, on a le

\begin{monlem}
Soit $\mathfrak{A}$ un $\mathfrak{o}$--ordre héréditaire dans $\mathfrak{g}$ normalisé par $E^\times$, et soit $\mathfrak{B}=\mathfrak{A}\cap \mathfrak{b}$ . Posons $\mathfrak{P}={\rm rad}(\mathfrak{A})$ et $\mathfrak{Q}={\rm rad}(\mathfrak{B})$. 
Pour tout entier $i> k_0=k_0(\beta,\mathfrak{A})$, on a
$$
\{g\in G: g^{-1}\beta g \in \beta + \mathfrak{P}^{i}\}=H(1+ \mathfrak{Q}^{i-k_0}\mathfrak{N}_{k_0}(\beta,\mathfrak{A})).
$$
\end{monlem} 

Fixons une mesure de Haar $dz$ sur le centre $Z=F^\times$ de $G$. On peut prendre pour $dz$ la mesure qui donne le volume $1$ au sous--groupe compact maximal $U_F$ de $Z$, 
mais ce n'est pas vraiment nécessaire. Pour $\gamma\in G_{\rm qre}$, on note $dg_\gamma$ la mesure de Haar sur $G_\gamma=F[\gamma]^\times$ telle que ${\rm vol}(Z \backslash G_\gamma, \textstyle{dg_\gamma \over dz})=1$, \cad celle telle que
$$
e(F[\gamma]/F){\rm vol}(U_{F[\gamma]},dg_\gamma)={\rm vol}(U_F,dz),
$$
et on note $\ES{O}_\gamma=\ES{O}_\gamma^G$ la distribution ($G$--invariante) sur $G$ définie par
$$
\ES{O}_\gamma(f)= \int_{G_\gamma\backslash G}f(g^{-1}\gamma g)\textstyle{dg\over dg_\gamma},\quad f\in C^\infty_{\rm c}(G).
$$
On a donc
$$
\ES{O}_\gamma(f) =  \int_{Z\backslash G}f(g^{-1}\gamma g)\textstyle{dg\over dz},\quad f\in C^\infty_{\rm c}(G).
$$
On définit aussi une constante $v_F(\gamma)=v_F(\gamma,\textstyle{dg\over dg\gamma})>0$ par la formule
$$
v_F(\gamma)= {\rm vol}\!\left(F[\gamma]^\times(1+ \mathfrak{p}_{F[\gamma]}\mathfrak{N}_{k_F(\gamma)}(\gamma,\mathfrak{A}_\gamma)),
\textstyle{dg\over dg_\gamma}\right).\leqno{(5)}
$$
Fixons une mesure de Haar $dh$ sur $H$, et une mesure de Haar $dz_E$ sur le centre $Z_H=E^\times$ de $H$. On peut prendre pour $dh$ la mesure $\mathfrak{d}^\times b'= \textstyle{ \mathfrak{d}b' \over \vert \det(b')\vert_E^d}$ associée à $\mathfrak{d}b'$, et pour $dz_E$ la mesure telle que ${\rm vol}(F^\times \backslash E^\times, {dz_E\over dz})=1$, 
mais ce n'est pas nécessaire pour l'instant --- voir la proposition de \ref{IO normalisées}. De la même manière, pour $b \in \mathfrak{b}_{\rm qre}$, on note $dh_b$ la mesure de Haar sur $H_b= E[b]^\times$ telle que ${\rm vol}(Z_H\backslash H_b, {dh_b\over dz_E})=1$, \cad telle que $e(E[b]/E){\rm vol}(U_{E[b]},dh_b)= {\rm vol}(U_E,dz_E)$, 
et $\ES{O}_b^{\mathfrak{b}}$ la distribution ($H$--invariante) sur $\mathfrak{b}$ définie par
$$
\ES{O}_b^{\mathfrak{b}}(\mathfrak{f})=\int_{H_b\backslash H}\mathfrak{f}(h^{-1}b h)\textstyle{dh\over dh_b},\quad \mathfrak{f}\in C^\infty_{\rm c}(\mathfrak{b}).
$$
On a donc
$$
\ES{O}_b^{\mathfrak{b}}(\mathfrak{f})=\int_{Z_H\backslash H}\mathfrak{f}(h^{-1}b h)\textstyle{dh\over dz_E},\quad \mathfrak{f}\in C^\infty_{\rm c}(\mathfrak{b}).
$$
On définit aussi une constante $v_E(b)= v_E(b, \textstyle{dh\over dh_b})>0$ par la formule
$$
v_E(b)={\rm vol}\!\left(E[b]^\times(1+ \mathfrak{p}_{E[b]}\mathfrak{N}_{k_E(b)}(b,\mathfrak{B}_b)), \textstyle{dh\over dh_b}\right).\leqno{(6)}
$$
Notons que si $E[b]=E$, alors $E[b]^\times(1+ \mathfrak{p}_{E[b]}\mathfrak{N}_{k_E(b)}(b,\mathfrak{B}_b))=E^\times\;(=H)$ et $v_E(b)$ n'est autre que le rapport des mesures ${dh\over dh_b}$, \cad (compte--tenu de la normalisation de $dh_b$) ${\rm vol}(U_E,dh){\rm vol}(U_E,dz_E)^{-1}$. 

\begin{marema1}
{\rm Pour $\gamma\in G_{\rm qre}$, la distribution $f\mapsto v_F(\gamma)^{-1}\ES{O}_\gamma(f)$ sur $G$ ne dépend pas du choix des  
mesures de Haar. D'après le lemme, pour $\gamma\in G_{\rm qre}$, la constante $v_F(\gamma)$ est le volume (pour la mesure $\textstyle{dg\over dg_\gamma}$ sur $F[\gamma]^\times \backslash G$) de l'ensemble des $g\in G$ tels que $g^{-1}\gamma g$ appartient à $\gamma + \mathfrak{P}_\gamma^{k_F(\gamma)+1}=\gamma U^{\tilde{k}_F(\gamma)+1}(\mathfrak{A}_\gamma)$. 
Ce lemme est à la base de ce que, dans \cite{L2}, nous avons maladroitement appelé la normalisation \og $J$\fg des intégrales orbitales sur $G$. Cette normalisation \og $J$\fg consiste à choisir les mesures $dg$ et $dg_\gamma$ (pour $\gamma\in G_{\rm qre})$ de telle manière que le facteur de normalisation $v_F(\gamma)$ soit égal à $1$. Nous y reviendrons plus loin (\ref{IO normalisées}). \hfill $\blacksquare$
}
\end{marema1} 

On a posé $d= {N\over [E:F]}$. Pour $b\in \mathfrak{b}_{\rm qre}\cap \underline{\mathfrak{Q}}^{\underline{k}_0+1}$, on 
pose
$$
k_F(\beta,b)\;(= k_F(\beta + \bs{x}_0\otimes b)) = \left\{\begin{array}{ll}
k_E(b)& \mbox{si $E[b]\neq E$}\\
k_F(\beta) & \mbox{sinon}
\end{array}\right..
$$
On note $\mathfrak{A}_{\beta,b}$ l'unique $\mathfrak{o}$--ordre héréditaire dans $\mathfrak{g}$ normalisé par $F[\beta,b]^\times =E[b]^\times$, et $\mathfrak{B}_b$ l'unique $\mathfrak{o}_E$--ordre héréditaire dans $\mathfrak{b}$ normalisé par $E[b]^\times$. On a donc $\mathfrak{A}_{\beta,b}\cap \mathfrak{b}= \mathfrak{B}_b$, et $\mathfrak{A}_{\beta,b}$ est aussi l'unique $\mathfrak{o}$--ordre héréditaire dans $\mathfrak{g}$ normalisé par $F[\beta + \bs{x}_0\otimes b]^\times$. On pose $\mathfrak{P}_{\beta,b}={\rm rad}(\mathfrak{A}_{\beta,b})$, $\mathfrak{Q}_b={\rm rad}(\mathfrak{B}_b)$, 
$$
n_F(\beta,b)\;(=-\nu_{\mathfrak{A}_{\beta,b}}(\beta)) = n_F(\beta)e(\mathfrak{B}_b\vert \mathfrak{o}_E),
$$
et
$$
\tilde{k}_F(\beta,b)\;(= \tilde{k}_F(\beta + \bs{x}_0\otimes b)) = n_F(\beta,b)+ k_F(\beta,b)\geq 0.
$$
Soit
$$
I_G^H(\beta,b)= {{\rm vol}(\mathfrak{Q}_b^{k_F(\beta,b)+1},\mathfrak{d}b')\over 
{\rm vol}(U^{\tilde{k}_F(\beta,b)+1}(\mathfrak{A}_{\beta,b}),dg)}.\leqno{(7)}
$$

\begin{mapropo}
Soit $b\in \mathfrak{b}_{\rm qre}\cap \underline{\mathfrak{Q}}^{\underline{k}_0+1}$ et soit $\gamma= \beta + \bs{x}_0\otimes b$. 
On a ${\rm Supp}(\vartheta_{\ES{O}_\gamma})= \ES{O}_H(b)$, et la distribution $H$--invariante 
$\theta_{\ES{O}_\gamma}$ sur $\mathfrak{b}$ est donnée par
$$
\theta_{\ES{O}_\gamma} = I_G^H(\beta,b){v_F(\gamma)\over v_E(b)} \ES{O}_{b}^{\mathfrak{b}}.
$$
\end{mapropo}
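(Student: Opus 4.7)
The plan is to establish the two parts of the conclusion in turn. For the support identification, one observes that $\gamma\in G_{\rm qre}$ implies $\ES{O}_G(\gamma)$ is closed in $\mathfrak{g}$, hence $\ES{O}_\gamma$ is supported on $\ES{O}_G(\gamma)$. By point (ii) of the proposition of \ref{le r�sultat principal}, for $b'\in{^H\!(\underline{\mathfrak{Q}}^{\underline{k}_0+1})}\cap\mathfrak{b}_{\rm qre}$ one has $\beta+\bs{x}_0\otimes b'\in\ES{O}_G(\gamma)$ iff $b'\in\ES{O}_H(b)$. Running this through the formulas \ref{le principe de submersion}.(1)--(4): any $\varphi\in C_c^\infty({^H\!(\underline{\mathfrak{Q}}^{\underline{k}_0+1})})$ supported away from $\ES{O}_H(b)$, decomposed as $\sum_h\varphi_h$, yields functions $\wt{\varphi}_h=(\varphi_h\circ{\rm Ad}_h)\circ\bs{s}$ on $\bs{x}\underline{\mathfrak{Q}}^{\underline{k}_0+1}$ whose supports are mapped by $\delta(g,\cdot)$ into $G\smallsetminus\ES{O}_G(\gamma)$, so $\phi^\delta$ vanishes on $\ES{O}_G(\gamma)$ and $\langle\wt{\varphi}_h,\wt{\vartheta}_{\ES{O}_\gamma}\rangle=0$. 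This gives ${\rm Supp}(\vartheta_{\ES{O}_\gamma})\subset\ES{O}_H(b)$; the reverse inclusion will follow from the nonvanishing in the second part.

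For the formula, I would reduce to a single test. Since $\theta_{\ES{O}_\gamma}$ is $H$-invariant with support in the closed $H$-orbit $\ES{O}_H(b)$, and $\ES{O}_b^{\mathfrak{b}}$ is a nonzero $H$-invariant measure on the same orbit, the two distributions are proportional. The proportionality constant is determined by evaluating both on $\mathfrak{f}=\mathbf{1}_{b+\mathfrak{Q}_b^{k_F(\beta,b)+1}}$. The lemma of \ref{le principe de submersion}, applied to $b\in\mathfrak{b}$ (with $\beta$ replaced by $b$, $\mathfrak{A}$ by $\mathfrak{B}_b$), gives $\{h\in H:h^{-1}bh\in b+\mathfrak{Q}_b^{k_F(\beta,b)+1}\}=H_b(1+\mathfrak{p}_{E[b]}\mathfrak{N}_{k_E(b)}(b,\mathfrak{B}_b))$, so that $\ES{O}_b^{\mathfrak{b}}(\mathfrak{f})=v_E(b)$ by the definition \ref{le principe de submersion}.(6). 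The case $E[b]=E$ (where $\mathfrak{b}=E$) is a trivial direct check.

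For the left side I would take $\chi=\mathbf{1}_{\bs{x}(b+\mathfrak{Q}_b^{k_F(\beta,b)+1})}$ and choose $\psi\in C_c^\infty(G)$ supported in $U^{\tilde{k}_F(\beta,b)+1}(\mathfrak{A}_{\beta,b})$ with $\int_G\psi\,dg=1$; then $\phi(g,\bs{x}b')=\psi(g)\chi(\bs{x}b')$ satisfies $\phi_\delta=\chi$. The crucial step is to identify $\phi^\delta$. Using the decomposition \ref{une submersion}.(3) at level $k_F(\beta,b)+1$ one shows that $\delta$ sends $U^{\tilde{k}_F(\beta,b)+1}(\mathfrak{A}_{\beta,b})\times\bs{x}(b+\mathfrak{Q}_b^{k_F(\beta,b)+1})$ onto $\gamma U^{\tilde{k}_F(\beta,b)+1}(\mathfrak{A}_{\beta,b})=\gamma+\mathfrak{P}_{\beta,b}^{k_F(\beta,b)+1}$, with a Jacobian which identifies $\phi^\delta$ as a constant multiple of $\mathbf{1}_{\gamma U^{\tilde{k}_F(\beta,b)+1}(\mathfrak{A}_{\beta,b})}$, the constant being precisely $I_G^H(\beta,b)$ by the relation between ${\rm vol}(\mathfrak{Q}_b^{k_F(\beta,b)+1},\mathfrak{d}b')$ and ${\rm vol}(U^{\tilde{k}_F(\beta,b)+1}(\mathfrak{A}_{\beta,b}),dg)$. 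Finally $\ES{O}_\gamma(\mathbf{1}_{\gamma U^{\tilde{k}_F(\beta,b)+1}(\mathfrak{A}_{\beta,b})})=v_F(\gamma)$ by the lemma of \ref{le principe de submersion} applied with $\gamma$ in place of $\beta$ and $\mathfrak{A}_\gamma=\mathfrak{A}_{\beta,b}$, together with the definition \ref{le principe de submersion}.(5). Combining gives $\langle\mathfrak{f},\theta_{\ES{O}_\gamma}\rangle=I_G^H(\beta,b)\,v_F(\gamma)$, and dividing by $\ES{O}_b^{\mathfrak{b}}(\mathfrak{f})=v_E(b)$ yields the formula (and in particular the support is exactly $\ES{O}_H(b)$).

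The main obstacle lies in the identification of $\phi^\delta$: one must verify (i) that the restricted submersion $(g,b'')\mapsto g^{-1}(\beta+\bs{x}(b+b''))g$ on $U^{\tilde{k}_F(\beta,b)+1}(\mathfrak{A}_{\beta,b})\times\mathfrak{Q}_b^{k_F(\beta,b)+1}$ surjects onto $\gamma+\mathfrak{P}_{\beta,b}^{k_F(\beta,b)+1}$ modulo the $G_\gamma$-action on the first factor, (ii) that the Jacobian collapses to a constant (so that $\phi^\delta$ is exactly a scalar multiple of $\mathbf{1}_{\gamma+\mathfrak{P}_{\beta,b}^{k_F(\beta,b)+1}}$), and (iii) that the constant is $I_G^H(\beta,b)$. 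All of this fits within the strata formalism of Bushnell--Kutzko and the explicit structure of $\delta$ recorded in \ref{une submersion}, but the two cases $E[b]\neq E$ and $E[b]=E$ have to be handled separately, the latter being where $\mathfrak{A}_{\beta,b}=\mathfrak{A}_\beta$, $\tilde{k}_F(\beta,b)=\tilde{k}_F(\beta)$, $\mathfrak{b}=E$, and the computation degenerates to a direct volume comparison.
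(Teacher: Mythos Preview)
Your proposal is correct and follows essentially the same two--step strategy as the paper: identify the support via point (ii) of the proposition of \S\ref{le r�sultat principal}, then compute the proportionality constant by testing against a single characteristic function and invoking the lemma of \S\ref{le principe de submersion} on both sides. There are two minor differences worth noting.

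First, for the support inclusion ${\rm Supp}(\vartheta_{\ES{O}_\gamma})\subset\ES{O}_H(b)$, point (ii) of \S\ref{le r�sultat principal} only covers $b'\in\mathfrak{b}_{\rm qre}$; the paper handles $b'\notin\mathfrak{b}_{\rm qre}$ separately by observing that such $b'$ lies in a proper parabolic subalgebra of $\mathfrak{b}$, hence $\beta+\bs{x}_0\otimes b'$ lies in a proper parabolic subalgebra of $\mathfrak{g}$ and cannot belong to the elliptic orbit $\ES{O}_G(\gamma)$. You should add this sentence.

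Second, for the computation of the constant, the paper does not work with a $\psi$ supported in $U^{\tilde{k}_F(\beta,b)+1}(\mathfrak{A}_{\beta,b})$ but with the characteristic function of the larger group $\ES{K}=1+\mathfrak{Q}^{-s-k_0+1}\mathfrak{N}_{k_0}(\beta,\mathfrak{A})$, where $-s=k_F(\beta,b)$ if $E[b]\neq E$ and $-s=\nu_E(b)$ otherwise, and $k_0=k_0(\beta,\mathfrak{A}_{\beta,b})$. This particular choice is what makes the change--of--variables identity
\[
\int_{\ES{K}\times\mathfrak{Q}^{-s+1}}f\circ\delta(g,\bs{x}(b+b'))\,dg\,\mathfrak{d}b'
=\int_{\ES{K}\times\mathfrak{Q}^{-s+1}}f(\gamma+{\rm Ad}_\beta(1-g)+\bs{x}b')\,dg\,\mathfrak{d}b'
\]
available directly from \cite[5.3.4, 5.4.3]{L2}, giving $(\bs{1}_\ES{K}\otimes\wt\varphi)^\delta$ as an explicit multiple of $\bs{1}_{\gamma+\mathfrak{P}^{-s+1}}$ --- this is precisely your ``main obstacle'' (i)--(iii), resolved by that citation. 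Your choice $U^{\tilde{k}_F(\beta,b)+1}$ has the advantage that $\ES{O}_\gamma(\bs{1}_{\gamma U^{\tilde{k}_F(\beta,b)+1}})=v_F(\gamma)$ falls out of the lemma uniformly in both cases, whereas the paper, working at level $-s+1$, must do a supplementary index computation when $E[b]=E$ (since then $-s\neq k_F(\gamma)$). Either route leads to the same formula; the paper's is what is actually supported by the existing literature.
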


\begin{proof}Commen\c{c}ons par prouver que le support de la distribution 
$\vartheta_{\ES{O}_\gamma}$ sur ${^H\!(\underline{\mathfrak{Q}}^{\underline{k}_0+1})}$ est égal à $\ES{O}_H(b)$. Soit $\ES{K}$ un sous--groupe ouvert compact de $G$, et soit $f_{\ES{K}}$ la fonction caractéristique de $\ES{K}$ divisée par ${\rm vol}(\ES{K},dg)$. Soit $\varphi\in C^\infty_{\rm c}({^H\!(\underline{\mathfrak{Q}}^{\underline{k}_0+1})})$. Décomposons 
$\varphi$ en $\varphi= \sum_{h\in H} \varphi_h$ comme en (2). Par définition de $\vartheta_{\ES{O}_\gamma}$, on a
$$
\langle \varphi , \vartheta_{\ES{O}_\gamma} \rangle = \sum_{h\in H} \ES{O}_\gamma((f_{\ES{K}}\otimes \wt{\varphi}_h)^\delta)
=\ES{O}_\gamma\!\left(\textstyle{\sum_{h\in H}} (f_{\ES{K}}\otimes \wt{\varphi}_h)^\delta\right),
$$
puis, par linéarité de l'application $\phi \mapsto \phi^\delta$,
$$
\langle \varphi , \vartheta_{\ES{O}_\gamma} \rangle = \ES{O}_\gamma\!\left([f_{\ES{K}}\otimes (\textstyle{\sum_{h\in H}} \wt{\varphi}_h)]^\delta\right).
$$
Si de plus $\varphi>0$, alors $[f_{\ES{K}}\otimes (\textstyle{\sum_{h\in H}} \wt{\varphi}_h)]^\delta>0$, et notant $Y\subset \underline{\mathfrak{Q}}^{\underline{k}_0+1}$ le support de la fonction 
$\textstyle{\sum_{h\in H}}\varphi_h\circ {\rm Ad}_h$, on a $\langle \varphi , \vartheta_{\ES{O}_\gamma} \rangle\neq 0$ si et seulement si
$$
\delta(\ES{K}\times \bs{x}Y)\cap \ES{O}_G(\gamma)\neq \emptyset,
$$
\cad si et seulement si
$$
\{\beta + \bs{x}_0\otimes b': b'\in Y\}\cap \ES{O}_G(\gamma)\neq \emptyset.
$$
Puisque $\vartheta_{\ES{O}_\gamma}$ est une distribution $H$--invariante, son support contient l'orbite $\ES{O}_H(b)$, et il est égal à cette orbite si et seulement si pour tout $b'\in \underline{\mathfrak{Q}}^{\underline{k}_0+1}$, on a
$$
\beta + \bs{x}_0\otimes b' \in \ES{O}_G(\gamma)\Rightarrow b'\in \ES{O}_H(b).\leqno{(8)}
$$
Si l'élément $b'$ n'est pas quasi--régulier elliptique (dans $\mathfrak{b}$), alors il est contenu dans une sous--$E$--algèbre parabolique propre de $\mathfrak{b}$, et $\gamma'=\beta + \bs{x}_0\otimes b'$ est contenu dans une sous--$F$--algèbre parabolique propre de $\mathfrak{g}$, ce qui entraîne que $\gamma'\notin \ES{O}_G(\gamma)$. D'autre part si $b'\in \mathfrak{b}_{\rm qre}\cap \underline{\mathfrak{Q}}^{\underline{k}_0+1}$, l'implication (8) est vraie d'après la proposition de \ref{le résultat principal}. On a donc prouvé que le support de la distribution $\vartheta_{\ES{O}_\gamma}$ est égal à $\ES{O}_H(b)$. 

On vient de voir que la distribution $H$--invariante $\theta_{\ES{O}_\gamma}$ sur $\mathfrak{b}$ vérifie $\theta_{\ES{O}_\gamma}= \alpha \ES{O}_b^{\mathfrak{b}}$ pour une constante $\alpha >0$. Calculons cette constante $\alpha$. Posons $\mathfrak{A}=\mathfrak{A}_{\beta,b}$, $\mathfrak{B}=\mathfrak{B}_b$, $\mathfrak{P}=\mathfrak{P}_{\beta,b}$ et $\mathfrak{Q}=\mathfrak{Q}_b$. Posons $s= \inf\{-k_E(b), n_E(b)\}$. On a donc $-s =k_E(b)$ si $E[b]\neq E$, et $-s = \nu_E(b)$ sinon. Soit 
$\varphi$, resp. $\wt{\varphi}$, la fonction caractéristique de $b+ \mathfrak{Q}^{-s+1}$, resp. $\bs{x}(b+ \mathfrak{Q}^{-s+1})$. Si $E[b]\neq E$, d'après le lemme, on a
$$
\ES{O}_b^{\mathfrak{b}}(\varphi)=v_E(b).\leqno{(9)}
$$
Si $E[b]=E$, l'égalité (9) reste vraie. En effet dans ce cas, on a $E^\times(1+ \mathfrak{p}_{E}\mathfrak{N}_{k_E(b)}(b,\mathfrak{B}))=E^\times $ et
$$
\ES{O}_b^{\mathfrak{b}}(\mathfrak{f})=\mathfrak{f}(b)\textstyle{dh\over dh_b}= \mathfrak{f}(b){{\rm vol}(U_E,dh)\over {\rm vol}(U_E,dz_E)},\quad \mathfrak{f}\in C^\infty_{\rm c}(\mathfrak{b});
$$ 
en particulier pour $\mathfrak{f}=\varphi$, puisque $\varphi(b)=1$, on a bien $\ES{O}_b^{\mathfrak{b}}(\varphi)=v_E(b)$.

Posons $k_0=k_0(\beta,\mathfrak{A})$, et notons $\ES{K}$ le sous--groupe 
$1 + \mathfrak{Q}^{-s-k_0+1}\mathfrak{N}_{k_0}(\beta,\mathfrak{A})$ de $U^1(\mathfrak{A})$. D'après la preuve de \cite[5.4.3]{L2}, pages 73--74, qui utilise \cite[5.3.4]{L2}, on a
$$
\delta(\ES{K}\times \bs{x}(b+ \mathfrak{Q}^{-s+1}))=\gamma + \mathfrak{P}^{-s+1}
$$
et pour toute fonction $f\in C^\infty_{\rm c}(\gamma + \mathfrak{P}^{-s+1})$, on a
$$
\int_{\ES{K} \times \mathfrak{Q}^{-s+1}}f\circ\delta(g,\bs{x}(b+b'))dg\mathfrak{d}b'=\int_{\ES{K}\times \mathfrak{Q}^{-s+1}}f(\gamma + {\rm Ad}_\beta(1-g)+ \bs{x}b')dg\mathfrak{d}b'.
$$
Notons $\bs{1}_{\ES{K}}$ la fonction caractéristique de $\ES{K}$, et prenons pour $f$ la fonction caractéristique de $\gamma + \mathfrak{P}^{-s+1}$. On obtient
$$
(\bs{1}_{\ES{K}}\otimes \wt{\varphi})^\delta = {{\rm vol}(\ES{K},dg){\rm vol}(\mathfrak{Q}^{-s+1},\mathfrak{d}b') \over {\rm vol}(\gamma + \mathfrak{P}^{-s+1},dg)}f.
$$
Posons $n=n_F(\beta,b)$. Puisque $n= -\nu_{\mathfrak{A}}(\gamma)$, on a $\gamma + \mathfrak{P}^{-s+1}= \gamma U^{n-s+1}(\mathfrak{P})$ et
$$
{\rm vol}(\gamma + \mathfrak{P}^{-s+1},dg)={\rm vol}(U^{n-s+1}(\mathfrak{P}),dg).
$$
D'autre part, on a
$$
(\bs{1}_{\ES{K}}\otimes \wt{\varphi})_\delta={\rm vol}(\ES{K},dg) \wt{\varphi}.
$$
D'après (1), on obtient
$$
\ES{O}_\gamma(f)= {{\rm vol}(U^{n-s+1}(\mathfrak{P}),dg)\over {\rm vol}(\mathfrak{Q}^{-s+1},\mathfrak{d}b')}
\langle \wt{\varphi}, \wt{\vartheta}_{\ES{O}_\gamma}\rangle.
$$
Or on a
$$
\langle \wt{\varphi}, \wt{\vartheta}_{\ES{O}_\gamma}\rangle = \langle \varphi, \theta_{\ES{O}_\gamma}\rangle =\alpha \ES{O}_b^{\mathfrak{b}}(\varphi),
$$
d'où
$$
{{\rm vol}(\mathfrak{Q}^{-s+1},\mathfrak{d}b')\over {\rm vol}(U^{n-s+1}(\mathfrak{P}),dg)}\ES{O}_\gamma(f) = \alpha \ES{O}_b^{\mathfrak{b}}(\varphi).\leqno{(10)}
$$
Enfin, à nouveau d'après le lemme, on a
$$
\ES{O}_\gamma(f) = {\rm vol}\!\left(F[\gamma]^\times(1+ \mathfrak{p}_{F[\gamma]}^{-s-k_F(\gamma)+1}\mathfrak{N}_{k_F(\gamma)}(\gamma,\mathfrak{A})),\textstyle{dg\over dg_\gamma}\right).\leqno{(11)}
$$
Si $E[b]\neq E$, alors $-s = k_F(\gamma)=k_E(b)$ et $\ES{O}_\gamma(f)=v_F(\gamma)$, et grâce à (9) et (10), on obtient la valeur annoncée pour la constante $\alpha$. Reste à traiter le cas $E[b]=E$. En ce cas, on a $-s = \nu_E(b)$ et $k_F(\gamma)=k_F(\beta)$, et posant 
$\mathfrak{N}=\mathfrak{N}_{k_F(\beta)}(\gamma,\mathfrak{A})$, le volume à droite de l'égalité (11) est égal à
$$
v_F(\gamma){[\mathfrak{p}_{F[\gamma]}: \mathfrak{p}_{F[\gamma]}^{-s-k_F(\beta)+1}]\over 
[\mathfrak{p}_{F[\gamma]}\mathfrak{N}: \mathfrak{p}_{F[\gamma]}^{-s-k_F(\beta)+1}\mathfrak{N}]},
$$
ou encore à
$$
v_F(\gamma){[\mathfrak{o}_{E}: \mathfrak{p}_{E}^{-s-k_F(\beta)}]\over 
[\mathfrak{A}: \mathfrak{P}^{-s-k_F(\beta)}]}.
$$
Le terme à gauche de l'égalité (10) est donc égal à
$$
v_F(\gamma){[\mathfrak{o}_{E}: \mathfrak{p}_{E}^{-s-k_F(\beta)}]\over 
[\mathfrak{A}: \mathfrak{P}^{-s-k_F(\beta)}]}{{\rm vol}(\mathfrak{p}_E^{-s+1},\mathfrak{d}b')\over {\rm vol}(U^{n-s+1}(\mathfrak{A}),dg)},
$$
ou encore à
$$
v_F(\gamma){{\rm vol}(\mathfrak{p}_E^{k_F(\beta)+1},\mathfrak{d}b')\over {\rm vol}(U^{n+k_F(\beta)+1}(\mathfrak{A}),dg)}=v_F(\gamma)I_G^H(\beta,b).
$$
On conclut grâce à (9).
\end{proof}

\begin{moncoro}
Supposons que $\beta$ est quasi--régulier elliptique (dans $G$), i.e. que $E$ est un sous--corps maximal de $\mathfrak{g}$ (on a donc $\mathfrak{b}=E$, i.e. $d=1$). Alors pour $b\in \mathfrak{p}_E^{k_F(\beta)+1}$, posant $\gamma= \beta + \bs{x}_0\otimes b$, on a
$$
\theta_{\ES{O}_\gamma}= {{\rm vol}(\mathfrak{p}_E^{k_F(\beta)+1},\mathfrak{d}b')v_F(\beta)\over {\rm vol}(U^{\tilde{k}_F(\beta)+1}(\mathfrak{A}_\beta),dg)} \bs{\delta}_b,
$$
où $\bs{\delta}_b$ désigne la mesure de Dirac au point $b$.
\end{moncoro}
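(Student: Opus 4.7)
On procède par spécialisation de la proposition de~\ref{le principe de submersion} au cas étudié ici. Puisque $\beta$ est quasi--régulier elliptique, $E=F[\beta]$ est un sous--corps maximal de $\mathfrak{g}$, d'où $\mathfrak{b}={\rm End}_E(V)=E$, $d=1$, et $H=\mathfrak{b}^\times = E^\times$ est commutatif.

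Pour $b\in\mathfrak{p}_E^{k_F(\beta)+1}$, on a $E[b]=E$ et donc $\mathfrak{B}_b=\mathfrak{o}_E$, $\mathfrak{Q}_b=\mathfrak{p}_E$, $\mathfrak{A}_{\beta,b}=\mathfrak{A}_\beta$, $k_F(\beta,b)=k_F(\beta)$ et $\tilde{k}_F(\beta,b)=\tilde{k}_F(\beta)$; le facteur $I_G^H(\beta,b)$ donné par \ref{le principe de submersion}.(7) n'est autre que la constante figurant dans l'énoncé du corollaire. Par ailleurs, $H$ étant commutatif, le centralisateur $H_b$ est égal à $H$ pour tout $b\in E$, l'orbite $\ES{O}_H(b)$ se réduit au singleton $\{b\}$, et un calcul direct fournit
$$
\ES{O}_b^{\mathfrak{b}}(\mathfrak{f})=\mathfrak{f}(b){{\rm vol}(U_E,dh)\over {\rm vol}(U_E,dz_E)}=v_E(b)\mathfrak{f}(b),\quad \mathfrak{f}\in C^\infty_{\rm c}(\mathfrak{b}),
$$
\cad $\ES{O}_b^{\mathfrak{b}}=v_E(b)\bs{\delta}_b$. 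En reportant cette égalité dans la formule de la proposition, on obtient
$$
\theta_{\ES{O}_\gamma}=I_G^H(\beta,b)v_F(\gamma)\bs{\delta}_b.
$$

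Le seul point restant à vérifier est l'égalité $v_F(\gamma)=v_F(\beta)$, ce qui constitue l'obstacle principal. D'après la proposition de~\ref{le r\'esultat principal}, on a $\mathfrak{A}_\gamma=\mathfrak{A}_\beta$ et $k_F(\gamma)=k_F(\beta)$; l'inclusion $\bs{x}_0 b\in\mathfrak{P}_\beta^{k_F(\beta)+1}$ implique ${\rm ad}_{\bs{x}_0 b}(\mathfrak{A}_\beta)\subset\mathfrak{P}_\beta^{k_F(\beta)+1}$ et par conséquent $\mathfrak{N}:=\mathfrak{N}_{k_F(\beta)}(\beta,\mathfrak{A}_\beta)=\mathfrak{N}_{k_F(\gamma)}(\gamma,\mathfrak{A}_\gamma)$. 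Le lemme de~\ref{le principe de submersion}, appliqué à $\beta$ puis à $\gamma$ avec $i=k_F(\beta)+1$, caractérise les deux sous--groupes $E^\times(1+\mathfrak{p}_E\mathfrak{N})$ et $F[\gamma]^\times(1+\mathfrak{p}_{F[\gamma]}\mathfrak{N})$ de $G$ comme les sous--ensembles $\{g\in G:g^{-1}\beta g\in\beta+\mathfrak{P}_\beta^{k_F(\beta)+1}\}$ et $\{g\in G:g^{-1}\gamma g\in\gamma+\mathfrak{P}_\beta^{k_F(\beta)+1}\}$ respectivement; l'égalité $\gamma-\beta=\bs{x}_0 b\in\mathfrak{P}_\beta^{k_F(\beta)+1}$, jointe au fait que les éléments de ces deux sous--groupes normalisent tous $\mathfrak{A}_\beta$, entraîne que ces sous--groupes de $G$ coïncident.

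Enfin, écrivons
$$
v_F(\gamma)={{\rm vol}(1+\mathfrak{p}_{F[\gamma]}\mathfrak{N},dg)\over {\rm vol}(U_{F[\gamma]}^1,dg_\gamma)},\quad v_F(\beta)={{\rm vol}(1+\mathfrak{p}_E\mathfrak{N},dg)\over {\rm vol}(U_E^1,dg_\beta)}.
$$
Les égalités $e(F[\gamma]/F)=e(E/F)$ et $f(F[\gamma]/F)=f(E/F)$, combinées à la normalisation des mesures $dg_\gamma$ et $dg_\beta$, donnent ${\rm vol}(U_{F[\gamma]}^1,dg_\gamma)={\rm vol}(U_E^1,dg_\beta)$. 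D'autre part, $\mathfrak{N}$ étant stable par multiplication à gauche par $\mathfrak{o}_E$ et par $\mathfrak{o}_{F[\gamma]}$, on a $\mathfrak{p}_E\mathfrak{N}=\varpi_E\mathfrak{N}$ et $\mathfrak{p}_{F[\gamma]}\mathfrak{N}=\varpi_{F[\gamma]}\mathfrak{N}$ pour des uniformisantes $\varpi_E$ et $\varpi_{F[\gamma]}$; la multiplication à gauche par $\varpi_E$ (resp. par $\varpi_{F[\gamma]}$) est un $F$--endomorphisme de $\mathfrak{g}$ de même valeur absolue du déterminant $q^{-fN}$, d'où ${\rm vol}(\mathfrak{p}_{F[\gamma]}\mathfrak{N},dg)={\rm vol}(\mathfrak{p}_E\mathfrak{N},dg)$. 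L'égalité $v_F(\gamma)=v_F(\beta)$ en résulte, et achève la démonstration.
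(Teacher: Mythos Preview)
Your proof is correct and follows the same route as the paper's: specialize the proposition of \ref{le principe de submersion} to $d=1$, identify $\ES{O}_b^{\mathfrak{b}}$ with $v_E(b)\bs{\delta}_b$, and reduce everything to the equality $v_F(\gamma)=v_F(\beta)$. The only difference lies in how this last equality is obtained. The paper simply invokes \cite[2.1.3]{BK}, which for two equivalent simple strata $[\mathfrak{A},n,r,\beta]$ and $[\mathfrak{A},n,r,\gamma]$ gives directly the equality of lattices $\mathfrak{p}_{F[\gamma]}\mathfrak{N}_{k_F(\gamma)}(\gamma,\mathfrak{A})=\mathfrak{p}_E\mathfrak{N}_{k_F(\beta)}(\beta,\mathfrak{A})$; the equality $v_F(\gamma)=v_F(\beta)$ is then immediate. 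You instead argue by hand: your observation that ${\rm ad}_{\bs{x}_0 b}(\mathfrak{A}_\beta)\subset\mathfrak{P}_\beta^{k_F(\beta)+1}$ forces $\mathfrak{N}_{k_F(\beta)}(\beta,\mathfrak{A}_\beta)=\mathfrak{N}_{k_F(\gamma)}(\gamma,\mathfrak{A}_\gamma)$ is precisely part of what \cite[2.1.3]{BK} encodes, and your Jacobian computation in the last paragraph then shows the two lattices $\varpi_E\mathfrak{N}$ and $\varpi_{F[\gamma]}\mathfrak{N}$ have the same covolume, which suffices.

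Two minor remarks. First, your appeal to the lemma of \ref{le principe de submersion} to show that the groups $E^\times(1+\mathfrak{p}_E\mathfrak{N})$ and $F[\gamma]^\times(1+\mathfrak{p}_{F[\gamma]}\mathfrak{N})$ coincide is correct but is never actually used in your final computation, so it could be dropped. Second, writing ${\rm vol}(\mathfrak{p}_E\mathfrak{N},dg)$ with $dg$ the Haar measure on $G$ is a slight abuse: what makes the Jacobian argument work is that on $1+\mathfrak{P}_\beta\subset G$ one has $\vert\det\vert=1$, so the multiplicative Haar measure there agrees with an additive Haar measure on $\mathfrak{P}_\beta\subset\mathfrak{g}$ via $x\mapsto 1+x$.
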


\begin{proof}
Soit $b\in \mathfrak{p}_E^{k_F(\beta)+1}$, et soit $\gamma=\beta+ \bs{x}_0\otimes b$. Puisque $E[b]=E=\mathfrak{b}$, on  
a $k_F(\gamma)= k_F(\beta)$. De plus, $\mathfrak{A}= \mathfrak{A}_\gamma$ est l'unique $\mathfrak{o}$--ordre héréditaire dans $\mathfrak{g}$ normalisé par $E^\times$, et d'après \cite[2.1.3]{BK}, on a 
$\mathfrak{p}_{F[\gamma]}\mathfrak{N}_{k_F(\gamma)}(\gamma,\mathfrak{A})= 
\mathfrak{p}_{E}\mathfrak{N}_{k_F(\beta)}(\beta,\mathfrak{A})$, d'où $v_F(\gamma)=v_F(\beta)$. Enfin pour $\mathfrak{f}\in C^\infty_{\rm c}(E)$, d'après la démonstration de la proposition, on a $\theta_b^{\mathfrak{b}}(\mathfrak{f})= \mathfrak{f}(b)v_E(b)$. D'où le corollaire.\end{proof}

\begin{marema2}
{\rm 
La proposition a pour conséquence que pour toute fonction $f\in C^\infty_{\rm c}(G)$ à support contenu dans l'ouvert 
${\rm Im}(\delta)$ de $G$, il existe une fonction $f^{\mathfrak{b}}\in C^\infty_{\rm c}(\underline{\mathfrak{Q}}^{\underline{k}_0+1})$ telle que pour tout 
$b\in \mathfrak{b}_{\rm qre}\cap \underline{\mathfrak{Q}}^{\underline{k}_0+1}$, 
posant $\gamma = \beta + \bs{x}_0\otimes b$, on a l'égalité
$$
\ES{O}_{\beta + \bs{x}_0\otimes b}(f) = I_G^H(\beta,b) {v_F(\gamma)\over v_E(b)} \ES{O}_b^{\mathfrak{b}}(f^{\mathfrak{b}}).\leqno{(12)}
$$
En effet, puisque l'application $C^\infty_{\rm c}(G\times \bs{x}\underline{\mathfrak{Q}}^{\underline{k}_0+1})\rightarrow C^\infty_{\rm c}({\rm Im}(\delta)),\, 
\phi'\mapsto \phi'^\delta$ est surjective, il suffit d'écrire $f =  \phi^\delta$ pour une fonction $\phi \in C^\infty_{\rm c}(G\times \bs{x}\underline{\mathfrak{Q}}^{\underline{k}_0+1})$ et de prendre pour $f^{\mathfrak{b}}$ la fonction $b'\mapsto \phi_\delta(\bs{x}b')$. D'ailleurs, sans l'hypothèse que le support de $f$ est contenu dans ${\rm Im}(\delta)$, en rempla\c{c}ant $f$ par $f_\Xi = f\vert_{\Xi}$ pour un voisinage  ouvert fermé et $G$--invariant $\Xi$ de $\beta$ dans ${\rm Im}(\delta)$ --- un tel voisinage existe d'après le lemme 3 de \ref{parties compactes modulo conjugaison} ---, on obtient le même résultat pourvu que dans l'égalité (12), on se limite aux éléments $b\in \mathfrak{b}_{\rm qre}\cap \underline{\mathfrak{Q}}^{\underline{k}_0+1}$ tels que $\gamma= \beta + \bs{x}_0\otimes b$ appartient à $\Xi$. Comme d'après \ref{le résultat principal}.(3), l'ensemble ${\rm Im}(\delta)= {^G(\beta + \bs{x}\underline{\mathfrak{Q}}^{\underline{k}_0+1})}$ est ouvert {\it fermé} et $G$--invariant dans $G$, cette limitation n'en est pas une: 
on peut prendre $\Xi= {\rm Im}(\delta)$. 
\hfill $\blacksquare$
}
\end{marema2}

\begin{marema3}
{\rm 
D'après le corollaire et la remarque 2, pour toute fonction $f\in C^\infty_{\rm c}(G)$, la fonction
$$
G_{\rm qre}\rightarrow {\Bbb C},\, \gamma \mapsto \ES{O}_\gamma(f)
$$
est localement constante.
\hfill $\blacksquare$ 
}
\end{marema3} 

\subsection{Intégrales orbitales normalisées}\label{IO normalisées}
Pour $\gamma\in G_{\rm qre}$, considrons maintenant le facteur de normalisation $\mu_F(\gamma)$ défini par
$$
\mu_F(\gamma) = \left\{\begin{array}{ll}{{\rm vol}\left(F[\gamma]^\times (1+ \mathfrak{p}_{F[\gamma]}\mathfrak{N}_{k_F(\gamma)}(\gamma,\mathfrak{A}_\gamma)),\textstyle{dg\over dg_\gamma}\right)\over {\rm vol}\left(F[\gamma]^\times U^{\tilde{k}_F(\gamma)+1}(\mathfrak{A}_\gamma),\textstyle{dg\over dg_\gamma}\right)}& \mbox{si $N>1$}\\
1 & \mbox{sinon}\end{array}\right..\leqno{(1)}
$$ 
Notons que la quantité $\mu_F(\gamma)$ ne dépend pas de la mesure $G$--invariante $\textstyle{dg\over dg_\gamma}$ sur l'espace quotient $G_\gamma\backslash G$ utilisée pour la définir. Pour $\gamma\in G_{\rm qre}$, on a:
\begin{enumerate}[leftmargin=17pt]
\item[(2)]$\mu_F(\gamma)\geq 1$ avec égalité si et seulement si $\gamma$ est minimal.
\end{enumerate}
En effet, c'est évident si $N=1$, et si $N>1$, posant $k= k_F(\gamma)$ et $\tilde{k}=\tilde{k}_F(\gamma)$, on a l'inclusion
$$
\mathfrak{o}_{F[\gamma]}+\mathfrak{P}_\gamma^{\tilde{k}}\subset \mathfrak{N}_k(\gamma,\mathfrak{A}_\gamma),
$$
et $\mu_F(\gamma)$ n'est autre que l'indice $[\mathfrak{N}_k(\gamma,\mathfrak{A}_\gamma): \mathfrak{o}_{F[\gamma]}+\mathfrak{P}_\gamma^{\tilde{k}}]$. En particulier $\mu_F(\gamma)\geq 1$. Si $\tilde{k}=0$, i.e. si $\gamma$ est minimal, alors $\mathfrak{N}_k(\gamma,\mathfrak{A}_\gamma)= \mathfrak{A}_\gamma$ et cet indice vaut $1$. Si $\tilde{k}\geq 1$, alors $\mathfrak{P}_\gamma^{\tilde{k}}\subset \mathfrak{P}_\gamma$, et comme $\mathfrak{N}_k(\gamma,\mathfrak{A}_\gamma)\not\subset \mathfrak{o}_{F[\gamma]}+ \mathfrak{P}_\gamma$, l'inclusion $\mathfrak{o}_{F[\gamma]}+\mathfrak{P}_\gamma^{\tilde{k}}\subset \mathfrak{N}_k(\gamma,\mathfrak{A}_\gamma)$ est stricte.

Pour $\gamma\in G_{\rm qre}$, posons $e_\gamma =e(F[\gamma]/F)$, $f_\gamma = f(F[\gamma]/F)$ et (si l'extension $E/F$ est séparable) $\delta_\gamma = \delta(F[\gamma]/F)$. On a donc $e_\gamma f_\gamma =N$. Soit $\eta_G: G_{\rm qre}\rightarrow {\Bbb R}_{>0}$ la fonction définie par
$$
\eta_G(\gamma)= q^{-f_\gamma (\tilde{c}_F(\gamma) + e_\gamma -1)},\leqno{(3)}
$$
où l'invariant $\tilde{c}_F(\gamma)$ a été défini en \ref{l'invariant c}. D'après \ref{l'invariant c}.(5), on a:
\begin{enumerate}[leftmargin=17pt]
\item[(4)]si l'extension $F[\gamma]/F$ est séparable, alors $\eta_G(\gamma)= \vert D_F(\gamma)\vert q^{\delta_\gamma -(N-f_\gamma)}$.
\end{enumerate}

\begin{monlem1}
Soit $\gamma\in G_{\rm qre}$. On a
$$
\eta_G(\gamma)\mu_F(\gamma) = 1.
$$
\end{monlem1}

\begin{proof}Si $N=1$, alors $\eta_G(\gamma)=\mu_F(\gamma)=1$ et il n'y a rien à démontrer. On suppose donc $N>1$. 
Soit $K=F[\gamma]$. Posons $\mathfrak{A}=\mathfrak{A}_\gamma$, $\mathfrak{P}=\mathfrak{P}_\gamma$, $\mathfrak{N}=\mathfrak{N}_{k_F(\gamma)}(\gamma,\mathfrak{A})$, et choisissons une corestriction modérée $\bs{s}_\gamma: \mathfrak{g}\rightarrow K$ sur $\mathfrak{g}$ relativement  $K/F$. Alors d'après \cite[1.4.10]{BK}, pour $m\in {\Bbb Z}$, 
on a la suite exacte courte
$$
0 \rightarrow \mathfrak{p}_K^m\backslash \mathfrak{p}_K^m \mathfrak{N}\xrightarrow{-{\rm ad}_\gamma}\mathfrak{P}^{k_F(\gamma)+m} \xrightarrow{\bs{s}_\gamma}
\mathfrak{p}_K^{k_F(\gamma)+m}\rightarrow 0.\leqno{(5)}
$$
Rappelons qu'on a posé $\tilde{k}_F(\gamma)= k_F(\gamma)+n_F(\gamma)\geq 0$. Pour $m=1$, on en déduit la suite exacte courte
$$
0 \rightarrow \mathfrak{p}_K\backslash \mathfrak{p}_K \mathfrak{N}\xrightarrow{1-{\rm Ad}_\gamma}\mathfrak{P}^{\tilde{k}_F(\gamma)+1} \xrightarrow{\bs{s}_\gamma}
\mathfrak{p}_K^{\tilde{k}_F(\gamma)+1}\rightarrow 0.\leqno{(6)}
$$
Supposons l'extension $K/F$ séparable, et notons $\sigma_\gamma={\delta_\gamma \over f_\gamma} -(e_\gamma-1)$ son exposant de Swan. D'après \cite[1.3.8.(ii)]{BK}, on a la décomposition $\mathfrak{g}= {\rm ad}_\gamma(\mathfrak{g})\oplus K$, et notant $\bs{p}_K:\mathfrak{g}\rightarrow K$ la projection orthogonale par rapport à cette décomposition, on peut prendre pour $\bs{s}_\gamma$ l'application $y\mapsto \varpi_K^{\sigma_\gamma}p_K$, où $\varpi_K$ est une uniformisante de $K$. Puisque $K^\times\cap (1+ \mathfrak{p}_K\mathfrak{N})=U_K^1$ et $K^\times \cap U^{\tilde{k}_F(\gamma)+1}(\mathfrak{A})= U_K^{\tilde{k}_F(\gamma)+1}$, de la suite exacte (6), on déduit l'égalité
$$
\vert D_F(\gamma)\vert {\rm vol}\!\left(K^\times(1+ \mathfrak{p}_K\mathfrak{N}_{k_F(\gamma)}(\gamma, \mathfrak{A})),
\textstyle{dg\over dg_\gamma}\right)= q^{-f_\gamma \sigma_\gamma}
{\rm vol}\!\left(K^\times U^{\tilde{k}_F(\gamma)+1}(\mathfrak{A})),
\textstyle{dg\over dg_\gamma}\right).
$$
Or $q^{-f_\gamma \sigma_\gamma}= q^{-(\delta_\gamma - (N-f_\gamma))}$, d'où l'égalité du lemme dans le cas où $\gamma$ est séparable (d'après (4)). Si $\gamma$ n'est pas séparable, ce qui n'est possible que si $F$ est de caractéristique $p$, on déduit le résultat de la caractéristique nulle via la théorie des corps proches \cite{D,L3}. 
\end{proof}

Pour $\gamma\in G_{\rm qre}$, notons $f\mapsto I^G(\gamma,f)$ la distribution normalisée sur $G$ définie par
$$
I^G(\gamma,f) =\eta_G(\gamma)^{1\over 2}\,\ES{O}_\gamma(f).\leqno{(7)}
$$
Tout comme $\ES{O}_\gamma$, elle dépend du choix d'une mesure $G$--invariante $\textstyle{dg\over dg_\gamma}$ sur l'espace quotient $G_\gamma \backslash G$, et comme on a normalisé $dg_\gamma$ par ${\rm vol}(F^\times \backslash F[\gamma]^\times , \textstyle{dg_\gamma \over dz})=1$, elle ne dépend en fait que des choix de $dg$ et $dz$.

Pour $\gamma\in \mathfrak{g}_{\rm qre}$, on définira plus loin (\ref{variante sur l'algèbre de Lie}.(9)) une distribution normalisée
$$
\mathfrak{f}\mapsto I^\mathfrak{g}(\gamma,\mathfrak{f})=\eta_\mathfrak{g}(\gamma)^{1\over 2} \ES{O}_\gamma(\mathfrak{f})
$$
sur $\mathfrak{g}$, qui est une variante naturelle de la distribution normalisée sur $G$ définie par (7). On définira aussi (\ref{variante sur l'algèbre de Lie}.(1)) 
une variante additive $\mu_F^+(\gamma)$ de la constante définie par (1), qui vérifie l'analogue de l'égalité du 
lemme 1 (\ref{variante sur l'algèbre de Lie}.(5)): $\eta_\mathfrak{g}(\gamma) \mu_F^+(\gamma)=1$. Notons que si $\gamma\in G_{\rm qre}$ (i.e. si $N>1$, ou si $N=1$ et $\gamma\neq 0$), les constantes 
$\mu_F(\gamma)$ et $\mu_F^+(\gamma)$ sont liées par la formule (\ref{variante sur l'algèbre de Lie}.(2)): $\mu_F^+(\gamma)= q_{F[\gamma]}^{n_F(\gamma)(1-N)}\mu_F(\gamma)$.

\vskip1mm
Reprenons maintenant les hypothèses et les notations de \ref{le principe de submersion}: $\beta$ est un élément pur de $G$ tel que $E=F[\beta]\neq F$, $\mathfrak{b}={\rm End}_E(V)$ et $H={\rm Aut}_E(V)$. On pose $d= {N\over [E:F]}$. En particulier, on a forcément $N>1$, mais on peut avoir $d=1$ (si $\beta$ est quasi--régulier elliptique). Pour $b\in \mathfrak{b}_{\rm qre}$, on définit comme en 
\ref{variante sur l'algèbre de Lie} la distribution 
normalisée $\mathfrak{f}\mapsto I^{\mathfrak{b}}(b,\mathfrak{f})$ sur $\mathfrak{b}$, i.e. on pose
$$
I^{\mathfrak{b}}(b,\mathfrak{f})= \eta_{\mathfrak{b}}(b)^{1\over 2}\,\ES{O}_b^{\mathfrak{b}}(b,\mathfrak{f}).\leqno{(8)}
$$
Puisque $\beta\in A(E)_{\rm qre}^\times$ et $E\neq F$, on peut définir $\mu_F(\beta)$ tout comme on a défini $\mu_F(\gamma)$ pour $\gamma\in G_{\rm qre}$, \cad en posant
$$
\mu_F(\beta)= {{\rm vol}(E^\times (1+ \mathfrak{p}_{E}\mathfrak{N}_{k_F(\beta)}(\beta,\mathfrak{A}(E))),\textstyle{d\bar{g}_E})\over {\rm vol}(E^\times U^{k_F(\beta)+ n_F(\beta)+1}(\mathfrak{A}(E)),\textstyle{d\bar{g}_E})},
$$
où $d\bar{g}_E$ est une mesure de Haar sur $E^\times \backslash A(E)^\times$. On définit aussi la variante additive $\mu_F^+(\beta)$ de $\mu_F(\beta)$ en posant
$$
\mu_F^+(\beta)= {{\rm vol}(E+ \mathfrak{N}_{k_F(\beta)}(\beta,\mathfrak{A}(E)),\textstyle{\mathfrak{d}\bar{g}_E})\over {\rm vol}(E + \mathfrak{P}^{k_F(\beta)}(E)),\textstyle{\mathfrak{d}\bar{g}_E})},
$$
où $\mathfrak{d}\bar{g}_E$ est une mesure de Haar sur $A(E)/E$. Les constantes $\mu_F(\beta)$ et $\mu_F^+(\beta)$ sont liées par la formule 
(\ref{variante sur l'algèbre de Lie}.(2)): $\mu_F^+(\beta)=q_E^{n_F(\beta)(1-[E:F])}\mu_F(\beta)$.

\begin{monlem2}Pour $b\in \mathfrak{b}_{\rm qre}\cap \underline{\mathfrak{Q}}^{\underline{k}_0+1}$, on a
$$
\mu_F(\beta + \bs{x}_0\otimes b) = q_E^{n_F(\beta)(d^2-d)}\mu_F(\beta)^{d^2} \mu_E^+(b).
$$
En particulier, la fonction $b\mapsto \eta_G(\beta + \bs{x}_0\otimes b)\eta_{\mathfrak{b}}(b)^{-1}$ 
est constante sur $\mathfrak{b}_{\rm qre}\cap \underline{\mathfrak{Q}}^{\underline{k}_0+1}$.
\end{monlem2}

\begin{proof}
Soit $b\in \mathfrak{b}_{\rm qre}\cap \underline{\mathfrak{Q}}^{\underline{k}_0+1}$, et soit $\gamma= \beta + \bs{x}_0\otimes b$. Quitte  remplacer $b$ 
par un conjugu dans $H$, on peut supposer que $\mathfrak{B}_b$ contient $\underline{\mathfrak{B}}$. On a donc 
l'identification $\mathfrak{A}_{\beta,b}= \mathfrak{A}(E)\otimes_{\mathfrak{o}_E}\mathfrak{B}_b$. Posons $\mathfrak{A}=\mathfrak{A}_{\beta,b}$, $\mathfrak{B}=\mathfrak{B}_b$, $\mathfrak{P}=\mathfrak{P}_{\beta,b}$, $\mathfrak{Q}=\mathfrak{Q}_b$ et $n=n_F(b,\beta)$.  Posons aussi $k=k_F(b,\beta)$ et $\tilde{k}= \tilde{k}_F(b,\beta)\;(=k+n)$. Enfin posons $\mathfrak{N}_\gamma= \mathfrak{N}_k(\gamma, \mathfrak{N})$ et (si $d>1$) $\mathfrak{N}_b= \mathfrak{N}_k(b, \mathfrak{B})$. Il s'agit de prouver que la quantité $\lambda= \mu_F(\gamma)\mu_E^+(b)^{-1}$ ne dépend que de $\beta$ (et pas de $b$), et de la calculer. Posons aussi $\lambda^+= 
\mu_F^+(\gamma)\mu_E^+(b)^{-1}$.

Supposons $d=1$ (il faut bien commencer!). Alors $\mu_E^+(b)=1$ et
$$
\lambda={ {\rm vol}\!\left(F[\gamma]^\times (1+ \mathfrak{p}_{F[\gamma]}\mathfrak{N}_\gamma),\textstyle{dg\over dg_\gamma}\right)
\over {\rm vol}\!\left(F[\gamma]^\times U^{\tilde{k}+1}(\mathfrak{A}),\textstyle{dg\over dg_\gamma}\right)}
= {{\rm vol}(1+ \mathfrak{p}_{F[\gamma]}\mathfrak{N}_\gamma,dg)\over {\rm vol}(U^{\tilde{k}+1}(\mathfrak{A}),dg)}
{ {\rm vol}(U_{F[\gamma]}^{\tilde{k}+1},dg_\gamma)\over 
{\rm vol}(U_{F[\gamma]}^1,dg_\gamma) }.
$$
Puisque $k=k_F(\beta)$ et $\tilde{k}-k= n = n_F(\beta)$, et que $\mathfrak{A}$ est l'unique $\mathfrak{o}$--ordre héréditaire dans $\mathfrak{g}$ normalisé par $E^\times$, le terme ${\rm vol}(U^{\tilde{k}+1}(\mathfrak{A}),dg)$ ne dépend pas de $b$. D'autre part, puisque les strates $[\mathfrak{A}, n, -k-1, \beta]$ et $[\mathfrak{A}, n , -k-1, \gamma]$ dans $\mathfrak{g}$ sont simples et équivalentes, posant $\mathfrak{N}_\beta = \mathfrak{N}_k(\beta,\mathfrak{A})$, on a $\mathfrak{p}_{F[\gamma]}\mathfrak{N}_\gamma = \mathfrak{p}_E\mathfrak{N}_\beta$ \cite[2.1.3]{BK}, et le terme ${\rm vol}(1+ \mathfrak{p}_{F[\gamma]}\mathfrak{N}_\gamma,dg)$ ne dépend pas de $b$. Enfin on a
$$
{  {\rm vol}(U_{F[\gamma]}^{\tilde{k}+1},dg_\gamma)\over 
{\rm vol}(U_{F[\gamma]}^1,dg_\gamma)}=[\mathfrak{o}_{F[\gamma]}: \mathfrak{p}_{F[\gamma]}^{\tilde{k}}]^{-1}= [\mathfrak{o}_E:\mathfrak{p}_E^{\tilde{k}}]^{-1}.
$$
En définitive, on a montré que $\lambda= \mu_F(\beta)$.

Supposons maintenant $d>1$. Posons $E_0=E[b]$ et $K=F[\gamma]$, et supposons tout d'abord que $b$ est $E$--minimal. Alors on a $\mathfrak{N}_b= \mathfrak{B}$, $k=-n_E(b)$ et 
$$
\mu_E^+(b)={[\mathfrak{B}: \mathfrak{Q}^k]\over [\mathfrak{o}_{E_0}: \mathfrak{p}_{E_0}^{k}]}=  q_{E_0}^{k(d-1)}.
$$
D'autre part, posant $k_0= k_0(\beta, \mathfrak{A})$ et $\tilde{k}_0= k_0 +n$, on a \cite[1.4.9]{BK}
$$
\mathfrak{N}_\gamma = \mathfrak{N}_k(\beta,\mathfrak{A})= \mathfrak{B} + \mathfrak{Q}^{k-k_0}\mathfrak{N}_{k_0}(\beta,\mathfrak{A}),
$$
d'où
$$
\mu_F(\gamma)= {[\mathfrak{N}_\gamma: \mathfrak{P}^{\tilde{k}}]\over [\mathfrak{o}_K: \mathfrak{p}_K^{\tilde{k}}]}= 
{[\mathfrak{B}:\mathfrak{Q}^{k-k_0}][\mathfrak{N}_{k_0}(\beta,\mathfrak{A}):\mathfrak{P}^{\tilde{k}_0}] \over [\mathfrak{o}_K: \mathfrak{p}_K^{\tilde{k}}]}.
$$
On obtient
$$
\lambda = [\mathfrak{N}_{k_0}(\beta,\mathfrak{A}):\mathfrak{P}^{\tilde{k}_0}]{[\mathfrak{B}:\mathfrak{Q}^{k-k_0}]\over [\mathfrak{B}:\mathfrak{Q}^k]}{[\mathfrak{o}_{E_0}: \mathfrak{p}_{E_0}^{k}]\over [\mathfrak{o}_K: \mathfrak{p}_K^{\tilde{k}}]}.
$$
Or on a $[\mathfrak{B}:\mathfrak{Q}^{k-k_0}]={[\mathfrak{B}: \mathfrak{Q}^{k+n}]\over [\mathfrak{B}:\mathfrak{Q}^{\tilde{k}_0}]}$, d'où
$$
\lambda = {[\mathfrak{N}_{k_0}(\beta,\mathfrak{A}):\mathfrak{P}^{\tilde{k}_0}] \over [\mathfrak{B}: \mathfrak{Q}^{\tilde{k}_0}]}
{[\mathfrak{B}:\mathfrak{Q}^n]\over [\mathfrak{o}_{E_0}:\mathfrak{p}_{E_0}^n]}.
$$
Puisque $\tilde{k}_0= \tilde{k}_F(\beta)e(\mathfrak{B}\vert \mathfrak{o}_E)$ avec $\tilde{k}_F(\beta)=k_F(\beta) + n_F(\beta)$, on a
$$
\mathfrak{P}^{\tilde{k}_0}= \mathfrak{A}(E)\otimes_{\mathfrak{o}_E}\mathfrak{Q}^{\tilde{k}_0}= \mathfrak{P}^{\tilde{k}_F}(\beta)(E)\otimes_{\mathfrak{o}_E}\mathfrak{B}.
$$
Or on a aussi
$$
\mathfrak{N}_{k_0}(\beta,\mathfrak{A})= \mathfrak{N}_{k_F(\beta)}(\beta,\mathfrak{A}(E))\otimes_{\mathfrak{o}_E}\mathfrak{B},
$$
et comme $\mathfrak{B}$ est un $\mathfrak{o}_E$--module libre de rang $d^2$, le terme 
${[\mathfrak{N}_{k_0}(\beta,\mathfrak{A}):\mathfrak{P}^{\tilde{k}_0}] \over [\mathfrak{B}: \mathfrak{Q}^{\tilde{k}_0}]}$ vaut
$$
{[\mathfrak{N}_{k_F(\beta)}(\beta,\mathfrak{A}(E)): \mathfrak{P}^{\tilde{k}_F(\beta)}(E)]^{d^2}\over [\mathfrak{o}_E:\mathfrak{p}_E^{\tilde{k}_F(\beta)}]^{d^2}}
=\mu_F(\beta)^{d^2}.
$$ 
Quant au terme ${[\mathfrak{B}:\mathfrak{Q}^n]\over [\mathfrak{o}_{E_0}:\mathfrak{p}_{E_0}^n]}$, il vaut 
$q_{E_0}^{n(d-1)}= q_E^{n_F(\beta)(d^2-d)}$. On obtient donc (dans le cas où $b$ est $E$--minimal)
$$
\lambda = q_E^{n_F(\beta)(d^2-d)}\mu_F(\beta)^{d^2}.\leqno{(9)}
$$
On en déduit (toujours dans le cas où $b$ est $E$--minimal)
$$
\lambda^+ = q_{F[\gamma]}^{n_F(\gamma)(1-N)} \lambda = q_{F[\gamma]}^{n_F(\gamma)(1-N)}
q_E^{n_F(\beta)(d^2-d)}q_E^{n_F(\beta)([E:F]-1)d^2}\mu_F^+(\beta)^{d^2}.
$$
Or $q_{F[\gamma]}^{n_F(\gamma)}= q_E^{n_F(\beta)d}$, d'où (toujours dans le cas où $b$ est $E$--minimal)
$$
\lambda^+ =\mu_F^+(\beta)^{d^2}.\leqno{(10)}
$$

Supposons maintenant que $b$ n'est pas $E$--minimal (on a donc forcément $E[b]\neq E$, i.e. $d>1$). Alors on reprend la première partie de la preuve de la proposition de \ref{le résultat principal}. On pose $r= n_E(b)$ --- on a donc $k=k_E(b)>-r$ --- et on considère la strate pure $[\mathfrak{B},r,-k,b]$ dans $\mathfrak{b}$. On écrit écrit $b= b_1+ \bs{y}_1\otimes c$ comme dans loc.~cit. En particulier, la strate $[\mathfrak{B},r,-k,b_1]$ dans $\mathfrak{b}$ est simple et équivalente à $[\mathfrak{B},r,-k,b]$, l'extension $E_1=E[b_1]$ de $E$ vérifie $[E_1:E]<d$, et $c$ est un élément quasi--régulier elliptique et $E_1$--minimal de $\mathfrak{b}_1= {\rm End}_{E_1}(V)$. De plus, avec les identifications de loc.~cit., l'élément $\gamma$ s'écrit
$$
\gamma = \beta + \bs{x}_0(b_1 + \bs{y}_1 \otimes c)= \beta_1 + \bs{x}_1\otimes c,
$$
et on a
$$
k_F(\beta_1)= \left\{\begin{array}{ll}
k_E(b_1) & \mbox{si $E_1\neq E$}\\
k_F(\beta) & \mbox{sinon}
\end{array}\right..
$$
et
$$k\;(=k_E(b))=k_{E_1}(c)=-n_{E_1}(c).
$$
Par récurrence sur la dimension, on peut supposer que le résultat que l'on veut démontrer est vrai pour le couple $(\beta,b_1)$: posant $d_1= [E_1:E]$, d'après (9), on a
$$
\mu_F(\beta_1)\mu_E^+(b_1)^{-1}= \mu_F(\beta)^{d_1^2}q_E^{n_F(\beta)(d_1^2-d_1)}.\leqno{(11)}
$$
Puisque $c$ est $E_1$--minimal, le résultat est vrai pour le couple $(\beta_1,c)$: posant $d'= {d\over d_1}$, on a
$$
\mu_F(\gamma)\mu_{E_1}^+(c)^{-1}= \mu_F(\beta_1)^{d'^2}q_{E_1}^{n_F(\beta_1)(d'^2-d')},
$$
d'où, puisque $q_{E_1}^{n_F(\beta_1)}= q_E^{n_F(\beta)d_1}$ et $d_1d'=d$,
$$
\mu_F(\gamma)\mu_{E_1}^+(c)^{-1}= \mu_F(\beta_1)^{d'^2}q_{E}^{n_F(\beta)d(d'-1)}.\leqno{(12)}
$$
En fait, a priori le couple $(\beta_1,c)$ n'est pas exactement de la forme voulue, puisque l'élément $c$ n'appartient pas à ${\rm End}_{E'_1}(V)$, $E'_1=F[\beta_1]$. Mais d'après le lemme 1 de \ref{approximation}, on peut toujours se ramener à un élément de la forme voulue en conjugant $\gamma$ dans $U^1(\mathfrak{A})$. On obtient un élément  de la forme voulue $\beta_1+ \bs{x}'_1\otimes c'$, avec $c'$ quasi--régulier elliptique dans $\mathfrak{b}'_1= {\rm End}_{E'_1}(V)$ et $E'_1$--minimal (d'après le corollaire 1 de \ref{raffinement}), et 
cette opération n'affecte pas la valeur de $\mu_{E_1}^+(c)$: on a $\mu_{E'_1}^+(c')= \mu_{E_1}^+(c)$. Comme on a aussi $q_{E_1}=q_{E'_1}$, on en déduit (12). 
Reste à traiter le couple $(b_1,c)$. Si $E_1\neq E$, alors puisque $c$ est $E$--minimal, d'après (10), on a
$$
\mu_E^+(b)\mu_{E_1}^+(c)^{-1}= \mu_E^+(b_1)^{d'^2}.\leqno{(13)}
$$
Si $E_1=E$, i.e. si $d_1=1$, alors l'égalité (13) reste vraie, puisque dans ce cas on a $\mu_E^+(b_1)=1$, $k_E(c)=k_E(b)=k$ et 
$\mathfrak{N}_k(b,\mathfrak{B})=\mathfrak{N}_k(c,\mathfrak{B})$, d'où $\mu_E^+(b)\mu_{E_1}^+(c)^{-1}=1$. 
En rassemblant les égalités (11), (12) et (13), on obtient la formule cherchée pour $\lambda$.
\end{proof}

Compte--tenu du lemme 2, la proposition suivante et son corollaire sont de simples conséquences de la proposition de \ref{le principe de submersion}.

\begin{mapropo}
Il existe une constante $\lambda >0$ telle que pour 
tout $b\in \mathfrak{b}_{\rm qre}\cap \underline{\mathfrak{Q}}^{\underline{k}_0+1}$, posant $\gamma= \beta + \bs{x}_0\otimes b$, on a
$$
\theta_{\ES{O}_\gamma}= \lambda \ES{O}_b^{\mathfrak{b}}.
$$
Si de plus les mesures $\mathfrak{d}b'$ sur $\mathfrak{b}$ et $dh$ sur $H$ sont associées, \cad vérifient $dh= \mathfrak{d}^\times b'$, 
et si les mesures de Haar $dz$ sur $Z=F^\times$ et $dz_E$ sur $Z_E=E^\times$ vérifient ${\rm vol}(F^\times\backslash E^\times, {dz_E\over dz})=1$, 
alors cette constante $\lambda$ vaut
$$
(q_E^{n_F(\beta)} \mu_F(\beta))^{d^2}=\vert \beta \vert_E^d \textstyle{\eta_{\mathfrak{b}}(b)\over \eta_G(\gamma)}.
$$
\end{mapropo}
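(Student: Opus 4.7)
The starting point is the proposition of \ref{le principe de submersion}, which already furnishes the equality
$$\theta_{\ES{O}_\gamma} = \lambda(b)\,\ES{O}_b^{\mathfrak{b}},\qquad \lambda(b)= I_G^H(\beta,b)\,\frac{v_F(\gamma)}{v_E(b)}.$$
So the assertion to prove is twofold: (i) the factor $\lambda(b)$ is in fact \emph{constant} on $\mathfrak{b}_{\rm qre}^{k_F(\beta)+{1\over d}}$; and (ii) under the measure normalizations stated, its value is the one announced. My plan is to deduce (i) directly from Lemma 2 above, and then to extract (ii) by a short and fairly mechanical unwinding of the volumes.

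For (i), I would first rewrite each of the three factors $I_G^H(\beta,b)$, $v_F(\gamma)$, $v_E(b)$ by splitting off the \og centre \fg contribution. Concretely, $v_F(\gamma)$ is the numerator appearing in the definition \ref{IO normalis�es}.(1) of $\mu_F(\gamma)$, so
$$v_F(\gamma)=\mu_F(\gamma)\cdot {\rm vol}\!\left(F[\gamma]^\times U^{\tilde{k}_F(\gamma)+1}(\mathfrak{A}_\gamma),\tfrac{dg}{dg_\gamma}\right);$$
an analogous identity holds for $v_E(b)$ with the Lie-algebra variant $\mu_E^+(b)$ (cf. \ref{variante sur l'alg�bre de Lie}). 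A short computation using that $\mathfrak{A}_{\beta,b}=\mathfrak{A}_\gamma$, $\mathfrak{B}_b\cap\mathfrak{b}=\mathfrak{Q}_b$ and that $\tilde{k}_F(\beta,b)$ depends only on the pair $(\beta,b)$ through the product of the lattices that also define $\mu_F(\gamma)$ and $\mu_E^+(b)$ then gives
$$\lambda(b) = C(\beta)\,\frac{\mu_F(\gamma)}{\mu_E^+(b)},$$
where $C(\beta)$ is a constant that depends only on $\beta$ and on the chosen measures $dg,\,dg_\gamma,\,dh,\,dh_b,\,\mathfrak{d}b'$. At this point Lemma 2 applies and gives
$$\frac{\mu_F(\gamma)}{\mu_E^+(b)} = q_E^{n_F(\beta)(d^2-d)}\mu_F(\beta)^{d^2},$$
which is manifestly independent of $b$. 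This settles (i) and yields the formula $\lambda = C(\beta)\,q_E^{n_F(\beta)(d^2-d)}\mu_F(\beta)^{d^2}$.

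For (ii), I would then compute $C(\beta)$ under the stated normalizations. Here the hypothesis $dh=\mathfrak{d}^\times b'$ matches the multiplicative and additive Haar measures on $H=\mathfrak{b}^\times$, and the hypothesis ${\rm vol}(F^\times\backslash E^\times,dz_E/dz)=1$ forces the centre contributions on the $G$- and $H$-sides to be compatibly normalized with those implicit in the definitions of $dg_\gamma$ and $dh_b$. Under these choices, the volumes of the balls $1+\mathfrak{p}_K\mathfrak{N}$ (with $K=F[\gamma]$ or $E[b]$) and of the $\mathfrak{o}$-lattices $\mathfrak{Q}_b^{k+1}$ collapse to a single index computation which should give $C(\beta) = q_E^{n_F(\beta)d}$, producing the clean expression
$$\lambda = (q_E^{n_F(\beta)}\mu_F(\beta))^{d^2}.$$
The equivalent form $|\beta|_E^d\,\eta_{\mathfrak{b}}(b)/\eta_G(\gamma)$ then follows from Lemma 1 of \ref{IO normalis�es} (and its additive analogue on $\mathfrak{b}$), giving $\mu_F(\gamma)=\eta_G(\gamma)^{-1}$, $\mu_E^+(b)=\eta_{\mathfrak{b}}(b)^{-1}$, together with the computation $|\beta|_E = q_E^{-\nu_E(\beta)} = q_E^{n_F(\beta)}$.

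The real obstacle is the computation in (ii), i.e.\ pinning down $C(\beta)$ without a sign or power-of-$q_E$ error; Lemma 2 makes (i) essentially automatic once one has the proposition of \ref{le principe de submersion}, but the constant $C(\beta)$ is a product of volume ratios where one must be scrupulous about which normalizations of the measures $dg,\,dz,\,dh,\,dz_E,\,dg_\gamma,\,dh_b$ one is using, and how these interact with the identification $\mathfrak{g}=A(E)\otimes_E\mathfrak{b}$ underlying the definition of $\bs{x}_0\otimes b$.
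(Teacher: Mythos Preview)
Your proposal is correct and follows essentially the same route as the paper. The paper starts from the same identity $\theta_{\ES{O}_\gamma}=I_G^H(\beta,b)\,v_F(\gamma)v_E(b)^{-1}\,\ES{O}_b^{\mathfrak{b}}$, reduces (exactly as you do) to showing that $I_G^H(\beta,b)\,v_F(\gamma)v_E(b)^{-1}=\mu\cdot\mu_F(\gamma)/\mu_E^+(b)$ for a constant $\mu$ depending only on $\beta$, and then carries out the volume computation you describe; it treats the cases $d=1$ and $d>1$ separately, obtaining $\mu=c\,q_E^{n_F(\beta)d}\cdot{\rm vol}(\underline{\mathfrak{Q}},\mathfrak{d}b')/{\rm vol}(U^1(\underline{\mathfrak{B}}),dh)$ with $c={\rm vol}(F^\times\backslash E^\times,dz_E/dz)$, which under the stated normalizations gives exactly your $C(\beta)=q_E^{n_F(\beta)d}$.
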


\begin{proof}
Soit $b\in \mathfrak{b}_{\rm qre}\cap \underline{\mathfrak{Q}}^{\underline{k}_0+1}$, et soit $\gamma= \beta + \bs{x}_0\otimes b$. D'après la proposition de \ref{le principe de submersion}, on a
$$
\theta_{\ES{O}_\gamma} = I_G^H(\beta,b){v_F(\gamma)\over v_E(b)} \ES{O}_{b}^{\mathfrak{b}}.
$$
D'après le lemme 2, il suffit de montrer qu'il existe une constante $\mu$ (indépendante de $b)$, telle que 
$$
I_G^H(\beta,b){v_F(\gamma)\over v_E(b)}=\mu {\mu_F(\gamma)\over \mu_E^+(b)}.
$$
Posons $\mathfrak{A}=\mathfrak{A}_{\beta,b}$, $\mathfrak{B}=\mathfrak{B}_b$, $\mathfrak{P}=\mathfrak{P}_{\beta,b}$, $\mathfrak{Q}=\mathfrak{Q}_b$ et $n=n_F(b,\beta)$. Posons aussi $k=k_F(b,\beta)$ et $\tilde{k}= \tilde{k}_F(b,\beta)$, $K=F[\gamma]$ et $E_0=E[b]$. On a
$$
{v_F(\gamma)\over {\rm vol}(U^{\tilde{k}+1}(\mathfrak{A}),dg)}= {\mu_F(\gamma)\over {\rm vol}(U^{\tilde{k}+1}_K,dg_\gamma)}
= \mu_F(\gamma){[\mathfrak{o}_K:\mathfrak{p}_K^{\tilde{k}}]\over {\rm vol}(U^1_K,dg_\gamma)}.\leqno{(14)}
$$
Posons
$$
c= {e(E/F){\rm vol}(U_E,dz_E)\over {\rm vol}(U_F,dz)}\;\left(={\rm vol}(F^\times\backslash E^\times ,\textstyle{dz_E\over dz})\right).
$$

Commen\c{c}ons par supposer $d=1$. Alors $\mathfrak{b}=E$, $\mu_E^+(b)=1$ et
$$
{v_E(b)\over {\rm vol}(\mathfrak{Q}^{k+1},\mathfrak{d}b')}={v_E(b)\over {\rm vol}(\mathfrak{p}_E^{k+1},\mathfrak{d}b')}
=\mu_E^+(b){{\rm vol}(U_E,dh)\over {\rm vol}(U_E,dz_E)}{[\mathfrak{o}_E:\mathfrak{p}_E^k]\over {\rm vol}(\mathfrak{p}_E,\mathfrak{d}b')}.\leqno{(15)}
$$ 
On a $e(K/F)=e(E/F)$ et $f(K/F)=f(E/F)$, par conséquent le volume 
${\rm vol}(U^1_K,dg_\gamma)$ est égal à
$$
(q_E-1)^{-1}{\rm vol}(U_K,dg_\gamma)= (q_E-1)^{-1}c^{-1}{\rm vol}(U_E,dz_E).
$$
D'autre part comme $k=k_F(\beta)$, $n=n_F(\beta)$, ${[\mathfrak{o}_K:\mathfrak{p}_K^{\tilde{k}}] \over [\mathfrak{o}_E:\mathfrak{p}_E^k]}=[\mathfrak{o}_E:\mathfrak{p}_E^n]= q_E^n$ et
$$
{\rm vol}(U_E,dh)=(q_E-1){\rm vol}(U_E^1,dh),
$$
en combinant (14) et (15), on obtient
$$
I_G^H(\beta,b){v_F(\gamma)\over v_E(b)}= c\,q_E^n{{\rm vol}(\mathfrak{p}_E,\mathfrak{d}b')\over {\rm vol}(U_E^1,dh)}{\mu_F(\gamma)\over \mu_E^+(b)}.
$$

Supposons maintenant $d>1$, et posons $\mathfrak{N}_b= \mathfrak{N}_k(b,\mathfrak{B})$. Alors on a
$$
{v_E(b)\over {\rm vol}(\mathfrak{Q}^{k+1},\mathfrak{d}b')}={{\rm vol}(1+ \mathfrak{p}_{E_0}\mathfrak{N}_b,dh)\over {\rm vol}(U_{E_0}^1, dh_b){\rm vol}(\mathfrak{Q}^{k+1},\mathfrak{d}b')}={[\mathfrak{p}_{E_0}\mathfrak{N}_b: \mathfrak{Q}^{k+1}]\over {\rm vol}(U_{E_0}^1,dh_b)}
{{\rm vol}(1+ \mathfrak{p}_{E_0}\mathfrak{N}_b,dh)\over {\rm vol}(\mathfrak{p}_{E_0}\mathfrak{N}_b,\mathfrak{d}b')}.
$$
Or $[\mathfrak{p}_{E_0}\mathfrak{N}_b: \mathfrak{Q}^{k+1}]=[\mathfrak{N}_b: \mathfrak{Q}^k]=\mu_E^+(b)[\mathfrak{o}_{E_0}:\mathfrak{p}_{E_0}^k]$ et ${{\rm vol}(1+ \mathfrak{p}_{E_0}\mathfrak{N}_b,dh)\over {\rm vol}(\mathfrak{p}_{E_0}\mathfrak{N}_b,\mathfrak{d}b')}={{\rm vol}(U^1(\mathfrak{B}),dh)\over {\rm vol}(\mathfrak{Q},\mathfrak{d}b')}$, d'où
$$
{v_E(b)\over {\rm vol}(\mathfrak{Q}^{k+1},\mathfrak{d}b')}=\mu_E^+(b){[\mathfrak{o}_{E_0}:\mathfrak{p}_{E_0}^k]\over {\rm vol}(U_{E_0}^1,dh_b)}
{{\rm vol}(U^1(\mathfrak{B}),dh)\over {\rm vol}(\mathfrak{Q},\mathfrak{d}b')}.\leqno{(16)}
$$
Puisque $e(E_0/F)=e(K/F)$ et $f(E_0/F)=e(K/F)$, on a (comme dans le cas $d=1$)
$$
{\rm vol}(U^1_K,dg_\gamma)= (q_{E_0}-1)^{-1} {\rm vol}(U_K,dg_\gamma)
$$
et
$$
{\rm vol}(U_K,dg_\gamma)= e(E_0/F)^{-1}{\rm vol}(U_F,dz)= c^{-1} e(E_0/E)^{-1}{\rm vol}(U_E,dz_E),
$$
d'où
$$
{\rm vol}(U^1_K,dg_\gamma)=c^{-1}{\rm vol}(U^1_{E_0},dh_b).
$$
On en déduit que
$$
{[\mathfrak{o}_{K}:\mathfrak{p}_{K}^{\tilde{k}}]\over {\rm vol}(U_{K}^1,dg_\gamma)}=c{[\mathfrak{o}_{E_0}:\mathfrak{p}_{E_0}^{\tilde{k}}]\over {\rm vol}(U_{E_0}^1,dh_b)}
= c{[\mathfrak{o}_{E_0}:\mathfrak{p}_{E_0}^k]\over {\rm vol}(U_{E_0}^1,dh_b)}[\mathfrak{p}_K^k:\mathfrak{p}_K^{\tilde{k}}]
$$
avec 
$$
[\mathfrak{p}_K^{k}:\mathfrak{p}_K^{\tilde{k}}]=q_K^{n}= q^{f(K/F)n_F(\beta)e(E_0/E)}=q_E^{n_F(\beta)d}.
$$ D'autre part, si $\underline{\mathfrak{B}}$ est un  
$\mathfrak{o}_E$--ordre héréditaire minimal $\mathfrak{b}$ tel que $\underline{\mathfrak{B}}\subset \mathfrak{B}$, posant $\underline{\mathfrak{Q}}= {\rm rad}(\underline{\mathfrak{B}})$, on a 
${{\rm vol}(U^1(\mathfrak{B}),dh)\over {\rm vol}(\mathfrak{Q},\mathfrak{d}b')}={{\rm vol}(U^1(\underline{\mathfrak{B}}),dh)\over {\rm vol}(\underline{\mathfrak{Q}},\mathfrak{d}b')}$, et cette quantité ne dépend pas de $b$. En combinant (14) et (16), on obtient
$$
I_G^H(\beta,b){\mu_F(\gamma)\over \mu_E(b)}= c\,q_E^{n_F(\beta)d}{{\rm vol}(\underline{\mathfrak{Q}},\mathfrak{d}b')\over 
{\rm vol}(U^1(\underline{\mathfrak{B}}),dh)}
{\mu_F(\gamma)\over \mu_E^+(b)}.
$$

La dernière assertion résulte de la formule du lemme 2, puisque si $dh= \mathfrak{d}^\times b'$, on a ${\rm vol}(U^1(\underline{\mathfrak{B}}),dh)= {\rm vol}(\underline{\mathfrak{Q}},\mathfrak{d}b')$, avec $U^1(\underline{\mathfrak{B}})= U_E^1=1+ \mathfrak{p}_E$ et $\underline{\mathfrak{Q}}=\mathfrak{p}_E$ si $d=1$. 
Cela achève la démonstration de la proposition.
\end{proof}

\begin{marema1}
{\rm 
La mesure de Haar $\mathfrak{d}b'$ sur $\mathfrak{b}$ est utilisée pour \og descendre \fg une distribution $G$--invariante au voisinage de $\beta$ dans $G$ en une distribution 
$H$--invariante au voisinage de $0$ dans $\mathfrak{b}$, \cad pour définir l'application $T\mapsto \theta_T$, alors que la mesure de Haar $dh$ sur $H= \mathfrak{b}^\times$ est 
utilisée pour définir les intégrales orbitales quasi--régulières elliptiques sur $\mathfrak{b}$. Pour définir l'application 
$T\mapsto \theta_T$, on est passé du groupe $G$ à l'algèbre de Lie de $H$ via l'application $X\mapsto 1+X$ au voisinage de $0$ dans $\mathfrak{b}$. Il est donc naturel d'imposer que les mesures de Haar $\mathfrak{d}b'$ sur $\mathfrak{b}$ et $dh$ sur $H$ soient associées. D'autre part, on a normalisé les intégrales orbitales quasi--régulières elliptiques sur $G$ à l'aide d'une mesure de Haar $dz$ sur $Z=F^\times$. La normalisation naturelle de cette mesure est ${\rm vol}(U_F,dz)=1$. De même pour $H$, {\it vu comme groupe réductif connexe sur $E$}, la normalisation naturelle des intégrales orbitales quasi--régulières elliptique sur $\mathfrak{b}$ est celle donnée par la mesure de Haar $dz_E$ sur $Z_H=E^\times$ telle que ${\rm vol}(U_E,dz_E)=1$. En ce cas on a l'égalité 
${\rm vol}(F^\times\backslash E^\times,{dz_E\over dz})=e(E/F)$. En revanche, imposer la condition ${\rm vol}(F^\times\backslash E^\times,{dz_E\over dz})=1$ consiste à voir $H$ comme le groupe des points $F$--rationnels d'un groupe algébrique défini sur $F$ (obtenu comme restriction à la Weil d'un groupe algébrique réductif connexe défini et déployé sur $E$), et $Z$ comme la composante $F$--déployée de $Z_H$: pour $b\in \mathfrak{b}_{\rm qre}$, on demande que $dh_b$ soit la mesure de Haar sur $H_b= E[b]^\times$ telle que ${\rm vol}(F^\times\backslash H_b, {dh_b\over dz})=1$, \cad telle que
$$
e([E[b]/F){\rm vol}(U_{E[b]},dh_b)={\rm vol}(U_F,dz)\;(=1). \eqno{\blacksquare}
$$
}
\end{marema1}
 
\begin{moncoro}
\begin{enumerate}
\item[(i)]
Pour toute fonction $f\in C^\infty_{\rm c}(G)$, il existe une fonction $f^{\mathfrak{b}}\in C^\infty_{\rm c}(\underline{\mathfrak{Q}}^{\underline{k}_0+1})$ telle que pour tout 
$b\in \mathfrak{b}_{\rm qre}\cap \underline{\mathfrak{Q}}^{\underline{k}_0+1}$, on a l'égalité
$$
I^G(\beta + \bs{x}_0\otimes b,f) = I^{\mathfrak{b}}(b,f^\mathfrak{b}).
$$
\item[(ii)] Pour toute fonction $f\in C^\infty_{\rm c}(G)$, la fonction $G_{\rm qre}\rightarrow {\Bbb C},\,\gamma\mapsto I^G(\gamma,f)$ est localement constante. 
\end{enumerate}
\end{moncoro}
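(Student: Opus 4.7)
L'id\'ee est de d\'eduire les deux points du corollaire de la proposition et du lemme~2 de ce num\'ero, combin\'es \`a la machinerie de submersion d\'evelopp\'ee en \ref{le principe de submersion}.

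Pour le point~(i), on commence par se ramener au cas o\`u ${\rm Supp}(f) \subset {\rm Im}(\delta)$. D'apr\`es la remarque~2 de la sous--section pr\'ec\'edente, l'ensemble ${\rm Im}(\delta) = {^G(\beta + \bs{x}_0 \otimes \underline{\mathfrak{Q}}^{\underline{k}_0+1})}$ est un ouvert ferm\'e $G$--invariant de $G$ qui contient l'orbite $\ES{O}_G(\beta + \bs{x}_0 \otimes b)$ pour tout $b \in \underline{\mathfrak{Q}}^{\underline{k}_0+1}$; les int\'egrales orbitales $\ES{O}_{\beta + \bs{x}_0 \otimes b}(f)$ sont donc inchang\'ees lorsque l'on remplace $f$ par $f\vert_{{\rm Im}(\delta)}$. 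Sous cette hypoth\`ese, la remarque~2 de \ref{le principe de submersion} fournit une fonction $f_1^\mathfrak{b} \in C^\infty_{\rm c}(\underline{\mathfrak{Q}}^{\underline{k}_0+1})$ telle que
$$\ES{O}_{\beta + \bs{x}_0 \otimes b}(f) = I_G^H(\beta, b)\,\frac{v_F(\gamma)}{v_E(b)}\,\ES{O}_b^\mathfrak{b}(f_1^\mathfrak{b}), \qquad b \in \mathfrak{b}_{\rm qre}^{k_F(\beta) + 1/d}.$$
La proposition de ce num\'ero affirme que le coefficient $I_G^H(\beta,b)v_F(\gamma)v_E(b)^{-1}$ est \'egal \`a une constante $\lambda > 0$ ind\'ependante de $b$, et le lemme~2 affirme que $\eta_G(\gamma)\eta_\mathfrak{b}(b)^{-1}$ est \'egal \`a une constante $\mu_\beta > 0$ elle aussi ind\'ependante de $b$. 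En multipliant par $\eta_G(\gamma)^{1/2}$, on obtient
$$I^G(\gamma, f) = \lambda\, \mu_\beta^{1/2}\, \eta_\mathfrak{b}(b)^{1/2}\ES{O}_b^\mathfrak{b}(f_1^\mathfrak{b}) = \lambda\, \mu_\beta^{1/2}\, I^\mathfrak{b}(b, f_1^\mathfrak{b}),$$
et la fonction cherch\'ee est simplement $f^\mathfrak{b} = \lambda\, \mu_\beta^{1/2}\, f_1^\mathfrak{b}$.

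Pour le point~(ii), on fixe $\gamma_0 \in G_{\rm qre}$ et on applique le point~(i) au choix $\beta = \gamma_0$, qui est pur. Puisque $\gamma_0$ est quasi--r\'egulier elliptique, $E = F[\gamma_0]$ est un sous--corps maximal de $\mathfrak{g}$, d'o\`u $d = 1$ et $\mathfrak{b} = E$. Alors ${\rm Im}(\delta)$ est un voisinage ouvert $G$--invariant de $\gamma_0$ dans $G_{\rm qre}$, et par $G$--invariance de $I^G(\cdot, f)$, il suffit de prouver que $b \mapsto I^G(\gamma_0 + \bs{x}_0 \otimes b, f)$ est localement constante sur $\mathfrak{p}_E^{k_F(\gamma_0)+1}$. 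D'apr\`es (i), cette fonction est \'egale \`a $b \mapsto I^\mathfrak{b}(b, f^\mathfrak{b})$. Or $\mathfrak{b} = E$ est commutative, et on v\'erifie directement sur les d\'efinitions que $\eta_\mathfrak{b}(b) = 1$ pour tout $b \in E$ (puisque $E[b] = E$, on a $e(E[b]/E) = f(E[b]/E) = 1$ et $\tilde{c}_E(b) = 0$) et que $\ES{O}_b^\mathfrak{b}(f^\mathfrak{b})$ est proportionnelle \`a $f^\mathfrak{b}(b)$. Comme $f^\mathfrak{b} \in C^\infty_{\rm c}(E)$ est localement constante, il en est de m\^eme de $b \mapsto I^\mathfrak{b}(b, f^\mathfrak{b})$. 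Le cas d\'eg\'en\'er\'e $N = 1$ (o\`u $G = F^\times$) est imm\'ediat.

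La preuve ne pr\'esente aucun obstacle v\'eritable: le travail essentiel a d\'ej\`a \'et\'e accompli dans la proposition et le lemme~2 de ce num\'ero, et le corollaire r\'esulte de l'assemblage des pi\`eces. Le seul point requ\'erant un peu d'attention est l'utilisation du caract\`ere ouvert ferm\'e de ${\rm Im}(\delta)$ dans $G$, pour supprimer la condition sur le support de $f$ lorsque l'on invoque la construction de submersion.
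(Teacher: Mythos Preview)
Your proof is correct and follows essentially the same approach as the paper's. For (i), both you and the paper invoke remarque~2 de \ref{le principe de submersion} together with the fact that ${\rm Im}(\delta)$ is open-closed in $G$, then use the proposition and lemme~2 of this subsection to absorb the constant into $f^{\mathfrak{b}}$; for (ii), both arguments specialize to $\beta=\gamma_0\in G_{\rm qre}$ (hence $d=1$, $\mathfrak{b}=E$) and observe that $I^{\mathfrak{b}}(b,f^{\mathfrak{b}})$ reduces to a constant multiple of $f^{\mathfrak{b}}(b)$. One minor remark: your parenthetical justification that $\eta_{\mathfrak{b}}(b)=1$ via $\tilde{c}_E(b)=0$ is slightly off --- the definition of $\eta_{\mathfrak{b}}$ in \ref{variante sur l'alg�bre de Lie}.(3) simply sets it to $1$ in the rank--one case (and uses $c_E$ rather than $\tilde{c}_E$ otherwise) --- but the conclusion is of course correct.
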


\begin{proof}
On obtient le point (i) comme dans la remarque 2 de \ref{le principe de submersion} (en utilisant le fait que ${\rm Im}(\delta)$ est ouvert fermé et $G$--invariant dans $G$: pour $f\in C^\infty_{\rm c}(G)$, la fonction $f\vert_{{\rm Im}(\delta)}$ appartient à $C^\infty_{\rm c}({\rm Im}(\delta))$. Quant au point (ii), il suffit de voir que si $\beta\in G_{\rm qre}$ (i.e. si $d=1$), alors pour tout 
$b\in \mathfrak{b}_{\rm qre}\cap \underline{\mathfrak{Q}}^{\underline{k}_0+1}$, posant $\gamma = \beta + \bs{x}_0\otimes b$, 
on a $\mu_F(\beta + \bs{x}_0\otimes b)=\mu_F(\beta)$ et $\mu_E^+(b)=1$. On peut alors appliquer le point (i), en remarquant que pour tout fonction $\mathfrak{f}\in C^\infty_{\rm c}(\mathfrak{b})$ et tout élément $b\in \mathfrak{b}=E$, on a $I^{\mathfrak{b}}(b,\mathfrak{f})= \ES{O}^\mathfrak{b}_b(\mathfrak{f})=
{{\rm vol}(U_E,dh)\over {\rm vol}(U_E,dz_E)}\mathfrak{f}(b)$. 
\end{proof}

\begin{marema2}
{\rm Les résultats de cette section \ref{descente centrale au voisinage d'un élément pur} ne concernent que les élements quasi--réguliers {\it elliptiques} au voisinage d'un élément pur. On verra en \ref{descente centrale au voisinage d'un élément pur (suite)} et \ref{descente centrale au voisinage d'un élément fermé} qu'on peut les étendre à tous les éléments quasi--réguliers au voisinage d'un élément fermé. \hfill $\blacksquare$
}
\end{marema2}

\subsection{Variante sur l'algbre de Lie}\label{variante sur l'algbre de Lie}
Pour $\gamma\in \mathfrak{g}_{\rm qre}$, on peut définir comme en \ref{IO normalises}.(7) une distribution normalisée $\mathfrak{f}\mapsto I^{\mathfrak{g}}(\gamma, \mathfrak{f})$ sur $\mathfrak{g}$. On commence par définir la distribution $\mathfrak{f}\mapsto \ES{O}_\gamma(\mathfrak{f})=\ES{O}_\gamma^{\mathfrak{g}}(\mathfrak{f})$ sur $\mathfrak{g}$ comme on a défini la distribution $\ES{O}_\gamma$ sur $G$, \cad en intégrant la fonction $\mathfrak{f}\in C^\infty_{\rm c}(\mathfrak{g})$ sur l'orbite $\ES{O}_G(\gamma)$ grâce à la mesure $\textstyle{dg\over dg_\gamma}$ sur $G_\gamma \backslash G$, où $dg_\gamma$ est la mesure de Haar sur $F[\gamma]^\times$ telle que ${\rm vol}(F^\times \backslash F[\gamma]^\times, \textstyle{dg_\gamma\over dz})=1$. Notons que si $F[\gamma]=\mathfrak{g}$ (ce qui n'est possible que si $N=1$, i.e. si $\mathfrak{g}=F$), alors on a $\ES{O}_\gamma(\mathfrak{f})= {\rm vol}(U_F,dg)\mathfrak{f}(\gamma)$. On pose
$$
\mu^+_F(\gamma)= \left\{\begin{array}{ll}
{{\rm vol}(F[\gamma] + \mathfrak{N}_{k_F(\gamma)}(\gamma,\mathfrak{A}_\gamma),d\bar{y}) \over {\rm vol}(F[\gamma]+  \mathfrak{P}_\gamma^{k_F(\gamma)},d\bar{y})}&
\mbox{si $N>1$}\\
1 & \mbox{sinon}
\end{array}\right.,\leqno{(1)}
$$
où $d\bar{y}$ est une mesure de Haar sur $\mathfrak{g}/F[\gamma]$. La quantité $\mu_F^+(\gamma)$ peut être vue comme une version \og additive\fg de $\mu_F(\gamma)$. Notons (si $N>1$) que l'exposant $\tilde{k}_F(\gamma)$ dans l'expression au dénominateur de $\mu_F(\gamma)$ a été remplacé dans celle au dénominateur de $\mu_F^+(\gamma)$ par un exposant $k_F(\gamma)= \tilde{k}_F(\gamma)-n_F(\gamma)$, ce qui traduit l'égalité $\gamma U^{\tilde{k}_F(\gamma) +1}(\mathfrak{A}_\gamma)= \gamma + \mathfrak{P}_\gamma^{k_F(\gamma)+1}$. On vérifie (si $N>1$) 
que
$$
\mu_F^+(\gamma)= q_{F[\gamma]}^{n_F(\gamma)(1-N)}\mu_F(\gamma)\leqno{(2)}
$$
avec
$$
q_{F[\gamma]}^{n_F(\gamma)(1-N)}=q^{f_\gamma n_F(\gamma)(1-N)}= \vert \det (\gamma)\vert^{1-N}.
$$
Soit $\eta_{\mathfrak{g}}: \mathfrak{g}_{\rm qre}\rightarrow {\Bbb R}_{>0}$ la fonction définie par
$$
\eta_{\mathfrak{g}}(\gamma)= \left\{ \begin{array}{ll} q^{-f_\gamma (c_F(\gamma) + e_\gamma -1)} & \mbox{si $N>1$}\\
1 & \mbox{sinon}
\end{array}\right.,\leqno{(3)}
$$
où l'invariant $c_F(\gamma)$ a été défini en \ref{l'invariant c}. Rappelons que si $\gamma \neq 0$ (donc en particulier si $N>1$), on a 
$c_F(\gamma)= \tilde{c}_F(\gamma)- (N-1)n_F(\gamma)$. On a donc (si $N>1$, et même si $N=1$ et $\gamma\neq 0$, \cad si $\gamma\in G_{\rm qre}$)
$$
\eta_\mathfrak{g}(\gamma)= q^{f_\gamma n_F(\gamma)(N -1)}\eta_G(\gamma)=\vert \det(\gamma)\vert^{N-1}\eta_G(\gamma).\leqno{(4)}
$$
D'après le lemme 1 de \ref{IO normalisées}, pour $\gamma \in G_{\rm qre}$, on a
$$
\eta_{\mathfrak{g}}(\gamma)\mu_F^+(\gamma)= \eta_G(\gamma)\mu_F(\gamma)=1.\leqno{(5)}
$$
Enfin, pour $\gamma\in \mathfrak{g}_{\rm qre}$, on pose
$$
D_F^+(\gamma) = \left\{
\begin{array}{ll}\det_F(-{\rm ad}_\gamma; \mathfrak{g}/\mathfrak{g}_\gamma)& \mbox{si $N>1$}\\
1 & \mbox{sinon}
\end{array}\right..\leqno{(6)}
$$
Si $N>1$, on a donc
$$
D_F^+(\gamma)= {\rm det}_F(y \mapsto y \gamma; \mathfrak{g}/\mathfrak{g}_\gamma){\rm det}_F(1-{\rm Ad}_\gamma; \mathfrak{g}/\mathfrak{g}_\gamma)= 
\det(\gamma)^{N-1}D_F(\gamma).\leqno{(7)}
$$
D'après (4), on a (pour tout $\gamma \in \mathfrak{g}_{\rm qre}$):
\begin{enumerate}[leftmargin=17pt]
\item[(8)]si l'extension $F[\gamma]/F$ est séparable, alors $\eta_{\mathfrak{g}}(\gamma)= \vert D_F^+(\gamma)\vert q^{\delta_\gamma -(N-f_\gamma)}$.
\end{enumerate}
D'ailleurs pour tout $\gamma\in \mathfrak{g}_{\rm qre}$ (séparable ou non), on peut en déduire l'égalité 
$\eta_\mathfrak{g}(\gamma)\mu_F^+(\gamma)=1$ comme dans la preuve du lemme 1 de \ref{IO normalises}, grâce à la suite 
exacte \ref{IO normalises}.(5) pour $m=1$. Pour $\gamma\in \mathfrak{g}_{\rm qre}$, on note $\mathfrak{f}\mapsto I^{\mathfrak{g}}(\gamma,\mathfrak{f})$ la distribution normalisée sur $\mathfrak{g}$ définie par
$$
I^{\mathfrak{g}}(\gamma,\mathfrak{f})= \eta_{\mathfrak{g}}(\gamma)^{1\over 2}\,\ES{O}_\gamma(\mathfrak{f}).\leqno{(9)}
$$

\vskip1mm
D'après (4), pour $\gamma\in G_{\rm qre}$, on a $\eta_\mathfrak{g}(\gamma)= \vert \det(\gamma)\vert^{N-1}\eta_G(\gamma)$. 
Pour $b\in \mathfrak{b}_{\rm qre}\cap \underline{\mathfrak{Q}}^{\underline{k}_0+1}$, on a $\beta + \bs{x}_0\otimes b= \beta(1+ \beta^{-1}(\bs{x}_0\otimes b))$ 
avec $\beta^{-1}(\bs{x}_0\otimes b)\in \mathfrak{P}_\gamma$, par conséquent
$$
\vert \det(\gamma)\vert = \vert \det(\beta)\vert.
$$
En voyant ${\rm Im}(\delta)\subset G$ comme un ensemble ouvert fermé et $G$--invariant {\it dans $\mathfrak{g}$}, on en dduit la variante sur $\mathfrak{g}$ du corollaire de \ref{IO normalises}:

\begin{moncoro}
\begin{enumerate}
\item[(i)] Pour toute fonction $\mathfrak{f}\in C^\infty_{\rm c}(\mathfrak{g})$, il existe une fonction $\mathfrak{f}^{\mathfrak{b}}\in C^\infty_{\rm c}(\underline{\mathfrak{Q}}^{\underline{k}_0+1})$ telle que pour tout $b\in \mathfrak{b}_{\rm qre}\cap \underline{\mathfrak{Q}}^{\underline{k}_0+1}$, on a l'égalité
$$
I^\mathfrak{g}(\beta + \bs{x}_0\otimes b,\mathfrak{f}) = I^{\mathfrak{b}}(b,f^\mathfrak{b}).
$$
\item[(ii)] Pour toute fonction $\mathfrak{f}\in C^\infty_{\rm c}(\mathfrak{g})$, la fonction $\mathfrak{g}_{\rm qre}\rightarrow {\Bbb C},\,\gamma\mapsto I^\mathfrak{g}(\gamma,\mathfrak{f})$ est localement cons\-tante. 
\end{enumerate}
\end{moncoro}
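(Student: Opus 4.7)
La démonstration se calquera sur celle du corollaire de \ref{IO normalisées} (version sur $G$), l'outil principal étant la relation $\eta_\mathfrak{g}(\gamma) = \vert \det(\gamma)\vert^{N-1}\eta_G(\gamma)$ de \ref{variante sur l'algèbre de Lie}.(4). Le seul ingrédient nouveau par rapport au cas du groupe est le caractère fermé de ${\rm Im}(\delta) = {^G(\beta + \bs{x}_0\otimes \underline{\mathfrak{Q}}^{\underline{k}_0+1})}$ non plus seulement dans $G$ mais dans $\mathfrak{g}$: je vérifierais d'abord que l'argument de la remarque 2 de \ref{le résultat principal} se transpose sans changement, puisqu'il repose sur le critère ${^G\Omega}= \Pi^{-1}(\Pi(\Omega))$ qui, d'après la remarque 2 de \ref{parties compactes modulo conjugaison}, se formule identiquement dans $\mathfrak{g}$.

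Pour le point (i), je procéderais ainsi. Le principe de submersion d'Harish--Chandra appliqué à la distribution $\ES{O}_\gamma$ sur $\mathfrak{g}$ donne la même descente $\vartheta_{\ES{O}_\gamma}$ que sur $G$, puisque les constructions ne dépendent que de la submersion $\delta$; la proposition de \ref{le principe de submersion}, combinée à celle de \ref{IO normalisées}, donne alors $\theta_{\ES{O}_\gamma}= \lambda\ES{O}_b^\mathfrak{b}$ avec $\lambda >0$ indépendant de $b\in \mathfrak{b}_{\rm qre}^{k_F(\beta)+{1\over d}}$. Grâce au caractère fermé de ${\rm Im}(\delta)$ dans $\mathfrak{g}$, la restriction $\mathfrak{f}\vert_{{\rm Im}(\delta)}$ de toute $\mathfrak{f}\in C^\infty_{\rm c}(\mathfrak{g})$ appartiendra à $C^\infty_{\rm c}({\rm Im}(\delta))$, et la remarque 2 de \ref{le principe de submersion} (dont la preuve s'applique sans modification) fournira une fonction $\tilde{\mathfrak{f}}^\mathfrak{b}\in C^\infty_{\rm c}(\underline{\mathfrak{Q}}^{\underline{k}_0+1})$ telle que $\ES{O}_\gamma(\mathfrak{f})= \lambda\ES{O}_b^\mathfrak{b}(\tilde{\mathfrak{f}}^\mathfrak{b})$ pour tout $b\in \mathfrak{b}_{\rm qre}^{k_F(\beta)+{1\over d}}$. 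Pour passer aux intégrales normalisées, la factorisation $\gamma= \beta(1+ \beta^{-1}(\bs{x}_0\otimes b))$, avec $1+ \beta^{-1}(\bs{x}_0\otimes b)\in U^1(\mathfrak{A}_{\beta,b})$, donne $\vert\det(\gamma)\vert = \vert\det(\beta)\vert$ constant en $b$; combiné à \ref{variante sur l'algèbre de Lie}.(4) et au lemme 2 de \ref{IO normalisées}, cela montrera que $c_0= \eta_\mathfrak{g}(\gamma)/\eta_\mathfrak{b}(b)$ est une constante strictement positive indépendante de $b$, d'où
$$
I^\mathfrak{g}(\gamma,\mathfrak{f}) = \eta_\mathfrak{g}(\gamma)^{1\over 2}\lambda\, \ES{O}_b^\mathfrak{b}(\tilde{\mathfrak{f}}^\mathfrak{b}) = c_0^{1\over 2}\lambda\, I^\mathfrak{b}(b,\tilde{\mathfrak{f}}^\mathfrak{b}),
$$
et il suffira de prendre $\mathfrak{f}^\mathfrak{b}= c_0^{1\over 2}\lambda\,\tilde{\mathfrak{f}}^\mathfrak{b}$.

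Le point (ii) suivra de (i) exactement comme dans la preuve du corollaire de \ref{IO normalisées}: en prenant $\beta = \gamma_0$ pour $\gamma_0\in \mathfrak{g}_{\rm qre}$ (ce qui donne $\mathfrak{b}=E$, $d=1$, $\eta_\mathfrak{b}(b)=1$, et $\ES{O}_b^\mathfrak{b}$ proportionnelle à la masse de Dirac en $b$), la fonction $b\mapsto I^\mathfrak{g}(\gamma_0+\bs{x}_0 b,\mathfrak{f})= c_1\mathfrak{f}^\mathfrak{b}(b)$ sera localement constante en $b$, et la conclusion s'obtiendra par $G$--invariance de $I^\mathfrak{g}(\cdot,\mathfrak{f})$ puisque ${\rm Im}(\delta)$ est un voisinage ouvert $G$--invariant de $\gamma_0$ dans $\mathfrak{g}$. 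L'argument étant une adaptation directe de la preuve \og groupe\fg, il ne semble pas y avoir d'obstacle véritable à surmonter ici; le seul point demandant vérification est la transposition du caractère fermé de ${\rm Im}(\delta)$ de $G$ à $\mathfrak{g}$, laquelle découle sans difficulté du critère de la remarque 2 de \ref{parties compactes modulo conjugaison}.
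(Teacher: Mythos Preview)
Your proposal is correct and follows essentially the same approach as the paper: use the relation $\eta_\mathfrak{g}(\gamma)=\vert\det(\gamma)\vert^{N-1}\eta_G(\gamma)$, the constancy of $\vert\det(\beta+\bs{x}_0\otimes b)\vert=\vert\det(\beta)\vert$ via the factorisation $\gamma=\beta(1+\beta^{-1}(\bs{x}_0\otimes b))$, and the fact that ${\rm Im}(\delta)$ is open, closed and $G$--invariant in $\mathfrak{g}$, to transpose the group--version corollary. One minor simplification: you do not need to rerun the $\Pi$--criterion argument to get closure in $\mathfrak{g}$; since $\vert\det\vert$ is constant and nonzero on ${\rm Im}(\delta)$, this set lies in the closed subset $\{g\in\mathfrak{g}:\vert\det(g)\vert=\vert\det(\beta)\vert\}\subset G$, so closure in $G$ (already known from \ref{le r�sultat principal}.(3)) immediately gives closure in $\mathfrak{g}$.
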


\section{Descente centrale: le cas général}\label{descente centrale: le cas général}

\subsection{Descente parabolique}\label{descente parabolique}Pour $\gamma\in G$, on note $D_G(\gamma)$ le coefficient de $t^N$ dans le polynôme ${\rm det}_F(t+1-{\rm Ad}_\gamma; \mathfrak{g})$, et on pose
$$
G_{\rm r}=\{\gamma \in G: D_G(\gamma)\neq 0\}.
$$
Un élément $\gamma\in G$ est dans $G_{\rm r}$ si et et seulement si son centralisateur $G_\gamma$ est un tore, \cad si et seulement s'il est (semisimple) régulier. On a l'inclusion $G_{\rm r}\subset G_{\rm qr}$, et $G_{\rm r}$ est l'ensemble des éléments quasi--réguliers {\it séparables} de $G$, \cad ceux tels que $F[\gamma]$ est un produit $E_1\times \cdots \times E_r$ d'extensions séparables $E_i/F$. On pose $G_{\rm re}= G_{\rm r}\cap G_{\rm qre}$. Pour $\gamma \in G_{\rm qre}$, on a donc $D_G(\gamma)=D_F(\gamma)$. De la même manière, pour $\gamma\in \mathfrak{g}$, on note $D_{\mathfrak{g}}(\gamma)$ le coefficient de $t^N$ dans le polynôme ${\rm \det}_F(t-{\rm ad}_\gamma; \mathfrak{g})$, et on note
$$
\mathfrak{g}_{\rm r}= \{\gamma \in \mathfrak{g}: D_\mathfrak{g}(\gamma) \neq 0\}\subset \mathfrak{g}_{\rm qr}
$$
l'ensemble des éléments (semisimples) réguliers de $\mathfrak{g}$. On pose $\mathfrak{g}_{\rm re}= \mathfrak{g}\cap \mathfrak{g}_{\rm qre}$. Pour $\gamma\in \mathfrak{g}_{\rm re}$, on a donc $D_\mathfrak{g}(\gamma) =D_F^+(\gamma)$. Si $N=1$, on a $G_{\rm re}= G_{\rm r}=G_{\rm qr}= G$ et $\mathfrak{g}_{\rm re}=\mathfrak{g}_{\rm r}= \mathfrak{g}_{\rm qr}= \mathfrak{g}$. En général, les inclusions $G_{\rm r}\subset G_{\rm qr}$, resp. $G_{\rm re}\subset G_{\rm qre}$, et $\mathfrak{g}_{\rm r}\subset \mathfrak{g}_{\rm qr}$, resp. $\mathfrak{g}_{\rm re}\subset \mathfrak{g}_{\rm qre}$, sont strictes. L'une d'elle est une égalité si et seulement si les trois autres le sont, ce qui n'est possible que si l'une des deux conditions suivantes est vérifiée:
\begin{itemize}
\item ${\rm car}(F)=0$;
\item ${\rm car}(F)=p>0$ et $p$ ne divise pas $N$.
\end{itemize} 

On étend naturellement ces définitions à tout groupe $H$ isomorphe à un produit fini de groupes linéaires $GL(d_i,E_i)$ pour des extensions finies $E_i/F$ --- rappelons qu'un élément $\gamma\in G$ est fermé, au sens où son 
orbite $\ES{O}_G(\gamma)$ fermée dans $G$ pour la topologie $\mathfrak{p}$--adique, 
si et seulement son centralisateur $H=G_\gamma$ est de cette forme (cf. la remarque 1 de \ref{éléments qre})) ---, donc en particulier à toute composante de Levi $H=M$ d'un sous--groupe parabolique de $G$.

Pour $\gamma\in G_{\rm qre}$, on a défini en \ref{IO normalisées}.(7) une distribution normalisée $f\mapsto I^G(\gamma,f)$. On étend comme suit cette définition à tout élément $\gamma\in G_{\rm qr}$. Pour un tel $\gamma$, on a $F[\gamma]=E_1\times \cdots \times E_r$ pour des extensions $E_i/F$ telles que $\sum_{i=1}^r [E_i:F]=N$. Soit $A_\gamma = F^\times \times \cdots \times F^\times \subset F[\gamma]^\times$ le sous--tore déployé maximal du centralisateur $G_\gamma =F[\gamma]^\times$ de $\gamma$ dans $G$, et soit $M=M(\gamma)$ le centralisateur $Z_G(A_\gamma)$ de $A_\gamma$ dans $G$. On note $da_\gamma$ la mesure de Haar sur $A_\gamma$ qui donne le volume $1$ au sous--groupe compact maximal $U_F\times\cdots\times U_F$ de $A_\gamma$, et $dg_\gamma$ la mesure de Haar sur $G_\gamma$ telle que ${\rm vol}(A_\gamma \backslash G_\gamma, \textstyle{dg_\gamma\over da_\gamma})=1$. On pose
$$
\ES{O}_\gamma(f)= \int_{G_\gamma \backslash G}f(g^{-1}\gamma g)\textstyle{dg\over dg_\gamma},\quad f\in C^\infty_{\rm c}(G).
$$
\'Ecrivons $\gamma=(\gamma_1,\ldots ,\gamma_r)$ avec $\gamma_i\in E_i^\times$, et posons
$$
\eta_{M}(\gamma)= \prod_{i=1}^r \eta_{G_i}(\gamma_i),\quad G_i={\rm Aut}_F(E_i).
$$
Notons $\mathfrak{m}=\mathfrak{m}(\gamma)$ le centralisateur $Z_\mathfrak{g}(A(\gamma))$ de $A(\gamma)$ dans $\mathfrak{g}$, et posons
$$
\eta_G(\gamma) = \vert D_{M\backslash G}(\gamma)\vert \eta_M(\gamma)
$$
avec
$$
D_{M\backslash G}(\gamma) = {\rm det}_F(1-{\rm Ad}_\gamma; \mathfrak{g}/\mathfrak{m}).
$$
Enfin on pose
$$
I^G(\gamma,f)= \eta_G(\gamma)^{1\over 2}\ES{O}_\gamma(f),\quad f\in C^\infty_{\rm c}(G).
$$
\begin{marema}
{\rm 
D'après \ref{IO normalisées}.(4), si $\gamma$ est {\it séparable}, \cad si $\gamma\in G_{\rm r}$, on a
$$
\eta_G(\gamma) = \vert D_G(\gamma)\vert q^{\sum_{i=1}^r \delta_{\gamma_i} -  f_{\gamma_i}(e_{\gamma_i}-1)}
$$
avec
$$
D_G(\gamma) = {\rm det}_F(1-{\rm Ad}_\gamma; \mathfrak{g}/\mathfrak{g}_\gamma).
$$
En ce cas, on a
$$
D_{M\backslash G}(\gamma) = D_G(\gamma)D_M(\gamma)^{-1}. \eqno{\blacksquare}
$$
}
\end{marema}

Soit $P=MU$ un sous--groupe parabolique de $G$ de composante de Levi $M$ et de radical unipotent $U$, et soit $K$ un sous--groupe ouvert compact maximal de $G$ en \og bonne position\fg{} par rapport à $(P,A)$, $A=Z(M)$, \cad tel que
$$
P\cap K= (M\cap K)(U\cap K).
$$
Soient $dm$, $du$, $dk$, des mesures de Haar sur $M$, $U$, $K$, normalisées de telle manière que
$$
\int_Gf(g)dg= \int\!\!\!\int\!\!\!\int_{M\times U\times K} f(muk)dmdu dk, \quad f\in C^\infty_{\rm c}(G).\leqno{(1)}
$$
Pour $f\in C^\infty_{\rm c}(G)$, on note $f_P\in C^\infty_{\rm c}(M)$ le terme $K$--invariant (ou terme constant) de $f$ suivant $P$ défini par
$$
f_P(m)= \delta_P(m)^{1\over 2}\int\!\!\!\int_{K\times U}f(k^{-1}muk)dkdu, \quad m\in M,\leqno{(2)}
$$
où $\delta_P: P \rightarrow q^{\Bbb Z}\subset {\Bbb Q}^\times$ est le caractère module usuel défini par $d(p'pp'^{-1})= \delta_P(p')dp$ pour une (i.e. pour toute) mesure de Haar, à gauche ou à droite, $dp$ sur $P$. On note $\ES{O}_\gamma^M$ la distribution sur $M$ définie par
$$
\ES{O}_\gamma^M(f)= \int_{G_\gamma\backslash M}f(m^{-1}\gamma m)\textstyle{dm\over dg_\gamma}, \quad f\in C^\infty_{\rm c}(M),
$$
et on note $I^M(\gamma, \cdot)$ la distribution normalisée sur $M$ donnée par
$$
I^M(\gamma,\cdot )= \eta_M(\gamma)^{1\over 2}\ES{O}_\gamma^M.
$$
Alors on a la formule de descente bien connue
$$
\ES{O}_\gamma(f) = \vert D_{M\backslash G}(\gamma)\vert^{-{1\over 2}}\ES{O}_\gamma^M(f_P),\quad f\in C^\infty_{\rm c}(G).\leqno{(3)}
$$
D'où la formule de descente entre intégrales orbitales normalisées
$$
I^G(\gamma,f)= I^M(\gamma,f_P),\quad f\in C^\infty_{\rm c}(G).\leqno{(4)}
$$
Par construction, l'élément $\gamma$ est quasi--régulier elliptique dans $M$. On va voir plus loin que la formule (4) reste vraie pour tout tout élément $\gamma\in M\cap G_{\rm qr}$ 
(d'ailleurs on pourrait le voir tout de suite, aprs avoir tendu la fonction $\eta_M$  $M_{\rm qr}$ comme on l'a fait pour $G$: 
il suffit de voir que la formule de descente (3) reste vraie si $Z_G(A_\gamma)\subset M$).

\vskip1mm
On fixe une paire de Borel $(P_0,A_0)$ de $G$, et on note $U_0$ le radical unipotent de $P_0$. On fixe aussi un sous--groupe compact maximal $K$ de $G$ en bonne position par rapport à $(P_0,A_0)$. On suppose désormais, ce qui est loisible, que la mesure de Haar 
$dg$ sur $G$ vérifie
$$
{\rm vol}(K,dg)=1,
$$
et on note $dk$ la restriction de $dg$ à $K$.

Un sous--groupe parabolique $P$ de $G$ est dite {\it standard} s'il contient $P_0$. 
On note $\ES{P}=\ES{P}_G$ l'ensemble des sous--groupes paraboliques standards de $G$, et pour $P\in \ES{P}$, 
on note $U_P$ le radical unipotent de $P$, $M_P$ la composante de Levi de $P$ contenant $A_0$, et $A_P=Z(M_P)\subset A_0$ le centre de $M_P$. Pour chaque $P\in \ES{P}$, le groupe $K$ est en bonne position par rapport à la paire parabolique $(P,A_P)$ de $G$, \cad 
qu'on a la décomposition
$$
K\cap P=(K\cap M_P)(K\cap U_P).
$$
On prend comme mesures de Haar $dm=dm_P$ sur $M_P$ et $du=du_P$ sur $U_P$ celles qui donnent le volume $1$ à $M_P\cap K$ et à $U_P\cap K$. Alors on a la formule (1) pour ces mesures. Le groupe $M_P$ est un produit de groupes linéaires sur $F$. Comme pour $G$, on définit l'ensemble $(M_P)_{\rm qr}$ des éléments quasi--réguliers de $M_P$, le sous--ensemble $(M_P)_{\rm qre}\subset (M_P)_{\rm qr}$ des éléments quasi--réguliers elliptiques de $M_P$, et le facteur de normalisation $\eta_{M_P}: (M_P)_{\rm qr}\rightarrow {\Bbb R}_{>0}$. Pour $\gamma\in (M_P)_{\rm qr}$, on note $I^{M_P}(\gamma,\cdot)$ la distribution normalisée $\eta_{M_P}(\gamma)^{1\over 2}\ES{O}_\gamma^{M_P}$ sur $M_P$ définie par la mesure $\textstyle{dm\over dm_{P,\gamma}}$ sur $M_{P,\gamma} \backslash M_P$ dfinie comme suit. Soit $A_{P,\gamma}$ le sous--tore dploy maximal du centralisateur $M_{P,\gamma}$ de $\gamma$ dans $M_P$, et soit 
$da_{P,\gamma}$ la mesure de Haar sur $A_{P,\gamma}$ qui donne le volume $1$ au sous--groupe compact maximal de $A_{P,\gamma}$. Alors 
$dm_{P,\gamma}$ est la mesure de Haar sur $M_{P,\gamma}$ telle que
$$
{\rm vol}(A_{P,\gamma}\backslash M_{P,\gamma}, \textstyle{dm_{P,\gamma} \over da_{P,\gamma}})=1.
$$
On a l'inclusion
$$
M_P\cap G_{\rm qr}\subset (M_P)_{\rm qr}
$$
et l'égalité
$$
(M_P)_{\rm qre}\cap G_{\rm qr}=\{\gamma \in G_{\rm qr}: M(\gamma)=M_P\}.
$$
Pour $f\in C^\infty_{\rm c}(G)$, on note $f_P\in C^\infty_{\rm c}(M_P)$ le terme $K$--invariant de $f$ suivant $P$ défini comme en (2) par $du_P$ et $dk$. Plus généralement, pour $Q\in \ES{P}$ tel que $P\subset Q$ et $f\in C^\infty_{\rm c}(M_Q)$, on note 
$f_{P\cap M_Q}\in C^\infty_{\rm c}(M_P)$ le terme $(K\cap M_Q)$--invariant  de $f$ suivant $P\cap M_Q$, défini de la même manière en utilisant les mesures de Haar normalisées par le sous--groupe compact maximal $K\cap M_Q$ de $M_Q$. On a la propriété de transitivité
$$
f_P = (f_Q)_{P\cap M_Q}, \quad f\in C^\infty_{\rm c}(G).\leqno{(5)}
$$
D'après (4) et (5), pour $f\in C^\infty_{\rm c}(G)$, on a la formule de descente
$$
I^G(\gamma,f)= I^{M_P}(\gamma,f_P),\quad \gamma \in M_P\cap G_{\rm qr}.\leqno{(6)}
$$
D'ailleurs, plus généralement, pour $Q\in \ES{P}$ tel que $P\subset Q$ et $f\in C^\infty_{\rm c}(M_Q)$, 
on a la formule de descente
$$
I^{M_Q}(\gamma,f)= I^{M_P}(\gamma,f_{P\cap M_Q}),\quad \gamma \in M_P\cap (M_Q)_{\rm qr}.\leqno{(7)}
$$ 

Puisque
$$
G_{\rm qr}= \bigcup_{P\in \ES{P}}{^G((M_P)_{\rm qre}\cap G_{\rm qr})},\leqno{(8)}
$$
d'après (6) --- ou d'après (4) ---, l'étude des intégrales orbitales quasi--régulières normalisées de $G$ se ramène à celle des intégrales orbitales quasi--régulières elliptiques des sous--groupes de Levi $M_P$ de $G$. Pour $P\in \ES{P}$, l'ensemble 
${^G((M_P)_{\rm qre}\cap G_{\rm qr})}$ est ouvert dans $G$. On en déduit, d'après (8), la formule de descente (6), et 
le point (ii) du corollaire de \ref{IO normalisées}, que pour toute fonction $f\in C^\infty_{\rm c}(G)$, on a:
\begin{enumerate}[leftmargin=17pt]
\item[(9)] la fonction $G_{\rm qr}\rightarrow {\Bbb C},\, \gamma \mapsto I^G(\gamma,f)$ est localement 
constante.
\end{enumerate}

Pour $P\in \ES{P}$, le groupe $M_P$ est un produit de groupes linéaires sur $F$. On peut donc définir, par produit comme on l'a fait pour $G$, la filtration $\{(M_P)_{\rm qre}^k: k\in {\Bbb R}\}$ de $(M_P)_{\rm qre}$. On pose
$$
G_{\rm qr}^k= \bigcup_{P\in \ES{P}}{^G((M_P)_{\rm qre}^k\cap G_{\rm qr})},\quad k\in {\Bbb R}.\leqno{(10)}
$$

\subsection{Variante sur l'algèbre de Lie (suite)}\label{variante sur l'algèbre de Lie (suite)}
On a bien sûr la variante sur $\mathfrak{g}$ des constructions précédentes. Pour $\gamma\in \mathfrak{g}_{\rm qr}$, on pose
$$
\ES{O}_\gamma(\mathfrak{f})= \int_{G_\gamma \backslash G}\mathfrak{f}(g^{-1}\gamma g)\textstyle{dg\over dg_\gamma},\quad 
\mathfrak{f}\in C^\infty_{\rm c}(\mathfrak{g}),
$$
où les mesures $dg$ sur $G$ et $dg_\gamma$ sur $G_\gamma =F[\gamma]^\times$ sont normalisées comme en 
\ref{descente parabolique}. Notons que si $F[\gamma]= \mathfrak{g}$, ce qui n'est possible que si $N=1$, alors $dg_\gamma=dg$ et $\ES{O}_\gamma = \bs{\delta}_\gamma$ (mesure de Dirac au point $\gamma$). On note encore $A_\gamma$ le sous--tore déployé maximal de $F[\gamma]^\times$, $M=M(\gamma)$ le centralisateur de $A(\gamma)$ dans $G$, et $\mathfrak{m}=\mathfrak{m}(\gamma)$ le centralisateur de $A(\gamma)$ dans $\mathfrak{g}$. \'Ecrivons $F[\gamma]=E_1\times \cdots \times E_r$, $\gamma =(\gamma_1,\ldots ,\gamma_r)$ avec $\gamma_i\in E_i$, et 
posons
$$
\eta_\mathfrak{m}(\gamma) = \prod_{i=1}^r \eta_{\mathfrak{g}_i}(\gamma_i),\quad \mathfrak{g}_i= {\rm End}_F(E_i),
$$
et
$$
\eta_\mathfrak{g}(\gamma) = \vert D_{\mathfrak{m}\backslash \mathfrak{g}}(\gamma)\vert \eta_\mathfrak{m}(\gamma)
$$
avec
$$
D_{\mathfrak{m}\backslash \mathfrak{g}}(\gamma) = {\rm det}_F(-{\rm ad}_\gamma; \mathfrak{g}/\mathfrak{m}).
$$
Enfin, on pose
$$
I^\mathfrak{g}(\mathfrak{f},\gamma)= \eta_\mathfrak{g}(\gamma)^{1\over 2}\ES{O}_\gamma(\mathfrak{f}),\quad \mathfrak{f}\in C^\infty_{\rm c}(\mathfrak{g}).
$$

\begin{marema1}
{\rm 
D'après \ref{variante sur l'algbre de Lie}.(8), si $\gamma$ est {\it séparable}, \cad si $\gamma\in \mathfrak{g}_{\rm r}$, on a
$$
\eta_\mathfrak{g}(\gamma) = \vert D_\mathfrak{g}(\gamma)\vert q^{\sum_{i=1}^r \delta_{i} -  f_i(e_i-1)}
$$
avec $?_i=?(E_i/F)$ et
$$
D_\mathfrak{g}(\gamma) = {\rm det}_F(-{\rm ad}_\gamma; \mathfrak{g}/\mathfrak{g}_\gamma).
$$
En ce cas, on a
$$
D_{\mathfrak{m}\backslash \mathfrak{g}}(\gamma) = D_\mathfrak{g}(\gamma)D_\mathfrak{m}(\gamma)^{-1}. \eqno{\blacksquare}
$$
}
\end{marema1}

\begin{marema2}
{\rm 
Pour $\gamma\in G_{\rm qr}$, on a
$$
\eta_\mathfrak{g}(\gamma) = \vert \det(\gamma)\vert^{N-1} \eta_G(\gamma).
$$
En effet, si $\gamma\in G_{\rm qre}$, cela résulte des définitions (cf. \ref{variante sur l'algbre de Lie}). En général, posant $M=M(\gamma)$ et $\mathfrak{m}= \mathfrak{m}(\gamma)$, on a $\gamma\in M_{\rm qre}$. Précisément, $M = G_1\times \cdots \times G_r$ avec $G_i={\rm Aut}_F(V_i)$, et $\mathfrak{m}=\mathfrak{g}_1\times\cdots \times \mathfrak{g}_r$ avec $\mathfrak{g}_i = {\rm End}_F(V_i)$, pour une décomposition $V=V_1\times\cdots \times V_r$. L'élément $\gamma$ s'écrit $\gamma=(\gamma_1,\ldots ,\gamma_r)$ avec $\gamma_i\in (G_i)_{\rm qre}$. Pour $i=1,\ldots ,r$, on a donc
$$
\eta_{\mathfrak{g}_i}(\gamma_i) = \vert \det(\gamma_i)\vert^{N_i-1} \eta_{G_i}(\gamma_i), \quad N_i= \dim_F(V_i).
$$
Ici $\det(\gamma_i)$ est le déterminant $\det_F(v\mapsto \gamma_iv;V_i)$, et $\det(\gamma_i)^{N_i-1}= \det_F(y\mapsto y\gamma_i; \mathfrak{g}_i/ (\mathfrak{g}_i)_{\gamma_i})$. Par produit, on obtient
$$
\eta_\mathfrak{m}(\gamma)= \vert {\rm \det}_F(y\mapsto y \gamma; \mathfrak{m}/\mathfrak{m}_\gamma)\vert \eta_G(\gamma).
$$
D'autre part, on a
$$
D_{\mathfrak{m}\backslash \mathfrak{g}}(\gamma)=  {\rm \det}_F(y\mapsto y\gamma; \mathfrak{g}/\mathfrak{m})D_{M\backslash G}(\gamma).
$$
Comme $\mathfrak{g}_\gamma = \mathfrak{m}_\gamma$, on a
$$
\vert {\rm \det}_F(y\mapsto y \gamma; \mathfrak{m}/\mathfrak{m}_\gamma)\vert \vert {\rm \det}_F(y\mapsto y\gamma; \mathfrak{g}/\mathfrak{m})\vert = \vert {\rm \det}_F(y\mapsto y \gamma; \mathfrak{g}/\mathfrak{g}_\gamma)\vert = \vert \det(\gamma)\vert^{N-1},
$$
d'où l'égalité cherchée: $\eta_\mathfrak{g}(\gamma) = \vert \det(\gamma)\vert^{N-1} \eta_G(\gamma)$. \hfill $\blacksquare$
}
\end{marema2}

Soit $P=MU$ un sous--groupe parabolique de $G$ de composante de Levi $M$ et de radical unipotent $U$, et soit $K'$ un sous--groupe compact maximal de $G$ en bonne position par rapport à $(P,A)$, $A=Z(M)$. Soit $dk'$ la mesure de Haar normalisée sur $K'$ (\cad que ${\rm vol}(K',dk')=1$), et soit 
$dm$, resp. $du$, la mesure de Haar sur $M$, resp. $U$, telle que ${\rm vol}(K'\cap M,dm)= 1$ , resp. ${\rm vol}(K'\cap U,du)=1$. Rappelons que d'après la normalisation de $dg$, puisque tous les sous--groupes compacts maximaux de $G$ sont conjugués dans $G$, on a ${\rm vol}(K',dg)=1$ (i.e. $dk'= dg\vert_{K'}$). La formule (1) de \ref{descente parabolique} est donc valable pour ces mesures. Notons $\mathfrak{p}$, $\mathfrak{m}$, $\mathfrak{u}$, les algèbres de Lie de $P$, $M$, $U$, naturellement identifiées à des sous--$F$--algèbres de $\mathfrak{g}$. On a la décomposition $\mathfrak{p}=\mathfrak{m}\oplus \mathfrak{u}$. Soit $\mathfrak{d}u$ la mesure de Haar sur $\mathfrak{u}$ image de $du$ par l'isomorphisme de variétés $\mathfrak{p}_F$--adiques $U\rightarrow \mathfrak{u},\, u\mapsto u-1$. En d'autres termes, $\mathfrak{d}u$ est la mesure de Haar sur $\mathfrak{u}$ telle que ${\rm vol}(\mathfrak{u}\cap \mathfrak{A},\mathfrak{d}u)=1$, où $ \mathfrak{A}$ l'$\mathfrak{o}$--ordre héréditaire dans $\mathfrak{g}$ tel que $K= \mathfrak{A}^\times$. Pour $\mathfrak{f}\in C^\infty_{\rm c}(\mathfrak{g})$, on note $\mathfrak{f}_\mathfrak{p}\in C^\infty_{\rm c}(\mathfrak{m})$ le terme $K'$--invariant (ou terme constant) de $\mathfrak{f}$ suivant $\mathfrak{p}$ défini par
$$
\mathfrak{f}_\mathfrak{p}(m)= \int\!\!\!\int_{K'\times \mathfrak{u}}\mathfrak{f}(k'^{-1}(m+u)k')dk' \mathfrak{d}u,\quad m\in M.\leqno{(1)}
$$
Par rapport à \ref{descente parabolique}.(2), noter l'absence du facteur $\delta_P$ (le groupe $\mathfrak{p}$ est unimodulaire).
Alors on a la variante sur $\mathfrak{g}$ de la formule de descente \ref{descente parabolique}.(3):
$$
\ES{O}_\gamma(\mathfrak{f})= \vert D_{\mathfrak{m}\backslash \mathfrak{g}}(\gamma)\vert^{-{1\over 2}}\ES{O}_\gamma^{\mathfrak{m}}(\mathfrak{f}_\mathfrak{p}),\quad \mathfrak{f}\in C^\infty_{\rm c}(\mathfrak{g}),\leqno{(2)}
$$
où $\ES{O}_\gamma^\mathfrak{m}$ est la distribution sur $\mathfrak{m}$ définie par
$$
\ES{O}_\gamma^{\mathfrak{m}}(\mathfrak{f}')= \int_{G_\gamma \backslash M}\mathfrak{f}'(m^{-1}\gamma m)\textstyle{dm\over dg_\gamma},\quad \mathfrak{f}'\in C^\infty_{\rm c}(\mathfrak{m}).
$$
Posant $I^\mathfrak{m}(\gamma ,\cdot) = \eta_\mathfrak{m}(\gamma)^{1\over 2}\ES{O}_\gamma^\mathfrak{m}$, 
on obtient la formule de descente entre intégrales orbitales normalisées
$$
I^\mathfrak{g}(\gamma,\mathfrak{f})= I^\mathfrak{m}(\gamma, \mathfrak{f}_\mathfrak{p}),\quad \mathfrak{f}\in C^\infty_{\rm c}(\mathfrak{g}).\leqno{(3)}
$$
Par construction (comme pour $G$), l'élément $\gamma$ est quasi--régulier elliptique dans $\mathfrak{m}$. 

Pour étendre la formule (3) à tout élément $\gamma\in \mathfrak{m}\cap \mathfrak{g}_{\rm qr}$, on procède exactement comme en \ref{descente parabolique}. On a déjà fixé un ensemble $\ES{P}=\ES{P}_G$ de sous--groupes paraboliques standards de $G$, et un sous--groupe compact maximal $K$ de $G$ en bonne position par rapport à $(P,A_P)$ pour tout $P\in \ES{P}$. Pour $P\in \ES{P}$, on note $\mathfrak{p}=\mathfrak{p}_P$, $\mathfrak{m}_P$, $\mathfrak{u}_P$, les algèbres de Lie de $P$, $M_P$, $U_P$, naturellement identifiées à des sous--$F$--algèbres de $\mathfrak{g}$. On a donc la décomposition $\mathfrak{p}= \mathfrak{m}_P\oplus \mathfrak{u}_P$. Pour $P=P_0$, on écrit $\mathfrak{p}_0$, $\mathfrak{m}_0$ et $\mathfrak{u}_0$ au lieu de $\mathfrak{p}_{P_0}$, $\mathfrak{m}_{P_0}$, $\mathfrak{u}_{P_0}$. Pour $P\in \ES{P}$, on a déjà fixé des mesures normalisées $dm_P$, $du_P$, $dm_{P,\gamma}$ pour $\gamma \in \mathfrak{m}_P\cap \mathfrak{g}_{\rm qr}$, sur les groupes $M_P$, $U_P$, $M_{P,\gamma} =G_\gamma$. On note $\mathfrak{d}u_P$ la mesure de Haar sur $\mathfrak{u}_P$ image de $du_P$ par l'isomorphisme de variétés $\mathfrak{p}_F$--adiques $U_P\rightarrow \mathfrak{u}_P,\, u\mapsto u-1$, et pour $\mathfrak{f}\in C^\infty_{\rm c}(\mathfrak{g})$, on note $\mathfrak{f}_{\mathfrak{p}_P}\in C^\infty_{\rm c}(\mathfrak{m}_P)$ le terme $K$--invariant de $\mathfrak{f}$ suivant $\mathfrak{p}$ défini à l'aide des mesures normalisées $dk$ sur $K$ et $\mathfrak{d}u_P$ sur $\mathfrak{u}_P$. Pour $\mathfrak{f}\in C^\infty_{\rm c}(\mathfrak{g})$, on a la formule de descente
$$
I^\mathfrak{g}(\gamma,\mathfrak{f})= I^{\mathfrak{m}_P}(\gamma,\mathfrak{f}_{\mathfrak{p}}),\quad \gamma\in \mathfrak{m}_P\cap \mathfrak{g}_{\rm qr}.\leqno{(4)}
$$
Plus généralement, pour $P,\, Q\in \ES{P}$ tels que $P\subset Q$ et $\mathfrak{f}\in C^\infty_{\rm c}(\mathfrak{m}_Q)$, on note $\mathfrak{f}_{\mathfrak{p}\cap \mathfrak{m}_Q}\in C^\infty_{\rm c}(\mathfrak{m}_P)$ le terme $(K\cap M_Q)$--invariant de $\mathfrak{f}$ suivant $\mathfrak{p}\cap \mathfrak{m}_Q$, défini comme en (1) à l'aide des mesures normalisées sur $K\cap M_Q$ et sur $\mathfrak{u}_P\cap \mathfrak{m}_Q$. Alors posant $\mathfrak{q}= \mathfrak{p}_Q$, on a la propriété de transitivité
$$
(\mathfrak{f}_\mathfrak{q})_{\mathfrak{p}\cap \mathfrak{m}_Q}= \mathfrak{f}_\mathfrak{p},\quad \mathfrak{f}\in C^\infty_{\rm c}(\mathfrak{g}),\leqno{(5)}
$$
et pour $\mathfrak{f}\in C^\infty_{\rm c}(\mathfrak{g})$, on a la formule de descente
$$
I^{\mathfrak{m}_Q}(\gamma,\mathfrak{f})=I^{\mathfrak{m}_P}(\gamma, \mathfrak{f}_{\mathfrak{p}\cap \mathfrak{m}_Q}),\quad 
\gamma \in \mathfrak{m}_P\cap (\mathfrak{m}_Q)_{\rm qr}.\leqno{(6)}
$$

Comme pour $G$, on a
$$
\mathfrak{g}_{\rm qr} = \bigcup_{P\in \ES{P}}{^G((\mathfrak{m}_P)_{\rm qre}\cap G_{\rm qr})}.\leqno{(7)}
$$
D'après (7), la formule de descente (5) et le point (ii) du corollaire de \ref{variante sur l'algèbre de Lie}, 
pour toute fonction $\mathfrak{f}\in C^\infty_{\rm c}(\mathfrak{g})$, on a la variante sur $\mathfrak{g}$ de \ref{descente parabolique}.(9):
\begin{enumerate}[leftmargin=17pt]
\item[(8)] la fonction $\mathfrak{g}_{\rm qr}\rightarrow {\Bbb C},\, \gamma \mapsto I^\mathfrak{g}(\gamma,\mathfrak{f})$ est localement 
constante.
\end{enumerate}

Comme on l'a fait pour $G$, on pose
$$
\mathfrak{g}_{\rm qr}^k= \bigcup_{P\in \ES{P}}{^G((\mathfrak{m}_P)_{\rm qre}^k\cap \mathfrak{g}_{\rm qr})},\quad k\in {\Bbb R}.
\leqno{(9)}
$$

\subsection{Descente centrale au voisinage d'un élément pur (suite)}\label{descente centrale au voisinage d'un élément pur (suite)}Soit $\beta\in G$ un élément pur. On pose $E=F[\beta]$, $d= {N\over [E:F]}$, $\mathfrak{b}= {\rm End}_E(V)$ et $H= \mathfrak{b}^\times \;(=G_\beta)$. On suppose $E\neq F$. Rappelons que d'après la section \ref{descente centrale au voisinage d'un élément pur} (corollaire de \ref{IO normalisées}), il existe un élément $\bs{x}\in \mathfrak{g}$ dans l'image réciproque de $1$ par une corestriction modérée $\bs{s}: \mathfrak{g}\rightarrow \mathfrak{b}$ sur $\mathfrak{g}$ relativement à $E/F$ et un voisinage ouvert fermé et $G$--invariant $\Xi$ de $\beta$ dans $G$, tels que pour toute fonction $f\in C^\infty_{\rm c}(G)$, il existe une fonction $f^\mathfrak{b}_\Xi\in C^\infty_{\rm c}(\mathfrak{b})$ telle que
$$
I^G(\beta + \bs{x}b,f)= I^\mathfrak{b}(b,f^\mathfrak{b}_\Xi)\leqno{(1)}
$$
pour tout $b\in \mathfrak{b}_{\rm qre}\cap \ES{V}$ o $\ES{V}$ est un voisinage ouvert ferm de $0$ dans $\mathfrak{b}$ tel que $\beta + \bs{x} \ES{V} \subset \Xi$, 
ces conditions impliquant que $\beta+ \bs{x}b$ est quasi--régulier elliptique (dans $G$). 
Précisément, $\bs{x}=\bs{x}_0\otimes 1$ et $\bs{s}=\bs{s}_0\otimes {\rm id}_\mathfrak{b}$ pour un élément $\bs{x}_0\in \mathfrak{A}(E)$ dans l'image réciproque de $1$ par une corestriction modérée $\bs{s}_0:A(E)\rightarrow E$ sur $A(E)$ relativement à $E/F$ et une $(W,E)$--décomposition $\mathfrak{g}=A(E)\otimes_E\mathfrak{b}$ de $\mathfrak{g}$ induite par une $(W,E)$--décomposition $\underline{\mathfrak{A}}= \mathfrak{A}(E)\otimes_{\mathfrak{o}_E}\underline{\mathfrak{B}}$ de $\underline{\mathfrak{A}}$, où $\underline{\mathfrak{A}}$ est un $\mathfrak{o}$--ordre héréditaire dans $\mathfrak{g}$ normalisé par $E^\times$ tel que $\underline{\mathfrak{B}}= \mathfrak{b}\cap \underline{\mathfrak{A}}$ est un $\mathfrak{o}_E$--ordre héréditaire minimal dans $\mathfrak{b}$. Posant $\underline{k}_0= k_F(\beta)d \;(=k_0(\beta,\underline{\mathfrak{A}}))$ et $\mathfrak{Q}={\rm rad}(\underline{\mathfrak{B}})$, l'application partout submersive
$$
\delta: G\times \bs{x}\underline{\mathfrak{Q}}^{\underline{k}_0+1}\rightarrow G,\, (g,\bs{x}b)\mapsto g^{-1}(\beta + \bs{x}b)g\leqno{(2)}
$$
permet de \og descendre\fg une distribution $G$--invariante $T$ au voisinage de $\beta$ dans $G$ en une distribution $H$--invariante $\theta_T$ au voisinage de $0$ dans $\mathfrak{b}$ (cf. \ref{le principe de submersion}). En particulier, pour tout 
$b\in \mathfrak{b}_{\rm qre}\cap \underline{\mathfrak{Q}}^{\underline{k}_0+1}$, l'intégrale orbitale $\ES{O}_{\beta + \bs{x}b}$ sur $G$ se descend en une distribution $\theta_{\ES{O}_{\beta + \bs{x}b}}$ sur $\mathfrak{b}$, qui est un multiple de l'intégrale orbitale $\ES{O}_b^\mathfrak{b}$ sur $\mathfrak{b}$. On suppose ici que la mesure de Haar $\mathfrak{d}b'$ sur $\mathfrak{b}$ utilisée pour définir l'application $T\mapsto \theta_T$ est celle qui est associée à la mesure de Haar $dh$ sur $H$ utilisée pour définir les distributions $\ES{O}_b^\mathfrak{b}$ ($b\in \mathfrak{b}_{\rm qre}$), \cad que l'on a $dh = \mathfrak{d}^\times b'$. On suppose aussi que la mesure de Haar $dz_E$ sur $Z_H=E^\times$ utilisée pour normaliser les intégrales orbitales $\ES{O}_b^\mathfrak{b}$ ($b\in \mathfrak{b}_{\rm qre}$) est celle qui vérifie ${\rm vol}(F^\times\backslash E^\times, {dz_E\over dz})=1$. 
Alors d'après la proposition de \ref{IO normalisées}, pour $b\in \mathfrak{b}_{\rm qre}\cap \underline{\mathfrak{Q}}^{\underline{k}_0+1}$, posant $\gamma = \beta + \bs{x}b$, on a
$$
\theta_{\ES{O}_\gamma}= \lambda \ES{O}_b^\mathfrak{b}\leqno{(3)}
$$
avec
$$
\lambda =(q_E^{n_F(\beta)} \mu_F(\beta))^{d^2}=\vert \beta \vert_E^d \textstyle{\eta_\mathfrak{b}(b)\over \eta_G(\gamma)},
$$
ce qui, en termes des intégrales orbitales normalisées, équivaut à
$$
\theta_{I^G(\gamma,\cdot)}= \vert \beta \vert_E^d \left(\textstyle{\eta_G(\gamma)\over \eta_\mathfrak{b}(b)}\right)^{-\textstyle{1\over 2}}I^\mathfrak{b}(b,\cdot).\leqno{(4)}
$$

\begin{marema1}
{\rm 
Pour généraliser ces formules (3) et (4) aux éléments $b\in \mathfrak{b}_{\rm qr}$ qui ne sont pas elliptiques, on est donc ramené à relier l'application $T\mapsto \theta_T$ à l'application terme constant $f\mapsto f_P$. Notons que les choses se présentent plutôt bien, puisque le rang $d$ de $H$ sur $E$ est un invariant stable par passage aux sous--groupes de Levi de $H$, et que, pour un sous--groupe de Levi $M_H$ de $H$, notant $M$ le plus petit sous--groupe de Levi de $G$ contenant $M_H$ (\cad le centralisateur dans $G$ de la composante $F$--déployée de $Z(M_H)$), $\mathfrak{m}$ l'algèbre de Lie de $M$, et $\mathfrak{m}_\beta = \mathfrak{m}\cap \mathfrak{b}$ l'algèbre de Lie de $M_H$, pour $b \in (\mathfrak{m}_\beta)_{\rm qre} \cap \mathfrak{b}_{\rm qr}$ suffisamment proche de $0$ dans $\mathfrak{m}_\beta$, 
l'élément $\gamma = \beta + \bs{x}b$ appartient à $M_{\rm qre}\cap G_{\rm qr}$, et par définition, on a
$$
\textstyle{\eta_G(\gamma)\over \eta_\mathfrak{b}(b)}= \textstyle{\eta_M(\gamma)\over \eta_{\mathfrak{m}_\beta}(b)}\textstyle{\vert D_{M\backslash G}(\gamma)\vert \over \vert D_{\mathfrak{m}_\beta\backslash \mathfrak{b}}(b)\vert }.
$$
On voit donc apparaître les Jacobiens des applications \og terme constant\fg sur $G$ et sur $\mathfrak{b}$. \hfill$\blacksquare$
}
\end{marema1}

Comme pour $G$, on fixe une paire parabolique minimale $(P_{H,0},A_{H,0})$ de $H$, et un sous--groupe compact maximal $K_H$ de $H$ en bonne position par rapport à $(P_{H,0},A_{H,0})$. 
On suppose que la mesure de Haar $dh$ sur $H$ est celle qui donne le volume $1$ à $K_H$. On note $\ES{P}_H$ l'ensemble des sous--groupes paraboliques {\it standards} de $H$, \cad ceux qui contiennent $P_{H,0}$, et pour $P\in \ES{P}_H$, 
on note $U_{P_H}$ le radical unipotent de $P_H$, $M_{P_H}$ la composante de Levi de $P_H$ contenant $A_{H,0}$, et $A_{P_H}=Z(M_{P_H})\subset A_{H,0}$ le centre de $M_{P_H}$. On suppose que les ensembles $\ES{P}$ et $\ES{P}_H$ sont {\it compatibles}, au sens où en posant $U_{H,0}=U_{P_{H,0}}$ et en notant $A^G_{H,0}\;(\simeq (F^\times)^d)$ le tore $F$--déployé maximal de $A_{H,0}\;(\simeq (E^\times)^d)$, on a les inclusions
$$
U_{H,0}\subset U_0,\quad A^G_{H,0}\subset A_0.
$$
Pour $P_H\in \ES{P}_H$, on note $A^G_{P_H}$ le tore $F$--déployé maximal de $A_{P_H}$, $M^G_{P_H}$ le centralisateur de $A^G_{P_H}$ dans $G$, et $P^G_H$ le sous--groupe parabolique standard de $G$ défini par $P^G_H = M^G_{P_H}U_0$, ou, de manière équivalente, par $A_{P_H^G}= A_{P_H}^G$, resp. par $M_{P_H^G}= M_{P_H}^G$. L'application
$$
\ES{P}_H\rightarrow \ES{P}_G,\, P_H \mapsto P^G_H\leqno{(5)}
$$
est injective. De plus, on a $P_H = H \cap P^G_H$, $M_{P_H}= H \cap M_{P_H^G}$ et $A_{P_H}= Z_H(A^G_{P_H})$. Notant $\ES{P}(H)=\ES{P}_G(H)$ l'image de (5), la bijection inverse est donnée par
$$
\ES{P}(H)\rightarrow \ES{P}_H,\, P \mapsto P_H=H\cap P. 
$$

Pour $P\in \ES{P}$, on note $\mathfrak{p}= \mathfrak{p}_P$, $\mathfrak{m}_P$, $\mathfrak{u}_P$, les algèbres de Lie de $P$, $M_P$, $U_P$, identifiées à des sous--$F$--algèbres de $\mathfrak{g}$. On a la décomposition $\mathfrak{p}= \mathfrak{m}_P\oplus \mathfrak{u}_P$. De même, pour $P_H\in \ES{P}_H$, on note $\mathfrak{p}_H= \mathfrak{p}_{P_H}$, $\mathfrak{m}_{P_H}$, $\mathfrak{u}_{P_H}$, les algèbres de Lie de $P_H$, $M_{P_H}$, $U_{P_H}$, identifiées à des sous--$E$--algèbres de $\mathfrak{b}$. On a la décomposition $\mathfrak{p}_H= \mathfrak{m}_{P_H}\oplus \mathfrak{u}_{P_H}$. Rappelons qu'on a fixé une $(W,E)$--décomposition $\mathfrak{g}=A(E)\otimes_E\mathfrak{b}$. L'hypothèse de compatibilité entre $\ES{P}$ et $\ES{P}_H$ assure que pour $P_H\in \ES{P}_H$ et $P=P_H^G\in \ES{P}$, on a les décompositions
$$
\mathfrak{p}= A(E)\otimes_E\mathfrak{p}_H,\quad 
\mathfrak{m}_P= A(E)\otimes_E\mathfrak{m}_{P_H},\quad 
\mathfrak{u}_P= A(E)\otimes_E\mathfrak{u}_{P_H}.\quad \leqno{(6)} 
$$
Bien sûr on a aussi
$$
\mathfrak{p}_H = \mathfrak{p}\cap \mathfrak{b},\quad
\mathfrak{m}_{P_H}= \mathfrak{m}_P\cap \mathfrak{b},\quad 
\mathfrak{u}_{P_H}= \mathfrak{u}_P\cap \mathfrak{b}.
$$

Pour $b\in \mathfrak{b}_{\rm qr}$, on définit comme en \ref{variante sur l'algèbre de Lie (suite)} les distributions $\ES{O}_b^\mathfrak{b}$ et $I^\mathfrak{b}(b,\cdot)=\eta_\mathfrak{b}(b)^{1\over 2}\ES{O}_b^\mathfrak{b}$ sur $\mathfrak{b}$, l'intégrale orbitale $\ES{O}_b^\mathfrak{b}$ sur $\mathfrak{b}$ étant normalisée de la manière suivante: le groupe $H_b=E[b]^\times$ est un produit $E_1^\times \times\cdots \times E_r^\times$ pour des extensions $E_i/E$; il contient le tore $F$--déployé maximal $A_b^G=(F^\times)^r$ que l'on munit de la mesure de Haar $da$ telle que 
${\rm vol}((\mathfrak{o}^\times)^s,da)=1$; alors on utilise pour définir $\ES{O}_b^\mathfrak{b}$ la mesure de Haar $dh_b$ sur $H_b$ telle que ${\rm vol}(A_b^G\backslash H_b, {dh_b\over da})=1$. Pour $b\in \mathfrak{b}_{\rm qre}$, cette normalisation co\?{\i}ncide avec celle introduite plus haut.

Pour $P_H\in \ES{P}_H$ et $b\in 
(\mathfrak{m}_{P_H})_{\rm qr}$, posant $\mathfrak{p}_*= \mathfrak{p}_{P_H}$, $\mathfrak{m}_*=\mathfrak{m}_{P_H}$ et $\mathfrak{u}_*=\mathfrak{u}_{P_H}$, on définit comme en \ref{variante sur l'algèbre de Lie (suite)} --- avec la normalisation ci--dessus --- les distributions $\ES{O}_b^{\mathfrak{m}_*}$ et $I^{\mathfrak{m}_*}(b,\cdot)= \eta_{\mathfrak{m}_*}(b)^{1\over 2}\ES{O}_b^{\mathfrak{m}_*}$ sur $\mathfrak{m}_*$. Pour $\mathfrak{f}\in C^\infty_{\rm c}(\mathfrak{b})$, on note $\mathfrak{f}_{\mathfrak{p}_*}\in C^\infty_{\rm c}(\mathfrak{m}_*)$ le terme $K_H$--invariant 
de $\mathfrak{f}$ suivant $\mathfrak{p}_*$, défini à l'aide des mesures normalisées sur $K_H$ et sur $\mathfrak{u}_*$. Pour $\mathfrak{f}\in C^\infty_{\rm c}(\mathfrak{b})$, d'après \ref{variante sur l'algèbre de Lie (suite)}.(4), 
on a la formule de descente
$$
I^\mathfrak{b}(b,\mathfrak{f})= I^{\mathfrak{m}_*}(b, \mathfrak{f}_{\mathfrak{p}_*}),\quad b\in \mathfrak{m}_*\cap \mathfrak{b}_{\rm qr}.\leqno{(7)}
$$

Soit $P_H\in \ES{P}_H$. Posons $P=P_H^G$, $M=M_P$, $U=U_P$, et notons $\mathfrak{p}=\mathfrak{p}_P$, $\mathfrak{m}=\mathfrak{m}_P$, $\mathfrak{u}=\mathfrak{u}_P$, les algèbres de Lie de $P$, $M$, $U$. De même, posons $P_*= P_H$, $M_* = M_{P_H}$, $U_* = U_{P_H}$, et notons  $\mathfrak{p}_*= \mathfrak{p}_{P_H}$, $\mathfrak{m}_*= \mathfrak{m}_{P_H}$, $\mathfrak{u}_*= \mathfrak{u}_{P_H}$, les algèbres de Lie de $P_*$, $M_*$, $U_*$. Soit aussi $P_*^-$ le sous--groupe parabolique de $H$ opposé à $P_*$ par rapport à $M_*$, et soit $U_*^-$ son radical unipotent. Notons $\mathfrak{u}_*^-\subset \mathfrak{b}$ l'algèbre de Lie de $\mathfrak{u}_*^-$. 
Rappelons que la $(W,E)$--décomposition $\mathfrak{g}= A(E)\otimes_E \mathfrak{b}$ de $\mathfrak{g}$ est induite à partir d'une $(W,E)$--décomposition $\underline{\mathfrak{A}}= \mathfrak{A}(E)\otimes_{\mathfrak{o}_E}\underline{\mathfrak{B}}$ de $\underline{\mathfrak{A}}$. On suppose que la sous--$\mathfrak{o}_E$--algèbre d'Iwahori $\underline{\mathfrak{B}}$ de $\mathfrak{b}$ est associée à une chambre de l'immeuble affine de $H$ contenue dans l'appartement associé au tore $E$--déployé maximal $A_{H,0}$ de $H$. Alors on a la décomposition
$$
\underline{\mathfrak{Q}}^k =(\underline{\mathfrak{Q}}^k\cap \mathfrak{u}_*^-)\oplus (\underline{\mathfrak{Q}}^k\cap \mathfrak{m}_*)\oplus (\underline{\mathfrak{Q}}^k\cap \mathfrak{u}_*),\quad k\in {\Bbb Z}.\leqno{(8)}
$$
Pour $k\in {\Bbb Z}$, posons
$$
\underline{\mathfrak{Q}}^{k}_{\mathfrak{m}}=\underline{\mathfrak{Q}}^{k}\cap 
\mathfrak{m}\;(=\underline{\mathfrak{Q}}^k\cap \mathfrak{m}_*).
$$
Pour $k,\, j\in {\Bbb Z}$, on a donc
$$\underline{\mathfrak{Q}}^{dk+j}_{\mathfrak{m}}= \varpi_E^k(\underline{\mathfrak{Q}}_ \mathfrak{m}^j).\leqno{(9)}
$$ 
o $\varpi_E$ est une uniformisante de $E$. La $E$--algbre $\mathfrak{m}_*$ se dcompose en $\mathfrak{m}_*= \mathfrak{b}_1\times \cdots \times 
\mathfrak{b}_s$, $\mathfrak{b}_i= {\rm End}_E(V_i)$, pour une dcomposition du $E$--espace vectoriel $V$ en 
$V= V_1\times \cdots \times V_s$. La $\mathfrak{o}_E$--algbre $\underline{\mathfrak{B}}_{\mathfrak{m}}=\underline{\mathfrak{B}}\cap \mathfrak{m}$ se dcompose en $\underline{\mathfrak{B}}_{\mathfrak{m}}= \underline{\mathfrak{B}}_1\times\cdots \times \underline{\mathfrak{B}}_s$ o $\underline{\mathfrak{B}}_i$ est un $\mathfrak{o}_E$--ordre hrditaire minimal (i.e. d'Iwahori) dans $\mathfrak{b}_i$, et posant $\underline{\mathfrak{Q}}_\mathfrak{m} = \underline{\mathfrak{Q}} \cap \mathfrak{m} \;(=\underline{\mathfrak{Q}}_\mathfrak{m}^1)$ et $\underline{\mathfrak{Q}}_i={\rm rad}(\underline{\mathfrak{B}}_i)$ pour $i=1,\ldots ,s$, on a 
$\underline{\mathfrak{Q}}_\mathfrak{m}= \underline{\mathfrak{Q}}_1\times \cdots \times \underline{\mathfrak{Q}}_s$. On en dduit que pour $k\in {\Bbb Z}$, 
posant $d_i= \dim_E(V_i)$, on a les galits
$$
\underline{\mathfrak{Q}}_\mathfrak{m}^{dk}= \underline{\mathfrak{Q}}_1^{d_1k}\times \cdots \times \underline{\mathfrak{Q}}_s^{d_sk},\quad 
\underline{\mathfrak{Q}}_\mathfrak{m}^{dk+1}= \underline{\mathfrak{Q}}_1^{d_1k+1}\times \cdots \times \underline{\mathfrak{Q}}_s^{d_sk +1}.\leqno{(10)}
$$
De mme, on a la dcomposition $\mathfrak{m}= \mathfrak{g}_1\times \cdots \times \mathfrak{g}_s$, $\mathfrak{g}_i= {\rm End}_F(V_i)$, et la $\mathfrak{o}$--algbre $\underline{\mathfrak{A}}_\mathfrak{m} = \underline{\mathfrak{A}}\cap \mathfrak{m}$ se dcompose en $\underline{\mathfrak{A}}_\mathfrak{m}= \underline{\mathfrak{A}}_1\times \cdots \times \underline{\mathfrak{A}}_s$ o $\underline{\mathfrak{A}}_i$ est l'unique $\mathfrak{o}_E$--ordre hrditaire dans $\mathfrak{g}_i$ normalis par $E^\times$ tel que $\underline{\mathfrak{A}}_i\cap \mathfrak{b}_i = \underline{\mathfrak{B}}_i$. L'élément $\bs{x}=\bs{x}_0\otimes 1$ appartient  $\underline{\mathfrak{A}}_\mathfrak{m} $, et d'aprs ce qui prcde --- rappelons que $\underline{k}_0= dk_F(\beta)$ ---, l'application
$$
\delta_M: M \times \underline{\mathfrak{Q}}^{\underline{k}_0+1}_{\mathfrak{m}}\rightarrow M,\, (m,b)\mapsto m^{-1}(\beta + \bs{x}b)m\leqno{(11)}
$$
est partout submersive. On peut donc, comme on l'a fait sur $G$ à l'aide de $\delta$ (\ref{le principe de submersion}), descendre une distribution $M$--invariante au voisinage de $\beta$ dans $M$ en une distribution $M_*$--invariante au voisinage de $0$ dans $\mathfrak{m}_*$:  toute distribution $M$--invariante $T_M$ sur $M$ est associe une distribution $\wt{\vartheta}_{T_M}$ sur $\bs{x}\underline{\mathfrak{Q}}^{\underline{k}_0+1}_\mathfrak{m}$ telle que pour toute fonction $\varphi\in 
C^\infty_{\rm c}(M\times \bs{x}\underline{\mathfrak{Q}}^{\underline{k}_0+1}_\mathfrak{m})$, on a
$$
\langle \varphi_{\delta_M} , \wt{\vartheta}_{T_M}\rangle = \langle \varphi^{\delta_M},T_M\rangle,
$$
o les fonctions $\varphi^{\delta_M}\in C^\infty_{\rm c}({\rm Im}(\delta_M))$ et $\phi_{\delta_M}\in C^\infty_{\rm c}(\bs{x}\underline{\mathfrak{Q}}^{\underline{k}_0+1}_\mathfrak{m})$ sont dfinies 
 l'aide de la mesure normalise $dm$ sur $M$ et de la mesure $\mathfrak{d}m_*$ sur $\mathfrak{m}_*$ associe  la mesure 
normalise sur $M_*$. \`A partir de $\wt{\vartheta}_{T_M}$, on construit comme en \ref{le principe de submersion} une distribution $H$--invariante $\vartheta_{T_M}$ sur l'ouvert $H$--invariant ${^{M_*}(\underline{\mathfrak{Q}}^{\underline{k}_0+1}_\mathfrak{m})}$ de $M_*$. \'Ecrivons $M_* =H_1\times \cdots \times H_s$, 
$H_i={\rm Aut}_E(V_i)$, et pour $i=1,\ldots ,s$, posons $\underline{k}_{i,0}= d_i k_F(\beta)$. D'aprs (10), on a
$$
{^{M_*}(\underline{\mathfrak{Q}}^{\underline{k}_0+1}_\mathfrak{m})}=
 {^{H_1}(\underline{\mathfrak{Q}}_1^{\underline{k}_{1,0}+1})}\times\cdots \times  {^{H_s}(\underline{\mathfrak{Q}}_s^{\underline{k}_{s,0}+1})},\leqno{(12)}
 $$
et d'aprs \ref{lments qre}.(7), ${^{M_*}(\underline{\mathfrak{Q}}^{\underline{k}_0+1}_\mathfrak{m})}$ est une partie (ouverte et $M_*$--invariante) {\it ferme} dans $\mathfrak{m}_*$. Le support ${\rm Supp}(\vartheta_{T_M})$ de $\vartheta_{T_M}$, qui est une partie fermée de ${^{M_*}(\underline{\mathfrak{Q}}^{\underline{k}_0+1}_\mathfrak{m})}$, est donc fermé dans $\mathfrak{m}_*$, et on peut prolonger cette distribution $\vartheta_{T_M}$ par $0$ sur 
$\mathfrak{m}_*\smallsetminus {\rm Supp}(\vartheta_{T_M})$. On obtient ainsi une distribution $M_*$--invariante sur $\mathfrak{m}_*$, de support ${\rm Supp}(\vartheta_{T_M})$, que l'on note $\theta_{T_M}$. On peut aussi restreindre la distribution $\vartheta_{T_M}$ à un voisinage ouvert fermé et $M_*$--invariant $\Omega'$ de $0$ dans $\mathfrak{m}_*$ contenu dans ${^{M_*}(\underline{\mathfrak{Q}}^{\underline{k}_0+1}_\mathfrak{m})}$, et dfinir une distribution $M_*$--invariante $\theta_{T_M}^{\Omega'}$ sur $\mathfrak{m}_*$, à support dans $\Omega'$, en posant
$$
\langle \mathfrak{f}, \theta_{T_M}^{\Omega'}\rangle = \langle \mathfrak{f}\vert_{\Omega'}, \vartheta_{T_M}\rangle,\quad \mathfrak{f}\in C^\infty_{\rm c}(\mathfrak{m}_*).
$$

D'autre part, pour $k\in {\Bbb Z}$, on a l'égalité
$$
{^H\!(\underline{\mathfrak{Q}}^{dk+1})} \cap \mathfrak{m} =  {^{M_*}(\underline{\mathfrak{Q}}^{dk+1}_\mathfrak{m})}.\leqno{(13)}
$$
En effet, puisque ${^H\!(\underline{\mathfrak{Q}}^{dk+1})}= \varpi_E^k({^H\!\underline{\mathfrak{Q}}})$ et ${^{M_*}(\underline{\mathfrak{Q}}^{dk+1}_{\mathfrak{m}})}= \varpi_E^k({^{M_*}(\underline{\mathfrak{Q}}_\mathfrak{m})})$, il suffit de vrifier (13) pour $k=0$. 
L'inclusion $\supset$ dans (13) est claire. Pour $b\in \mathfrak{b}$, notant $\zeta_{b}^\mathfrak{b}(t)= \sum_{j=0}^{d} a_{b,j}^{\mathfrak{b}}t^j\in E[t]$ le polynme caractristique du $E$--endomorphisme $b$ de $V$, d'aprs la remarque 2 de \ref{lments qre}, on a
$$
\{b\in \mathfrak{b}: \nu_E(a_{b,j}^\mathfrak{b})\geq 1,\, j=0,\ldots ,d-1\}= {^H\!\underline{\mathfrak{Q}}}.
$$
Pour $b=(b_1,\ldots ,b_s)\in \mathfrak{m}_*$, $b_i\in \mathfrak{b}_i$, le polynme caractristique $\zeta_b^\mathfrak{b}$ s'crit $\zeta_b^\mathfrak{b}= \prod_{i=1}^s \zeta_{b_i}^{\mathfrak{b}_i}$, et si les coefficients $a_{b,j}^\mathfrak{b}$ ($j=0,\ldots d-1$) appartiennent  $\mathfrak{p}_E$, alors pour $i=1,\ldots ,s$, 
le polynme caractristique $\zeta_{b_i}^{\mathfrak{b}_i}$ appartient  $\mathfrak{o}_E[t]$ et $\zeta_{b_i}^{\mathfrak{b}_i}\;({\rm mod}\,\mathfrak{p}_E)= t^{d_i}$. 
\`A nouveau d'aprs la remarque 2 de \ref{lments qre} (appliqu  chaque $\mathfrak{b}_i)$, on obtient l'inclusion $\subset$ dans (13) pour $k=0$.

\begin{marema2}
{\rm 
D'aprs (13), on a l'galit
$$
{^H\!(\underline{\mathfrak{Q}}^{\underline{k}_0+1})} \cap \mathfrak{m} =  {^{M_*}(\underline{\mathfrak{Q}}^{\underline{k}_0+1}_\mathfrak{m})}.\leqno{(14)}
$$
En particulier si $\Omega$ est un voisinage ouvert fermé et $H$--invariant de $0$ dans $\mathfrak{b}$ contenu dans ${^H(\underline{\mathfrak{Q}}^{\underline{k}_0+1})}$, alors $\Omega_{\mathfrak{m}}= \Omega \cap \mathfrak{m}\;(= \Omega \cap \mathfrak{m}_*)$ est un voisinage ouvert fermé et $M_*$--invariant de $0$ dans $\mathfrak{m}_*$ contenu dans ${^{M_*}(\underline{\mathfrak{Q}}^{\underline{k}_0+1}_{\mathfrak{m}})}$. 
\hfill $\blacksquare$
}
\end{marema2}

Comme on l'a fait pour $\mathfrak{b}$, pour chaque $r\in {\Bbb R}$, on peut définir par produit le sous--ensemble $(\mathfrak{m}_*)_{\rm qre}^{r}$ de $(\mathfrak{m}_*)_{\rm qre}$: on pose
$$
(\mathfrak{m}_*)_{\rm qre}^{r} = (\mathfrak{b}_1)_{\rm qre}^{r}\times \cdots \times (\mathfrak{b}_s)_{\rm qre}^{r}.
$$
Notons que si $r= k+ {1\over d}$ pour un entier $k$, alors puisque pour $i=1,\ldots ,s$, les \og sauts \fg{} de la filtration $r\mapsto (\mathfrak{b}_i)_{\rm qre}^r$ de $(\mathfrak{b}_i)_{\rm qre}$ sont les lments de ${1\over d_i}{\Bbb Z}$, on a l'galit
$$
(\mathfrak{m}_*)_{\rm qre}^{r}=  (\mathfrak{b}_1)_{\rm qre}^{k+{1\over d_1}}\times \cdots \times (\mathfrak{b}_s)_{\rm qre}^{k+ {1\over d_s}}.
$$
D'aprs (12) et le lemme 2 de \ref{lments qre}, on a donc
$$
(\mathfrak{m}_*)_{\rm qre}^{k_F(\beta)+{1\over d}}= (\mathfrak{m}_*)_{\rm qre}\cap {^{M_*}\!(\underline{\mathfrak{Q}}^{\underline{k}_0+1}_\mathfrak{m})},$$
d'o  (grce  (14))
$$
(\mathfrak{m}_*)_{\rm qre}^{k_F(\beta)+{1\over d}}= (\mathfrak{m}_*)_{\rm qre}\cap {^H\!(\underline{\mathfrak{Q}}^{\underline{k}_0+1})}.\leqno{(15)}
$$
Pour $b\in (\mathfrak{m}_*)_{\rm qre}\cap \underline{\mathfrak{Q}}^{\underline{k}_0+1}$, l'élément 
$\gamma = \beta + \bs{x}b$ appartient à $M_{\rm qre}$, et d'après (3), on a l'égalité
$$
\theta_{\ES{O}_\gamma^M}= \lambda_M \ES{O}_b^{\mathfrak{m}_*}\leqno{(16)}
$$
avec
$$
\lambda_M =(q_E^{n_F(\beta)}\mu_F(\beta))^{\dim_E(\mathfrak{m}_*)}=\vert \beta \vert_E^d \textstyle{\eta_{\mathfrak{m}_*}(b)\over \eta_M(\gamma)},
$$
ce qui, en termes d'intégrales orbitales normalisées, équivaut à
$$
\theta_{I^{M}(\gamma,\cdot)} = \vert \beta \vert_E^d \left(\textstyle{\eta_M(\gamma)\over \eta_{\mathfrak{m}_*}(b)}\right)^{-{1\over 2}} I^{\mathfrak{m}_*}(b,\cdot ).\leqno{(17)}
$$

\begin{monlem1}
Pour $b\in (\mathfrak{m}_*)_{\rm qre}\cap \mathfrak{b}_{\rm qr}\cap \underline{\mathfrak{Q}}^{\underline{k}_0+1}$, l'élément $\gamma = \beta + \bs{x}b$ appartient à $M_{\rm qre}\cap G_{\rm qr}$, et on a
$$
\textstyle{\vert D_{M\backslash G}(\gamma)\vert \over \vert D_{\mathfrak{m}_*\!\backslash \mathfrak{b}}(b)\vert_E}=
(q_E^{n_F(\beta)}\mu_F(\beta))^{\dim_E(\mathfrak{m}_*)- \dim_E(\mathfrak{b})}\;(=\lambda_M\lambda^{-1}).
$$
\end{monlem1}

\begin{proof}
\'Ecrivons $b=(b_1,\ldots ,b_s)$, $b_i\in (\mathfrak{b}_i)_{\rm qre}\cap \underline{\mathfrak{Q}}_i^{\underline{k}_{i,0}+1}$. Posons $M= G_1\times \cdots \times G_s$, $G_i= {\rm Aut}_F(V_i)$, et crivons $\gamma = (\gamma_1,\ldots ,\gamma_s)$, $\gamma_i=\beta + \bs{x}b_i\in G_i$. 
Pour $i=1,\ldots , s$, l'élément $\gamma_i$ est quasi--régulier elliptique dans $G_i$, donc $\gamma$ est quasi--régulier elliptique dans $M$. De plus, 
si $\gamma$ n'est pas quasi--régulier dans $G$, 
alors $b$ n'est pas quasi--régulier dans $\mathfrak{b}$, contradiction. Donc $\gamma\in M_{\rm qre}\cap G_{\rm qr}$.

On a $D_{M\backslash G}(\gamma)\det_F(y\mapsto y\gamma; \mathfrak{g}/\mathfrak{m})= D_{\mathfrak{m}\backslash \mathfrak{g}}(\gamma)$. Pour $i=1,\ldots ,s$, posons $N_i= \dim_F(V_i)$ et $d_i= {N_i\over [E:F]}$. On a
$$
{\rm det}_F(y\mapsto y\gamma; \mathfrak{g}/\mathfrak{m})= \det(\gamma)^{N-1}\textstyle{\prod_{i=1}^s} \det(\gamma_i)^{-(N_i-1)},
$$
d'où
$$
\vert {\rm det}_F(y\mapsto y\gamma; \mathfrak{g}/\mathfrak{m})\vert = \vert \beta\vert_E^{d(N-1)}\textstyle{\prod_{i=1}^s} \vert \beta\vert_E^{-d_i(N_i-1)}
$$
et, puisque $\sum_{i=1}^sd_i=d= {N\over [E:F]}$, 
$$
\vert {\rm det}_F(y\mapsto y\gamma; \mathfrak{g}/\mathfrak{m})\vert = \vert \beta \vert_E^{[E:F](d^2 - \sum_{i=1}^s d_i^2)}= \vert \beta\vert_E^{[E:F](\dim_E(\mathfrak{b})-\dim_E(\mathfrak{m}_*))}.
$$
D'autre part, on a $\mu_F^+(\beta)= \vert \beta\vert_E^{1-[E:F]}\mu_F(\beta)$, par conséquent il s'agit de prouver l'égalité
$$
\textstyle{\vert D_{\mathfrak{m}\backslash \mathfrak{g}}(\gamma)\vert \over \vert D_{\mathfrak{m}_*\!\backslash \mathfrak{b}}(b)\vert_E}
=\mu_F^+(\beta)^{-(\dim_E(\mathfrak{b})- \dim_E(\mathfrak{m}_*))}.
$$

Pour $1\leq i, j \leq s$ tels que $i\neq j$, posons $\mathfrak{g}_{i,j}= {\rm End}_F(V_i,V_j)$ et $\mathfrak{b}_{i,j}= {\rm End}_E(V_i,V_j)$. Le $F$--endomorphisme $-{\rm ad}_\gamma$ de $\mathfrak{g}$ se restreint en un $F$--automorphisme $g \mapsto g\gamma_i-\gamma_j g$ de $\mathfrak{g}_{i,j}$, que l'on note $\bs{\gamma}_{i,j}$. De la même manière, le $E$--endomorphisme $- {\rm ad}_b$ de $\mathfrak{b}$ se restreint en un $E$--automorphisme $y\mapsto yb_i - b_j y$, que l'on note $\bs{b}_{i,j}$. Puisque
$$
\textstyle{\vert D_{\mathfrak{m}\backslash \mathfrak{g}}(\gamma)\vert \over \vert D_{\mathfrak{m}_*\!\backslash \mathfrak{b}}(b)\vert_E}
= \prod_{i\leq i,  j \leq s,\, i\neq j} {\vert \det_F(\bs{\gamma}_{i,j}; \mathfrak{g}_{i,j})\vert \over \vert \det_E(\bs{b}_{i,j}; \mathfrak{b}_{i,j})\vert_E}
$$
et
$$
\dim_E(\mathfrak{b})-\dim_E(\mathfrak{m}_*)= \textstyle{\sum_{1\leq i, j\leq s,\, i\neq j}}\dim_E(\mathfrak{b}_{i,j}),
$$
il suffit de prouver que pour $1\leq i,j \leq s$ tels que $i\neq j$, on a
$$
 \textstyle{\vert \det_F(\bs{\gamma}_{i,j}; \mathfrak{g}_{i,j})\vert \over \vert \det_E(\bs{b}_{i,j}; \mathfrak{b}_{i,j})\vert_E}= \mu_F^+(\beta)^{-\dim_E(\mathfrak{b}_{i,j})}.
$$
Fixons un tel couple $(i,j)$ et prouvons l'égalité ci--dessus. Rappelons que la $(W,E)$--décompo\-sition $\mathfrak{g}= A(E)\otimes_E\mathfrak{b}$ est induite par une $(W,E)$--décomposition $\underline{\mathfrak{A}}= \mathfrak{A}(E)\otimes_{\mathfrak{o}_E}\underline{\mathfrak{B}}$. On en déduit (par restriction) une décomposition $\mathfrak{g}_{i,j}= A(E)\otimes_E \mathfrak{b}_{i,j}$. Pour $k\in {\Bbb Z}$, posons  $\mathfrak{X}^k= 
\underline{\mathfrak{P}}^k\cap \mathfrak{g}_{i,j}$ et $\mathfrak{Y}^k= \underline{\mathfrak{Q}}^k \cap \mathfrak{b}_{i,j}\;(= \underline{\mathfrak{Q}}^k\cap \mathfrak{g}_{i,j})$. On a $\mathfrak{X}^k\cap \mathfrak{b}_{i,j}= \mathfrak{Y}^k$ et $\mathfrak{X}^k= \mathfrak{A}(E)\otimes_{\mathfrak{o}_E}\mathfrak{Y}^k$. Soit $\mathfrak{N}= \mathfrak{N}_{\underline{k}_0}(\beta, \underline{\mathfrak{A}})$, et pour $k\in {\Bbb Z}$, posons $\mathfrak{Z}^k= \underline{\mathfrak{Q}}^k\mathfrak{N}\cap  \mathfrak{g}_{i,j}$. Puisque $\underline{\mathfrak{Q}}^k\subset \underline{\mathfrak{Q}}^k\mathfrak{N}$, on a $\mathfrak{Y}^k\subset \mathfrak{Z}^k$. On pose $\smash{\overline{\mathfrak{Z}}}^k= \mathfrak{Z}^k/\mathfrak{Y}^k$. Pour 
$z\in \mathfrak{Z}^k$, on a
$$
\bs{\gamma}_{i,j}(z) = -{\rm ad}_\beta(z) + z\bs{x}b_i - \bs{x} b_j z
$$
avec $z\bs{x}b_i - \bs{x}b_j z\in \mathfrak{X}^{\underline{k}_0 + k +1}$. \'Ecrivons
$$
z\bs{x}b_i - \bs{x}b_j z= -{\rm ad}_\beta(z') + \bs{x}z''
$$
avec $z'\in \mathfrak{Z}^{k+1}$ et $z''\in \mathfrak{Y}^{\underline{k}_0+k+1}_{i,j}$. On a donc
$$
\bs{\gamma}_{i,j}(z)= -{\rm ad}_\beta(z+z')+ \bs{x}z''.
$$
L'élément $z''$ est uniquement déterminé par $z$, puisqu'on a $z''= \bs{s}(z\bs{x}b_i - \bs{x}b_j z)$. Quant à l'élément $z'$, il n'est pas défini de manière unique, mais sa projection $\overline{z}'$ sur $\smash{\overline{\mathfrak{Z}}}^{k+1}$ l'est. On a donc défini deux applications $\mathfrak{o}$--linéaires
$$
\eta^k : \mathfrak{Z}^k \rightarrow \smash{\overline{\mathfrak{Z}}}^{k+1},\, z \mapsto z',\quad \nu^k: \mathfrak{Z}^k \rightarrow \mathfrak{Y}^{\underline{k}_0+ k+1},\, z\mapsto z'', 
$$
telles que
$$
\bs{\gamma}_{i,j}(z) = -{\rm ad}_\beta(z + \eta^k(z))+ \bs{x}\nu^k(z),\quad z\in \mathfrak{Z}^k.\leqno{(18)}
$$
Remarquons que pour $z=y\in \mathfrak{Z}^k\cap \mathfrak{b}_{i,j}= \mathfrak{Y}^k$, on a
$$
\nu^k(y)= yb_i - b_jy = \bs{b}_{i,j}(y).\leqno{(19)}
$$
Pour $k,\,k'\in {\Bbb Z}$ tels que $k'\geq k$, on a $\eta^k\vert_{\mathfrak{Z}^{k'}}= \eta^{k'}$ et $\nu^k\vert_{\mathfrak{Z}^{k'}}= \nu^{k'}$. On obtient deux applications $F$--linéaires
$$
\eta: \mathfrak{g}_{i,j} \rightarrow \overline{\mathfrak{g}}_{i,j}= \mathfrak{g}_{i,j}/\mathfrak{b}_{i,j},\quad \nu: \mathfrak{g}_{i,j}\rightarrow \mathfrak{b}_{i,j},
$$
telles que pour $k\in {\Bbb Z}_{\geq 1}$, on a $\eta^k= \eta\vert_{\mathfrak{Z}^k}$ et $\nu^k= \nu\vert_{\mathfrak{Z}^k}$. 
D'autre part, pour $m\in {\Bbb Z}$, d'après \cite[1.4.10]{BK}, on a la suite exacte courte
$$
0 \rightarrow \smash{\overline{\mathfrak{Z}}}^m \xrightarrow{-{\rm ad}_\beta}
\mathfrak{X}^{\underline{k}_0+ m} \buildrel \bs{s}\over{\longrightarrow} \mathfrak{Y}^{\underline{k}_0+m}\rightarrow 0.\leqno{(20)}
$$
Puisque d'après \cite[1.4.13]{BK}, on a $\underline{\mathfrak{Q}}^m\mathfrak{N}= \mathfrak{N}_0\otimes_{\mathfrak{o}_E}\underline{\mathfrak{Q}}^m$ avec $\mathfrak{N}_0= \mathfrak{N}_{k_F(\beta)}(\beta,\mathfrak{A}(E))$, elle se déduit par l'application $-\otimes_{\mathfrak{o}_E}\mathfrak{Y}^m$ de la suite exacte courte
$$
0 \rightarrow \mathfrak{N}_0 \xrightarrow{-{\rm ad}_\beta}
\mathfrak{P}^{k_F(\beta)}(E) \buildrel \bs{s}_0\over{\longrightarrow} \mathfrak{p}_E^{k_F(\beta)}\rightarrow 0.\leqno{(21)}
$$
D'après (20) et (21), pour tout $k\in {\Bbb Z}$ et toute mesure de Haar $\mathfrak{d}\bar{g}_{i,j}$ sur $\mathfrak{g}_{i,j}/\mathfrak{b}_{i,j}$, on a
$$
{{\rm vol}(\smash{\overline{\mathfrak{Z}}}^k, \mathfrak{d}\bar{g}_{i,j}) \over {\rm vol}(\mathfrak{X}^{\underline{k}_0+k}/ \mathfrak{Y}^{\underline{k}_0+k}, \mathfrak{d}\bar{g}_{i,j})} = \mu_F^+(\beta)^{\dim_E(\mathfrak{b}_{i,j})}.\leqno{(22)}
$$
De (18), (19) et (22), on déduit (voir par exemple \cite[5.3.3, 5.3.4]{L2}) que 
$$
\vert {\rm \det}_F(\bs{\gamma}_{i,j}; \mathfrak{g}_{i,j})\vert = 
\mu_F^+(\beta)^{-\dim_E(\mathfrak{b}_{i,j})}
\vert {\rm det}_E(\bs{b}_{i,j}; \mathfrak{b}_{i,j})\vert_E,
$$
ce qui est l'égalité cherchée.
\end{proof}

L'application terme constant  $C^\infty_{\rm c}(G)\rightarrow C^\infty_{\rm c}(M),\, f\mapsto f_P$ permet dualement d'associer à une distribution $T_M$ sur $M$ une distribution $\iota_P^G(T_M)$ sur $G$: pour $f\in C^\infty_{\rm c}(G)$, on pose
$$
\langle f, i_P^G(T_M)\rangle = \langle f_P,T_M\rangle. 
$$
De la même manière, l'application terme constant $C^\infty_{\rm c}(\mathfrak{b})\rightarrow C^\infty_{\rm c}(\mathfrak{m}_*),\, \mathfrak{f}\mapsto \mathfrak{f}_{\mathfrak{p}_*}$ permet dualement d'associer à une distribution $T_{\mathfrak{m}_*}$ sur $\mathfrak{m}_*$ une distribution $i_{\mathfrak{p}_*}^\mathfrak{b}(T_{\mathfrak{m}_*})$ sur $\mathfrak{b}$: pour $\mathfrak{f}\in C^\infty_{\rm c}(\mathfrak{b})$, on pose
$$
\langle \mathfrak{f}, i_{\mathfrak{p}_*}^\mathfrak{b}(T_{\mathfrak{m}_*})\rangle = \langle \mathfrak{f}_{\mathfrak{p}_*},T_{\mathfrak{m}_*}\rangle. 
$$

\begin{monlem2}
Pour $b\in (\mathfrak{m}_*)_{\rm qre}\cap \mathfrak{b}_{\rm qr}\cap \underline{\mathfrak{Q}}^{\underline{k}_0+1}$, posant $\gamma = \beta + \bs{x}b$, 
on a
$$
i_{\mathfrak{p}_*}^{\mathfrak{b}}(\theta_{\ES{O}_\gamma^M})= \left( \textstyle{\vert D_{M\backslash G}(\gamma) \vert
\over \vert D_{\mathfrak{m}_*\! \backslash \mathfrak{b}}(b)\vert_E}\right)^{1\over 2}\theta_{i_P^G(\ES{O}_\gamma^M)}.
$$
\end{monlem2}

\begin{proof}Pour toute fonction $\phi\in C^\infty_{\rm c}(G\times \bs{x}\underline{\mathfrak{Q}}^{\underline{k}_0+1})$, on note $\phi_\delta^{\bs{x}}\in C^\infty_{\rm c}(\underline{\mathfrak{Q}}^{\underline{k}_0+1})$ la fonction dfinie par $\phi_\delta^{\bs{x}}(y)= \phi_\delta(\bs{x}y)$ pour $y\in \underline{\mathfrak{Q}}^{\underline{k}_0+1}$. De mme, pour toute fonction $\varphi\in C^\infty_{\rm c}(M\times \bs{x}\underline{\mathfrak{Q}}^{\underline{k}_0+1}_\mathfrak{m})$, on note $\varphi_{\delta_M}^{\bs{x}}\in C^\infty_{\rm c}(\underline{\mathfrak{Q}}^{\underline{k}_0+1}_\mathfrak{m})$ la fonction dfinie par $\varphi_{\delta_M}^{\bs{x}}(y)= \varphi_{\delta_M}(\bs{x}y)$ pour $y\in \underline{\mathfrak{Q}}^{\underline{k}_0+1}_\mathfrak{m}$. Il suffit de montrer que pour toute fonction $\phi\in C^\infty_{\rm c}(G\times \bs{x}\underline{\mathfrak{Q}}^{\underline{k}_0+1})$, il existe une fonction $\phi_{P,\mathfrak{p}_*}\in C^\infty_{\rm c}(M\times \bs{x}\underline{\mathfrak{Q}}^{\underline{k}_0+1}_\mathfrak{m})$ vrifiant les conditions $(i)$ et $(ii)$ suivantes, pour tout 
$b\in (\mathfrak{m}_*)_{\rm qre}\cap \mathfrak{b}_{\rm qr}\cap \underline{\mathfrak{Q}}^{\underline{k}_0+1}$:
\begin{enumerate}
\item[(i)] $\ES{O}_b^{\mathfrak{m}_*}((\phi_\delta^{\bs{x}})_{\mathfrak{p}_*})= \ES{O}_b^{\mathfrak{m}_*}((\phi_{P,\mathfrak{p}_*})_{\delta_M}^{\bs{x}})$;
\item[(ii)] $\ES{O}_{\beta + \bs{x}b}^M((\phi^\delta)_P) =  \left({\vert D_{M\backslash G}(\beta + \bs{x}b)\vert \over \vert D_{\mathfrak{m}_*\!\backslash \mathfrak{b}}(b)\vert}\right)^{-{1\over 2}}\ES{O}_{\beta + \bs{x}b}^M((\phi_{P,\mathfrak{p}_*})^{\delta_M})$.
\end{enumerate}
En effet, soit $\phi\in C^\infty_{\rm c}(G\times \bs{x}\underline{\mathfrak{Q}}^{\underline{k}_0+1})$, 
et supposons qu'une telle fonction $\phi_{P,\mathfrak{p}_*}$ existe. Soit $b\in (\mathfrak{m}_*)_{\rm qre}\cap \mathfrak{b}_{\rm qr}\cap \underline{\mathfrak{Q}}^{\underline{k}_0+1}$, et 
posons $\gamma = \beta + \bs{x}b\in M_{\rm qre}\cap G_{\rm qr}$. En appliquant (ii)  la distribution $\ES{O}_\gamma^M$ sur $M$, on obtient
$$
\theta_{i_P^G(\ES{O}_\gamma)}(\phi_\delta^{\bs{x}})= i_P^G(\ES{O}_\gamma)(\phi^\delta)= \ES{O}_\gamma^M((\phi^\delta)_P)= 
\left(\textstyle{\vert D_{M\backslash G}(\gamma)\vert \over \vert D_{\mathfrak{m}\backslash \mathfrak{g}}(b)\vert} \right)^{-{1\over 2}}\ES{O}_\gamma^M((\phi_{P,\mathfrak{p}_*})^{\delta_M}).
$$
En appliquant (i)  la distribution $\theta_{\ES{O}_\gamma^M}=\lambda_M \ES{O}_b^{\mathfrak{m}_*}$ sur $\mathfrak{m}_*$, on obtient
$$
\ES{O}_\gamma^M((\phi_{P,\mathfrak{p}_*})^{\delta_M})= \theta_{\ES{O}_\gamma^M}((\phi_{P,\mathfrak{p}_*})_{\delta_M}^{\bs{x}})
= \theta_{\ES{O}_\gamma^M}((\phi_\delta^{\bs{x}})_{\mathfrak{p}_*}).
$$
Or on a (par dfinition)
$$
\theta_{\ES{O}_\gamma^M}((\phi_\delta^{\bs{x}})_{\mathfrak{p}_*})= i_{\mathfrak{p}^*}^{\mathfrak{b}}(\theta_{\ES{O}_\gamma^M})(\phi_\delta^{\bs{x}}),
$$
d'o
$$
\theta_{i_P^G(\ES{O}_\gamma)}(\phi_\delta^{\bs{x}}) = \left( \textstyle{\vert D_{M\backslash G}(\gamma) \vert
\over \vert D_{\mathfrak{m}_* \backslash \mathfrak{b}}(b)\vert_E}\right)^{-{1\over 2}} i_{\mathfrak{p}^*}^{\mathfrak{b}}(\theta_{\ES{O}_\gamma^M})(\phi_\delta^{\bs{x}}).
$$
Comme l'galit ci--dessus est vraie pour toute fonction $\phi\in C^\infty_{\rm c}(G\times \bs{x}\underline{\mathfrak{Q}}^{\underline{k}_0+1})$ --- pourvu qu'il existe une fonction $\phi_{P,\mathfrak{p}_*}\in C^\infty_{\rm c}(M\times \bs{x}\underline{\mathfrak{Q}}^{\underline{k}_0+1}_{\mathfrak{m}})$ vrifiant les conditions (i) et (ii) ci--dessus --- cela dmontre le lemme. Reste  prouver l'existence de $\phi_{P,\mathfrak{p}_*}$.

Soit $\phi\in C^\infty_{\rm c}(G\times \bs{x}\underline{\mathfrak{Q}}^{\underline{k}_0+1})$. Puisque $C^\infty_{\rm c}(G\times \bs{x}\underline{\mathfrak{Q}}^{\underline{k}_0+1})= C^\infty_{\rm c}(G)\otimes C^\infty_{\rm c}(\bs{x}\underline{\mathfrak{Q}}^{\underline{k}_0+1})$, on peut par linarit supposer que $\phi$ est de la forme $\phi = f \otimes \xi$ avec $f\in C^\infty_{\rm c}(G)$ et $\xi\in C^\infty_{\rm c}(\bs{x}\underline{\mathfrak{Q}}^{\underline{k}_0+1})$. Soit $\bar{f}\in C^\infty_{\rm c}(M)$ la fonction dfinie par $\bar{f}(m)= \int_{U_P\times K}f(muk)dudk$. Notons $\mathfrak{f}$ la fonction $\xi^{\bs{x}}\in C^\infty_{\rm c}(\underline{\mathfrak{Q}}^{\underline{k}_0+1})$, et prenons le terme $K_H$--invariant $\mathfrak{f}_{\mathfrak{p}_*}\in C^\infty_{\rm c}(\mathfrak{m}_*)$ de $\mathfrak{f}$ suivant $\mathfrak{p}_*$. Puisque
$$
\underline{\mathfrak{Q}}^{\underline{k}_0+1} + \mathfrak{u}_* \subset {^H(\underline{\mathfrak{Q}}^{\underline{k}_0+1})},
$$
le support de $\mathfrak{f}_{\mathfrak{p}_*}$ est contenu dans ${^H\!(\underline{\mathfrak{Q}}^{\underline{k}_0+1})}\cap \mathfrak{m}_*$, et 
d'aprs (14), cette intersection co\"{\i}ncide avec ${^{M_*}\!(\underline{\mathfrak{Q}}^{\underline{k}_0+1}_{\mathfrak{m}})}$. On peut donc dcomposer $\mathfrak{f}_{\mathfrak{p}_*}$ en
$$
\mathfrak{f}_{\mathfrak{p}_*}= \sum_{h\in M_*} \mathfrak{f}_{\mathfrak{p}_*,h}
$$ 
avec $\mathfrak{f}_{\mathfrak{p}_*,h}\in C^\infty_{\rm c}(h\underline{\mathfrak{Q}}^{\underline{k}_0+1}_{\mathfrak{m}}h^{-1})$ et $\mathfrak{f}_{\mathfrak{p}_*,h}=0$ sauf pour un nombre fini de $h$. Comme en \ref{le principe de submersion}, on obtient une fonction  
$\sum_{h\in M_*} \mathfrak{f}_{\mathfrak{p}_*,h}\circ {\rm Ad}_h \circ \bs{s}$ sur $\bs{x}\underline{\mathfrak{Q}}^{\underline{k}_0+1}_{\mathfrak{m}}$, que l'on note 
$\xi_{\mathfrak{p}_*}\in C^\infty_{\rm c}(\bs{x}\underline{\mathfrak{Q}}^{\underline{k}_0+1}_{\mathfrak{m}})$. Bien sr la fonction $\xi_{\mathfrak{p}_*}$ n'est pas vraiment dfinie, puisqu'elle dpend de la dcomposition de $\mathfrak{f}_{\mathfrak{p}_*}$ choisie, mais pour toute distribution $M$--invariante $T_{M}$ sur $M$, la quantit $\langle \xi_{\mathfrak{p}_*}, \wt{\vartheta}_{T_M}\rangle$
est bien dfinie (elle ne dpend pas de la dcomposition de $\mathfrak{f}_{\mathfrak{p}_*}$ choisie, cf. \ref{le principe de submersion}). On pose
$$
\phi_{P,\mathfrak{p}_*}= \bar{f}\otimes \xi_{\mathfrak{p}_*}\in C^\infty_{\rm c}(M\times \bs{x}\underline{\mathfrak{Q}}^{\underline{k}_0+1}_{\mathfrak{m}}).
$$
Pour $b\in (\mathfrak{m}_*)_{\rm qre}\cap \mathfrak{b}_{\rm qr}\cap \underline{\mathfrak{Q}}^{\underline{k}_0+1}$, posant $\gamma = \beta + \bs{x}b$, on a
$$
\theta_{\ES{O}_\gamma^M}((\phi_{P,\mathfrak{p}_*})_{\delta_M}^{\bs{x}})= \theta_{\ES{O}_\gamma^M}(c\, \xi_{\mathfrak{p}_*}^{\bs{x}})
= \theta_{\ES{O}_\gamma^M}( (\phi_\delta^{\bs{x}})_{\mathfrak{p}_*})
$$
avec $c= \int_M\bar{f}(m)dm = \int_G f(g)dg$. Puisque $\theta_{\ES{O}_\gamma^M}= \lambda_M \ES{O}_b^{\mathfrak{m}_*}$, cela prouve que 
la fonction $\phi_{P,\mathfrak{p}_*}$ vrifie la condition (i). Quant  la condition (ii), pour $m\in M$ et $u\in U$, posons $v_m(u)= m^{-1}umu\in U$. Pour $m\in M$, $u\in U$, $k\in K$ et $b\in \mathfrak{m}_*\cap \mathfrak{b}_{\rm qr}$ tel que $\gamma = \beta + \bs{x}b$ appartient  $G_{\rm qr}$, l'lment  $\delta_M(m,\bs{x} b)= m^{-1}\gamma m$ appartient  $M \cap G_{\rm qr}$, et on a
$$
\delta(muk,\bs{x}b)= k^{-1}\delta_M(m,\bs{x}b) v_{\delta_M(m,\bs{x}b)}(u)k.\leqno{(23)}
$$
De plus, l'application $U\rightarrow U,\, u\mapsto v_{\delta_M(m,\bs{xb})}(u)$ est un automorphisme de varit $\mathfrak{p}$--adique de Jacobien constant 
gal 
$$
\vert {\rm \det}_F(1-{\rm Ad}_{\gamma^{-1}}; \mathfrak{u})\vert_F= \delta_P(\gamma)^{-{1\over 2}}\vert D_{M\backslash G}(\gamma)\vert^{1\over 2}.\leqno{(24)}
$$
Pour $m_*\in M_*$ et $u_*\in U_*$, posons $\mathfrak{v}_{m^*}(u_*)=u_*^{-1}m_*u_* - b_* \in \mathfrak{u}_*$.
Alors pour $m_*\in M_*$, $u_*\in U_*$, $k\in K_H$ et $b\in \mathfrak{m}_*\cap \mathfrak{b}_{\rm qr}$, on a
$$
k_*^{-1}u_*^{-1}m_*^{-1} b m_*u_*k_*= k_*^{-1}(m_*^{-1}b m_* + \mathfrak{v}_{m_*^{-1}bm_*}(u_*))k_*,
$$
et l'application $\mathfrak{u}_*\rightarrow \mathfrak{u}_*,\, y\mapsto \mathfrak{v}_{m_*^{-1}bm_*}(1+y)$ est un automorphisme de varit $\mathfrak{p}$--adique de Jacobien constant gal 
$$
\vert {\rm det}_F({\rm ad}_b; \mathfrak{u_*})\vert=\vert D_{\mathfrak{m}_*\!\backslash \mathfrak{b}}(b)\vert^{1\over 2}.\leqno{(25)}
$$
La formule (23), et les calculs des Jacobiens (24) et (25), entranent que la fonction $\phi_{P,\mathfrak{p}_*}$ vrifie la condition (ii).
\end{proof}

\begin{mapropo}
Pour $b\in (\mathfrak{m}_*)_{\rm qre}\cap \mathfrak{b}_{\rm qr}\cap \underline{\mathfrak{Q}}^{\underline{k}_0+1}$, posant $\gamma = \beta + \bs{x}b$, 
on a
$$
\theta_{\ES{O}_\gamma} = \lambda \ES{O}_b^\mathfrak{b},
$$
où la constante $\lambda>0$ est celle de la proposition de \ref{IO normalisées}, \cad 
$\lambda = (q_E^{n_F(\beta)} \mu_F(\beta))^{d^2}$. 
\end{mapropo}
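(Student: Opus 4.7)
The plan is to deduce the proposition directly by combining Lemma 2, Lemma 1, equation (16), and the two parabolic descent formulas for orbital integrals on $\mathfrak{b}$ and on $G$. There is essentially no new analytic content beyond what the two preceding lemmas give; what remains is a bookkeeping computation.

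First I would rewrite the two parabolic descent formulas in terms of parabolic induction of distributions. The descent formula \ref{variante sur l'alg�bre de Lie (suite)}.(2) applied on $\mathfrak{b}$ with the parabolic $\mathfrak{p}_*$ reads
\[
i_{\mathfrak{p}_*}^{\mathfrak{b}}(\ES{O}_b^{\mathfrak{m}_*})
= \lvert D_{\mathfrak{m}_*\!\backslash\mathfrak{b}}(b)\rvert_E^{1/2}\,\ES{O}_b^{\mathfrak{b}},
\]
and \ref{descente parabolique}.(3) applied on $G$ with the parabolic $P$ reads
\[
i_P^G(\ES{O}_\gamma^M) = \lvert D_{M\backslash G}(\gamma)\rvert^{1/2}\,\ES{O}_\gamma .
\]
Both identities are available since $b\in(\mathfrak{m}_*)_{\rm qre}\cap\mathfrak{b}_{\rm qr}$ and $\gamma\in M_{\rm qre}\cap G_{\rm qr}$ (the latter by the first part of Lemma 1).

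Next I would assemble the pieces. Lemma 2 provides
\[
i_{\mathfrak{p}_*}^{\mathfrak{b}}(\theta_{\ES{O}_\gamma^M})
= \Bigl(\textstyle{\lvert D_{M\backslash G}(\gamma)\rvert\over \lvert D_{\mathfrak{m}_*\!\backslash\mathfrak{b}}(b)\rvert_E}\Bigr)^{1/2}
\theta_{i_P^G(\ES{O}_\gamma^M)} .
\]
On the one hand, using equation (16), namely $\theta_{\ES{O}_\gamma^M}=\lambda_M\ES{O}_b^{\mathfrak{m}_*}$, and the first boxed identity above, the left--hand side equals $\lambda_M\lvert D_{\mathfrak{m}_*\!\backslash\mathfrak{b}}(b)\rvert_E^{1/2}\ES{O}_b^{\mathfrak{b}}$. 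On the other hand, using the second boxed identity together with the linearity of $T\mapsto\theta_T$, the right--hand side equals
\[
\Bigl(\textstyle{\lvert D_{M\backslash G}(\gamma)\rvert\over \lvert D_{\mathfrak{m}_*\!\backslash\mathfrak{b}}(b)\rvert_E}\Bigr)^{1/2}
\lvert D_{M\backslash G}(\gamma)\rvert^{1/2}\,\theta_{\ES{O}_\gamma}
= \textstyle{\lvert D_{M\backslash G}(\gamma)\rvert \over \lvert D_{\mathfrak{m}_*\!\backslash\mathfrak{b}}(b)\rvert_E^{1/2}}\,\theta_{\ES{O}_\gamma}.
\]
Equating the two expressions yields
\[
\theta_{\ES{O}_\gamma} = \lambda_M\,\textstyle{\lvert D_{\mathfrak{m}_*\!\backslash\mathfrak{b}}(b)\rvert_E\over \lvert D_{M\backslash G}(\gamma)\rvert}\,\ES{O}_b^{\mathfrak{b}}.
\]

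Finally, Lemma 1 gives exactly the ratio we need:
\[
\textstyle{\lvert D_{M\backslash G}(\gamma)\rvert \over \lvert D_{\mathfrak{m}_*\!\backslash\mathfrak{b}}(b)\rvert_E}
= \lambda_M\lambda^{-1},
\]
so $\lambda_M\lvert D_{\mathfrak{m}_*\!\backslash\mathfrak{b}}(b)\rvert_E/\lvert D_{M\backslash G}(\gamma)\rvert = \lambda$, and one obtains $\theta_{\ES{O}_\gamma} = \lambda\,\ES{O}_b^{\mathfrak{b}}$, which is the claim. The crucial point—and the only nontrivial obstacle—was the compatibility of central descent with parabolic induction proved in Lemma 2, together with the explicit Jacobian computation of Lemma 1; once these are in hand, the proposition is a formal consequence, with the constants matching on the nose because $\lambda_M=(q_E^{n_F(\beta)}\mu_F(\beta))^{\dim_E(\mathfrak{m}_*)}$ and $\lambda=(q_E^{n_F(\beta)}\mu_F(\beta))^{d^2}$ differ precisely by the power $\dim_E(\mathfrak{b})-\dim_E(\mathfrak{m}_*)$ controlling the Jacobian ratio.
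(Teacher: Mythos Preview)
Your proof is correct and follows essentially the same route as the paper: both combine the parabolic descent identities $i_P^G(\ES{O}_\gamma^M)=\vert D_{M\backslash G}(\gamma)\vert^{1/2}\ES{O}_\gamma$ and $i_{\mathfrak{p}_*}^{\mathfrak{b}}(\ES{O}_b^{\mathfrak{m}_*})=\vert D_{\mathfrak{m}_*\!\backslash\mathfrak{b}}(b)\vert_E^{1/2}\ES{O}_b^{\mathfrak{b}}$ with Lemma~2, equation~(16), and the Jacobian ratio of Lemma~1. The only difference is that you spell out the algebraic cancellation a bit more explicitly than the paper does.
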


\begin{proof}
D'après la formule de descente \ref{descente parabolique}.(6), on a $i_P^G(I^M(\gamma,\cdot)= I^G(\gamma,\cdot)$, 
et d'après la formule de descente \ref{variante sur l'algèbre de Lie (suite)}.(4), on a $i_{\mathfrak{p}_*}^{\mathfrak{b}}(I^{\mathfrak{m}_*}(b,\cdot))= 
I^\mathfrak{b}(\gamma,\cdot)$. En termes des intégrales orbitales non normalisées, on a
donc $i_P^G(\ES{O}_\gamma^M)= \vert D_{M\backslash G}(\gamma)\vert^{1\over 2}\ES{O}_\gamma$ et $i_{\mathfrak{p}_*}^\mathfrak{b}(\ES{O}_b^{\mathfrak{m}_*})=\vert D_{\mathfrak{m}_*\!\backslash \mathfrak{b}}(b)\vert_E^{1\over 2}\ES{O}_b^\mathfrak{b}$.
D'aprs le lemme 1, on a
$$
\lambda = \textstyle{\vert D_{\mathfrak{m}_*\!\backslash \mathfrak{b}}(b)\vert_E \over \vert D_{M\backslash G}(\gamma)\vert}\lambda_M,
$$
et d'aprs le lemme 2 et la relation (16), on a
$$
\theta_{i_P^G(\ES{O}_\gamma^M)}=\left(\textstyle{D_{\mathfrak{m}_*\!\backslash \mathfrak{b}}(b)\vert_E \over \vert D_{M\backslash G}(\gamma)\vert }\right)^{1\over 2}i_{\mathfrak{p}_*}^{\mathfrak{b}}(\lambda_M \ES{O}_b^{\mathfrak{m}_*}).
$$
D'o la proposition.
\end{proof}

\begin{moncoro1}
Pour $b\in \mathfrak{b}_{\rm qr}\cap \underline{\mathfrak{Q}}^{\underline{k}_0+1}$, l'élément $\gamma = \beta + \bs{x}b$ appartient 
à $G_{\rm qr}$, et on a l'égalité
$$
\theta_{I^G(\gamma,\cdot)}= \vert \beta \vert_E^d \left( \textstyle{\eta_G(\gamma)\over \eta_{\mathfrak{b}}(b)}\right)^{-{1\over 2}}I^\mathfrak{b}(b,\cdot),
$$
avec
$$
\vert \beta \vert_E^d \left(\textstyle{\eta_G(\gamma)\over 
\eta_\mathfrak{b}(b)}\right)^{-{1\over 2}}= \vert \beta\vert_E^{d\over 2}\lambda^{1\over 2}.
$$
\end{moncoro1}

\begin{proof}
Soit $b\in \mathfrak{b}_{\rm qr} \cap  \underline{\mathfrak{Q}}^{\underline{k}_0+1}$. 
Puisque $\mathfrak{b}_{\rm qr} = \bigcup_{P\in \ES{P}_H} {^H\!((\mathfrak{m}_{P_H})_{\rm qre})}\cap \mathfrak{b}_{\rm qr}$, 
quitte à remplacer $b$ par $h^{-1}bh$ pour un $h\in H$, on peut (d'aprs le lemme de \ref{le rsultat principal}) 
supposer que $b\in (\mathfrak{m}_{P_H})_{\rm qre}\cap \mathfrak{b}_{\rm qr} \cap \underline{\mathfrak{Q}}^{\underline{k}_0+1}$ pour un $P_H\in \ES{P}_H$. Posons $P=P_H^G$, $M= M_P$ et $\mathfrak{m}_* = \mathfrak{m}_{P_H}$. L'élément $\gamma$ appartient à $M_{\rm qre}\cap G_{\rm qr}$, et on a 
$\theta_{\ES{O}_\gamma}= \lambda \ES{O}_b^{\mathfrak{b}}$ avec 
$$
\lambda = \lambda_M\textstyle{\vert D_{\mathfrak{m}_*\backslash \mathfrak{b}}(b)\vert_E \over \vert D_{M\backslash G}(\gamma)\vert}
= \vert \beta \vert_E^d \textstyle{\eta_{\mathfrak{m}_*}(b)\over \eta_M(\gamma)}\textstyle{\vert D_{\mathfrak{m}_*\backslash \mathfrak{b}}(b)\vert_E \over \vert D_{M\backslash G}(\gamma)\vert}
= \vert \beta \vert_E^d \textstyle{
\eta_\mathfrak{b}(b)\over \eta_G(\gamma)}.
$$
En termes des intégrales orbitales normalisées, on a donc
$$
\theta_{I^G(\gamma,\cdot)}= \vert \beta \vert_E^d \left(\textstyle{\eta_G(\gamma)\over 
\eta_\mathfrak{b}(b)}\right)^{-{1\over 2}} I^{\mathfrak{b}}(b,\cdot)
$$
avec $\vert \beta \vert_E^d \left(\textstyle{\eta_G(\gamma)\over 
\eta_\mathfrak{b}(b)}\right)^{-{1\over 2}}= \vert \beta	\vert_E^d (\vert \beta \vert_E^{-d}\lambda)^{1\over 2}
= \vert \beta\vert_E^{d\over 2}\lambda^{1\over 2}$.
\end{proof}

On en déduit que dans le corollaire de \ref{IO normalisées}, le point (i) reste vrai pour tous les 
éléments de $\underline{\mathfrak{Q}}^{\underline{k}_0+1}$ qui sont quasi--rguliers (dans $\mathfrak{b}$), et pas seulement 
pour ceux qui sont elliptiques:

\begin{moncoro2}
Pour toute fonction $f\in C^\infty_{\rm c}(G)$, il existe une fonction $f^{\mathfrak{b}}\in C^\infty_{\rm c}(\underline{\mathfrak{Q}}^{\underline{k}_0+1})$ telle que pour tout $b\in \mathfrak{b}_{\rm qr}
\cap \underline{\mathfrak{Q}}^{\underline{k}_0+1}$, on a l'égalité
$$
I^G(\beta + \bs{x}_0\otimes b,f) = I^{\mathfrak{b}}(b,f^\mathfrak{b}).
$$
\end{moncoro2}

\begin{marema2}
{\rm On a bien sûr aussi la variante sur $\mathfrak{g}$ du corollaire 2 (le point (i) du corollaire de \ref{variante sur l'algbre de Lie} reste vrai pour tous les éléments de $\underline{\mathfrak{Q}}^{\underline{k}_0+1}$ qui sont quasi--réguliers, et pas seulement 
pour ceux qui sont elliptiques):
{\it --- Pour toute fonction $\mathfrak{f}\in C^\infty_{\rm c}(\mathfrak{g})$, il existe une fonction $\mathfrak{f}^{\mathfrak{b}}\in C^\infty_{\rm c}(\underline{\mathfrak{Q}}^{\underline{k}_0+1})$ telle que pour tout $b\in \mathfrak{b}_{\rm qr}\cap \underline{\mathfrak{Q}}^{\underline{k}_0+1}$, on a l'égalité
$$
I^\mathfrak{g}(\beta + \bs{x}_0\otimes b,\mathfrak{f}) = I^{\mathfrak{b}}(b,\mathfrak{f}^\mathfrak{b}).\eqno{\blacksquare}
$$
}}
\end{marema2}

\subsection{Descente centrale au voisinage d'un élément fermé}\label{descente centrale au voisinage d'un élément fermé}
Soit $\beta\in G$ un élément fermé (cf. \ref{éléments qre}). Notons $\mathfrak{b}$ le centralisateur $\mathfrak{g}_\beta={\rm End}_{F[\beta]}(V)$ de $\beta$ dans $\mathfrak{g}$. La $F$--algèbre $F[\beta]$ se décompose en $F[\beta]= E_1\times \cdots \times E_r$ pour des extensions $E_i/F$. Pour $i=1,\ldots ,r$, notons $e_i$ l'idempotent de $F[\gamma]$ associé à $E_i$, et posons $V_i= e_i(V)$, $\mathfrak{g}_i={\rm End}_F(V_i)$ et $\mathfrak{b}_i= {\rm End}_{E_i}(V_i)$. On a donc la décomposition $\mathfrak{b}= \mathfrak{b}_1\times \cdots \times \mathfrak{b}_r$, et l'élément $\beta=(\beta_1,\ldots ,\beta_r)$ est ($F$--){\it pur} dans $\mathfrak{m}= \mathfrak{g}_1\times \cdots \times \mathfrak{g}_r$, au sens où pour $i=1,\ldots ,r$, l'élément $\beta_i$ est pur dans $\mathfrak{g}_i$. Notons $H$ le centralisateur $G_\beta =\mathfrak{b}^\times$ de $\beta$ dans $G$, $A_\beta$ le tore déployé maximal du centre $Z(H)=F[\beta]^\times$ de $H$, et $M=M(\beta)$ le centralisateur $Z_G(A_\beta)$ de $A_\beta$ dans $G$. On a donc $H= H_1\times \cdots \times H_r$ avec $H_i={\rm Aut}_{E_i}(V_i)$, et $M= G_1\times \cdots \times G_r$ avec $G_i= {\rm Aut}_F(V_i)$. Quitte à remplacer $\beta$ par un conjugué par $g$, on peut supposer que $\beta$ est en \og position standard\fg, \cad que $A_\beta= A_P$ pour un $P\in \ES{P}$. Alors on a $M= M_P$ et $\mathfrak{m}=\mathfrak{m}_P$. Rappelons que pour toute fonction $f\in C^\infty_{\rm c}(G)$, on a la formule de descente (\ref{descente parabolique}.(6))
$$
I^G(\gamma,f)= I^M(\gamma,f_P), \quad \gamma \in M\cap G_{\rm qr}.\leqno{(1)}
$$

Pour $i\in \{1,\ldots ,r\}$ tel que $E_i\neq F$, on peut descendre les intégrales orbitales normalisées 
au voisinage de $\beta_i$ dans $G_i$ en des intégrales orbitales normalisées au voisinage de $0$ dans $\mathfrak{b}_i$ 
(corollaire 2 de \ref{descente centrale au voisinage d'un élément pur (suite)}): il existe un élément $\bs{x}_i\in \mathfrak{g}_i$ et un voisinage ouvert compact $\ES{V}_i$ de $0$ dans $\mathfrak{b}_i$, tels que pour toute fonction $f_i\in C^\infty_{\rm c}(G_i)$, il existe une fonction $f_i^{\mathfrak{b}_i}\in C^\infty_{\rm c}(\ES{V}_i)$ telle que
$$
I^{G_i}(\beta_i + \bs{x}_i b_i,f_i)=  I^{\mathfrak{b}_i}(b_i,f_i^{\mathfrak{b}_i}),\quad b_i \in (\mathfrak{b}_i)_{\rm qr} \cap \ES{V}_i.
$$
Précisément, l'élément $\bs{x}_i$ est de la forme $\bs{x}_i=\bs{x}_{i,0}\otimes 1$ pour un élément $\bs{x}_{i,0}\in \mathfrak{A}(E_i)$ dans l'image réciproque de $1$ par une corestriction modérée $\bs{s}_{i,0}:A(E_i)\rightarrow E_i$ sur $A(E_i)$ relativement à $E_i/F$ et une $(W_i,E_i)$--décomposition $\mathfrak{g}_i=A(E_i)\otimes_{E_i}\mathfrak{b}_i$ de $\mathfrak{g}_i$ induite par une $(W_i,E_i)$--décomposition $\underline{\mathfrak{A}_i}= \mathfrak{A}(E_i)\otimes_{\mathfrak{o}_{E_i}}\underline{\mathfrak{B}}_i$ de $\underline{\mathfrak{A}}_i$, où $\underline{\mathfrak{A}}_i$ est un $\mathfrak{o}$--ordre héréditaire dans $\mathfrak{g}_i$ normalisé par $E_i^\times$ tel que $\underline{\mathfrak{B}}_i= \mathfrak{b}_i\cap \underline{\mathfrak{A}}_i$ est un $\mathfrak{o}_{E_i}$--ordre héréditaire minimal dans $\mathfrak{b}_i$. Le voisinage $\ES{V}_i$ de $0$ dans $\mathfrak{b}_i$ est donné par $\ES{V}_i = \underline{\mathfrak{Q}}_i^{\smash{\underline{k}_{i,0}+1}}$, où $\underline{\mathfrak{Q}}_i ={\rm rad}(\underline{\mathfrak{B}}_i)$ et $\underline{k}_{i,0} = k_0(\beta_i, \underline{\mathfrak{A}}_i)= d_i k_F(\beta_i)$ avec $d_i= \dim_{E_i}(V_i)\;(=e(\underline{\mathfrak{B}}_i\vert \mathfrak{o}_{E_i}))$. Rappelons que d'après \ref{éléments qre}.(7), ${^{H_i}(\ES{V}_i)}$ est une partie ouverte fermée et $H_i$--invariante dans $\mathfrak{b}_i$, et que d'après \ref{le résultat principal}.(3), $\omega_i={^{G_i}(\beta_i+ \bs{x}_i \ES{V}_i)}$ est une partie ouverte fermée et $G$--invariante dans $G$. 

Pour $i\in \{1,\ldots ,r\}$ tel que $E_i=F$, l'élément $\beta_i$ appartient à $F^\times$, et on a $\mathfrak{b}_i=\mathfrak{g}_i$ et $H_i=G_i$. On choisit un $\mathfrak{o}$--ordre héréditaire minimal $\underline{\mathfrak{B}}_i$ dans $\mathfrak{b}_i$. On pose $\bs{x}_i=1$,  $\underline{\mathfrak{Q}}_i = {\rm rad}(\underline{\mathfrak{B}}_i)$, $d_i=\dim_F(V_i)$ et $\ES{V}_i = \underline{\mathfrak{Q}}_i^{d_i\nu(\beta_i)+1}$. D'après \ref{éléments qre}.(7), l'ensemble 
$\omega_i= {^{G_i}(\beta_i + \bs{x}_i\ES{V}_i)}= \beta_i +  {^{G_i}(\ES{V}_i)}$ 
est un voisinage ouvert fermé et $G_i$--invariant de $\beta_i$ dans $G_i$. Pour toute fonction $f_i\in C^\infty_{\rm c}(G_i)$, on choisit une décomposition
$$
f_i = \sum_{g_i\in G_i} f_{i,g_i}
$$
avec $f_{i,g_i}\in C^\infty_{\rm c}(\beta_i + g_i\ES{V}_i g_i^{-1})$ et $f_{i,g_i}=0$ pour presque tout $g_i\in G_i$, et on note $f_i^{\mathfrak{b}_i}\in C^\infty_{\rm c}(\ES{V}_i)$ la fonction définie par
$$
f_i^{\mathfrak{b}_i}(b_i) = \sum_{g_i\in G_i} f_i\circ {\rm Int}_{g_i}(\beta_i + b_i), \quad b_i\in \ES{V}_i.
$$ On a donc
$$
I^{G_i}(\beta_i + \bs{x}_i b_i,f_i)=  I^{\mathfrak{b}_i}(b_i,f_i^{\mathfrak{b}_i}),\quad b_i \in 
(\mathfrak{b}_i)_{\rm qr}\cap \ES{V}_i.
$$

Soit $\bs{x}=(\bs{x}_1,\ldots ,\bs{x}_r)\in \mathfrak{m}$, et soit $\ES{V}$ le voisinage ouvert compact de $0$ dans $\mathfrak{b}$ défini par $\ES{V}= \ES{V}_1\times \cdots \times \ES{V}_r$. Notons que ${^H\ES{V}}= {^{H_1}(\ES{V}_1)}\times\cdots\times {^{H_r}(\ES{V}_r)}$ est un voisinage ouvert fermé et $H$--invariant de $0$ dans $\mathfrak{b}$, et que ${^M(\beta + \bs{x}\ES{V})}= \omega_1\times\cdots \times\omega_r$ est un voisinage ouvert fermé et $M$--invariant de $\beta$ dans $M$. D'après ce qui précède, pour toute fonction $f\in C^\infty_{\rm c}(M)$, il existe une fonction $f^\mathfrak{b}\in C^\infty_{\rm c}(\ES{V})$ telle que
$$
I^M(\beta + \bs{x}b,f)= I^{\mathfrak{b}}(b,f^\mathfrak{b}),\quad b\in \mathfrak{b}_{\rm qr} \cap \ES{V}.\leqno{(2)}
$$
En effet, puisque $C^\infty_{\rm c}(M)= C^\infty_{\rm c}(G_1)\otimes\cdots \otimes C^\infty_{\rm c}(G_r)$, la fonction $f$ est une combinaison linéaire de fonctions du type $f_1\otimes \cdots \otimes f_r$ avec $f_i \in C^\infty_{\rm c}(G_i)$. Pour $f= f_1\otimes \cdots \otimes f_r$, la fonction $f^\mathfrak{b}= f_1^{\mathfrak{b}_1}\otimes \cdots \otimes f_r^{\mathfrak{b}_r}$ convient. D'où le résultat par linéarité. 

Posons
$$
\ES{V}'= \{b\in \ES{V}: D_{M\backslash G}(\beta + \bs{x}b)\neq 0\}.
$$
Puisque $\mathfrak{b}= \mathfrak{g}_\beta$ est contenu dans $\mathfrak{m}$, on a $D_{M\backslash G}(\beta) \neq 0$, et $\ES{V}'$ est un voisinage ouvert de $0$ dans $\mathfrak{b}$. On a l'inclusion
$$
\{\beta + \bs{x}b: b\in \mathfrak{b}_{\rm qr}\cap \ES{V}'\} \subset M\cap G_{\rm qr}.\leqno{(3)}
$$
En effet, d'après \ref{descente centrale au voisinage d'un élément pur (suite)}, pour tout $b\in \mathfrak{b}_{\rm qr}\cap \ES{V}$, 
l'élément $\beta + \bs{x}b$ appartient à $M_{\rm qr}$. Par conséquent pour tout $b\in \mathfrak{b}_{\rm qr}\cap \ES{V}'$, 
puisque $\beta + \bs{x}b\in M_{\rm qr}$ et $D_{M\backslash G}(\beta + \bs{x}b)\neq 0$, 
l'élément $\beta + \bs{x}b$ appartient à $M\cap G_{\rm qr}$.

L'application
$$
\delta_M: M\times \bs{x} \ES{V} \rightarrow M, \, (m, \bs{x}b)\mapsto m^{-1}(\beta + \bs{x}b)m
$$
est partout submersive. 
Par définition de $\ES{V}'$, l'application
$$
\delta' : G \times \bs{x}\ES{V}' \rightarrow G, \, (g,\bs{x}b)\mapsto g^{-1}(\beta + \bs{x}b) g
$$
est elle aussi partout submersive. Pour $g\in G$ et $b\in \mathfrak{b}$, puisque $G=KP=PK$, on peut écrire $g=muk$ avec $m\in M$, $u\in U_P$ et $k\in K$. Alors pour $b\in \ES{V}'$, posant $\gamma = m^{-1}(\beta + \bs{x}b)m$, on a
$$
g^{-1}(\beta + \bs{x}b)g= k^{-1} \gamma (\gamma^{-1} u^{-1}\gamma u)k.
$$
Puisque $D_{M\backslash G}(\gamma)\neq 0$, l'application $U_P\rightarrow U_P,\, u\mapsto \gamma^{-1}u^{-1}\gamma u$ est un automorphisme de variété $\mathfrak{p}$--adique. D'où l'inclusion
$$
{\rm Im}(\delta') \subset {^K({\rm Im}(\delta_M)U_P)}.\leqno{(4)}
$$

\begin{marema1}
{\rm L'inclusion (4) n'implique pas que si $f\in C^\infty_{\rm c}(G)$ est à support contenu dans ${\rm Im}(\delta')$, alors la fonction $f_P\in C^\infty_{\rm c}(M)$ est à support contenu dans ${\rm Im}(\delta_M)$. D'autre part, d'après le lemme 3 de \ref{parties compactes modulo conjugaison}, il existe un voisinage ouvert fermé et $M$--invariant $\Xi_M$ de $\beta$ dans $M$ tel que $\Xi_M \subset {^M(\beta + \bs{x} \ES{V}')}$. Pour un tel $\Xi_M$, on a ${^G(\Xi_M)}= {^K(\Xi_M U_P)}$, et $\Xi={^G(\Xi_M)}$ est un voisinage ouvert fermé et $G$--invariant de $\beta$ dans $G$.\hfill$\blacksquare$
}
\end{marema1}

D'après le lemme 3 de \ref{parties compactes modulo conjugaison}, on peut choisir un voisinage ouvert fermé et $M$--invariant $\Xi_M$ de $\beta$ dans $M$ tel que $\Xi_M \subset {^M(\beta + \bs{x} \ES{V}')}$. Pour toute fonction $f\in C^\infty_{\rm c}(G)$, la fonction $f_P\vert_{\Xi_M}$ est à support dans ${\rm Im}(\delta_M)= {^M(\beta + \bs{x}\ES{V})}$, et d'après (2), il existe une fonction $f_{\Xi_M}^\mathfrak{b} = (f_P\vert_{\Xi_M})^{\mathfrak{b}}\in C^\infty_{\rm c}(\ES{V})$ telle que
$$
I^M(\beta + \bs{x}b,f_P\vert_{\Xi_M}) = I^{\mathfrak{b}}(b,f_{\Xi_M}^{\mathfrak{b}}),\quad b\in \mathfrak{b}_{\rm qr}\cap \ES{V}.
$$
D'après (1), on en déduit que pour tout $b\in \mathfrak{b}_{\rm qr}\cap \ES{V}$ tel que $\beta + \bs{x}b\in \Xi_M$, on a l'égalité
$$
I^G(\beta +\bs{x}b,f)= I^{\mathfrak{b}}(b,f_{\Xi_M}^{\mathfrak{b}}). \leqno{(4)}
$$
D'après la remarque 1, ${^G(\Xi_M)}= {^K(\Xi_M U_P)}$ est un voisinage ouvert fermé et $G$--invariant de $\beta$ dans $G$. 

On peut choisir $\Xi_M$ de la forme
$$
\Xi_M= {^M(\beta + \bs{x}\ES{V}^{(k)})}, \quad \ES{V}^{(k)} = \underline{\mathfrak{Q}}_1^{d_1k_1 + 1}\times \cdots \times \underline{\mathfrak{Q}}_r^{d_rk_r+1},
$$
pour un entier un $r$--uplet $(k)= (k_1,\ldots ,k_r)\in {\Bbb Z}^r$ tel que $\ES{V}^{(k)}\subset \ES{V}'$ (pour $i=1,\ldots ,r$, on a 
donc forcément $k_i \geq \max\{ k_F(\beta_i),\nu_{E_i}(\beta_i)\}$). D'où la

\begin{mapropo}
Soit un $r$--uplet $(k)\in {\Bbb Z}^r$ tel que $\ES{W}=\ES{V}^{(k)} \subset \ES{V}'$. Pour toute fonction $f\in C^\infty_{\rm c}(G)$, il existe une fonction $f^{\mathfrak{b}}\in C^\infty_{\rm c}(\ES{\ES{W}})$ telle que pour tout $b\in \mathfrak{b}_{\rm qr}\cap \ES{W}$, on a l'égalité
$$
I^G(\beta + \bs{x} b,f) = I^{\mathfrak{b}}(b,f^\mathfrak{b}).
$$
\end{mapropo}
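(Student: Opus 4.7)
Le plan est de raffiner l'�galit� (4) d�j� obtenue --- qui donne un $f_{\Xi_M}^\mathfrak{b}\in C^\infty_{\rm c}(\ES{V})$ --- en exploitant la forme particuli�re du voisinage $\Xi_M={^M(\beta+\bs{x}\ES{W})}$ pour remplacer $\ES{V}$ par $\ES{W}$. Je poserais donc $\Xi_M={^M(\beta+\bs{x}\ES{W})}$, qui, d'apr�s les arguments d�j� d�velopp�s dans cette section (application composante par composante de la remarque 2 de \ref{le r�sultat principal} et du lemme 3 de \ref{parties compactes modulo conjugaison}, compte--tenu du fait que $\ES{W}\subset \ES{V}'$ assure $\Xi_M\subset {\rm Im}(\delta_M)$), est un voisinage ouvert ferm� et $M$--invariant de $\beta$ dans $M$.

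L'�tape clef consiste � utiliser la submersion \emph{restreinte}
$$
\delta_M\vert_{M\times \bs{x}\ES{W}}: M\times \bs{x}\ES{W}\rightarrow \Xi_M,\, (m,\bs{x}b)\mapsto m^{-1}(\beta+\bs{x}b)m,
$$
laquelle reste partout submersive (comme restriction de la submersion partout submersive $\delta_M$) et surjective sur $\Xi_M$ par d�finition. Pour $f\in C^\infty_{\rm c}(G)$, la fonction $(f_P)\vert_{\Xi_M}$ appartient � $C^\infty_{\rm c}(M)$ puisque $\Xi_M$ est ferm� dans $M$. Par le principe de submersion d'Harish--Chandra appliqu� � cette submersion restreinte, l'application $\phi\mapsto \phi^{\delta_M}$ induit une surjection $C^\infty_{\rm c}(M\times \bs{x}\ES{W})\twoheadrightarrow C^\infty_{\rm c}(\Xi_M)$, et je pourrais �crire $(f_P)\vert_{\Xi_M}=\phi^{\delta_M}$ pour une certaine $\phi\in C^\infty_{\rm c}(M\times \bs{x}\ES{W})$. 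En d�composant $\phi$ selon la structure produit $M=G_1\times\cdots \times G_r$, $\mathfrak{b}=\mathfrak{b}_1\times\cdots \times\mathfrak{b}_r$ et $\ES{W}=\ES{V}_1^{(k_1)}\times\cdots \times \ES{V}_r^{(k_r)}$, puis en appliquant composante par composante le corollaire 2 (et sa remarque 2) de \ref{descente centrale au voisinage d'un �l�ment pur (suite)}, on obtient ainsi une fonction $f^\mathfrak{b}\in C^\infty_{\rm c}(\ES{W})$ (� support dans $\ES{W}$, et non seulement dans ${^H\ES{W}}$, gr�ce au choix restreint de $\phi$) telle que
$$
I^M(\beta+\bs{x}b,(f_P)\vert_{\Xi_M})= I^\mathfrak{b}(b,f^\mathfrak{b}),\quad b\in {^H\ES{W}}\cap \mathfrak{b}_{\rm qr}.
$$

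Pour conclure, pour $b\in {^H\ES{W}}\cap \mathfrak{b}_{\rm qr}$, l'�l�ment $\gamma=\beta+\bs{x}b$ appartient � $\Xi_M\subset M\cap G_{\rm qr}$ d'apr�s l'inclusion (3), et comme $\Xi_M$ est $M$--invariant contenant $\ES{O}_M(\gamma)$, on a $I^M(\gamma,f_P)= I^M(\gamma,(f_P)\vert_{\Xi_M})$. La formule de descente parabolique \ref{descente parabolique}.(6) donne enfin $I^G(\gamma,f)=I^M(\gamma,f_P)=I^\mathfrak{b}(b,f^\mathfrak{b})$, ce qui est l'�galit� voulue. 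Le point le plus d�licat sera de justifier rigoureusement le raffinement de support via la submersion restreinte (\cad de montrer que la surjection $C^\infty_{\rm c}(M\times \bs{x}\ES{W})\twoheadrightarrow C^\infty_{\rm c}(\Xi_M)$ est bien induite par le principe d'Harish--Chandra) ; ceci est une cons�quence standard de la surjectivit� de $\delta_M\vert_{M\times \bs{x}\ES{W}}$ sur $\Xi_M$ combin�e � la d�composition en produit tensoriel des supports, mais m�rite d'�tre expos� avec soin.
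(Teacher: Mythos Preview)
Votre d�monstration est correcte et suit essentiellement la m�me voie que celle du papier, qui consid�re la proposition comme cons�quence imm�diate de la discussion qui pr�c�de (le texte dit simplement \og D'o� la\fg{} apr�s avoir remarqu� que l'on peut choisir $\Xi_M={^M(\beta+\bs{x}\ES{W})}$). Vous explicitez en outre le point clef --- implicite dans le papier --- permettant d'obtenir $f^\mathfrak{b}$ � support dans $\ES{W}$ et non seulement dans $\ES{V}$, � savoir l'emploi du principe de submersion pour la submersion restreinte $\delta_M\vert_{M\times\bs{x}\ES{W}}$; c'est exactement ce qu'il faut faire.
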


\begin{marema2}
{\rm On a aussi la variante sur $\mathfrak{g}$ de la proposition. Soit $\beta\in \mathfrak{g}$ un élément fermé. On reprend à l'identique les constructions précédentes, la seule différence étant que dans la décomposition $\beta =(\beta_1,\ldots ,\beta_r)$, les éléments $\beta_j$ tels que $F[\beta_j]=F$ sont dans $F$ (et pas forcément dans $F^\times$). On suppose toujours que $M=M(\beta)$ est de la forme $M=M_P$ pour un $P\in \ES{P}$. On définit $\ES{V}$ de la même manière --- c'est un voisinage ouvert compact de $\beta$ dans $\mathfrak{b}$ ---, et on pose
$$
\ES{V}'= \{b\in \ES{V}: D_{\mathfrak{m}\backslash \mathfrak{g}}(\beta + \bs{x}b)\neq 0\}.
$$
Alors $\ES{V}'$ est un voisinage ouvert de $\beta$ dans $\mathfrak{b}$. L'application
$$
\delta_M: M\times \bs{x} \ES{V} \rightarrow \mathfrak{m}, \, (m, \bs{x}b)\mapsto m^{-1}(\beta + \bs{x}b)m
$$
est partout submersive. 
Par définition de $\ES{V}'$, l'application
$$
\delta' : G \times \bs{x}\ES{V}' \rightarrow \mathfrak{g}, \, (g,\bs{x}b)\mapsto g^{-1}(\beta + \bs{x}b) g
$$
est elle aussi partout submersive. Pour un $r$--uplet $(k)\in {\Bbb Z}^r$, on définit le voisinage ouvert compact $\ES{V}^{(k)}$ de 
$0$ dans $\mathfrak{b}$ comme plus haut. 
On obtient de la même manière: {\it --- Soit un $r$--uplet $(k)\in {\Bbb Z}^r$ tel que $\ES{W}=\ES{V}^{(k)}\subset \ES{V}'$. 
Pour toute fonction $\mathfrak{f}\in C^\infty_{\rm c}(\mathfrak{g})$, il existe une fonction $\mathfrak{f}^\mathfrak{b}\in C^\infty_{\rm c}(\ES{W})$ tel que pour tout 
$b\in \mathfrak{b}_{\rm qr}\cap \ES{W}$, on a l'égalité
$$
I^\mathfrak{g}(\beta + \bs{x}b,\mathfrak{f})= I^\mathfrak{b}(b,\mathfrak{f}^\mathfrak{b}).\eqno{\blacksquare}
$$
}
}
\end{marema2}

\section{Germes de Shalika et résultats sur l'algèbre de Lie}\label{germes de shalika et résultats sur l'algèbre de lie}

\subsection{Théorie des germes de Shalika}\label{théorie des germes de shalika}On reprend dans ce numéro les principaux éléments de la théorie des germes de Shalika au voisinage de $0$ dans $\mathfrak{g}_{\rm qr}$. Elle est exactement la mme qu'en caractristique nulle. Soit $\ES{N}$ l'ensemble des éléments nilpotents de $\mathfrak{g}$. On sait que $\ES{N}$ est réunion d'un nombre fini de classes de $G$--conjugaison, paramétrisées par les partitions de $N$. Soient $\ES{O}_0,\ldots ,\ES{O}_{d_\ES{N}}\subset \ES{N}$ l'ensemble de ces classes de $G$--conjugaison, ordonnées de telle manière que $\dim(\ES{O}_i)\leq \dim(\ES{O}_{i+1})$, où $\dim(X)$ désigne la dimension d'une variété $\mathfrak{p}$--adique $X$. On a donc $\ES{O}_0=\{0\}$, et $\ES{O}_{d_\ES{N}}$ est l'orbite nilpotente régulière, \cad celle de dimension $N^2-N$. Pour $k=0,\ldots ,d_{\ES{N}}$, 
la partie $\ES{N}_i=\coprod_{i=0}^k\ES{O}_i$ est fermée dans $G$, et l'orbite $\ES{O}_i$ est ouverte dans $\ES{N}_i$.

Pour $i=0,\ldots ,d_{\ES{N}}$, choisissons un élément $x_i\in \ES{O}_i$. On sait que le centralisateur $G_{x_i}$ de $x_i$ dans $G$ est unimodulaire. On peut donc fixer une mesure de Haar $dg_{x_i}$ sur $G_{x_i}$. Pour une fonction $\mathfrak{f}\in C^\infty_{\rm c}(\mathfrak{g})$, on pose
$$
\ES{O}_{x_i}(\mathfrak{f})= \int_{G_{x_i}\backslash G}\mathfrak{f}(g^{-1}x_i g)\textstyle{dg\over dg_{x_i}},.
$$
D'après \cite{Ho}, cette intégrale est absolument convergente. Elle définit donc une distribution $\ES{O}_{x_i}$ sur $\mathfrak{g}$, de support la fermeture $\overline{\ES{O}}_i$ de l'orbite $\ES{O}_i$ dans $\mathfrak{g}$ (pour la topologie $\mathfrak{p}$--adique). \`A l'élément nilpotent $x_i$ est associé comme suit un sous--groupe parabolique $P_{x_i}$ de $G$. Pour chaque entier $k\geq 0$, on note $V_{x_i}^k$ le noyau de l'endomorphisme $x_i^k$ de $V$. Alors
$$
P_{x_i}= \{g\in G: g(V_{x_i}^k)\subset V_{x_i}^k,\, k\geq 1\}.
$$
Le radical unipotent $U_{x_i}$ de $P_{x_i}$ est donné par
$$
U_{x_i}= \{g\in G: g(V_{x_i}^k)\subset V_{x_i}^{k-1}, k\geq 1\}.
$$
Soit $r_i$ le rang de $x_i$, \cad le plus petit entier $k\geq 1$ tel que $x_i^{k-1}\neq 0$. L'élément $x_i$ appartient à $U_{x_i}$, 
et pour $i=2,\ldots ,r_i$, il induit par passage aux quotients une application injective $V_{x_i}^k/ V_{x_i}^{k-1} \rightarrow V_{x_i}^{k-1}/V_{x_i}^{k-2}$, ce qui signifie (d'après le lemme 2 de \cite{Ho}) que l'orbite $\ES{O}_{P_{x_i}}(x_i)=\{p^{-1}x_ip: p\in P_{x_i}\}$ est dense dans $U_{x_i}$ pour la topologie $\mathfrak{p}$--adique. C'est d'ailleurs ce résultat qui permet de montrer la convergence absolue de l'intégrale orbitale $\ES{O}_{x_i}(\mathfrak{f})$. 

Pour $z\in F^\times$ et $\mathfrak{f}\in C^\infty_{\rm c}(\mathfrak{g})$, on note $\mathfrak{f}^z\in C^\infty_{\rm c}(\mathfrak{g})$ la fonction définie par $\mathfrak{f}^z(y)= \mathfrak{f}(zy)$. Pour $i=0, \ldots , d_{\ES{N}}$, l'orbite $\ES{O}_i$ vérifie $z\ES{O}_i=\ES{O}_i$, et la distribution $\ES{O}_{x_i}$ vérifie (cf. \cite[3.6.1]{L2})
$$
\ES{O}_{x_i}(\mathfrak{f}^z)= \vert z \vert^{{1\over 2}\dim(\ES{O}_i)}\ES{O}_{x_i}(\mathfrak{f}),\quad \mathfrak{f}\in C^\infty_{\rm c}(\mathfrak{g}).\leqno{(1)}
$$

Soient $\{\mathfrak{f}_i: i=0\ldots ,d_{\ES{N}}\}\subset C^\infty_{\rm c}(\mathfrak{g})$ un ensemble de fonctions vérifiant les deux conditions suivantes (cf. \cite[3.5.1]{L2}):
\begin{enumerate}
\item[(i)] pour $i=0,\ldots , m$, le support de la restriction de $\mathfrak{f}_i$ à $\ES{N}_i$ est contenu dans $\ES{O}_i$; 
\item[(ii)] pour $1\leq i,\,j \leq d_{\ES{N}}$, on a $\ES{O}_{x_i}(\mathfrak{f}_j) = \delta_{i,j}$ (symbole de Kronecker).
\end{enumerate} 
Pour toute fonction $\phi:\mathfrak{g}_{\rm qr}\rightarrow {\Bbb C}$, on note $[\phi]^\mathfrak{g}_0$ le germe de fonctions au voisinage de $0$ dans $\mathfrak{g}_{\rm qr}$ définie par $\phi$. Deux fonctions $\phi, \, \phi': \mathfrak{g}_{\rm qr}\rightarrow {\Bbb C}$ définissent le même germe 
$[\phi]^\mathfrak{g}_0=[\phi']^\mathfrak{g}_0$ si et seulement s'il existe un voisinage $\ES{V}$ de $0$ dans $\mathfrak{g}$ tel que $(\phi-\phi')\vert_{\ES{V} \cap \mathfrak{g}_{\rm qr}}=0$. D'après \cite{Ho} (cf. \cite[3.5.2]{L2}), pour toute fonction $\mathfrak{f}\in C^\infty_{\rm c}(\mathfrak{g})$, la fonction $\gamma \mapsto \ES{O}_\gamma(\mathfrak{f})$ sur $\mathfrak{g}_{\rm qr}$ admet le développement en germes
$$
[\ES{O}_\gamma(\mathfrak{f})]^\mathfrak{g}_0= \sum_{i=0}^{d_{\ES{N}}} \ES{O}_{x_i}(\mathfrak{f})[\ES{O}_\gamma(\mathfrak{f}_i)]^\mathfrak{g}_0.\leqno{(2)}
$$
En d'autres termes, il existe un voisinage $\ES{V}_{\mathfrak{f}}$ de $0$ dans $\mathfrak{g}$ tel que pour tout $\gamma \in \ES{V}_{\mathfrak{f}}\cap \mathfrak{g}_{\rm qr}$, on a l'égalité
$$
\ES{O}_\gamma(\mathfrak{f})= \sum_{i=0}^{d_{\ES{N}}} \ES{O}_{x_i}(\mathfrak{f})\ES{O}_\gamma(\mathfrak{f}_i).
$$
De plus, les germes de fonctions $[\gamma \mapsto \ES{O}_\gamma (\mathfrak{f}_i)]^\mathfrak{g}_0$ au voisinage de $0$ dans $\mathfrak{g}_{\rm qr}$ sont uniquement déterminés par le développement (2) pour toute fonction $\mathfrak{f}\in C^\infty_{\rm c}(\mathfrak{g})$. En particulier, si $\{\mathfrak{f}'_i: i=0,\ldots ,d_{\ES{N}}\}\subset C^\infty_{\rm c}(\mathfrak{g})$ est un autre ensemble de fonctions vérifiant les conditions (i) et (ii), alors pour $i=0,\ldots , m$, on a l'égalité des germes
$[\gamma \mapsto \ES{O}_\gamma(\mathfrak{f}'_i)]^\mathfrak{g}_0= [\gamma \mapsto \ES{O}_\gamma(\mathfrak{f}_i)]^\mathfrak{g}_0$. On note $\bs{a}_i$ ce germe 
$[\gamma \mapsto \ES{O}_\gamma(\mathfrak{f}_i)]_0$. On l'appelle {\it germe de Shalika} associé à l'orbite $\ES{O}_i$. 

Pour $z\in F^\times$ et $\gamma\in \mathfrak{g}_{\rm qr}$, on a $z\gamma\in \mathfrak{g}_{\rm qr}$. On peut donc, pour tout germe de fonctions $\bs{a}$ au voisinage de $0$ dans $\mathfrak{g}_{\rm qr}$, 
définir le germe $\bs{a}^z$: si $\bs{a}= [\varphi]^\mathfrak{g}_0$ pour une fonction $\varphi:\mathfrak{g}_{\rm qr}\rightarrow {\Bbb C}$, on pose $\bs{a}^z = [\varphi^z]^\mathfrak{g}_0$ avec $\varphi^z(\gamma)= \varphi(z\gamma)$, $\gamma\in \mathfrak{g}_{\rm qr}$. Pour $i=0,\ldots , d_{\ES{N}}$, d'après (2) et la propriété d'unicité des germes de Shalika, le germe $\bs{a}_i$ vérifie la formule d'homogénéité
$$
\bs{a}_i^z= \vert z\vert^{- {1\over 2}\dim(\ES{O}_i)}\bs{a}_i,\quad z\in F^\times.\leqno{(3)}
$$
Grâce à cette propriété d'homogénéité, on peut remplacer les germes de Shalika par des fonctions canoniques sur $\mathfrak{g}_{\rm qr}$, induisant les mêmes germes au voisinage de $0$ dans $\mathfrak{g}_{\rm qr}$. En effet (cf. \cite[17.8]{K}), pour $i=0,\ldots , d_{\ES{N}}$, il existe une unique fonction $\tilde{\bs{a}}_i: \mathfrak{g}_{\rm qr} \rightarrow {\Bbb C}$ telle que $[\tilde{\bs{a}}_i]^\mathfrak{g}_0= \bs{a}_i$ et $\tilde{\bs{a}}_i(z\gamma)= \vert z\vert^{-{1\over 2}\dim(\ES{O}_i)}\tilde{\bs{a}}_i(\gamma)$ pour tout 
$\gamma\in \mathfrak{g}_{\rm qr}$ et tout $z\in F^\times$. De plus (loc.~cit.), la fonction $\tilde{\bs{a}}_i$ est à valeurs dans ${\Bbb R}$ (car on peut choisir les fonctions $\mathfrak{f}_i$, pour $i=0,\ldots ,d_\ES{N}$, à valeurs dans ${\Bbb R}$), elle est invariante par translation par les éléments du centre $\mathfrak{z}=F$ de $\mathfrak{g}$, et invariante par conjugaison par les éléments de $G$. Bien sûr ces fonctions $\tilde{\bs{a}}_i$ dépendent de la normalisation des distributions $\ES{O}_\gamma$ pour 
$\gamma\in \mathfrak{g}_{\rm qr}$, et aussi de celle des distributions $\ES{O}_{x_i}$ pour $i=0,\dots ,d_{\ES{N}}$.

\vskip1mm
Les germes de Shalika $\bs{a}_i$, ou ce qui revient au même, les fonctions $\tilde{\bs{a}}_i$, sont en général très difficiles à calculer (voir par exemple \cite{R1, R2}). On dispose cependant d'un résultat crucial, concernant le germe $\bs{a}_0$ associé à l'orbite nulle: il induit un germe de fonctions constant au voisinage de $0$ dans $\mathfrak{g}_{\rm re}$. Précisément, en imposant la condition $\ES{O}_{x_0}= \bs{\delta}_{x_0}$ (mesure de Dirac en $x_0=0$), on a \cite[A.3.3]{He}
$$
\tilde{\bs{a}}_0(\gamma) = (-1)^{N-1}d({\rm St}_G)^{-1}, \quad \gamma \in \mathfrak{g}_{\rm re},\leqno{(4)}
$$
où $d({\rm St}_G)$ est le degré formel de la représentation de Steinberg ${\rm St}_G$ de $G$. Pour définir 
ce degré formel, on utilise bien sûr ici la mesure $\textstyle{dg\over dz}$ sur l'espace quotient $Z \backslash G$, où $dz$ est la mesure de Haar sur $Z=F^\times$ qui donne le volume $1$ à $U_F$. D'ailleurs, puisque
$$
\bs{a}_0= [y \mapsto \ES{O}_y(\mathfrak{f}_0)]^\mathfrak{g}_0
$$
et que l'application $\gamma \mapsto \ES{O}_\gamma(\mathfrak{f}_0)$ est localement constante sur $\mathfrak{g}_{\rm qre}$, l'égalité (4) est vraie pour tout $\gamma\in \mathfrak{g}_{\rm qre}$. Grâce à (4) et à la formule d'homogénéité (3), on obtient (cf. \cite[5.6.1]{L2}) que les fonctions $\tilde{\bs{a}}_i: \mathfrak{g}_{\rm qr}\rightarrow {\Bbb R}$ pour $i=0,\ldots , d_{\ES{N}}$, et même leurs restrictions à l'ouvert 
$\mathfrak{g}_{\rm r}\subset \mathfrak{g}_{\rm qr}$ des éléments (quasi--réguliers) séparables, sont linéairement indépendantes sur ${\Bbb C}$: si 
$$
\sum_{i=0}^{d_{\ES{N}}}\mu_i \tilde{\bs{a}}_i\vert_{\mathfrak{g}_{\rm r}}= 0
$$ pour des nombres complexes $\mu_i$, alors on a forcément $\mu_0=\cdots = \mu_{\ES{N}}=0$. 

\subsection{Germes de Shalika normalisés}\label{germes de shalika normalisés}On peut dans le développement en germes 
\ref{théorie des germes de shalika}.(2), remplacer les distributions $\ES{O}_\gamma$ ($\gamma\in \mathfrak{g}_{\rm qr}$) par les distributions normalisées $I^\mathfrak{g}(\gamma,\cdot)$. On obtient de la même manière, pour toute fonction $\mathfrak{f}\in C^\infty_{\rm c}(\mathfrak{g})$, le développement en germes
$$
[I^\mathfrak{g}(\gamma,\mathfrak{f})]^\mathfrak{g}_0= \sum_{i=0}^{d_{\ES{N}}}\ES{O}_{x_i}(f) [I^\mathfrak{g}(\gamma, \mathfrak{f}_i)]^\mathfrak{g}_0.\leqno{(1)}
$$
Pour $i=0,\ldots ,d_{\ES{N}}$, le germe $[\gamma \mapsto I^\mathfrak{g}(\gamma, \mathfrak{f}_i]^\mathfrak{g}_0$ au voisinage de $0$ dans $\mathfrak{g}_{\rm qr}$ est appelé {\it germe de Shalika normalisé} associé à l'orbite nilpotente $\ES{O}_i$, et noté $\bs{b}_i$. Comme pour les germes $\bs{a}_i$, les germes de Shalika normalisés $\bs{b}_i$ sont uniquement déterminé par le développement (1) pour toute fonction $\mathfrak{f}\in C^\infty_{\rm c}(\mathfrak{g})$. De plus, par définition des distributions normalisées $I^\mathfrak{g}(\gamma,\cdot) $ ($\gamma \in \mathfrak{g}_{\rm qr}$), on a
$$
\bs{b}_i= \eta_\mathfrak{g}^{1\over 2}\bs{a}_i, \quad i=0,\ldots , d_{\ES{N}}.\leqno{(2)}
$$
Pour $\gamma\in \mathfrak{g}_{\rm qre}$ et $z\in F^\times$, on a
$$
\eta_\mathfrak{g}(z\gamma) =q^{-f_\gamma(c_F(z\gamma)+ e_\gamma -1)}= q^{-f_\gamma(e_\gamma(N-1)\nu(z) +c_F(\gamma)+ e_\gamma-1)}= \vert z\vert^{N(N-1)}\eta_{\mathfrak{g}}(\gamma).
$$
En en déduit que pour $\gamma\in \mathfrak{g}_{\rm qr}$ et $z\in F^\times$, on a encore
$$
\eta_{\mathfrak{g}}(z\gamma)= \vert z\vert^{N(N-1)}\eta_{\mathfrak{g}}(\gamma). \leqno{(3)}
$$
En effet, d'après la définition de $\eta_\mathfrak{g}(\gamma)$, posant $\mathfrak{m}= \mathfrak{m}(\gamma)$, on a $\eta_\mathfrak{g}(z\gamma)=  \vert D_{\mathfrak{m}\backslash \mathfrak{g}}(z\gamma)\vert  \eta_\mathfrak{m}(z\gamma)$ et $\eta_\mathfrak{g}(\gamma)=  \vert D_{\mathfrak{m}\backslash \mathfrak{g}}(\gamma)\vert  \eta_\mathfrak{m}(\gamma)$. \'Ecrivons $\mathfrak{m}= \mathfrak{g}_1\times \cdots \mathfrak{g}_r$ avec $\mathfrak{g}_i={\rm End}_F(V_i)$. Les éléments $\gamma$ et $z\gamma$ appartiennent à $\mathfrak{m}_{\rm qre}= (\mathfrak{g}_i)_{\rm qre}\times \cdots \times (\mathfrak{g}_r)_{\rm qre}$, et on a
$$
\eta_\mathfrak{m}(z\gamma)= \vert z\vert^{\sum_{i=1}^r N_i(N_i-1)} \eta_\mathfrak{m}(\gamma), \quad N_i= \dim_F(V_i).
$$
D'autre part, on a
$$
\vert D_{\mathfrak{m}\backslash \mathfrak{g}}(z\gamma)\vert =  \vert z \vert^{\dim_F(\mathfrak{g})-\dim_F(\mathfrak{m})}\vert D_{\mathfrak{m}\backslash \mathfrak{g}}(\gamma)\vert. 
$$
Or $\dim_F(\mathfrak{m})= \sum_{i=1}^r N_i^2$ et $\sum_{i=1}^rN_i = N$, d'où l'égalité (3).
Pour $i=0,\ldots ,d_{\ES{N}}$, d'après \ref{théorie des germes de shalika}.(3), le germe $\bs{b}_i$ vérifie donc la formule d'homogénéité 
$$
\bs{b}_i^z = \vert z\vert^{{1\over 2}N(N-1)- {1\over 2} \dim (\ES{O}_i)}\bs{b}_i
= \vert z \vert^{{1\over 2}(\dim (G_{x_i}) -N)} \bs{b}_i.\leqno{(4)}
$$
Notons que l'exposant ${1\over 2}(\dim(G_{x_i})- N)$ est toujours $\geq 0$. Comme pour les germes $\bs{a}_i$, on peut grâce à la formule d'homogénéité (4) remplacer les germes de Shalika normalisés $\bs{b}_i$ par de vraies fonctions: pour $i=0,\ldots , d_{\ES{N}}$, il existe une unique fonction $\tilde{\bs{b}}_i: \mathfrak{g}_{\rm qr} \rightarrow {\Bbb C}$ telle que $[\tilde{\bs{b}}_i]^\mathfrak{g}_0= \bs{b}_i$ et $\tilde{\bs{b}}_i(z\gamma)= \vert z\vert^{{1\over 2}(\dim(G_{x_i})-N}\tilde{\bs{b}}_i(\gamma)$ pour tout 
$\gamma\in \mathfrak{g}_{\rm qr}$ et tout $z\in F^\times$. Comme pour $\tilde{\bs{a}}_i$, la fonction $\tilde{\bs{b}}_i$ est à valeurs dans ${\Bbb R}$, elle est invariante par translation par les éléments du centre $\mathfrak{z}$ de $\mathfrak{g}$, et invariante par conjugaison par les éléments de $G$.

\vskip1mm
Soit un élément $z\in \mathfrak{z}\smallsetminus \{0\} \;(=F^\times)$. On peut, comme on l'a fait pour $z=0$, s'intéresser aux intégrales orbitales normalisées au voisinage de $z$ dans $\mathfrak{g}$. Les classes de $G$--conjugaison dans $\mathfrak{g}$ qui contiennent $z$ dans leur fermeture sont exactement les $\ES{O}_G(z+ x_i)= z + \ES{O}_i$ pour $i=0,\ldots ,d_{\ES{N}}$. Pour $\mathfrak{f}\in C^\infty_{\rm c}(\mathfrak{g})$, le germe de fonctions $[\gamma \mapsto I^\mathfrak{g}(\gamma,\mathfrak{f})]_z^\mathfrak{g}$ au voisinage de $z$ dans $\mathfrak{g}_{\rm qr}$ est donné par le développement en germes
$$
[I^\mathfrak{g}(\gamma,\mathfrak{f})]_z^\mathfrak{g}= \sum_{i=0}^{d_{\ES{N}}} \ES{O}_{z+ x_i}(\mathfrak{f})\bs{b}_i(\gamma),\quad \gamma \in \mathfrak{g}_{\rm qr},\leqno{(5)}
$$ 
où la distribution $\ES{O}_{z+x_i}$ sur $\mathfrak{g}$ est définie à l'aide de la mesure $dg_{z+x_i}=dg_{x_i}$ sur $G_{z+x_i}=G_{x_i}$. Il suffit, pour obtenir (5), d'appliquer (1) à la fonction $\mathfrak{f}'\in C^\infty_{\rm c}(\mathfrak{g})$ définie par $\mathfrak{f}'(\gamma)= \mathfrak{f}(z+\gamma)$ et d'utiliser la propriété d'invariance des fonctions $\tilde{\bs{b}}_i$ par translation par les éléments 
de $\mathfrak{z}$.

\vskip1mm
Soit $\beta\in \mathfrak{g}$ un élément fermé. Reprenons les notations de \ref{descente centrale au voisinage d'un élément fermé}. Notons $\mathfrak{b}$ le centralisateur $\mathfrak{g}_\beta={\rm End}_{F[\beta]}(V)$ de $\beta$ dans $\mathfrak{g}$. \'Ecrivons $F[\beta]= E_1\times \cdots \times E_r$ pour des extensions $E_i/F$. Pour $i=1,\ldots ,r$, notons $e_i$ l'idempotent de $F[\gamma]$ associé à $E_i$, et posons $V_i= e_i(V)$, $\mathfrak{g}_i={\rm End}_F(V_i)$ et $\mathfrak{b}_i= {\rm End}_{E_i}(V_i)$. On a la décomposition $\mathfrak{b}= \mathfrak{b}_1\times \cdots \times \mathfrak{b}_r$, et l'élément $\beta=(\beta_1,\ldots ,\beta_r)$ est ($F$--){\it pur} dans $\mathfrak{m}= \mathfrak{g}_1\times \cdots \times \mathfrak{g}_r$. Notons $H$ le centralisateur $G_\beta =\mathfrak{b}^\times$ de $\beta$ dans $G$, $A_\beta$ le tore déployé maximal du centre $Z(H)=F[\beta]^\times$ de $H$, et $M=M(\beta)$ le centralisateur $Z_G(A_\beta)$ de $A_\beta$ dans $G$. On a $H=H_1\times \cdots \times H_r$ avec $H_i={\rm Aut}_{E_i}(V_i)$, et $M= G_1\times \cdots \times G_r$ avec $G_i={\rm Aut}_F(V_i)$. Quitte à remplacer $\beta$ par $g^{-1}\beta g$ pour un $g\in G$, on peut supposer que $A_\beta= A_P$ pour un $P\in \ES{P}$ --- en ce cas, on a $M= M_P$ et $\mathfrak{m}=\mathfrak{m}_P$ ---, mais ce  n'est pas indispensable ici. Soit $\bs{x}=(\bs{x}_1,\ldots ,\bs{x}_r)\in \mathfrak{m}$ un élément comme en \ref{descente centrale au voisinage d'un élément fermé}. D'après la remarque 2 de \ref{descente centrale au voisinage d'un élément fermé}, pour toute fonction $\mathfrak{f}\in C^\infty_{\rm c}(\mathfrak{g})$, il existe une fonction $\mathfrak{f}^{\mathfrak{b}}\in C^\infty_{\rm c}(\mathfrak{b})$ telle qu'on a l'égalité des germes
$$
[I^\mathfrak{g}(\beta + \bs{x}b, \mathfrak{f})]_\beta^\mathfrak{g} = [I^\mathfrak{b}(b, \mathfrak{f}^\mathfrak{b})]_0^\mathfrak{b}, 
\quad b\in \mathfrak{b}_{\rm qr}.\leqno{(6)}
$$
À droite de l'égalité (6), $[I^\mathfrak{b}(b, \mathfrak{f}^\mathfrak{b})]_0^\mathfrak{b}$ est le germe de la fonction $b\mapsto I^\mathfrak{b}(b, \mathfrak{f}^\mathfrak{b})$ au voisinage de $0$ dans $\mathfrak{b}_{\rm qr}$, et à gauche, $[I^\mathfrak{g}(\gamma,\mathfrak{f})]_\beta^\mathfrak{g}$ est le germe de la fonction $\gamma \mapsto I^\mathfrak{g}(\gamma,\mathfrak{f})$ au voisinage de $\beta$ dans $\mathfrak{g}_{\rm qr}$. L'égalité (6) a un sens car pour $b\in \mathfrak{b}_{\rm qr}$ suffisamment proche de $0$, l'élément $\beta + \bs{x}b$ appartient à $\mathfrak{g}_{\rm qr}$. On peut bien sûr, comme on l'a fait pour $\mathfrak{g}$, écrire le développement en germes (1) de 
$[I^\mathfrak{b}(b,\mathfrak{f}_*)]_0$ pour toute fonction $\mathfrak{f}_*\in C^\infty_{\rm c}(\mathfrak{b})$:
$$
[I^\mathfrak{b}(b, \mathfrak{f}_*)]_0^\mathfrak{b}= \sum_{j=0}^{d_{\mathfrak{N}(\mathfrak{b})}} \ES{O}_{y_{j}}^\mathfrak{b}(\mathfrak{f}_*)[I^\mathfrak{b}(b,\mathfrak{f}_{*,j})]_0^\mathfrak{b},\leqno{(7)}
$$
où
\begin{itemize}
\item $\ES{N}(\mathfrak{b})$ est l'ensemble des éléments nilpotents de $\mathfrak{b}$;
\item $y_1,\ldots ,y_{d_{\ES{N}(\mathfrak{b})}}$ est un système de représentants des $H$--orbites dans $\ES{N}(\mathfrak{b})$, ordonnées de telle manière que $\dim(\ES{O}_H(y_j))\leq \dim(\ES{O}_H(y_{j+1}))$;
\item $\ES{O}_{y_{j}}^\mathfrak{b}$ est l'intégrale orbitale nilpotente sur $\mathfrak{b}$ associée à l'orbite $\ES{O}_H(y_j)$, définie par le choix d'une mesure de Haar sur $H_{y_j}$;
\item $\{\mathfrak{f}_{*,j}: j=0\ldots ,d_{{\ES{N}(\mathfrak{b})}}\}\subset C^\infty_{\rm c}(\mathfrak{\mathfrak{b}})$ est un ensemble de fonctions vérifiant les conditions (i) et (ii) de \ref{théorie des germes de shalika} pour ces orbites $\ES{O}_H(y_j)$.
\end{itemize}
Pour $j=0,\ldots , d_{\ES{N}(\mathfrak{b})}$, on note $\bs{b}_j^\mathfrak{b}$ le germe de Shalika normalisé $[b \mapsto I^\mathfrak{b}(b, \mathfrak{f}_{*,j})]_0^\mathfrak{b}$. D'après (6) et (7), pour toute fonction $\mathfrak{f}\in C^\infty_{\rm c}(\mathfrak{g})$, il existe une fonction $\mathfrak{f}^\mathfrak{b}\in C^\infty_{\rm c}(\mathfrak{b})$ telle que le germe de fonctions $[\gamma \mapsto I^\mathfrak{g}(\gamma,\mathfrak{f})]_\beta^\mathfrak{g}$ au voisinage de $\beta$ dans $\mathfrak{g}_{\rm qr}$ est donné par
$$
[I^\mathfrak{g}(\beta + \bs{x}b,\mathfrak{f})]_\beta^\mathfrak{g}= \sum_{j=0}^{d_{\ES{N}(\mathfrak{b})}} \ES{O}_{y_j}^\mathfrak{b}(\mathfrak{f}^\mathfrak{b}) \bs{b}_j^\mathfrak{b}(b),\quad b\in \mathfrak{b}_{\rm qr}.\leqno{(8)}
$$
Pour $j=0,\ldots ,d_{\ES{N}(\mathfrak{b})}$, la $H$--orbite $\ES{O}_{y_j}^\mathfrak{b}$ se décompose en 
$\ES{O}_{y_j}^\mathfrak{b}= \ES{O}_{y_{j,1}}^{\mathfrak{b}_1}\times \cdots \times \ES{O}_{y_{j,r}}^{\mathfrak{b}_r}$, où on a posé $y_j=(y_{j,1},\ldots ,y_{j,r})$, et le germe de Shalika normalisé $\bs{b}_j^\mathfrak{b}$ (au voisinage de $0$ dans $\mathfrak{b}_{\rm qr}$) associé à l'orbite nilpotente $\ES{O}_{y_j}^\mathfrak{b}$ dans $\mathfrak{b}$ se décompose en $\bs{b}_j^\mathfrak{b} = \bs{b}_{j,1}^{\mathfrak{b}_1}\otimes \cdots \otimes \bs{b}_{j,r}^{\mathfrak{b}_r}$, où, pour $k=1,\ldots ,r$, 
$\bs{b}_{j,k}^{\mathfrak{b}_k}$ est le germe de Shalika normalisé (au voisinage de $0$ dans $(\mathfrak{b}_k)_{\rm qr}$) associé à l'orbite nilpotente $\ES{O}_{y_{j,k}}^{\mathfrak{b}_k}$ dans $\mathfrak{b}_k$. Ce germe $\bs{b}_j^\mathfrak{b}$ vérifie donc la formule d'homogénéité: pour $z=(z_1,\ldots ,z_r)\in E_1^\times\times \cdots \times E_r^\times$, notant $c_k$ la dimension de la variété $\mathfrak{p}_{E_k}$--adique $(H_k)_{y_{j,k}}$ et 
posant $d_k=\dim_{E_k}(V_k)$, on a
$$
(\bs{b}_j^\mathfrak{b})^z= \vert z_1\vert_{E_1}^{{1\over 2}(c_1-d_1)}\cdots \vert z_1\vert_{E_r}^{{1\over 2}(c_r-d_r)}\bs{b}_j^\mathfrak{b}\leqno{(9)}  
$$
La formule (9) permet comme plus haut d'associer au germe $\bs{b}_j^\mathfrak{b}$ une fonction $\smash{\tilde{\bs{b}}}_j^\mathfrak{b}: \mathfrak{b}_{\rm qr}\rightarrow {\Bbb C}$. Cette fonction est à valeurs dans ${\Bbb R}$, elle est invariante par translation par les éléments du centre $F[\beta]= E_1\times\cdots \times E_r$ de $\mathfrak{b}$, et invariante par conjugaison par les éléments de $H$. 
D'après (9), pour $z\in F^\times$ identifié à $(z,\ldots ,z)\in E_1^\times\cdots \times E_r^\times$, on a
$$
(\bs{b}_j^\mathfrak{b})^z = \vert z\vert^{{1\over 2}(\dim(H_{y_j})- N)}\bs{b}_j^\mathfrak{b}.\leqno{(10)}
$$
Ici $\dim(H_{y_j})= \sum_{k=1}^r [E_k:F]c_k$ est la dimension de $H_{y_j}=(H_1)_{y_1}\times\cdots \times (H_r)_{y_r}$ en tant que 
{\it variété $\mathfrak{p}$--adique}, et $N= \sum_{k=1}^r [E_k:F]d_k$. On a bien sûr toujours 
${1\over 2}(\dim(H_{y_j})- N)\geq 0$. D'ailleurs pour définir la fonction $\tilde{\bs{b}}_j^\mathfrak{b}$ à partir du germe $\bs{b}_j^\mathfrak{b}$, on peut tout aussi bien utiliser la formule d'homogénéité (10) (au lieu de (9)). 

Au voisinage de $\beta$ dans $\mathfrak{g}_{\rm qr}$, les fonctions $\tilde{\bs{b}}_i:\mathfrak{g}_{\rm qr}\rightarrow {\Bbb R}$ ($i= 0,\ldots ,d_\ES{N}$) associées aux germes de Shalika normalisés $\bs{b}_i$ pour $\mathfrak{g}$ admettent un développement 
en termes des fonctions $\smash{\tilde{\bs{b}}}_j^\mathfrak{b}:\mathfrak{b}_{\rm qr}\rightarrow {\Bbb R}$ ($j=0,\ldots ,d_{\ES{N}(\mathfrak{b})}$) associées aux germes de Shalika normalisés $\bs{b}_j^\mathfrak{b}$ pour $\mathfrak{b}$:

\begin{monlem}
Il existe un  voisinage ouvert compact $\ES{V}_\beta$ de $0$ dans $\mathfrak{b}$ tel que:
\begin{itemize}
\item pour $b\in \mathfrak{b}_{\rm qr}\cap \ES{V}_\beta$, l'élément $\beta + \bs{x} b$ appartient à $\mathfrak{g}_{\rm qr}$;
\item pour $i=0,\ldots , d_{\ES{N}}$, il existe des constantes $\lambda_{i,j}\in {\Bbb C}$ ($j=0,\ldots ,d_{\ES{N}(\mathfrak{b})}$) telles que
$$
\tilde{\bs{b}}_i(\beta + \bs{x} b)=\sum_{j=0}^{d_{\ES{N}(\mathfrak{b})}}\lambda_{i,j}\smash{\tilde{\bs{b}}}_j^\mathfrak{b}(b),\quad b\in  \mathfrak{b}_{\rm qr}\cap \ES{V}_\beta.
$$
\end{itemize}
\end{monlem}

\begin{proof} C'est la mme qu'en caractristique nulle (cf. \cite[lemma 17.7]{K}), compte--tenu du dveloppement (8) et des formules d'homognit (4) et (10). 
\end{proof}

\subsection{Les germes de Shalika normalisés sont localement bornés}\label{les germes de Shalika normalisés sont localement bornés}
On commence par un résultat sur les fonctions $\tilde{\bs{b}}_i:\mathfrak{g}_{\rm qr}\rightarrow {\Bbb R}$ associées aux germes de Shalika normalisés $\bs{b}_i$ pour $i=0,\ldots ,d_\ES{N}$:

\begin{mapropo}
Les fonctions $\tilde{\bs{b}}_i: \mathfrak{g}_{\rm qr}\rightarrow {\Bbb R}$ ($i=0,\ldots ,d_{\ES{N}}$) sont 
localement bornées sur $\mathfrak{g}$, au sens où pour tout élément fermé $\beta\in \mathfrak{g}$, il existe un voisinage $\omega_\beta$ de $\beta$ dans $\mathfrak{g}$ --- que l'on peut supposer vérifiant ${^G(\mathfrak{z}+ \omega_\beta)} = \omega_\beta$ --- tel que
$$
\sup\{\tilde{\bs{b}}_i(\gamma): \gamma\in \mathfrak{g}_{\rm qr}\cap  \omega_\beta,\, i=0,\ldots ,d_{\ES{N}}\}<+\infty.
$$
\end{mapropo}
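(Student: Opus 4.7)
My plan is to induct on $N=\dim_F V$. The base case $N=1$ is immediate: $\mathfrak{g}=\mathfrak{g}_{\rm qr}=F$, the only nilpotent orbit is $\{0\}$, and $\tilde{\bs{b}}_0$ is constant. Assume the statement for all dimensions strictly less than $N$, fix a closed element $\beta\in\mathfrak{g}$, and split the analysis according to whether $\beta\notin\mathfrak{z}$ or $\beta\in\mathfrak{z}$.

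If $\beta\notin\mathfrak{z}$, then $F[\beta]=E_1\times\cdots\times E_r$ and each factor $V_k$ has $\dim_{E_k}V_k<N$. I apply the lemma of \ref{germes de shalika normalis�s}: there exist a neighborhood $\ES{V}_\beta$ of $0$ in $\mathfrak{b}=\mathfrak{g}_\beta$, an element $\bs{x}$ produced by the central descent of section \ref{descente centrale au voisinage d'un �l�ment ferm�}, and constants $\lambda_{i,j}\in{\Bbb C}$ such that
$$
\tilde{\bs{b}}_i(\beta+\bs{x}b)=\sum_{j=0}^{d_{\ES{N}(\mathfrak{b})}}\lambda_{i,j}\,\tilde{\bs{b}}_j^\mathfrak{b}(b),\quad b\in{^H(\ES{V}_\beta)}\cap\mathfrak{b}_{\rm qr}.
$$
Since each $\tilde{\bs{b}}_j^\mathfrak{b}$ is a tensor product of germ functions on the factors $\mathfrak{b}_k={\rm End}_{E_k}(V_k)$, and each $\mathfrak{b}_k$ has $E_k$-dimension strictly less than $N$, the inductive hypothesis applied at the closed element $0$ in each $\mathfrak{b}_k$ (viewed as a matrix algebra over $E_k$) gives a uniform bound on each $\tilde{\bs{b}}_j^\mathfrak{b}$ on a small enough neighborhood of $0$ in $\mathfrak{b}$. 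Shrinking $\ES{V}_\beta$ as needed, I take $\omega_\beta={^G(\mathfrak{z}+\beta+\bs{x}\ES{V}_\beta)}$; this is a $G$-invariant, $\mathfrak{z}$-translate-invariant open neighborhood of $\beta$, and the closure property established in \ref{le r�sultat principal} guarantees it is ferm� in $\mathfrak{g}$.

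If $\beta\in\mathfrak{z}$, the $\mathfrak{z}$-translation invariance of every $\tilde{\bs{b}}_i$ reduces the problem to $\beta=0$. Here the homogeneity formula \ref{germes de shalika normalis�s}.(4),
$$
\tilde{\bs{b}}_i(z\gamma)=|z|^{(\dim(G_{x_i})-N)/2}\tilde{\bs{b}}_i(\gamma),\quad z\in F^\times,
$$
has non-negative exponent: for $n\geq 0$, writing $\gamma=\varpi^n\gamma'$ yields $|\tilde{\bs{b}}_i(\gamma)|\leq|\tilde{\bs{b}}_i(\gamma')|$. It therefore suffices to bound each $\tilde{\bs{b}}_i$ over a fixed compact-modulo-conjugacy annulus $\Omega^*\subset\mathfrak{g}_{\rm qr}$ (say $\{\gamma:\Pi_\mathfrak{g}(\gamma)\in\mathfrak{o}^N\smallsetminus\varpi\cdot\mathfrak{o}^N\}$, compact modulo conjugacy by lemma 1 of \ref{parties compactes modulo conjugaison}). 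Using lemma 3 of \ref{parties compactes modulo conjugaison}, I extract a finite $G$-invariant ferm� open cover of $\Omega^*$ by neighborhoods $\omega_{\beta'}$ indexed by closed elements $\beta'\in\overline{\Omega^*}$; those with $\beta'\notin\mathfrak{z}$ are controlled by the first case, and the resulting finite collection of bounds combines into a uniform bound on $\Omega^*$, hence on a full neighborhood of $0$.

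The main obstacle is that $\Omega^*$ may contain closed central elements (scalars $z\cdot 1$ with $\nu_F(z)=0$). Near such a $\beta'\in\mathfrak{z}\cap\overline{\Omega^*}$, the $\mathfrak{z}$-translation invariance transports the problem back to a neighborhood of $0\in\mathfrak{g}$, which is precisely what we are trying to control. Closing this loop requires interleaving the $\mathfrak{z}$-invariance, the $F^\times$-homogeneity, and a further pass of central descent on progressively smaller scales, taking care to check that each reduction strictly decreases a well-chosen invariant so as to terminate. The structure of the argument follows Kottwitz's treatment in characteristic zero \cite[17.10--17.14]{K}, adapted here to the quasi-regular (rather than semisimple regular) setting and to the refined central descent of sections \ref{descente centrale au voisinage d'un �l�ment pur} and \ref{descente centrale: le cas g�n�ral}.
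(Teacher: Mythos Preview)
Your overall architecture is right --- induction on the dimension, descent via the lemma of \S\ref{germes de shalika normalis�s} for non-central closed $\beta$, and homogeneity near $0$ --- and it matches the paper's. But the obstacle you flag at the end is real and you do not resolve it; the paper does, by a device you have not used.

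The fix is to pass to the quotient $\overline{\mathfrak{g}}=\mathfrak{g}/\mathfrak{z}$ from the outset. Since each $\tilde{\bs{b}}_i$ is $\mathfrak{z}$-translation-invariant, it descends to a function on $\mathfrak{g}_{\rm qr}/\mathfrak{z}$, and the non-central case shows these descended functions are locally bounded on $\overline{\mathfrak{g}}\smallsetminus\{0\}$. Now take an $\mathfrak{o}$-lattice $\Lambda\subset\mathfrak{g}$ and set $\bar\Lambda=\Lambda+\mathfrak{z}$. The point is that $\bar\Lambda\smallsetminus\mathfrak{p}\bar\Lambda$ is a \emph{genuinely compact} subset of $\overline{\mathfrak{g}}\smallsetminus\{0\}$ --- not merely compact modulo conjugacy --- so local boundedness there yields a uniform constant $c_i$ on this annulus by a finite cover. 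Homogeneity (with non-negative exponent) then propagates the bound to all of $\bar\Lambda$. In the quotient the problematic central elements simply vanish: they all project to $0\in\overline{\mathfrak{g}}$, hence lie outside the annulus.

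By contrast, your annulus $\Omega^*$ lives in $\mathfrak{g}$ itself and genuinely contains scalar matrices, so you are forced back to a neighborhood of $0$ and the argument loops. Your suggested remedy (``a further pass of central descent'') cannot help here: the centralizer of a central element is all of $\mathfrak{g}$, so descent at such a point is vacuous and no invariant strictly decreases. The missing idea is precisely the passage to $\mathfrak{g}/\mathfrak{z}$.
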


\begin{proof}Puisque les fonctions $\tilde{\bs{b}}_i$ sont invariantes par translations par les éléments du centre $\mathfrak{z}$ de $\mathfrak{g}$, on peut les considérer comme des fonctions sur $\mathfrak{g}_{\rm qr}/\mathfrak{z}$. 
Il s'agit donc de montrer que les fonctions 
$\tilde{\bs{b}}_i: \mathfrak{g}_{\rm qr}/ \mathfrak{z}\rightarrow {\Bbb R}$ ($i=0,\ldots ,d_{\ES{N}}$) sont 
localement bornées sur $\overline{\mathfrak{g}}=\mathfrak{g}/\mathfrak{z}$. 
Pour tout élément $\gamma\in \mathfrak{g}$, on note $\bar{\gamma}$ l'élément $\gamma + \mathfrak{z}$ de $\overline{\mathfrak{g}}$. 

Soit un élément fermé $\beta\in \mathfrak{g}$. On pose $\mathfrak{b}=\mathfrak{g}_\beta$, et on note $\mathfrak{z}_*= F[\beta]$ le centre de $\mathfrak{b}$. On pose aussi $\overline{\mathfrak{b}}= \mathfrak{b}/\mathfrak{z}_*$. Rappelons que $\mathfrak{z}_* = E_1\times\cdots \times E_r$ pour des extensions finies $E_i/F$, et que $\mathfrak{b}= \mathfrak{b}_1\times \cdots \times \mathfrak{b}_r$ avec $\mathfrak{b}_i= {\rm End}_{E_i}(V_i)$, $V= V_1\times \cdots \times V_r$. On procède par récurrence sur la dimension de $\mathfrak{b}$ sur $F$. Si $\overline{\beta}\neq 0$ (i.e. si $\beta \not\in \mathfrak{z}$), alors $\dim_F(\mathfrak{b}) < \dim_F(\mathfrak{g})$, et d'après le lemme de 
\ref{germes de shalika normalisés}, il existe un élément $\bs{x}\in \mathfrak{g}$ et un voisinage ouvert compact $\ES{V}_\beta$ de $0$ dans $\mathfrak{b}$ tels que 
${^G(\beta + \bs{x} \ES{V}_\beta)}$ est ouvert dans $\mathfrak{g}$ et pour $i=0,\ldots ,d_{\ES{N}}$, 
on a
$$
\tilde{\bs{b}}_i(\beta + \bs{x} b)=\sum_{j=0}^{d_{\ES{N}(\mathfrak{b})}}\lambda_{i,j}\smash{\tilde{\bs{b}}}_j^\mathfrak{b}(b),\quad b\in \mathfrak{b}_{\rm qr}\cap \ES{V}_\beta .
$$
Par hypothèse de récurrence, les fonctions $\tilde{\bs{b}}_j^\mathfrak{b}: \mathfrak{b}_{\rm qr}/\mathfrak{z}_* \rightarrow {\Bbb R}$ sont bornées sur  $\mathfrak{z}_*+\ES{V}_\beta$. Par conséquent les fonctions $\tilde{\bs{b}}_i: \mathfrak{g}_{\rm qr}/\mathfrak{z}\rightarrow {\Bbb R}$ sont bornées sur $\omega_\beta =  \mathfrak{z} + {^G(\beta + \bs{x}\ES{V}_\beta)}$. En d'autres termes, les fonctions $\tilde{\bs{b}}_i: \mathfrak{g}_{\rm qr}/\mathfrak{z}\rightarrow {\Bbb R}$ sont localement bornées sur $\overline{\mathfrak{g}}\smallsetminus \{0\}$, et on est ramené à prouver qu'elles sont bornées au voisinage de $0\in \overline{\mathfrak{g}}$. Soit $\Lambda$ un $\mathfrak{o}$--réseau dans $\mathfrak{g}$. Alors $\bar{\Lambda}= \Lambda + \mathfrak{z}$ est un $\mathfrak{o}$--réseau dans $\overline{\mathfrak{g}}$, et puisque $\bar{\Lambda} \smallsetminus \mathfrak{p}\bar{\Lambda}$ est une partie compacte de $\overline{\mathfrak{g}}\smallsetminus \{0\}$, pour $i=0,\ldots , d_\ES{N}$, il existe une constante $c_i>0$ telle que
$$
 \sup \{\vert \tilde{\bs{b}}_i(\bar{\gamma})\vert: \gamma \in \mathfrak{g}_{\rm qr},\, \bar{\gamma} \in \bar{\Lambda}\smallsetminus \mathfrak{p}\bar{\Lambda}\} \leq c_i.
$$
Pour $\gamma\in \mathfrak{g}_{\rm qr}$ tel que $\bar{\gamma}\in \mathfrak{p}\Lambda$, il existe un (unique) entier $k\geq 1$ tel que $\varpi^{-k}\bar{\gamma}\in \Lambda\smallsetminus \mathfrak{p}\bar{\Lambda}$ pour une uniformisante $\varpi$ de $F$. 
Alors pour $i=0,\ldots ,d_\ES{N}$, d'après la formule d'homogénité \ref{germes de shalika normalisés}.(4), on a
$$
\tilde{\bs{b}}_i(\bar{\gamma})= \tilde{\bs{b}}_i^{\varpi^k}(\varpi^{-k}\gamma)=q^{-{k\over 2}(\dim(G_{x_i})-N)}\tilde{\bs{b}}_i(\varpi^{-k}\bar{\gamma}),
$$
d'où
$$
 \sup \{\vert \tilde{\bs{b}}_i(\bar{\gamma})\vert: \gamma \in \mathfrak{g}_{\rm qr},\, \bar{\gamma} \in \bar{\Lambda}\} \leq c_i.
$$
Cela achève la démonstration de la proposition.
\end{proof}

\begin{moncoro1}
Pour toute fonction $\mathfrak{f}\in C^\infty_{\rm c}(\mathfrak{g})$, la fonction $
\mathfrak{g}_{\rm qr}\rightarrow {\Bbb C},\, \gamma \mapsto I^\mathfrak{g}(\gamma,\mathfrak{f})$
est bornée sur $\mathfrak{g}$: il existe une constante $c_\mathfrak{f}>0$ telle que
$$
\sup\{\vert I^\mathfrak{g}(\gamma,\mathfrak{f})\vert: \gamma\in \mathfrak{g}_{\rm qr}\}\leq c_\mathfrak{f}.
$$
\end{moncoro1}

\begin{proof}
D'après la proposition, la fonction $\mathfrak{g}_{\rm qr}\rightarrow {\Bbb C},\, \gamma \mapsto I^\mathfrak{g}(\gamma,\mathfrak{f})$
est localement bornée sur $\mathfrak{g}$, et puisque d'après le lemme 1 de \ref{parties compactes modulo conjugaison}, il existe une partie compacte $\Omega$ dans $\mathfrak{g}$ telle que $I^\mathfrak{g}(\gamma,\mathfrak{f})=0$ pour tout $\gamma\in \mathfrak{g}_{\rm qr}$ tel que $\gamma\not\in {^G\Omega}$ --- il suffit de choisir $\Omega$ de telle manière que $\overline{^G{\rm Supp}(f)}\subset {^G\Omega}$ ---, elle est bornée sur $\mathfrak{g}$.
\end{proof}

\begin{marema}
{\rm 
Les fonctions $\tilde{\bs{b}}_i$ associées aux germes de Shalika normalisés $\bs{b}_i$ sont des fonctions (à valeurs réelles) sur $\mathfrak{g}_{\rm qr}/\mathfrak{z}$. On peut aussi s'intéresser aux intégrales orbitales quasi--régulières des 
fonctions $\bar{\mathfrak{f}}\in C^\infty_{\rm c}(\overline{\mathfrak{g}})$, où (comme dans la preuve de la proposition) on a posé $\overline{\mathfrak{g}}= \mathfrak{g}/\mathfrak{z}$. Pour $\bar{\mathfrak{f}}\in C^\infty_{\rm c}(\overline{\mathfrak{g}})$, le support de $\bar{\mathfrak{f}}$ est une partie ouverte compacte de $\overline{\mathfrak{g}}$, que l'on peut voir comme une partie ouverte fermée de $\mathfrak{g}$ invariante par translation par $\mathfrak{z}$; on la note $\mathfrak{S}(\bar{\mathfrak{f}})\subset \mathfrak{g}$. Pour $\gamma\in \mathfrak{g}_{\rm qr}$ et $\bar{\mathfrak{f}}\in C^\infty_{\rm c}(\overline{\mathfrak{g}})$, l'ensemble $ \ES{O}_G(\gamma)\cap 
\mathfrak{S}(\bar{\mathfrak{f}})$ est compact --- car l'ensemble $\{\gamma'\in \mathfrak{S}(\bar{\mathfrak{f}}): \det(\gamma')=\det(\gamma)\}$ l'est ---, et on peut définir comme en \ref{variante sur l'algèbre de Lie (suite)} l'intégrale orbitale $\ES{O}_\gamma(\bar{\mathfrak{f}})$ et l'intégrale orbitale normalisée $I^\mathfrak{g}(\gamma, \bar{\mathfrak{f}})= \eta_{\mathfrak{g}}^{1\over 2}(\gamma)\ES{O}_\gamma(\bar{\mathfrak{f}})$. On a clairement
$$
I^\mathfrak{g}(z+\gamma,\bar{\mathfrak{f}})= I^\mathfrak{g}(\gamma, \bar{\mathfrak{f}}),\quad z\in \mathfrak{z}.
$$

Pour $\mathfrak{f}\in C^\infty_{\rm c}(G)$, on pose $\bar{\mathfrak{f}}(g)= \int_\mathfrak{z} \mathfrak{f}(z+g)\mathfrak{d}z$ ($g\in G$), où $\mathfrak{d}z$ est une mesure de Haar sur $\mathfrak{z}=F$. On obtient une application linéaire surjective
$$
C^\infty_{\rm c}(\mathfrak{g})\rightarrow C^\infty_{\rm c}(\overline{\mathfrak{g}}),\, \mathfrak{f}\mapsto \bar{\mathfrak{f}},
$$
et pour $\mathfrak{f}\in C^\infty_{\rm c}(\mathfrak{g})$, on a
$$
I^\mathfrak{g}(\gamma,\overline{\mathfrak{f}})= \int_\mathfrak{z} I^\mathfrak{g}(z+\gamma, \mathfrak{f})\mathfrak{d}z,\quad \gamma\in \mathfrak{g}_{\rm qr}.\leqno{(1)}
$$
L'intégrale (1) est absolument convergente, car l'ensemble $\{z\in \mathfrak{z}: \ES{O}_G(z+\gamma) \cap {\rm Supp}(f)\}$ est compact.
On obtient la variante suivante du corollaire 1:
\begin{enumerate}[leftmargin=17pt]
\item[(2)] pour tout fonction $\bar{\mathfrak{f}}\in C^\infty_{\rm c}(\overline{\mathfrak{g}})$, 
la fonction $\mathfrak{g}_{\rm qr}/\mathfrak{z}\rightarrow {\Bbb C},\, \gamma \mapsto I^\mathfrak{g}(\gamma,\mathfrak{f})$ est bornée sur $\overline{\mathfrak{g}}$.
\end{enumerate}
En effet, choisissons une fonction $\mathfrak{f}\in C^\infty_{\rm c}(\mathfrak{g})$ se projetant sur $\bar{\mathfrak{f}}$, et un ouvert compact $\Omega$ dans $\mathfrak{g}$ tel que $\overline{^G{\rm Supp}(f)}\subset {^G\Omega}$. Puisque $I^\mathfrak{g}(\gamma,\mathfrak{f})=0$ pour tout $\gamma\in\mathfrak{g}_{\rm qr}$ tel que $\gamma \notin {^G\Omega}$, on a $I^\mathfrak{g}(\gamma,\bar{\mathfrak{f}})=0$ pour tout $\gamma\in \mathfrak{g}_{\rm qr}$ tel que $\gamma\notin \mathfrak{z} +{^G\Omega}$, et il suffit de voir que la fonction $\gamma \mapsto I^\mathfrak{g}(\gamma, \bar{\mathfrak{f}})$ est bornée sur $\Omega$ (i.e sur $\mathfrak{z}+ \Omega$). L'ensemble $\omega = \{z\in \mathfrak{z}: (z + {^G\Omega})\cap {\rm Supp}(f)\}$ est ouvert compact, et d'après (1), pour $\gamma \in \mathfrak{g}_{\rm qr} \cap \Omega$, on a 
$$
\vert I^\mathfrak{g}(\gamma, \bar{\mathfrak{f}})\vert 
\leq \int_{\mathfrak{z}}\vert I^\mathfrak{g}(z + \gamma,\mathfrak{f})\vert \mathfrak{d}z \leq {\rm vol}(\omega, \mathfrak{d}z) 
\sup\{ \vert I^\mathfrak{g}(\gamma',\mathfrak{f})\vert: \gamma'\in \mathfrak{g}_{\rm qr}\cap \Omega\}.
$$
D'où le point (2). \hfill $\blacksquare$
}
\end{marema}

\begin{moncoro2}
La fonction $\eta_\mathfrak{g}^{-{1\over  2}}: \mathfrak{g}_{\rm qr}\rightarrow {\Bbb R}_{>0}$ est localement intégrable 
(par rapport à une mesure de Haar $\mathfrak{d}g$) sur $\mathfrak{g}$: pour toute fonction $\mathfrak{f}\in C^\infty_{\rm c}(\mathfrak{g})$, l'intégrale
$$
\int_\mathfrak{g} \eta_\mathfrak{g}(g)^{-{1\over  2}} \mathfrak{f}(g)\mathfrak{d}g
$$
est absolument convergente.
\end{moncoro2}

\begin{proof}Si $T$ est un tore maximal de $G$ --- \cad le groupe des points $F$--rationnels d'un tore maximal de $GL(N)$ défini sur $F$ ---, on note $W^G(T)$ le groupe de Weyl $N_G(T)/T$, $\mathfrak{t}$ l'algèbre de Lie de $T$, et $\alpha=\alpha_T: (T\backslash G) \times \mathfrak{t}\rightarrow \mathfrak{g}$ l'application $(g,\gamma)\mapsto g^{-1}\gamma g$. Pour $\gamma\in \mathfrak{t}$, la différentielle $\delta\alpha_{(1,\gamma)}: (\mathfrak{g}/\mathfrak{t})\times \mathfrak{t}\rightarrow \mathfrak{g}$ de $\alpha$ au point $(1,\gamma)$ est l'application
$$
(x, y) \mapsto {\rm ad}_\gamma(x)+y.
$$
Pour $(g,\gamma)\in (T\backslash G)\times \mathfrak{t}$, puisque $\alpha(g,\gamma)= g^{-1}\alpha(1,\gamma)g$, on en déduit que le Jacobien de $\alpha$ au point $(g,\gamma)$ est égal à $\det_F({\rm ad}_\gamma; \mathfrak{g}/\mathfrak{t})= D_\mathfrak{g}(-\gamma)\;(=D_\mathfrak{g}(\gamma))$. Rappelons que pour $\gamma \in \mathfrak{t}\cap \mathfrak{g}_{\rm qr}$, la distribution $\ES{O}_\gamma$ sur $\mathfrak{g}$ est définie via la mesure $G$--invariante ${dg\over dt}$ sur $T\backslash G$, où $dg$ est la mesure de Haar sur $G$ qui donne le volume $1$ à $K=GL(N,\mathfrak{o})$, et $dt$ est la mesure de Haar sur $T$ normalisée par ${\rm vol}(A_T\backslash T, {dt \over da_T})=1$. Ici $A_T$ est le tore déployé maximal de $T$, et $da_T$ est la mesure de Haar sur $A_T$ qui donne le volume $1$ au sous--groupe compact maximal de $A_T$. On note $\mathfrak{d}t$ la mesure de Haar sur $\mathfrak{t}$ associée à $dt$. 
On suppose aussi, ce qui est loisible, que la mesure de Haar $\mathfrak{d}g$ sur $\mathfrak{g}$ est celle associée à $dg$.   

Soit une fonction 
$\mathfrak{f}\in C^\infty_{\rm c}(\mathfrak{g})$. Quitte à remplacer $\mathfrak{f}$ par $\vert \mathfrak{f}\vert$, on peut supposer $\mathfrak{f}\geq 0$. La formule d'intégration de Weyl donne
$$
\int_{\mathfrak{g}} \eta_\mathfrak{g}(g)^{-{1\over  2}} \mathfrak{f}(g)\mathfrak{d}g=
\sum_T \vert W^G(T)\vert^{-1} \int_{\mathfrak{t}\cap \mathfrak{g}_{\rm qr}}\vert D_\mathfrak{g}(\gamma)\vert 
\eta_\mathfrak{g}(\gamma)^{-{1\over  2}} \ES{O}_\gamma(\mathfrak{f})\mathfrak{d}\gamma,\leqno{(3)}
$$
où $T$ parcourt un système de représentants des classes de conjugaison de tores maximaux de $G$. On peut aussi regrouper les tores maximaux $T$ suivant les classes de conjugaison des sous--tores déployés maximaux $A_T\subset T$. Rappelons qu'on a fixé un ensemble $\ES{P}= \ES{P}_G$ de sous--groupes paraboliques santards de $G$. Deux éléments $P,\,P'\in \ES{P}$ sont dits {\it associés} s'il existe un élément $g\in G$ tel que $gA_Pg^{-1}= A_{P'}$, ou, ce qui revient au même, tel que $gM_Pg^{-1} = M_{P'}$. Fixons un ensemble de représentants $\ES{P}^*\subset \ES{P}$ des classes d'association. Pour $P\in \ES{P}^*$, fixons un ensemble de représentants $\ES{T}_P$ des classes de $M_P$--conjugaison de tores maximaux $T$ de $M_P$ tels que $A_T=A_P$. Tout tore maximal $T$ de $G$ est conjugué (dans $G$) à un unique élément de $\coprod_{P\in \ES{P}^*}\ES{T}_P$. D'autre part si $T,\,T'\in \ES{T}_P$ pour un $P\in \ES{P}^*$, et si $T'= gTg^{-1}$ pour un $g\in G$, alors on a $gA_Tg^{-1}=A_{T'}$, et $g$ définit un élément de 
$W^G(A_P)= N_G(A_P)/M_P$. Pour $P\in \ES{P}^*$, on a
$$
\vert W^G(T)\vert = \vert W^G(A_P)\vert \vert W^{M_P}(T)\vert.
$$
D'après (3), on obtient
$$
\int_{\mathfrak{g}} \eta_\mathfrak{g}(g)^{-{1\over  2}} \mathfrak{f}(g)\mathfrak{d}g=
\sum_{P\in \ES{P}^*}\vert W^G(A_P)\vert^{-1}\sum_{T\in \ES{T}_P} \vert W^{M_P}(T)\vert^{-1} \int_{\mathfrak{t}\cap \mathfrak{g}_{\rm qr}}\vert D_\mathfrak{g}(\gamma)\vert 
\eta_\mathfrak{g}(\gamma)^{-{1\over  2}} \ES{O}_\gamma(\mathfrak{f})\mathfrak{d}\gamma.\leqno{(4)}
$$
Pour $P\in \ES{P}^*$, $T\in \ES{T}_P$ et $\gamma\in \mathfrak{t}\cap \mathfrak{g}_{\rm qr}$, puisque $\vert D_\mathfrak{g}(\gamma)=
\vert D_{\mathfrak{m}_P}(\gamma)\vert \vert D_{\mathfrak{m}_P\backslash \mathfrak{g}}(\gamma)\vert$ et (par définition) 
$\eta_\mathfrak{g}(\gamma) = \eta_{\mathfrak{m}_P}(\gamma)\vert D_{\mathfrak{m}_P\backslash \mathfrak{g}}(\gamma)\vert$, 
d'après la formule de descente \ref{variante sur l'algèbre de Lie (suite)}.(3), on a
$$
\vert D_\mathfrak{g}(\gamma)\vert \eta_\mathfrak{g}(\gamma)^{-{1\over  2}} \ES{O}_\gamma(\mathfrak{f})=
\vert D_\mathfrak{g}(\gamma)\vert \eta_\mathfrak{g}(\gamma)^{-1}I^\mathfrak{g}(\gamma,\mathfrak{f})= 
\vert D_{\mathfrak{m}_P}(\gamma)\vert \eta_{\mathfrak{m}_P}(\gamma)^{-1}I^{\mathfrak{m}_P}(\gamma, \mathfrak{f}_{\mathfrak{p}_P}).
$$
Le produit $\vert D_{\mathfrak{m}_P}(\gamma)\vert \eta_{\mathfrak{m}_P}(\gamma)^{-1}$ ne dépend pas de l'élément $\gamma \in \mathfrak{t}\cap \mathfrak{g}_{\rm qr}$: d'après \ref{variante sur l'algbre de Lie}.(8), il vaut $q^{- \mu(T)}$ pour une constante $\mu(T)\geq 0$ calculée comme suit. Le tore $T$ s'écrit $T=E_1^\times \times \cdots \times E_r^\times$ pour des extensions séparables $E_i/F$. Pour $i=1,\dots ,r$, posons $e_i=e(E_i/F)$, $f_i= f(E_i/F)$, $N_i = [E_i:F]$ et $\delta_i = \delta(E_i/F)$. On a
$$
\mu(T)= \sum_{i=1}^r \delta_i - f_i(e_i-1).
$$
L'égalité (4) devient
$$
\int_{\mathfrak{g}} \eta_\mathfrak{g}(g)^{-{1\over  2}} \mathfrak{f}(g)\mathfrak{d}g=
\sum_{P\in \ES{P}^*}\vert W^G(A_P)\vert^{-1}\sum_{T\in \ES{T}_P} \vert W^{M_P}(T)\vert^{-1} q^{-\mu(T)} \int_{\mathfrak{t}\cap \mathfrak{g}_{\rm qr}}I^{\mathfrak{m}_P}(\gamma , \mathfrak{f}_{\mathfrak{p}_P})\mathfrak{d}\gamma.\leqno{(5)}
$$

Pour $P\in \ES{P}^*$, choisissons une mesure de Haar $\mathfrak{d}a_P$ sur l'algèbre de Lie $\mathfrak{a}_P$ de $A_P$ (qui est aussi le centre de $\mathfrak{m}_P$), posons $\overline{\mathfrak{m}}_P = \mathfrak{m}_P/\mathfrak{a}_P$, et notons $\bar{\mathfrak{f}}_{\mathfrak{m}_P}\in C^\infty_{\rm c}(\overline{\mathfrak{m}}_P)$ la fonction définie par
$$
\bar{\mathfrak{f}}_{\mathfrak{m}_P}(m)= \int_{\mathfrak{a}_P}\mathfrak{f}_{\mathfrak{m}_P}(a_P + m) \mathfrak{d}a_P, \quad m\in \mathfrak{m}_P.
$$
Posons $\overline{\mathfrak{t}}= \mathfrak{t}/ \mathfrak{a}_P$ et notons $\mathfrak{d}\bar{t}$ la mesure quotient $\textstyle{\mathfrak{d}t\over \mathfrak{d}a_P}$ sur $\overline{\mathfrak{t}}$. Puisque
$$
{\rm vol}((\mathfrak{t}\cap (\mathfrak{m}_P)_{\rm qr})\smallsetminus (\mathfrak{t} \cap \mathfrak{g}_{\rm qr}), \mathfrak{d}t)=0, 
$$
on a (d'après (1))
$$
\int_{\mathfrak{t}\cap \mathfrak{g}_{\rm qr}}I^{\mathfrak{m}_P}(\gamma , \mathfrak{f}_{\mathfrak{p}_P})\mathfrak{d}\gamma
= \int_{\overline{\mathfrak{t}}\cap ((\mathfrak{m}_P)_{\rm qr}/\mathfrak{a}_P)}I^{\mathfrak{m}_P}(\gamma , \bar{\mathfrak{f}}_{\mathfrak{p}_P})\mathfrak{d}\bar{\gamma}. 
$$
Or par construction $\overline{\mathfrak{t}}$ est compact --- si $T= E_1^\times\times\cdots \times E_r^\times$ comme plus haut, alors 
on a $\overline{\mathfrak{t}}= (E_1/F)\times\cdots \times (E_r/F)$ ---, et d'après la remarque, la fonction $\gamma \mapsto I^{\mathfrak{m}_P}(\gamma, \bar{\mathfrak{f}}_{\mathfrak{p}_P})$ est bornée sur $\overline{\mathfrak{m}}_P = \mathfrak{m}_P/\mathfrak{a}_P$. On en déduit qu'il existe une constante $c_P(\mathfrak{f})$ telle que pour tout $T\in \ES{T}_P$, on a
$$
\int_{\mathfrak{t}\cap \mathfrak{g}_{\rm qr}}I^{\mathfrak{m}_P}(\gamma , \mathfrak{f}_{\mathfrak{p}_P})\mathfrak{d}\gamma
\leq c_P(\mathfrak{f}).\leqno{(6)}
$$

Posons $c(\mathfrak{f})= \max\{c_P(\mathfrak{f}): P\in \ES{P}^*\}$. La majoration (6) injectée dans (5) donne
$$
\int_{\mathfrak{g}} \eta_\mathfrak{g}(g)^{-{1\over  2}} \mathfrak{f}(g)\mathfrak{d}g\leq c(\mathfrak{f})
\sum_{P\in \ES{P}^*}\vert W^G(A_P)\vert^{-1}\sum_{T\in \ES{T}_P} \vert W^{M_P}(T)\vert^{-1} q^{-\mu(T)}.\leqno{(7)}
$$
On est donc ramené à prouver que pour chaque $P\in \ES{P}^*$, la somme $\sum_{T\in \ES{T}_P} \vert W^{M_P}(T)\vert^{-1} q^{-\mu(T)}$ est finie. Il suffit de le faire pour $P= G$ (le cas des autres $P$ s'en déduisant par produit et récurrence sur la dimension de $G$). Or pour $P=G$, l'expression $\sum_{T\in \ES{T}_G} \vert W^{G}(T)\vert^{-1} q^{-\mu(T)}$ n'est autre que le terme à gauche de l'égalité \ref{la formule de masse de Serre}.(3), \cad la formule de masse de Serre étendue à toutes les extensions séparables de $F$ de degré $N$. 
\end{proof}

\subsection{Intégrabilité locale de la fonction $\eta_\mathfrak{g}^{-{1\over 2}-\epsilon}$}\label{intégrabilité locale du eta Lie}On l'a dit dans l'introduction, pour établir une formule des traces locale, il est nécessaire d'obtenir un peu plus que l'intégrabilité locale de la fonction $\eta_\mathfrak{g}^{-{1\over 2}}$. C'est ce que nous faisons dans ce numéro.

Rappelons que la fonction $\eta_\mathfrak{g}: \mathfrak{g}_{\rm qr}\rightarrow {\Bbb R}_{>0}$ se factorise à travers $\mathfrak{g}_{\rm qr}/\mathfrak{z}$. De plus, elle vérifie la propriété d'homogénéité
$$
\eta_\mathfrak{g}(z\gamma) = \vert z\vert^{N(N-1)}\eta_\mathfrak{g}(\gamma),\quad \gamma\in \mathfrak{g}_{\rm qr},\, z\in F^\times.\leqno{(1)}
$$
En effet, pour $\gamma\in G_{\rm qre}$, elle résulte de l'égalité $\eta_\mathfrak{g}(\gamma) = \vert \det(\gamma)\vert^{N-1} \eta_G(\gamma)$ (remarque 2 de \ref{variante sur l'algèbre de Lie (suite)}) et du fait que l'application $\eta_G:G_{\rm qr}\rightarrow {\Bbb R}_{>0}$ se factorise à travers $G/Z$. Puisque $\mathfrak{g}_{\rm qre} = G_{\rm qre}$ si $N>1$, cette propriété (1) est vraie pour tout $\gamma \in \mathfrak{g}_{\rm qre}$. Pour $\gamma\in \mathfrak{g}_{\rm qr}$, posant $\mathfrak{m}= \mathfrak{m}(\gamma)$ comme en \ref{variante sur l'algèbre de Lie (suite)}, on a 
$\eta_\mathfrak{g}(\gamma) = \vert D_{\mathfrak{m}\backslash \mathfrak{g}}(\gamma)\vert \eta_\mathfrak{m}(\gamma)$. Pour $z\in F^\times$, on a $\vert D_{\mathfrak{m}\backslash \mathfrak{g}}(z\gamma)= \vert z\vert^{N^2- \dim_F(\mathfrak{m})}\vert 
D_{\mathfrak{m}\backslash \mathfrak{g}}(\gamma)\vert$, et puisque $\gamma\in \mathfrak{m}_{\rm qre}$, d'après la propriété 
(1) pour $\mathfrak{m}$, on a
$\eta_\mathfrak{m}(z\gamma)= \vert z\vert^{\dim_F(\mathfrak{m})-N}\eta_\mathfrak{m}(\gamma)$. Cela prouve (1) pour tout $\gamma\in \mathfrak{g}_{\rm qr}$.

\begin{mapropo}
Pour tout $\epsilon>0$ tel que $(N^2-N)\epsilon <1$, la fonction $\eta_\mathfrak{g}^{-{1\over 2}-\epsilon}$ est localement intégrable sur $\mathfrak{g}$.
\end{mapropo}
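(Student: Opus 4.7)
Le plan consiste � reprendre la strat�gie de la d�monstration du corollaire~2 de \ref{les germes de Shalika normalis�s sont localement born�s}, en introduisant le facteur suppl�mentaire $\eta_\mathfrak{g}^{-\epsilon}$ dans chaque �tape. Fixons une fonction $\mathfrak{f}\in C^\infty_{\rm c}(\mathfrak{g})$, que l'on peut supposer positive. Par la formule d'int�gration de Weyl, regroup�e suivant les classes d'association de sous--groupes paraboliques de $\ES{P}^*$, l'int�grale $\int_\mathfrak{g} \eta_\mathfrak{g}(g)^{-{1\over 2}-\epsilon}\mathfrak{f}(g)\mathfrak{d}g$ s'�crit
$$
\sum_{P\in \ES{P}^*}\vert W^G(A_P)\vert^{-1}\!\!\sum_{T\in \ES{T}_P}\!\vert W^{M_P}(T)\vert^{-1}\!\int_{\mathfrak{t}\cap \mathfrak{g}_{\rm qr}}\!\!\vert D_\mathfrak{g}(\gamma)\vert \eta_\mathfrak{g}(\gamma)^{-{1\over 2}-\epsilon}\ES{O}_\gamma(\mathfrak{f})\mathfrak{d}\gamma.
$$
On r�ecrit l'int�grand comme $\eta_\mathfrak{g}(\gamma)^{-\epsilon}\cdot \vert D_\mathfrak{g}(\gamma)\vert \eta_\mathfrak{g}(\gamma)^{-1}\cdot I^\mathfrak{g}(\gamma,\mathfrak{f})$ et on utilise la formule de descente parabolique \ref{variante sur l'alg�bre de Lie (suite)}.(4) pour obtenir
$$
\vert D_\mathfrak{g}(\gamma)\vert \eta_\mathfrak{g}(\gamma)^{-1}I^\mathfrak{g}(\gamma,\mathfrak{f})= q^{-\mu(T)}I^{\mathfrak{m}_P}(\gamma,\mathfrak{f}_{\mathfrak{p}_P})
$$
pour tout $T\in \ES{T}_P$ et tout $\gamma\in \mathfrak{t}\cap \mathfrak{g}_{\rm qr}$.

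Puisque pour un tel $T$ (elliptique dans $M_P$), on a l'identit� $\eta_\mathfrak{g}(\gamma)= q^{\mu(T)}\vert D_\mathfrak{g}(\gamma)\vert$ sur $\mathfrak{t}\cap \mathfrak{g}_{\rm qr}$, le facteur suppl�mentaire vaut
$$
\eta_\mathfrak{g}(\gamma)^{-\epsilon}= q^{-\mu(T)\epsilon}\vert D_\mathfrak{g}(\gamma)\vert^{-\epsilon}.
$$
On applique ensuite l'argument du corollaire~2 de \ref{les germes de Shalika normalis�s sont localement born�s}: la fonction $\gamma\mapsto I^{\mathfrak{m}_P}(\gamma,\bar{\mathfrak{f}}_{\mathfrak{p}_P})$ est born�e sur $\overline{\mathfrak{m}}_P= \mathfrak{m}_P/\mathfrak{a}_P$ (d'apr�s la remarque de \ref{les germes de Shalika normalis�s sont localement born�s} appliqu�e au groupe de Levi $M_P$, qui est un produit de $GL(N_i)$), et l'on passe au quotient par $\mathfrak{a}_P$ dans $\mathfrak{t}$, ce qui est licite car la fonction $\vert D_\mathfrak{g}\vert^{-\epsilon}$ est invariante par translation par $\mathfrak{a}_P$. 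On est donc ramen� � majorer
$$
\int_{\overline{\mathfrak{t}}} \vert D_\mathfrak{g}(\gamma)\vert^{-\epsilon}\mathfrak{d}\bar{\gamma},
$$
o� $\overline{\mathfrak{t}}= \mathfrak{t}/\mathfrak{a}_P$ est {\it compact}, ind�pendamment du choix de $T\in \ES{T}_P$, puis � contr�ler la somme r�sultante sur $T\in \ES{T}_P$ via la formule de masse de Serre comme dans loc.~cit.

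Le point crucial est donc l'int�grabilit� locale de $\vert D_\mathfrak{g}\vert^{-\epsilon}$ sur $\mathfrak{t}$ pour chaque tore $T\in \ES{T}_P$. �crivant $T= E_1^\times\times\cdots \times E_r^\times$ avec $E_i/F$ s�parable, on d�compose $D_\mathfrak{g}(\gamma)$ comme produit des facteurs $D_{\mathfrak{g}_i}(\gamma_i)$ (termes intra--blocs) et de d�terminants de r�sultants entre facteurs distincts (termes inter--blocs, born�s au voisinage des points g�n�riques). Pour les termes intra--blocs, le nombre de racines de $\mathfrak{g}_i$ relatif � l'alg�bre de Lie de $E_i^\times$ est $N_i(N_i-1)$, et le nombre total de racines de $\mathfrak{g}$ est $N(N-1)=N^2-N$. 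Par un calcul classique (par exemple en se ramenant via un plongement de $\mathfrak{t}\subset \mathfrak{g}(\overline{F})$ au cas du tore diagonal, o� $\vert D_\mathfrak{g}(\gamma)\vert = \prod_{i\neq j}\vert \gamma_i-\gamma_j\vert$ et o� chaque facteur est localement int�grable avec exposant $<1$), la condition $N(N-1)\epsilon <1$ assure l'int�grabilit� locale souhait�e. L'obstacle principal est donc l'analyse asymptotique en les singularit�s de $\vert D_\mathfrak{g}\vert$ sur les sous--vari�t�s de $\mathfrak{t}$ o� les racines s'annulent, mais celle--ci se ram�ne � des estim�es standard sur les int�grales de type $\int \vert f\vert^{-s}$ pour $f$ polynomiale, la constante $N(N-1)$ �tant pr�cis�ment le degr� total en les racines.
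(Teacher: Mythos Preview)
Your approach differs fundamentally from the paper's, and it contains a genuine gap.

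The explicit claim that $\vert D_\mathfrak{g}\vert^{-\epsilon}$ is invariant under translation by $\mathfrak{a}_P$ is \emph{false} whenever $P\neq G$. Writing $D_\mathfrak{g}(\gamma)=D_{\mathfrak{m}_P}(\gamma)\,D_{\mathfrak{m}_P\backslash \mathfrak{g}}(\gamma)$, the first factor is indeed $\mathfrak{a}_P$--invariant (since $\mathfrak{a}_P$ is central in $\mathfrak{m}_P$), but the second is not: for $a=(a_1,\ldots,a_r)\in\mathfrak{a}_P$, the operator ${\rm ad}_a$ acts by $a_i-a_j$ on the off--diagonal block $\mathfrak{g}_{i,j}$. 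Already for $N=2$ and the diagonal torus, $D_\mathfrak{g}(\gamma_1,\gamma_2)=(\gamma_1-\gamma_2)^2$ is only invariant under the diagonal $\mathfrak{z}\subsetneq \mathfrak{a}_{P_0}$. You therefore cannot pass to the compact quotient $\overline{\mathfrak{t}}$ as in the proof of Corollary~2 of \ref{les germes de Shalika normalis�s sont localement born�s}.

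More seriously, even in the elliptic case $P=G$ (where the invariance claim is correct, since then $\mathfrak{a}_P=\mathfrak{z}$), you must bound $\int_{E/F}\vert D_\mathfrak{g}(\gamma)\vert^{-\epsilon}\mathfrak{d}\bar{\gamma}$ \emph{uniformly} over all separable extensions $E/F$ of degree $N$. There are infinitely many such extensions in characteristic $p$, and the ``classical calculation'' you invoke (reducing to the split torus over $\overline{F}$) bounds each individual integral but says nothing about uniformity across the family. This uniformity is precisely the heart of the matter in positive characteristic, and it is why the paper does \emph{not} take the Weyl--integration route for this statement.

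The paper's proof avoids both issues entirely. It proceeds by induction on $\dim_F(\mathfrak{g}_\beta)$ for $\beta$ closed: for $\beta\notin\mathfrak{z}$, parabolic descent followed by the central descent of \ref{descente centrale au voisinage d'un �l�ment pur (suite)} yields $\eta_\mathfrak{g}(\beta+\bs{x}b)=c_\beta\,\eta_\mathfrak{b}(b)$ with $c_\beta$ constant, reducing local integrability near $\beta$ to the inductive hypothesis on the smaller algebra $\mathfrak{b}$. The remaining case $\beta\in\mathfrak{z}$ is handled by the homogeneity $\eta_\mathfrak{g}(z\gamma)=\vert z\vert^{N(N-1)}\eta_\mathfrak{g}(\gamma)$: writing a lattice in $\overline{\mathfrak{g}}$ as $\coprod_{i\geq 0}\varpi^i(\bar{\Lambda}\smallsetminus\varpi\bar{\Lambda})$ converts the integral into a geometric series with ratio $q^{-\alpha}$, where $\alpha=\tfrac{N(N+1)}{2}-1-N(N-1)\epsilon>0$ precisely when $N(N-1)\epsilon<1$ (and $N>1$). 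No mass formula is needed.
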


\begin{proof}Soit $\epsilon$ comme dans l'énoncé. Puisque la fonction $\eta_\mathfrak{g}^{-{1\over 2}-\epsilon}: \mathfrak{g}_{\rm qr}\rightarrow {\Bbb R}_{>0}$ se factorise à travers $\mathfrak{g}_{\rm qr}/\mathfrak{z}$, il suffit de montrer qu'elle est localement intégrable sur $\overline{\mathfrak{g}}= \mathfrak{g}/ \mathfrak{z}$. 
Grâce à (1), on peut procéder comme pour l'étude des fonctions $\tilde{\bs{b}}_i$ (cf. \ref{les germes de Shalika normalisés sont localement bornés}). Il s'agit de montrer que pour chaque élément fermé $\beta\in \mathfrak{g}$, 
la fonction $\eta_\mathfrak{g}^{-{1\over 2}-\epsilon}: \mathfrak{g}_{\rm qr}/\mathfrak{z}\rightarrow {\Bbb R}_{>0}$ est localement intégrable au voisinage de $\bar{\beta}= \beta + \mathfrak{z}$ dans $\overline{\mathfrak{g}}$. On raisonne par récurrence sur $N$. Pour $N=1$, il n'y a rien à démontrer. On suppose donc $N>1$.

Soit $\beta\in \mathfrak{g}$ un élément fermé. On pose $\mathfrak{b}=\mathfrak{g}_\beta$, $\mathfrak{z}_*= F[\beta]$, et $\overline{\mathfrak{b}}= \mathfrak{b}/\mathfrak{z}_*$. Rappelons que $\mathfrak{z}_* = E_1\times\cdots \times E_r$ pour des extensions finies $E_i/F$, et que $\mathfrak{b}= \mathfrak{b}_1\times \cdots \times \mathfrak{b}_r$ avec $\mathfrak{b}_i= {\rm End}_{E_i}(V_i)$, $V= V_1\times \cdots \times V_r$. On procède par récurrence sur la dimension de $\mathfrak{b}$ sur $F$. Supposons $\overline{\beta}\neq 0$ (i.e. $\beta \not\in \mathfrak{z}$). Notons $A_\beta$ le tore déployé 
maximal $F^\times \times \cdots \times F^\times $ de $E_1^\times \times\cdots \times E_r^\times$, $M = M(\beta)$ le centralisateur de $A_\beta$ dans $\mathfrak{g}$, et $\mathfrak{m}= \mathfrak{m}(\beta)$ l'algèbre de Lie de $M$. On a l'inclusion $\mathfrak{b}\subset \mathfrak{m}$, et $\beta$ est pur dans $\mathfrak{m}$. L'ensemble $\omega= \{\gamma \in \mathfrak{m}: D_{\mathfrak{m}\backslash \mathfrak{g}}(\gamma)\neq 0\}$ est ouvert dans $\mathfrak{m}$, et l'application 
$$
G \times \omega\rightarrow \mathfrak{g},\, (g,\gamma)\mapsto g^{-1}\gamma g,
$$
est partout submersive. L'ensemble $\omega' = \{\gamma\in \mathfrak{m}: \vert D_{\mathfrak{m}\backslash \mathfrak{g}}(\gamma)\vert = \vert D_{\mathfrak{m}\backslash \mathfrak{g}}(\beta)\vert\}$ est contenu dans $\omega$, et c'est un voisinage ouvert ($M$--invariant par conjugaison, et $\mathfrak{z}$--invariant par translation) de $\beta$ dans $\mathfrak{m}$. Puisque pour $(g,\gamma)\in G\times (\mathfrak{g}_{\rm qr}\cap \omega')$, on a
$$
\eta_\mathfrak{g}(g^{-1} \gamma g)= \vert D_{\mathfrak{m}\backslash \mathfrak{g}}(\beta)\vert \eta_\mathfrak{m}(\gamma),
$$
l'intégrabilité locale de la fonction 
$\eta_\mathfrak{g}^{-{1\over 2} -\epsilon}: \mathfrak{g}_{\rm qr}\rightarrow {\Bbb R}_{>0}$ au voisinage de $\beta$ dans $\mathfrak{g}$ est impliquée par l'intégrabilité locale de la fonction $\eta_{\mathfrak{m}}^{-{1\over 2}-\epsilon}: \mathfrak{m}_{\rm qr}\rightarrow {\Bbb R}_{>0}$ au voisinage de $\beta$ dans $\mathfrak{m}$. L'hypothse de rcurrence est vrifie: 
en crivant $\mathfrak{m}= \mathfrak{g}_1\times \cdots \times \mathfrak{g}_r$ avec 
$\mathfrak{g}_i= {\rm End}_F(V_i)$ et en posant $N_i= \dim_F(V_i)$, on a $N_i(N_i-1)\epsilon <1$ pour tout $i$. On s'est donc ramené au cas où $\mathfrak{m}=\mathfrak{g}$, \cad au cas où $\beta$ est pur dans $\mathfrak{g}$. 

On suppose de plus que $\beta$ est pur dans $\mathfrak{g}$ (avec toujours $\beta\notin \mathfrak{z}$). Alors $\beta\in G$. Reprenons les notations de \ref{descente centrale au voisinage d'un élément pur (suite)}, en particulier l'application partout submersive \ref{descente centrale au voisinage d'un élément pur (suite)}.(2)
$$
\delta: G\times \bs{x}\underline{\mathfrak{Q}}^{\underline{k}_0+1}\rightarrow G,\, (g,\bs{x}b)\mapsto g^{-1}(\beta + \bs{x}b)g.
$$
Notons $\ES{V}_\beta$ le voisinage ouvert compact $\underline{\mathfrak{Q}}^{\underline{k}_0+1}$ de $0$ dans $\mathfrak{b}$. D'après le corollaire 1 de \ref{descente centrale au voisinage d'un élément pur (suite)}, pour $(g,b)\in G\times (\mathfrak{b}_{\rm qr}\cap \ES{V}_\beta)$, l'élément $\gamma = g^{-1}(\beta + \bs{x}b)g$ appartient à $G_{\rm qr}$, et on a
$$
{\eta_G(\gamma)\over \eta_\mathfrak{b}(b)}= (q_E^{n_F(\beta)} \mu_F(\beta))^{d^2} \vert \beta \vert_E^{-d} =\mu_F(\beta)^{d^2} \vert \beta \vert_E^{d^2-d} ,
$$
où (rappel) $d={N\over [E:F]}$. Puisque
$\eta_G(\gamma)= \vert \det(\gamma)\vert^{1-N}\eta_\mathfrak{g}(\gamma)$ avec 
$$
\vert \det(\gamma)\vert^{1-N}= \vert \det(\beta)\vert^{1-N}=\vert \beta \vert_E^{-d(N-1)}.
$$
On obtient 
$$
\eta_\mathfrak{g}(\gamma) = c_\beta \eta_\mathfrak{b}(b), \quad c_\beta= \mu_F(\beta)^{d^2}\vert \beta\vert_E^{d(d-1+ N-1)}.
$$
D'après le principe de submersion (cf. \ref{le principe de submersion}), pour toute fonction $\phi \in C^\infty_{\rm c}(G\times \bs{x}\underline{\mathfrak{Q}}^{\underline{k}_0+1}))$, on a
\begin{eqnarray*}
\int_G\phi^\delta(g) \eta_\mathfrak{g}^{-{1\over 2}-\epsilon}(g)dg &=& \int_{G\times \mathfrak{b}}\phi(g,b)\eta_\mathfrak{g}^{-{1\over 2}-\epsilon}(g^{-1}(\beta + \bs{x}b)g)dg\mathfrak{d}b \\
&=& c_\beta \int_\mathfrak{b} \phi_\delta(b) \eta_\mathfrak{b}^{-{1\over 2}-\epsilon}(b)\mathfrak{d}b.
\end{eqnarray*}
L'intégrabilité de la fonction $\eta_\mathfrak{g}^{-{1\over 2} -\epsilon}: \mathfrak{g}_{\rm qr}\rightarrow {\Bbb R}_{>0}$ au voisinage de $\beta$ dans $\mathfrak{g}$ est impliquée par celle de la fonction $\eta_{\mathfrak{b}}^{-{1\over 2}-\epsilon}: \mathfrak{b}_{\rm qr}\rightarrow {\Bbb R}_{>0}$ au voisinage de $0$ dans $\mathfrak{b}$, qui est supposée connue par hypothèse de récurrence (notons que $(d^2-d)\epsilon <1$).

Grâce à l'hypothèse de récurrence, on a prouvé que pour tout élément fermé $\beta\in \mathfrak{g}$ tel que $\beta \notin \mathfrak{z}$, l'application $\eta_{\mathfrak{g}}^{-{1\over 2}-\epsilon}: \mathfrak{g}_{\rm qr}\rightarrow {\Bbb R}_{>0}$ est localement intégrable au voisinage de $\beta$ dans $\mathfrak{g}$. En d'autres termes, l'application $\eta_\mathfrak{g}^{-{1\over 2}-\epsilon}: \mathfrak{g}_{\rm qr}/\mathfrak{z}\rightarrow {\Bbb R}_{>0}$ est localement intégrable sur $\overline{\mathfrak{g}}\smallsetminus \{0\}$. Reste à traiter le cas $\beta\in \mathfrak{z}$ (i.e. $\bar{\beta}=0$). Choisissons un $\mathfrak{o}$--réseau $\Lambda$ dans $\mathfrak{g}$, et posons $\bar{\Lambda}= \mathfrak{z} + \Lambda \subset \overline{\mathfrak{g}}$.  Il suffit de prouver que
$$
\int_{\bar{\Lambda}}\eta_\mathfrak{g}^{-{1\over 2}-\epsilon}(g) \mathfrak{d}\bar{g} < +\infty,
$$
où $\mathfrak{d}\bar{g}$ est une mesure de Haar sur $\overline{\mathfrak{g}}$. 
Puisque $\eta_\mathfrak{g}^{-{1\over 2}-\epsilon}$ est localement intégrable sur $\overline{\mathfrak{g}}\smallsetminus \{0\}$, on a
$$
\int_{\bar{\Lambda}\smallsetminus \varpi \bar{\Lambda}}\eta_\mathfrak{g}^{-{1\over 2}-\epsilon}(g) \mathfrak{d}\bar{g} = c < +\infty,
$$
où $\varpi$ est une uniformisante de $F$. Comme $\bar{\Lambda} = \coprod_{i\geq 0} \varpi^i (\bar{\Lambda}\smallsetminus \varpi \bar{\Lambda})$, on obtient
$$
\int_{\bar{\Lambda}}\eta_\mathfrak{g}^{-{1\over 2}-\epsilon}(g) \mathfrak{d}\bar{g}=\sum_{i\geq 0}
\int_{\varpi^i(\bar{\Lambda}\smallsetminus \varpi \bar{\Lambda})}\eta_\mathfrak{g}^{-{1\over 2}-\epsilon}(g) \mathfrak{d}\bar{g}.
$$
Or pour $i\geq 1$, d'après la formule d'homogénité (1), on a
\begin{eqnarray*}
\int_{\varpi^i(\bar{\Lambda}\smallsetminus \varpi \bar{\Lambda})}\eta_\mathfrak{g}^{-{1\over 2}-\epsilon}(g) \mathfrak{d}\bar{g}
&=& \vert \varpi^i\vert^{N^2-1} \int_{\bar{\Lambda}\smallsetminus \varpi \bar{\Lambda}}\eta_\mathfrak{g}^{-{1\over 2}-\epsilon}(\varpi^ig) \mathfrak{d}\bar{g}\\
&=& \vert \varpi^i\vert^{N^2-1}\vert \varpi^i\vert^{-N(N-1)({1\over 2} + \epsilon)} c\\
&=& q^{-i\left({N(N+1)\over 2} - 1 - N(N-1)\epsilon \right)}c. 
\end{eqnarray*}
Puisque $N>1$ et $N(N-1)\epsilon <1$, la constante $\alpha = {N(N+1)\over 2} - 1 - N(N-1)\epsilon$ vérifie $\alpha>1$, et on obtient
$$
\int_{\bar{\Lambda}}\eta_\mathfrak{g}^{-{1\over 2}-\epsilon}(g) \mathfrak{d}\bar{g}= 
\left(\sum_{i\geq 0} q^{-\alpha i} \right)c <+\infty.
$$
Cela achève la démonstration de la proposition.\end{proof}

\begin{marema}
{\rm Considérons la fonction $\gamma \mapsto \lambda_\mathfrak{g}(\gamma)=\log (\max\{1, \eta_\mathfrak{g}(\gamma)^{-1}\})$ sur $\mathfrak{g}_{\rm qr}$. Cette fonction (à valeurs dans ${\Bbb R}_{>0}$) mesure d'une manière assez subtile la distance séparant un élément quasi--régulier de $\mathfrak{g}$ de l'ensemble $\mathfrak{g}\smallsetminus \mathfrak{g}_{\rm qr}$. En effet, pour $\gamma\in \mathfrak{g}_{\rm qr}$, posant $\mathfrak{m}=\mathfrak{m}(\gamma)$, on a $\eta_\mathfrak{g}(\gamma)= \vert D_{\mathfrak{m}\backslash \mathfrak{g}}(\gamma)\vert \eta_\mathfrak{m}(\gamma)$ avec $\gamma \in \mathfrak{m}_{\rm qre}$, et le facteur $\vert D_{\mathfrak{m}\backslash \mathfrak{g}}(\gamma)\vert^{-1}$ est d'autant plus grand que les valeurs propres de l'automorphisme ${\rm ad}_\gamma$ de $\mathfrak{g}/\mathfrak{m}$ sont proches les unes des autres. D'autre part pour $\gamma\in \mathfrak{g}_{\rm qre}$, on a $\eta_\mathfrak{g}^{-1}(\gamma)= \mu_F^+(\gamma)= \vert \det(\gamma)\vert^{1-N}\mu_F(\gamma)$, et le facteur $\mu_F(\gamma)$ est d'autant plus grand que $\gamma$ est loin d'être minimal (rappelons que $\mu_F(\gamma)\geq 1$ avec égalité si et seulement si $\gamma$ est minimal).

Pour tout $\alpha\in {\Bbb R}_{\geq 0}$, on a:
\begin{enumerate}[leftmargin=17pt]
\item[(2)] la fonction $\mathfrak{g}_{\rm qr}\rightarrow {\Bbb R}_{>0},\,\gamma \mapsto \lambda_\mathfrak{g}^\alpha(\gamma) \eta_\mathfrak{g}^{-{1\over 2}}$ est 
localement intégrable sur $\mathfrak{g}$.
\end{enumerate}
En effet, pour tout $\epsilon >0$, il existe une constante $c=c(\epsilon,\alpha)$ telle que
$$
\log (\max\{1, y\})^\alpha \leq cy^\epsilon, \quad y\in {\Bbb R}_{\geq 0}.
$$
On a donc
$$
\lambda_\mathfrak{g}^\alpha(\gamma) \eta_\mathfrak{g}^{-{1\over 2}}(\gamma) \leq c \eta_\mathfrak{g}^{-{1\over 2}-\epsilon}(\gamma),\quad \gamma\in \mathfrak{g}_{\rm qr}.
$$
Il suffit de prendre $\epsilon$ tel que $N(N-1)\epsilon <1$ et d'appliquer la proposition.\hfill $\blacksquare$
}
\end{marema}

\section{Résultats sur le groupe}\label{résultats sur le groupe}

\subsection{Les intégrales orbitales normalisées sont bornées}On a prouvé que les intégrales orbitales quasi--régulières d'une fonction $\mathfrak{f}\in C^\infty_{\rm c}(\mathfrak{g})$ sont bornées sur $\mathfrak{g}$ (corollaire 1 de \ref{les germes de Shalika normalisés sont localement bornés}). L'analogue sur $G$ de ce résultat est la 

\begin{mapropo}
Pour toute fonction $f\in C^\infty_{\rm c}(G)$, la fonction $G_{\rm qr}\rightarrow {\Bbb C},\, \gamma \mapsto I^G(\gamma,f)$
est bornée sur $G$: il existe une constante $c_f>0$ telle que
$$
\sup\{\vert I^G(\gamma,f)\vert: \gamma\in G_{\rm qr}\}\leq c_f.
$$
\end{mapropo}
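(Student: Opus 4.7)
Le plan est de suivre le sch�ma de la d�monstration du corollaire 1 de \ref{les germes de Shalika normalis�s sont localement born�s} (son analogue sur $\mathfrak{g}$), qui combine la locale bornitude de la fonction int�grale orbitale normalis�e et la compacit� modulo conjugaison de son support.

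La compacit� modulo conjugaison du support est une cons�quence imm�diate du lemme 2 de \ref{parties compactes modulo conjugaison}: puisque ${\rm Supp}(f)$ est une partie compacte de $G$, son image $\Pi_G({\rm Supp}(f))$ est born�e dans $F^{N-1}\times F^\times$, et il existe donc une partie compacte $\Omega\subset G$ v�rifiant ${^G{\rm Supp}(f)}\subset {^G\Omega}$. Comme $\ES{O}_\gamma(f)=0$ pour tout $\gamma$ tel que $\ES{O}_G(\gamma)\cap {\rm Supp}(f)=\emptyset$, la fonction $\gamma\mapsto I^G(\gamma,f)$ est nulle hors de ${^G\Omega}$.

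Pour la locale bornitude, je propose d'utiliser la descente centrale au voisinage d'un �l�ment ferm� d�velopp�e � la section \ref{descente centrale: le cas g�n�ral}. Fixons un �l�ment ferm� $\beta\in G$, et posons $\mathfrak{b}=\mathfrak{g}_\beta=\prod_{i=1}^r\mathfrak{b}_i$ avec $\mathfrak{b}_i={\rm End}_{E_i}(V_i)$ o� $F[\beta]=\prod_iE_i$; notons $H=G_\beta$. D'apr�s la proposition de \ref{descente centrale au voisinage d'un �l�ment ferm�}, il existe un �l�ment $\bs{x}\in \mathfrak{g}$ et un voisinage ouvert compact $\ES{W}$ de $0$ dans $\mathfrak{b}$ tels que pour la fonction $f$ consid�r�e, il existe une fonction $f^{\mathfrak{b}}\in C^\infty_{\rm c}(\ES{W})$ v�rifiant
$$
I^G(\beta + \bs{x}b,f)=I^\mathfrak{b}(b,f^\mathfrak{b}),\quad b\in {^H\ES{W}}\cap \mathfrak{b}_{\rm qr}.
$$
Chaque facteur $\mathfrak{b}_i$ est analogue � $\mathfrak{g}$ mais avec $E_i$ comme corps de base; en appliquant le corollaire 1 de \ref{les germes de Shalika normalis�s sont localement born�s} facteur par facteur � la d�composition $\mathfrak{b}=\prod_i\mathfrak{b}_i$, on obtient que la fonction $b\mapsto I^\mathfrak{b}(b,f^\mathfrak{b})$ est born�e sur $\mathfrak{b}_{\rm qr}$. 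Il en r�sulte que $\vert I^G(\cdot,f)\vert$ est born�e sur le voisinage ouvert $G$--invariant $\Xi_\beta={^G(\beta + \bs{x}({^H\ES{W}}))}\cap G_{\rm qr}$.

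Pour conclure, tout $\gamma\in G$ admet un �l�ment ferm� $\beta'\in \overline{\ES{O}_G(\gamma)}$ (cf. la discussion en \ref{parties compactes modulo conjugaison} sur la description des fibres de $\pi_G$); le voisinage ouvert $G$--invariant $\Xi_{\beta'}$ rencontre alors $\ES{O}_G(\gamma)$ et, par $G$--invariance, contient $\gamma$. En recouvrant le compact $\Omega$ par un nombre fini de tels voisinages $\Xi_{\beta'_i}$ et en utilisant la $G$--invariance de $I^G(\cdot,f)$, on obtient la majoration cherch�e. Le point d�licat n'est pas dans cette preuve elle--m�me --- qui n'est qu'un empaquetage topologique des r�sultats ant�rieurs --- mais dans la construction de la descente centrale au voisinage d'un �l�ment ferm� g�n�ral (trait�e � la section \ref{descente centrale: le cas g�n�ral}) et dans la d�monstration de la bornitude sur $\mathfrak{g}$ (corollaire 1 de \ref{les germes de Shalika normalis�s sont localement born�s}); une fois ces deux r�sultats acquis, la proposition suit essentiellement imm�diatement.
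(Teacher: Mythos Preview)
Ta preuve est correcte et suit essentiellement la m�me approche que celle du papier: r�duction � la locale bornitude via la compacit� modulo conjugaison (lemme 2 de \ref{parties compactes modulo conjugaison}), puis descente centrale au voisinage de chaque �l�ment ferm� (proposition de \ref{descente centrale au voisinage d'un �l�ment ferm�}) pour se ramener au r�sultat sur l'alg�bre de Lie (corollaire 1 de \ref{les germes de Shalika normalis�s sont localement born�s}). La seule diff�rence, mineure, est que le papier s�pare le cas $\beta\in Z$ --- qu'il ram�ne � $\beta=1$ et traite via la translation explicite $g\mapsto g-1$ sur ${^G(1+\mathfrak{P}_{\rm min})}$ --- alors que tu appliques uniform�ment la proposition de \ref{descente centrale au voisinage d'un �l�ment ferm�}, laquelle couvre d�j� le cas central (avec $\mathfrak{b}=\mathfrak{g}$ et $\bs{x}=1$).
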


\begin{proof}Soit $f\in C^\infty_{\rm c}(G)$. Il suffit de prouver que la fonction $G_{\rm qr}\rightarrow {\Bbb C},\, \gamma \mapsto I^G(\gamma,f)$ est bornée sur 
le compact ouvert ${\rm Supp}(f)$, ou, ce qui revient au même, qu'elle est localement bornée sur $G$. 

Soit un élément fermé $\beta\in G$. Notons $\mathfrak{b}$ le centralisateur $\mathfrak{g}_\beta = {\rm End}_{F[\beta]}(V)$ de $\beta$ dans $\mathfrak{g}$. 
Reprenons les notations de \ref{descente centrale au voisinage d'un élément fermé}. On a $F[\beta]=E_1\times\cdots \times E_r$, $V= V_1\times \cdots \times V_r$, et $\mathfrak{b}= \mathfrak{b}_1\times \cdots \times \mathfrak{b}_r$ avec $\mathfrak{b}_i = {\rm End}_{E_i}(V_i)$. On pose aussi $\mathfrak{m}= \mathfrak{g}_1\times\cdots \times \mathfrak{g}_r$ avec $\mathfrak{g}_i={\rm End}_F(V_i)$. On peut supposer que $\mathfrak{m}$ est standard, \cad de la forme $\mathfrak{m}= \mathfrak{m}_P$ pour un $P\in \ES{P}$. On pose $M=M_P$ et $H = \mathfrak{b}^\times$. D'après la proposition de \ref{descente centrale au voisinage d'un élément fermé}, il existe un voisinage ouvert compact $\ES{W}$ de $0$ dans $\mathfrak{b}$ et un élément $\bs{x}\in M$ tels que:
\begin{itemize}
\item l'application $G\times \bs{x}\ES{W} \rightarrow G, \, (g,\bs{x}b)\mapsto g^{-1}(\beta + \bs{x}b)g$ est partout submersive;
\item pour tout $b\in  \mathfrak{b}_{\rm qr}\cap \ES{W}$, l'élément $\beta + \bs{x}b$ appartient à $G_{\rm qr}$;
\item il existe une fonction $f^\mathfrak{b}\in C^\infty_{\rm c}(\ES{W})$ telle que pour tout $b\in \mathfrak{b}_{\rm qr}\cap \ES{W}$, on a l'égalité $I^G(\beta + \bs{x}b,f)=I^\mathfrak{b}(b,f^\mathfrak{b})$.
\end{itemize}
Puisque la fonction $\mathfrak{b}_{\rm qr}\rightarrow {\Bbb C},\, b\mapsto I^\mathfrak{b}(b,f^\mathfrak{b})$ est bornée sur $\mathfrak{b}$ (corollaire 1 de \ref{les germes de Shalika normalisés sont localement bornés}), on obtient que la fonction $G_{\rm qr}\rightarrow {\Bbb C},\, \gamma \mapsto I^G(\gamma,f)$ est bornée sur l'ouvert ${^G(\beta + \bs{x}\ES{W})}$ de $G$.

La fonction $G_{\rm qr}\rightarrow {\Bbb C},\, \gamma \mapsto I^G(\gamma,f)$ est donc localement bornée sur $G$, et la proposition est démontrée.
\end{proof}

\subsection{Intégrabilité locale de la fonction $\eta_G^{-{1\over 2}-\epsilon}$}\label{intégrabilité locale du eta}On a prouvé que pour $\epsilon>0$ tel que $N(N-1)\epsilon<1$, la fonction $\eta_\mathfrak{g}^{-{1\over 2}-\epsilon}$ est localement intégrable sur $\mathfrak{g}$ (proposition de \ref{intégrabilité locale du eta Lie}). On en déduit la

\begin{mapropo}
Pour tout $\epsilon>0$ tel que $(N^2-N)\epsilon <1$, la fonction $\eta_G^{-{1\over 2}-\epsilon}$ est localement intégrable sur $G$.
\end{mapropo}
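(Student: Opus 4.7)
La strat�gie consiste � ramener l'assertion au r�sultat analogue d�j� �tabli sur l'alg�bre de Lie (proposition de \ref{int�grabilit� locale du eta Lie}), en exploitant le lien explicite entre $\eta_G$ et $\eta_\mathfrak{g}$. Notons que la condition $(N^2-N)\epsilon<1$ co\"{\i}ncide avec la condition $N(N-1)\epsilon<1$ requise dans l'�nonc� sur $\mathfrak{g}$.

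Les deux ingr�dients cl�s sont rappel�s ais�ment: d'une part, d'apr�s la remarque 2 de \ref{variante sur l'alg�bre de Lie (suite)}, pour tout $\gamma\in G_{\rm qr}$, on a $\eta_\mathfrak{g}(\gamma)= \vert \det(\gamma)\vert^{N-1}\eta_G(\gamma)$; d'autre part, la mesure de Haar $dg$ sur $G$ et la restriction � l'ouvert $G$ d'une mesure de Haar additive $\mathfrak{d}g$ sur $\mathfrak{g}$ sont reli�es par $dg = \vert \det(g)\vert^{-N}\mathfrak{d}g$.

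L'�tape principale consiste � transf�rer l'int�grale sur $G$ en une int�grale sur $\mathfrak{g}$. Soit $f\in C^\infty_{\rm c}(G)$ positive. Le support de $f$ �tant compact dans $G$ et l'application $\det: G\rightarrow F^\times$ continue, la fonction $g\mapsto \vert \det(g)\vert$ y est major�e et minor�e par des constantes strictement positives; ${\rm Supp}(f)$ reste donc compacte dans $\mathfrak{g}$ (elle est s�par�e de l'hypersurface $\{\det = 0\}$), et $f$ se prolonge par $0$ en une fonction $\tilde{f}\in C^\infty_{\rm c}(\mathfrak{g})$. En combinant les deux relations ci--dessus, on obtient
$$
\int_G \eta_G(g)^{-{1\over 2}-\epsilon}f(g)\,dg = \int_\mathfrak{g} \vert \det(g)\vert^{(N-1)({1\over 2}+\epsilon)-N}\,\eta_\mathfrak{g}(g)^{-{1\over 2}-\epsilon}\tilde{f}(g)\,\mathfrak{d}g,
$$
o� le facteur $\vert \det(g)\vert^{(N-1)({1\over 2}+\epsilon)-N}$ est born� sur ${\rm Supp}(\tilde{f})$.

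La convergence absolue du membre de droite est alors une cons�quence directe de l'int�grabilit� locale de $\eta_\mathfrak{g}^{-{1\over 2}-\epsilon}$ sur $\mathfrak{g}$ d�j� �tablie en \ref{int�grabilit� locale du eta Lie}. Il n'y a essentiellement aucun obstacle s�rieux � surmonter: tout le travail technique a �t� fait dans le cas de l'alg�bre de Lie, et le passage de $\mathfrak{g}$ � $G$ est purement m�canique, gr�ce au fait que la partie $G$ est ouverte dans $\mathfrak{g}$ et que $\vert \det\vert$ est localement born�e et minor�e sur les compacts de $G$.
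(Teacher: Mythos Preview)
Your proof is correct and considerably more direct than the paper's own argument. The paper re--runs the whole descent machinery: it reduces to a neighbourhood of each closed element $\beta\in G$, passes to the Levi $M=M(\beta)$ via the parabolic submersion, then (when $\beta$ is pure and non--central) invokes the central descent of section~\ref{descente centrale au voisinage d'un �l�ment pur (suite)} to reduce to the Lie--algebra statement on $\mathfrak{b}=\mathfrak{g}_\beta$, and finally treats $\beta\in Z$ separately by translating to $\beta=1$ and using $\eta_G(\gamma)=\eta_\mathfrak{g}(\gamma-1)$ on a neighbourhood where $\vert\det\vert=1$.

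You bypass all of this by exploiting the global identity $\eta_\mathfrak{g}(\gamma)=\vert\det(\gamma)\vert^{N-1}\eta_G(\gamma)$ together with the change of measure $dg=\vert\det(g)\vert^{-N}\mathfrak{d}g$. This reduces the question in one line to the already--proved Lie--algebra result, the extra power of $\vert\det\vert$ being harmless on a compact support in $G$. Your approach is more economical; the paper's has the merit of making the descent structure visible (and of being parallel to the arguments used elsewhere, e.g.\ for the normalized orbital integrals), but for this particular statement that extra structure is not needed.
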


\begin{proof}
Il s'agit de montrer que pour chaque élément fermé $\beta\in G$, la fonction $\eta_G^{-{1\over 2}-\epsilon}:G_{\rm qr}\rightarrow {\Bbb R}_{>0}$ est localement intégrable au voisinage de $\beta$ dans $G$. Reprenons la démonstration de la proposition de 
\ref{intégrabilité locale du eta Lie}. On a défini un sous--groupe de Levi $M=M(\beta)$ de $G$ tel que $\beta$ est pur dans $M$. L'ensemble $\Omega= \{\gamma \in M: D_{M\backslash G}(\gamma)\neq 0\}$ est ouvert dans $M$, et l'application
$$
G\times \Omega\rightarrow G,\, (g,\gamma)\mapsto g^{-1}\gamma g
$$
est partout submersive. L'ensemble $\Omega' = \{\gamma\in M: \vert D_{M\backslash G}(\gamma)\vert = \vert D_{M\backslash G}(\beta)\vert\}\;(\subset \Omega)$ est un voisinage ouvert ($M$--invariant par conjugaison, et $Z$--invariant par translation) de $\beta$ dans $M$. Puisque pour $(g,\gamma)\in G\times (\Omega'\cap G_{\rm qr})$, on a
$$
\eta_G(g^{-1} \gamma g)= \vert D_{M\backslash G}(\beta)\vert \eta_M(\gamma),
$$
l'intégrabilité locale de la fonction 
$\eta_G^{-{1\over 2} -\epsilon}: G_{\rm qr}\rightarrow {\Bbb R}_{>0}$ au voisinage de $\beta$ dans $G$ est impliquée par l'intégrabilité locale de la fonction $\eta_{M}^{-{1\over 2}-\epsilon}: M_{\rm qr}\rightarrow {\Bbb R}_{>0}$ au voisinage de $\beta$ dans $M$. Notons que si $\mathfrak{m}=\mathfrak{m}(\beta)\subset \mathfrak{g}$ est l'algèbre de Lie de $M$, alors 
$\mathfrak{m}= \mathfrak{g}_1\times \cdots \times \mathfrak{g}_s$ avec $\mathfrak{g}_i={\rm End}_F(V_i)$ pour un sous--$F$--espace vectoriel $V_i$ de $V$ de dimension 
$N_i$, et l'hypothse de rcurrence est vrifie: on a $(N_i^2-N_i)\epsilon< 1$ pour tout $i$. On s'est donc ramené au cas où $\mathfrak{m}=\mathfrak{g}$, \cad au cas où $\beta$ est pur dans $\mathfrak{g}$. Si $\beta$ est un élément pur de $G$, alors en notant $\mathfrak{b}=\mathfrak{g}_\beta$ le centralisateur de $\beta$ dans $\mathfrak{g}$, on obtient 
comme dans la démonstration de \ref{intégrabilité locale du eta Lie} (par descente centrale) que l'intégrabilité locale de la fonction $\eta_G^{-{1\over 2}-\epsilon}:G_{\rm qr}\rightarrow {\Bbb R}_{>0}$ au voisinage de $\beta$ dans $G$ est impliquée par celle de la fonction $\eta_\mathfrak{b}^{-{1\over 2}-\epsilon}: \mathfrak{b}_{\rm qr}\rightarrow {\Bbb R}_{>0}$ au voisinage de $0$ dans $\mathfrak{b}$, 
laquelle est dsormais prouve. 
\end{proof}


\end{document}